\numberwithin{equation}{section}
\numberwithin{figure}{section}
\theoremstyle{plain}
\newtheorem{thm}{\protect\theoremname}
\theoremstyle{plain}
\newtheorem{lem}[thm]{\protect\lemmaname}
\theoremstyle{definition}
\newtheorem{defn}[thm]{\protect\definitionname}
\theoremstyle{plain}
\newtheorem{cor}[thm]{\protect\corollaryname}
\theoremstyle{remark}
\newtheorem{rem}[thm]{\protect\remarkname}
\theoremstyle{plain}
\newtheorem{prop}[thm]{\protect\propositionname}
\theoremstyle{plain}
\newtheorem{assump}[thm]{Assumption}
\providecommand{\corollaryname}{Corollary}
\providecommand{\definitionname}{Definition}
\providecommand{\lemmaname}{Lemma}
\providecommand{\propositionname}{Proposition}
\providecommand{\remarkname}{Remark}
\providecommand{\theoremname}{Theorem}
\begin{document}
\global\long\def\ve{\varepsilon}%
\global\long\def\R{\mathbb{R}}%
\global\long\def\Rn{\mathbb{R}^{n}}%
\global\long\def\Rd{\mathbb{R}^{d}}%
\global\long\def\E{\mathbb{E}}%
\global\long\def\P{\mathbb{P}}%
\global\long\def\bx{\mathbf{x}}%
\global\long\def\vp{\varphi}%
\global\long\def\ra{\rightarrow}%
\global\long\def\smooth{C^{\infty}}%
\global\long\def\Tr{\mathrm{Tr}}%
\global\long\def\bra#1{\left\langle #1\right|}%
\global\long\def\ket#1{\left|#1\right\rangle }%
\global\long\def\Re{\mathrm{Re}}%
\global\long\def\Im{\mathrm{Im}}%
\global\long\def\bsig{\boldsymbol{\sigma}}%
\global\long\def\btau{\boldsymbol{\tau}}%
\global\long\def\bmu{\boldsymbol{\mu}}%
\global\long\def\bx{\boldsymbol{x}}%
\global\long\def\bups{\boldsymbol{\upsilon}}%
\global\long\def\bSig{\boldsymbol{\Sigma}}%
\global\long\def\bt{\boldsymbol{t}}%
\global\long\def\bs{\boldsymbol{s}}%
\global\long\def\by{\boldsymbol{y}}%
\global\long\def\brho{\boldsymbol{\rho}}%
\global\long\def\ba{\boldsymbol{a}}%
\global\long\def\bb{\boldsymbol{b}}%
\global\long\def\bz{\boldsymbol{z}}%
\global\long\def\bc{\boldsymbol{c}}%
\global\long\def\balpha{\boldsymbol{\alpha}}%
\global\long\def\bbeta{\boldsymbol{\beta}}%
\global\long\def\blam{\boldsymbol{\lambda}}%
\global\long\def\Blam{\boldsymbol{\Lambda}}%
\global\long\def\sign{\mathrm{sign}}%
\global\long\def\diag{\mathrm{diag}}%
\global\long\def\bA{\mathbf{A}}%
\global\long\def\bB{\mathbf{B}}%
\global\long\def\bG{\mathbf{G}}%
\global\long\def\bX{\mathbf{X}}%
\global\long\def\vector{\mathrm{vec}}%
 
\global\long\def\M{\mathrm{M}}%
\global\long\def\S{\mathrm{S}}%
\global\long\def\epsilon{\varepsilon}%

\title{Non-Euclidean dual gradient ascent for entropically regularized linear
and semidefinite programming}
\author{Yuhang Cai\thanks{University of California, Berkeley} \and Michael
Lindsey\footnotemark[1] \thanks{Lawrence Berkeley National Laboratory}}
\maketitle
\begin{abstract}
We present an optimization framework that exhibits dimension-independent
convergence on a broad class of semidefinite programs (SDPs). Our
approach first regularizes the primal problem with the von Neumann
entropy, then solve the regularized problem using dual gradient ascent
with respect to a problem-adapted norm. In particular, we show that
the dual gradient norm converges to zero at a rate independent of
the ambient dimension and, via rounding arguments, construct primal-feasible
solutions in certain special cases. We also derive explicit convergence
rates for the objective. In order to achieve optimal computational
scaling, we must accommodate the use of stochastic gradients constructed
via randomized trace estimators. Throughout we illustrate the generality
of our framework via three important special cases---the Goemans-Williamson
SDP relaxation of the Max-Cut problem, the optimal transport linear
program, and several SDP relaxations of the permutation synchronization
problem. Numerical experiments confirm that our methods achieve dimension-independent
convergence in practice.
\end{abstract}

\section{Introduction}

Semidefinite programming is a powerful optimization framework with
widespread applications in combinatorial optimization \cite{goemans1995improved},
control theory \cite{boyd1994linear,parrilo2000structured}, quantum
information \cite{doherty2002distinguishing}, and machine learning
\cite{lanckriet2004learning,weinberger2009distance}. A standard semidefinite
program (SDP) seeks to minimize a linear functional over the set of
symmetric positive semidefinite matrices subject to affine constraints.
Despite their convexity, large-scale SDPs remain computationally challenging
to solve. In particular, checking the semidefinite constraint on an
$n\times n$ matrix $X$ in general requires $\Omega(n^{3})$ operations,
though the scaling of robust solvers such as interior point methods
\cite{boyd2004convex} can be far more costly still.

When the optimal solution is of rank $k\ll n$, specialized approaches
are able to avoid cubic scaling and achieve per-iteration complexity
of $O(nk^{2})$. Such approaches include the Burer-Monteiro / SDPLR
method \cite{BurerMonteiro} and alternative manifold-constrained
optimization approaches \cite{manopt}, as well as the randomized
approach SketchyCGAL \cite{SketchyCGAL}. In this work we are interested
in algorithms that make no assumption on the solution rank.

Meanwhile, the celebrated matrix multiplicative weights update (MMWU)
algorithm \cite{arora2012multiplicative}, enjoys strong theoretical
guarantees for certain SDPs such as the Goemans-Williamson relaxation
of the Max-Cut problem \cite{goemans1995improved}, achieving nearly-linear
scaling for graphs of bounded degree. However, MMWU has not been widely
adopted as a practical general-purpose tool for semidefinite programming,
partly due to its reliance on problem-specific oracles, and to the
best of our knowledge no implementations are publicly available. In
fact, MMWU is similar to our approach in that it implicitly represents
of the primal variable as a matrix exponential, and the approximate
Cholesky factorization referenced in MMWU can be understood in our
language as a choice of a randomized trace estimator. Our approach,
like MMWU, achieves nearly-linear scaling on the Max-Cut SDP for graphs
of bounded degree, but we also demonstrate the flexibility of the
approach for more general semidefinite programming. Moreover, our
algorithm is conceptually quite simple and can be derived simply as
dual gradient ascent with respect to a carefully chosen norm, after
a suitable entropic regularization of the primal problem.

Indeed, the success of entropic regularization in accelerating algorithms
for problems like optimal transport (OT) \cite{peyre2019computational}
provides a compelling inspiration. Although OT is a linear program
(LP) problem, its importance has inspired specialized algorithms beyond
standard LP solvers. Specifically, the celebrated Sinkhorn scaling
algorithm was proposed \cite{cuturi2013sinkhorn} as a method for
solving an entropically regularized variant of the OT problem. This
approach and a variant known as the Greenkhorn algorithm \cite{altschuler2017near}
have been shown to achieve nearly-linear complexity \cite{altschuler2017near,dvurechensky2018computational,lin2022efficiency},
attaining $\epsilon$ additive accuracy with a computational complexity
of $\tilde{O}(n^{2}/\epsilon^{2})$ for $n\times n$ transport plans.
Although more sophisticated approaches not based on entropic regularization
have been shown to achieve $\tilde{O}(n^{2}/\epsilon)$ scaling for
this problem (see, e.g., \cite{li2024fastcomputationoptimaltransport}
for a recent approach as well as a comprehensive review), the conceptual
and algorithmic simplicity of the entropic regularization remain extremely
compelling in practice. Moreover the regularization itself can be
viewed as beneficial for certain tasks \cite{pooladian2024entropicestimationoptimaltransport}.

Our work builds on the recent work \cite{lindsey2023fast}, which
introduces the general framework of entropically regularized semidefinite
programming, together with fast dual optimizers that achieve empirically
optimal scaling with respect to the problem dimension by using a variant
of Hutchinson's randomized trace estimator \cite{hutchinson1989stochastic,meyer2021hutch++}.
We comment that entropic regularization has appeared in several other
contexts of linear and semidefinite programming \cite{krechetov2018entropy,li2024fastcomputationoptimaltransport,lin2022variational,pavlov2024gibbs,pavlov2024logarithmically},
though to our knowledge \cite{lindsey2023fast} is the first to advance
it as a framework allowing for fast general semidefinite programming.
In \cite{lindsey2023fast}, numerical experiments are presented for
the Max-Cut SDP as well as an application in spectral clustering,
exhibiting dimension-independent convergence, but no rigorous convergence
theory is offered.

In this paper, we address this gap by considering a somewhat modified
optimization approach within this entropically regularized framework,
specifically dual gradient ascent or stochastic gradient ascent \cite{beck2017first}
with respect to a problem-adapted non-Euclidean norm, developing a
rigorous dimension-independent convergence theory for a broad class
of entropically regularized semidefinite programs. Our theoretical
contributions are summarized as follows.
\begin{itemize}
\item \textbf{Norm derivation: }Our algorithms and analysis define and make
use of a problem-adapted norm on the dual variable space, rather than
the standard Euclidean norm. The correct choice of norm for a given
problem depends on the structure of the constraints, but the procedure
for establishing the norm ultimately derives from the fact that the
von Neumann entropy is strongly convex with respect to the trace or
nuclear norm \cite{carlen2014remainder}.
\item \textbf{Gradient convergence and rounding: }We show that the dual
gradients converge to zero under quite general conditions, implying
the existence of a nearly primal-feasible approximate solution. We
further develop rounding arguments for specific SDPs to construct
a primal-feasible solution. Combining the gradient convergence and
the rounding arguments, we get an explicit bound on the primal objective.
Note that this strategy bears a resemblance at a high level to the
original nearly-linear scaling analysis \cite{altschuler2017near}
of the Sinkhorn and Greenkhorn algorithms, though this analysis has
been subsequently improved \cite{dvurechensky2018computational,lin2022efficiency}.
\item \textbf{Objective convergence: }We establish an explicit objective
convergence via an analysis of (possibly stochastic) gradient descent
on a smooth objective with respect to a general norm. Specifically,
we show that stochastic gradient descent converges exponentially fast
up to a bias, where the exponential rate and the bias are determined
by the gradient estimation error. To the best of our knowledge, such
an analysis of SGD is novel, though we point out that prior work \cite{beck2003mirror,nemirovskij1983problem}
has considered stochastic mirror descent under general norms.
\end{itemize}
In addition to the general theory, we study three concrete instances
of entropically regularized SDPs: the Goemans-Williamson SDP relaxation
of the Max-Cut problem \cite{goemans1995improved}, the optimal transport
linear program \cite{cuturi2013sinkhorn}, and several SDP relaxations
of the permutation synchronization problem \cite{pachauri2013solving}.
For each problem, we derive the dual of its entropic regularization
and develop the corresponding optimization algorithm. We summarize
some specific findings as follows.
\begin{itemize}
\item \textbf{OT LP: }We prove that our approach achieves $\tilde{O}(n^{2}/\epsilon^{2})$
complexity, matching that of the celebrated Sinkhorn algorithm \cite{cuturi2013sinkhorn}
and Greenkhorn algorithm \cite{altschuler2017near}. More specifically,
to solve the regularized problem with inverse temperature or regularization
parameter $\beta$, the complexity is $\tilde{O}(\beta n^{2}/\epsilon)$.
The optimal value in turn achieves $\epsilon$ additive error for
$\beta=\tilde{O}(\epsilon^{-1})$. Compared to other standard approaches
to the regularized problem, our algorithm is more flexible in that
it allows for the incorporation of additional constraints through
our more general framework. Moreover, the results for general $\beta$
(i.e., not necessarily in the $\beta=\tilde{O}(\epsilon^{-1})$ regime)
are new to the best of our knowledge, and we also obtain \emph{a priori}
bounds on the dual-optimal Kantorovich potentials that may be of general
interest.
\item \textbf{Max-Cut SDP} and \textbf{permutation synchronization SDPs:
}Inspired by the rounding argument for the OT LP \cite{altschuler2017near},
we develop rounding arguments for the Max-Cut SDP and a permutation
synchronization SDP. Then we can prove that additive accuracy $\epsilon$
is obtained in $\tilde{O}(\epsilon^{-3})$ iterations of our algorithm,
in which each iteration requires $\tilde{O}(\epsilon^{-2})$ samples
in the gradient estimator. Notably, the overall dimension-dependence
is near-optimal. To our knowledge, no such optimal-scaling algorithm
has been established for the SDP in the permutation synchronization
case.
\end{itemize}
Finally, we complement our theoretical results with numerical experiments
to validate the practical performance of our methods.

We conclude the introduction with an outline of this paper. In Section
\ref{sec:Preliminaries}, we present background and basic results
on the entropic regularization of general SDPs, examining linear programming
as a special case. We also introduce our specific problems of interest.
In Section \ref{sec:dual}, we derive the general dual problem and
its smoothness properties with respect to a carefully constructed
norm, working out further details for our specific problems of interest.
In Section \ref{sec:algo}, we describe the (stochastic) gradient
ascent algorithm for the dual problem induced by the choice of norm,
again working out further details for our case studies. In Section
\ref{sec:conv}, we provide the convergence analysis for the general
algorithm. In Section \ref{sec:ot-conv}, we provide a refined analysis
for OT. In Sections \ref{sec:max-cut-rounding} and \ref{sec:perm-synch-rounding},
we present rounding arguments for the Max-Cut SDP and an SDP relaxation
of the permutation synchronization problem. In Section \ref{sec:exp},
we present numerical experiments for our problems of interest.

\subsection*{Acknowledgments}

This material is based on work supported by the U.S. Department of
Energy, Office of Science, Accelerated Research in Quantum Computing
Centers, Quantum Utility through Advanced Computational Quantum Algorithms,
grant no. DE-SC0025572 and by the Applied Mathematics Program of the
US Department of Energy (DOE) Office of Advanced Scientific Computing
Research under contract number DE-AC02-05CH11231. M.L. was also partially
supported by a Sloan Research Fellowship. The authors gratefully acknowledge
conversations with Nikhil Srivastava and Aaron Sidford.

\section{Preliminaries\label{sec:Preliminaries}}

We begin with some background on semidefinite and linear programming
and their entropic regularizations.

Throughout this manuscript, we denote the set of non-negative real
numbers by $\R_{+}$, the set of integers $\{1,...,n\}$ by $[n]$,
and the $n$-dimensional simplex by $\Delta_{n}\coloneqq\{x\in\R_{+}^{n}:\sum_{i=1}^{n}x_{i}=1\}$.
We also use $\mathbf{1}_{n}$ to denote the vector of all ones in
$\R^{n}$.

\subsection{Unregularized SDP / LP}

We consider the semidefinite program (SDP) 
\begin{align}
\underset{X\in\R^{n\times n}}{\text{minimize}}\ \  & \Tr[CX]\label{eq:unreg_sdp}\\
\text{subject to}\ \  & \Tr[\mathbf{A}X]=\mathbf{b},\nonumber \\
 & \Tr[X]=1,\nonumber \\
 & X\succeq0.\nonumber 
\end{align}
 Here the second line is a shorthand for the constraints $\Tr[A_{i}X]=b_{i}$
for $i=1,\ldots,m$, where the $A_{i}$ are symmetric. We also assume
that $C$ is symmetric. (Both $C$ and the $A_{i}$ can always be
symmetrized, so the symmetry assumption loses no generality.) The
SDP can be viewed as an optimization problem over \emph{density operators},
in the sense of quantum information \cite{NielsenChuang}, i.e., operators
$X$ satisfying $X\succeq0$ and $\Tr[X]=1$.

In the `diagonal' case where $C=\mathrm{diag(c)}$ and all the $A_{i}=\mathrm{diag}(a_{i})$
are diagonal matrices, this problem descends to an optimization problem
(in fact, a \emph{linear program} or LP) over \emph{probability vectors}
$x\in\R^{n}$, satisfying $x\geq0$ and $\mathbf{1}^{\top}x=1$: 
\begin{align}
\underset{x\in\R^{n}}{\text{minimize}}\ \  & c^{\top}x\label{eq:unreg_lp}\\
\text{subject to}\ \  & Ax=b,\nonumber \\
 & \mathbf{1}_{n}^{\top}x=1,\nonumber \\
 & x\geq0,\nonumber 
\end{align}
 where $A=(a_{1},\ldots,a_{m})^{\top}\in\R^{m\times n}$.

\subsection{Key cases of interest \label{subsec:cases}}

Throughout this article we will highlight several important special
cases.

\subsubsection{Max-Cut SDP}

The first is the \emph{Max-Cut SDP}, which arises as the celebrated
SDP relaxation due to Goemans and Williamson of the Max-Cut problem
\cite{goemans1995improved}: 
\begin{align}
\underset{X\in\R^{n\times n}}{\text{minimize}}\ \  & \Tr[CX]\label{eq:maxcut_sdp}\\
\text{subject to}\ \  & \mathrm{diag}(X)=\mathbf{b},\nonumber \\
 & \Tr[X]=1,\nonumber \\
 & X\succeq0.\nonumber 
\end{align}
 Typically $\mathbf{b}=\mathbf{1}_{n}/n$, but more generally we may
consider any $\mathbf{b}$ such that $\mathbf{1}_{n}^{\top}\mathbf{b}=1$.
Here the trace constraint is redundant, but it is useful to maintain
it separately to derive a particular dual formulation.

\subsubsection{Optimal transport LP}

The second special case is the\emph{ discrete (Kantorovich) optimal
transport (OT) problem}. This problem is specified by marginal probability
vectors $\mu\in\Delta^{m}$ and $\nu\in\Delta^{n}$, as well a cost
matrix $c\in\R^{m\times n}$. (Although $c$ is viewed here as a matrix,
we keep the notation lower-case to emphasize its distinctness from
$C$ as defined above.) Then the \emph{OT LP} is written: 
\begin{align}
\underset{\pi\in\R^{m\times n}}{\text{minimize}}\ \  & \left\langle c,\pi\right\rangle \label{eq:ot_lp}\\
\text{subject to}\ \  & \pi\mathbf{1}_{n}=\mu,\nonumber \\
 & \pi^{\top}\mathbf{1}_{m}=\nu,\nonumber \\
 & \left\langle \pi,\mathbf{1}_{m\times n}\right\rangle =1,\nonumber \\
 & \pi\ge0.
\end{align}
 Here $\left\langle \,\cdot\,,\,\cdot\,\right\rangle $ is the usual
Frobenius inner product. Note carefully that in our notation for this
special case, we are over-riding some of our notation in the general
case (namely: $m$ and $n$) in order to maintain accepted notation
in the OT setting, so we pause here to explain the connection. To
view (\ref{eq:ot_lp}) as a special case of (\ref{eq:unreg_lp}),
we view the \emph{vectorization} of $\pi$ as corresponding to a probability
vector $x\in\R^{mn}$. Then the $m+n$ marginal equality constraints
can be phrased as $Ax=b$ for a suitable choice of $A\in\R^{(m+n)\times mn}$
and $b^{\top}=(\mu^{\top},\nu^{\top})\in\R^{m+n}$, and the cost vector
$c\in\R^{mn}$ is obtained as the vectorization of $c$ (for which
we maintain the same notation, abusing notation slightly). 

\subsubsection{SDPs for permutation synchronization}

Finally, we consider SDPs which are motivated by the permutation synchronization
(henceforward Perm-Synch) problem \cite{pachauri2013solving}. Recently,
the entropic regularization of this SDP was considered in \cite{lindseyshi2025}
where it was observed that the regularization enjoys favorable qualitative
properties from the point of view of the Perm-Synch problem. We will
develop an alternative optimization framework for the regularized
Perm-Synch SDPs considered in that work.

In this setting we consider a cost matrix $C$ which is $(NK)\times(NK)$
with $K\times K$ blocks denoted $C^{(i,j)}$ for $i,j=1,\ldots,N$.
(More generally it is possible to consider blocks of different sizes,
but we ignore this possibility for notational simplicity.) In the
Perm-Synch problem, one may think of $N$ as a number of images and
$K$ as a number of keypoints identified in each image. Then $-C^{(i,j)}$
is a one-hot encoding of tentative pairwise correspondences identified
between the keypoints in images $i$ and $j$, which formally can
be viewed as a submatrix of a permutation matrix. In the Perm-Synch
problem, we aim to find a cycle-consistent matching between keypoints
in all image pairs that is close to the tentative (not necessarily
cycle-consistent) matching encoded in $C$. 

Then the \emph{strong} Perm-Synch SDP that we consider is the following
problem: 

\begin{align}
\underset{X\in\R^{n\times n}}{\text{minimize}}\ \  & \Tr[CX]\label{eq:strongpermsynch_sdp}\\
\text{subject to}\ \  & X^{(i,i)}=\frac{\mathbf{I}_{K}}{n},\quad i=1,\ldots,N\nonumber \\
 & \Tr[X]=1,\nonumber \\
 & X\succeq0.\nonumber 
\end{align}
 Here $n=NK$, and we have normalized the diagonal block constraints
so that any feasible $X$ satisfies $\Tr[X]=1$.

We also consider the following \emph{weak }Perm-Synch SDP which slightly
relaxes the block-diagonal constraints in (\ref{eq:strongpermsynch_sdp}): 

\begin{align}
\underset{X\in\R^{n\times n}}{\text{minimize}}\ \  & \Tr[CX]\label{eq:weakpermsynch_sdp}\\
\text{subject to}\ \  & \Tr\left[\frac{\mathbf{1}_{K}\mathbf{1}_{K}^{\top}}{K}X^{(i,i)}\right]=1/n,\quad i=1,\ldots,N\nonumber \\
 & \mathrm{diag}(X)=\mathbf{1}_{n}/n,\nonumber \\
 & \Tr[X]=1,\nonumber \\
 & X\succeq0.\nonumber 
\end{align}

See \cite{lindseyshi2025} for more details and background, as well
as a description of how to recover a cycle-consistent matching from
the Perm-Synch SDPs.

\subsection{Entropic regularization}

We can regularize the SDP (\ref{eq:unreg_sdp}) using the von Neumann
entropy \cite{vonneumann1932mathematische}, defined by 
\begin{equation}
S(X):=\Tr[X\log X]\label{eq:SX}
\end{equation}
 for $X\succeq0$ (and $S(X)=+\infty$ otherwise), to obtain the regularized
SDP: 
\begin{align}
\underset{X\in\R^{n\times n}}{\text{minimize}}\ \  & \Tr[CX]+\beta^{-1}S(X)\label{eq:reg_sdp}\\
\text{subject to}\ \  & \Tr[\mathbf{A}X]=\mathbf{b},\nonumber \\
 & \Tr[X]=1.\nonumber 
\end{align}
 Here $\beta\in(0,\infty)$ can be interpreted physically as an inverse
temperature.

Note that the domain 
\begin{equation}
\mathcal{P}_{1}:=\{X\in\R^{n\times n}\,:\,X\succeq0,\,\Tr[X]=1\}\label{eq:P1}
\end{equation}
 of all density operators is compact, so all of the problems defined
above admit minimizers, provided that (\ref{eq:unreg_sdp}) (equivalently,
(\ref{eq:reg_sdp})) is feasible. We will always assume feasibility.
Then by strict convexity of the entropy, there is a unique minimizer
$X_{\star,\beta}$ of the regularized problem (\ref{eq:reg_sdp}),
attaining the optimal value $p_{\star,\beta}$. Meanwhile, let $X_{\star,\infty}$
be any solution of the unregularized problem, attaining the optimal
value $p_{\star,\infty}$. When the context is clear, we may simply
omit $\beta\in(0,\infty]$ from the subscript and write $X_{\star}$,
$p_{\star}$.

By Sion's minimax theorem, \cite{Komiya1988}, strong duality holds.
Moreover, if we assume that Slater's condition \cite{boyd2004convex}
holds, i.e., that there exists $X\succ0$ which is feasible, then
the dual problem admits a maximizer. (Note that all of our specific
problems of interest satisfy Slater's condition.) However, we will
not explicitly assume that Slater's condition holds. Indeed, Theorem
\ref{thm:gradconv} does not require the existence of a dual optimizer.
On the other hand, Theorem \ref{thm:objconv} is only vacuous if there
does not exist a dual optimizer.

Note that over $\mathcal{P}_{1}$, the entropy $S$ is maximized by
$X=uu^{\top}$ for unit vectors $u$, for which $S(X)=0$. Meanwhile,
$S$ is minimized over this domain by $X=\mathbf{I}_{n}/n$, for which
$S(X)=-\log n$. Therefore the `entropic diameter' of $\mathcal{P}_{1}$
is $\log n$. The following simple lemma based on this fact (analogous
to \cite{pmlr-v75-weed18a}), guarantees that regularization by inverse
temperature $\beta$ only changes the optimal value by $O(\beta^{-1}\log n)$.
\begin{lem}
\label{lem:reg-bound}With definitions and notation as in the preceding,
\[
\Tr[CX_{\star,\infty}]\leq\Tr[CX_{\star,\beta}]\leq\Tr[CX_{\star,\infty}]+\beta^{-1}\log n
\]
\end{lem}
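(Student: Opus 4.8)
The plan is to prove the two inequalities separately. The left inequality, $\Tr[CX_{\star,\infty}]\leq\Tr[CX_{\star,\beta}]$, is essentially immediate: since $X_{\star,\beta}$ is feasible for the regularized problem, it satisfies the same affine and trace constraints as the feasible set of the unregularized problem \eqref{eq:unreg_sdp}, and also $X_{\star,\beta}\succeq0$ (as $S(X_{\star,\beta})<\infty$ forces $X_{\star,\beta}\succeq0$). Hence $X_{\star,\beta}$ is feasible for \eqref{eq:unreg_sdp}, so $\Tr[CX_{\star,\infty}]\leq\Tr[CX_{\star,\beta}]$ by definition of the unregularized optimal value.

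For the right inequality I would use optimality of $X_{\star,\beta}$ for the regularized problem against the competitor $X_{\star,\infty}$, which is feasible for the regularized problem as well (it satisfies the affine constraints and the trace constraint; the only difference is that \eqref{eq:reg_sdp} has dropped the explicit $X\succeq0$ constraint, but this is harmless since $S$ takes the value $+\infty$ off the PSD cone so any minimizer is automatically PSD). Optimality gives
\[
\Tr[CX_{\star,\beta}]+\beta^{-1}S(X_{\star,\beta})\leq\Tr[CX_{\star,\infty}]+\beta^{-1}S(X_{\star,\infty}).
\]
Rearranging,
\[
\Tr[CX_{\star,\beta}]\leq\Tr[CX_{\star,\infty}]+\beta^{-1}\bigl(S(X_{\star,\infty})-S(X_{\star,\beta})\bigr),
\]
and it remains to bound $S(X_{\star,\infty})-S(X_{\star,\beta})$ by $\log n$. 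This follows from the two-sided bound on the entropy over $\mathcal{P}_{1}$ stated in the paragraph preceding the lemma: $-\log n\leq S(X)\leq0$ for all $X\in\mathcal{P}_{1}$, so $S(X_{\star,\infty})\leq 0$ and $S(X_{\star,\beta})\geq -\log n$, whence $S(X_{\star,\infty})-S(X_{\star,\beta})\leq\log n$.

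There is no real obstacle here; the only point requiring a modicum of care is the bookkeeping about which constraints appear in which problem — specifically, confirming that dropping the explicit semidefiniteness constraint in \eqref{eq:reg_sdp} does not change the feasible set of minimizers (because of the $+\infty$ convention for $S$ off the PSD cone) and that, conversely, a regularized minimizer is genuinely feasible for the unregularized problem. Once those identifications are made, the argument is just the displayed chain of inequalities together with the entropic-diameter bound. I would also remark that both inequalities are tight in natural limits ($\beta\to\infty$ recovers the left one with equality), which is consistent with the claimed $O(\beta^{-1}\log n)$ perturbation.
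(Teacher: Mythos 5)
Your argument is correct and is essentially the same as the paper's: both prove the left inequality by feasibility of $X_{\star,\beta}$ for the unregularized problem and the right inequality by comparing $X_{\star,\beta}$ against $X_{\star,\infty}$ in the regularized objective and invoking the entropic-diameter bound $S(X_{\star,\infty})-S(X_{\star,\beta})\le\log n$. The extra remarks you make about the $+\infty$ convention and the bookkeeping of constraints are sensible but add nothing beyond what the paper's proof already implicitly uses.
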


\begin{proof}
By optimality for the unregularized problem $\Tr[CX_{\star,\infty}]\leq\Tr[CX_{\star,\beta}]$.
Meanwhile, by optimality for the regularized problem, 
\[
\Tr[CX_{\star,\beta}]+\beta^{-1}S(X_{\star,\beta})\leq\Tr[CX_{\star,\infty}]+\beta^{-1}S(X_{\star,\infty}),
\]
 which implies 
\[
\Tr[CX_{\star,\beta}]\leq\Tr[CX_{\star,\infty}]+\beta^{-1}\left[S(X_{\star,\infty})-S(X_{\star,\beta})\right]\leq\Tr[CX_{\star,\infty}]+\beta^{-1}\log n.
\]
\end{proof}
For concreteness, we point out that in the diagonal case, the regularized
SDP (\ref{eq:maxcut_sdp}) descends to a regularized LP 
\begin{align}
\underset{x\in\R^{n}}{\text{minimize}}\ \  & c^{\top}x+\beta^{-1}S(x)\label{eq:reg_lp}\\
\text{subject to}\ \  & Ax=b,\nonumber \\
 & \mathbf{1}_{n}^{\top}x=1,\nonumber \\
 & x\geq0,\nonumber 
\end{align}
 where by some abuse of notation, here we view $S$ as the Shannon
entropy 
\[
S(x)=\sum_{i=1}^{n}\left[x_{i}\log x_{i}-x_{i}\right],
\]
 defined on $x\geq0$. In the diagonal case, we can view (\ref{eq:reg_lp})
either as a simplification of (\ref{eq:maxcut_sdp}), or more directly
as an entropic regularization of (\ref{eq:unreg_lp}). These interpretations
coincide because the minimal-entropy density operator $X\in\mathcal{P}_{1}$
satisfying $\mathrm{diag}(X)=x$, for any given probability vector
$x$, is the diagonal density operator $X=\mathrm{diag}(x)$. (This
is a consequence of the Gibbs variational principle \cite{georgii2011gibbs}.)

Moreover, observe that the regularized LP (\ref{eq:reg_lp}) in the
case of the OT problem (\ref{eq:ot_lp}) coincides with the celebrated
entropic regularization of OT \cite{cuturi2013sinkhorn}.

\section{Dual problem \label{sec:dual}}

\subsection{General form}

Within (\ref{eq:reg_sdp}), we dualize only the $\Tr[\mathbf{A}X]=\mathbf{b}$
constraint (with dual variable $\blam$), leaving the $\Tr[X]=1$
constraint over the primal domain intact. Then we form the Lagrangian
\begin{align*}
\mathcal{L}_{\beta}(X,\blam) & =\Tr[CX]+\beta^{-1}S(X)-\blam\cdot\left(\Tr[\mathbf{A}X]-\mathbf{b}\right)\\
 & =\mathbf{b}\cdot\blam+\Tr\left[\left(C-\blam\cdot\mathbf{A}\right)X\right]+\beta^{-1}S(X)
\end{align*}
 and define the unconstrained dual objective 
\begin{align*}
g_{\beta}(\blam) & :=\min_{X\in\mathcal{P}_{1}}\mathcal{L}_{\beta}(X,\blam)\\
 & =\mathbf{b}\cdot\blam+\min_{X\in\mathcal{P}_{1}}\left\{ \Tr\left[\left(C-\blam\cdot\mathbf{A}\right)X\right]+\beta^{-1}S(X)\right\} .
\end{align*}
 The minimum is attained by 
\begin{equation}
X_{\beta,\blam}:=\frac{e^{-\beta(C-\blam\cdot\mathbf{A})}}{\Tr\left[e^{-\beta(C-\blam\cdot\mathbf{A})}\right]}\in\mathcal{P}_{1},\label{eq:Xbetalambda}
\end{equation}
 and it follows that 
\[
g_{\beta}(\blam)=\mathbf{b}\cdot\blam-\beta^{-1}\log\Tr\left[e^{-\beta(C-\blam\cdot\mathbf{A})}\right].
\]
 For later purposes it is useful to define as well the unnormalized
density matrix 
\begin{equation}
Y_{\beta,\blam}:=e^{-\beta(C-\blam\cdot\mathbf{A})},\label{eq:Ybetalambda}
\end{equation}
 so that $X_{\beta,\blam}=Y_{\beta,\blam}/\Tr[Y_{\beta,\blam}].$

Then we seek to solve the dual optimization problem.
\[
\underset{\blam\in\R^{m}}{\text{maximize}}\ g_{\beta}(\blam).
\]
 Indeed, note that the first-order optimality condition is that 
\[
\Tr[\mathbf{A}X_{\beta,\blam}]=\mathbf{b},
\]
 which is the primal feasibility constraint. Letting $\blam_{\star,\beta}$
denote a dual solution (assuming that it exists), observe then that
$X_{\beta,\blam_{\star,\beta}}=X_{\star,\beta}$ is the primal solution.
As with $X_{\star,\beta}$, when the context is clear, we may simply
omit $\beta$ from the subscript and write $\blam_{\star}$ for a
dual optimizer.

Instead of maximizing $g$, we will minimize 
\[
f_{\beta}:=-g_{\beta}
\]
 to clarify the connection with conventional optimization theory.

Note that in the LP case (\ref{eq:reg_lp}), the dual objective can
be written more simply 
\[
g_{\beta}(\blam)=\mathbf{b}\cdot\blam-\beta^{-1}\log\sum_{i=1}^{n}\left[e^{-\beta(c_{i}-a_{i}\cdot\blam)}\right].
\]

\subsection{Strong smoothness}

In order to explain the smoothness properties of the dual problem,
we first recall the definitions for strong convexity and smoothness.
These definitions were first introduced in \cite{nemirovskij1983problem}.
\begin{defn}
[Strong convexity]

A function $f:\R^{p}\to\R$, is $\mu$-strongly convex with respect
to a norm $\|\cdot\|$ on $\R^{p}$ if
\[
f(t\cdot y+(1-t)\cdot x)\le t\,f(y)+(1-t)\,f(x)-\frac{\mu}{2}\,t\,(1-t)\,\|x-y\|^{2},
\]
for all $x,y\in\R^{p}$ and $t\in[0,1]$.
\end{defn}

\begin{defn}
[Smoothness]

A $f:\R^{p}\to\R$ is $L$-smooth with respect to a norm $\|\cdot\|$
on $\R^{p}$ if $f$ is everywhere differentiable and 
\[
\|\nabla f(x)-\nabla f(y)\|_{*}\le L\,\|x-y\|,
\]
for all $x,y\in\R^{p}$. 
\end{defn}

Throughout we will use $\|\cdot\|_{*}$ to denote the dual norm corresponding
to any choice of norm $\|\cdot\|.$ Now we state a well-known fact
\cite{carlen2014remainder} and derive some implications.
\begin{thm}
$S$ is 1-strongly convex on $\mathcal{P}_{1}$ with respect to the
nuclear norm $\Vert\,\cdot\,\Vert_{\Tr}$.
\end{thm}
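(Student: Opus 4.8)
The plan is to verify the defining inequality first for density operators in the relative interior $\mathcal{P}_1^{\circ} := \{X \succ 0 : \Tr[X] = 1\}$ by a Hessian computation, and then extend to all of $\mathcal{P}_1$ by continuity. Since $S$ is smooth on the open cone of positive definite matrices, and since a $C^2$ function whose Hessian satisfies $D^2 f(z)[H,H] \ge \mu\|H\|^2$ on a convex set is $\mu$-strongly convex there (the secant function $t \mapsto f(ty + (1-t)x) - tf(y) - (1-t)f(x) + \tfrac{\mu}{2}t(1-t)\|x-y\|^2$ is convex in $t$ and vanishes at the endpoints), it suffices to prove
\[
D^2 S(X)[H,H] \;\geq\; \|H\|_{\Tr}^2
\]
for every $X \succ 0$ with $\Tr[X] = 1$ and every Hermitian $H$. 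Once this is established on $\mathcal{P}_1^{\circ}$, one passes to arbitrary $X, Y \in \mathcal{P}_1$ by writing them as limits of interior points (e.g.\ $X_k = (1-\tfrac1k)X + \tfrac1k\,\mathbf{I}/n$) and using that $S$ is continuous on $\mathcal{P}_1$, since the scalar map $t \mapsto t\log t$ extends continuously to $t = 0$ and eigenvalues depend continuously on $X$.

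For the Hessian bound, first note $\nabla S(X) = \log X + \mathbf{I}$: differentiating $\Tr[(X+tH)\log(X+tH)]$ gives $\Tr[H\log(X+tH)] + \Tr[(X+tH)\,D\log(X+tH)[H]]$, and the identity $\Tr[X\,D\log(X)[H]] = \Tr[H]$ (immediate from $D\log(X)[H] = \int_0^\infty (X+s)^{-1}H(X+s)^{-1}\,ds$) reduces the second term to $\Tr[H]$. Differentiating once more, $D^2 S(X)[H,H] = \langle H, D\log(X)[H]\rangle$, which in an eigenbasis of $X$ (eigenvalues $\lambda_i$, matrix entries $H_{ij}$) equals $\sum_{i,j} \frac{\log\lambda_i - \log\lambda_j}{\lambda_i - \lambda_j}\,|H_{ij}|^2$, with the divided difference read as $\lambda_i^{-1}$ when $\lambda_i = \lambda_j$.

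The key estimate is the operator-mean inequality that the logarithmic mean is at most the arithmetic mean, i.e.\ $\frac{\log a - \log b}{a - b} \ge \frac{2}{a+b}$ for $a, b > 0$. Hence
\[
D^2 S(X)[H,H] \;\ge\; \sum_{i,j}\frac{2}{\lambda_i + \lambda_j}\,|H_{ij}|^2 \;=\; \langle H, \mathcal{T}^{-1}H\rangle,
\]
where $\mathcal{T}(A) := \tfrac12(XA + AX)$ is a positive definite superoperator on Hermitian matrices with $\langle K, \mathcal{T}(K)\rangle = \Tr[XK^2]$. Completing the square gives $\langle H, \mathcal{T}^{-1}H\rangle \ge 2\Tr[HK] - \Tr[XK^2]$ for every Hermitian $K$. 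Choosing $K = \|H\|_{\Tr}\,W$, where $W$ is the Hermitian "sign" of $H$ (so that $\Tr[HW] = \|H\|_{\Tr}$ and $-\mathbf{I} \preceq W \preceq \mathbf{I}$, whence $\Tr[XW^2] \le \Tr[X] = 1$), yields $D^2 S(X)[H,H] \ge 2\|H\|_{\Tr}^2 - \|H\|_{\Tr}^2 = \|H\|_{\Tr}^2$, as required.

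The heart of the argument is the passage from the double-operator-integral expression for the Hessian to the trace-norm bound; the only genuinely non-commutative input is the logarithmic-versus-arithmetic mean inequality, after which the estimate follows by completing the square and the elementary fact $\Tr[XW^2] \le \Tr[X]$, and the boundary extension is routine given continuity of $S$ on $\mathcal{P}_1$. Equivalently, one may phrase the interior estimate as the quantum Pinsker inequality $D(X\|Y) \ge \tfrac12\|X - Y\|_{\Tr}^2$ for the Umegaki relative entropy, which is precisely the Bregman divergence of $S$; the argument above is in effect a proof of that inequality.
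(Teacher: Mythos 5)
Your proof is correct. The key points all check out: the formula $\nabla S(X) = \log X + \mathbf{I}$ (using $\Tr[X\,D\log(X)[H]] = \Tr[H]$), the spectral expression for the Hessian with the divided-difference kernel $\frac{\log\lambda_i - \log\lambda_j}{\lambda_i - \lambda_j}$, the logarithmic-versus-arithmetic mean inequality $\frac{\log a - \log b}{a-b} \ge \frac{2}{a+b}$, the identification of $\sum_{i,j}\frac{2}{\lambda_i+\lambda_j}|H_{ij}|^2$ with $\langle H,\mathcal{T}^{-1}H\rangle$ for the Jordan-product superoperator $\mathcal{T}(A)=\tfrac12(XA+AX)$, the completed-square lower bound $\langle H,\mathcal{T}^{-1}H\rangle \ge 2\Tr[HK]-\Tr[XK^2]$, and the choice $K=\|H\|_{\Tr}W$ with $W = \sign(H)$ giving $\Tr[XW^2]\le 1$. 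The observation that the claim is equivalent to quantum Pinsker for the Umegaki relative entropy (the Bregman divergence of $S$) is also accurate. The continuity extension from $\mathcal{P}_1^\circ$ to $\mathcal{P}_1$ is routine, as you say, since $t\mapsto t\log t$ is continuous on $[0,\infty)$.

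The paper itself offers no proof of this theorem: it is stated as a ``well-known fact'' with a citation to Carlen's remainder-term paper. So you are not merely paralleling the paper's argument — you are supplying one where the paper supplies none. Your route is the standard operator-monotone proof (due essentially to Hiai--Petz) of quantum Pinsker: pass through the double-operator-integral form of the Hessian of $\Tr[X\log X]$, invert the kernel via the logarithmic mean, and bound $\langle H,\mathcal{T}^{-1}H\rangle$ from below by testing against the sign operator of $H$. The advantage of your write-up is that it makes the result self-contained and pinpoints exactly where the trace-norm (rather than Frobenius-norm) bound enters, namely the fact that $W=\sign(H)$ is a contraction so $\Tr[XW^2]\le\Tr[X]=1$. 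The reference the paper cites, by contrast, is concerned with sharpening Pinsker-type inequalities with remainder terms and would deliver the result as a corollary, but at the cost of importing machinery irrelevant to the present application.
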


It is is useful to extend the definition of the von Neumann entropy
$S(X)$ (\ref{eq:SX}) to the Euclidean vector space of symmetric
$n\times n$ matrices via the convention $S(X)=+\infty$ for $X\notin\mathcal{P}_{1}$.
Then we can define the suitable convex conjugate
\[
S^{*}(Y):=\sup_{X\in\R^{n\times n},\,X=X^{\top}}\left\{ \left\langle X,Y\right\rangle -S(X)\right\} ,\quad\text{for all }Y\in\R^{n\times n},\,Y=Y^{\top},
\]
 where $\left\langle \,\cdot\,,\,\cdot\,\right\rangle $ denotes the
Frobenius inner product and the explicit expression on the right-hand
side can be verified by direct computation. 

The next result follows from the relation \cite{kakade2009duality}
between strong convexity of the primal and strong smoothness of the
dual.
\begin{cor}
The convex conjugate $S^{*}$ is 1-strongly smooth with respect to
the spectral norm $\Vert\,\cdot\,\Vert_{2}$.
\end{cor}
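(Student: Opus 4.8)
The plan is to obtain the corollary from the abstract equivalence between strong convexity of a function and strong smoothness of its Legendre--Fenchel conjugate, as recorded in \cite{kakade2009duality}, applied to $S$ in its extended sense (with $S(X)=+\infty$ for $X\notin\mathcal{P}_{1}$). Concretely, the statement I would invoke is: if $h:\R^{p}\to\R\cup\{+\infty\}$ is a closed proper convex function that is $\mu$-strongly convex with respect to a norm $\|\cdot\|$ on its domain, then $h^{*}$ is differentiable on the interior of its domain and its gradient is $(1/\mu)$-Lipschitz there with respect to the dual norm $\|\cdot\|_{*}$. We take $h=S$, which is closed (the map $t\mapsto t\log t$ is continuous on $[0,\infty)$ and $\mathcal{P}_{1}$ is closed), proper, and convex, with $\mu=1$ and $\|\cdot\|=\|\cdot\|_{\Tr}$ by the preceding theorem. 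A helpful feature of our setting is that $\mathcal{P}_{1}$ is compact, so $S^{*}$ is finite — in fact Lipschitz — on the entire Euclidean space of symmetric $n\times n$ matrices; hence its domain is the whole space, there is no interior-of-domain caveat, and the ``everywhere differentiable'' clause in the definition of smoothness is automatic. (One may check directly that $S^{*}(Y)=\log\Tr[e^{Y}]$ with $\nabla S^{*}(Y)=e^{Y}/\Tr[e^{Y}]$, which is manifestly smooth and has nuclear norm $1$.)

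It then remains only to identify the norm with respect to which strong smoothness is obtained: it is the norm dual to the nuclear norm $\|\cdot\|_{\Tr}$ on the space of symmetric matrices equipped with the Frobenius inner product, and I claim this is precisely the spectral norm $\|\cdot\|_{2}$. For symmetric $X,Y$ with singular values (absolute eigenvalues) $\sigma_{1}\ge\cdots\ge\sigma_{n}$, von Neumann's trace inequality gives $\langle X,Y\rangle\le\sum_{i}\sigma_{i}(X)\sigma_{i}(Y)\le\big(\sum_{i}\sigma_{i}(X)\big)\max_{i}\sigma_{i}(Y)=\|X\|_{\Tr}\|Y\|_{2}$, and equality is attained by aligning eigenbases and taking $Y$ to be $\pm$ a rank-one projector, so $\|\cdot\|_{2}=(\|\cdot\|_{\Tr})_{*}$. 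Combining this identification with the duality statement applied to $h=S$, $\mu=1$, we conclude that $S^{*}$ is $1$-strongly smooth with respect to $\|\cdot\|_{2}$, as claimed.

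The one point I expect to need care is the mismatch between the hypothesis available to us — that $S$ is $1$-strongly convex \emph{on $\mathcal{P}_{1}$} rather than on the ambient space — and the way the strong-convexity/strong-smoothness duality is sometimes phrased (for finite-valued functions strongly convex everywhere). This is a non-issue because the relevant result in \cite{kakade2009duality} is stated for functions strongly convex on their domain; but if a self-contained route is preferred, one argues directly: strong convexity of $S$ on its domain is equivalent, via the Fenchel--Young inequality and conjugation, to the quadratic upper bound $S^{*}(Y')\le S^{*}(Y)+\langle\nabla S^{*}(Y),Y'-Y\rangle+\tfrac12\|Y'-Y\|_{2}^{2}$ for all symmetric $Y,Y'$, and this bound together with convexity of $S^{*}$ yields the cocoercivity estimate, hence $\|\nabla S^{*}(Y')-\nabla S^{*}(Y)\|_{\Tr}\le\|Y'-Y\|_{2}$, by the standard symmetrization argument used in the proof of the descent lemma. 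Beyond this, the bookkeeping around closedness/properness of $S$ and the elementary computation of the dual of the nuclear norm should be entirely routine.
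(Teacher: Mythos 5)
Your argument is correct and is essentially the paper's own: both invoke the strong-convexity/strong-smoothness conjugate duality from \cite{kakade2009duality} with $\mu=1$ and $\|\cdot\|=\|\cdot\|_{\Tr}$, and then identify the dual of the nuclear norm as the spectral norm. Your version simply spells out the routine verifications (closedness/properness of $S$, finiteness of $S^{*}$ via compactness of $\mathcal{P}_{1}$, the von Neumann trace inequality computation of the dual norm, and the explicit formula $S^{*}(Y)=\log\Tr[e^{Y}]$) that the paper leaves implicit.
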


\begin{proof}
The result follows directly from Theorem 6 in \cite{kakade2009duality},
via the observation that the spectral norm is the dual norm of the
nuclear norm.
\end{proof}
In order to define the appropriate notion of smoothness for our objective
$f(\blam)$, we need to take the structure of the constraints into
consideration.

\begin{assump}

\label{assump:norm}Assume that $\Vert\,\cdot\,\Vert$ is a norm on
$\R^{m}$ such that $\Vert\blam\cdot\mathbf{A}\Vert_{2}\leq\Vert\blam\Vert$.
In the LP case, this descends to an assumption that $\Vert A^{\top}\blam\Vert_{\infty}\leq\Vert\blam\Vert$.

\end{assump}
\begin{rem}
Note that $\Vert\blam\cdot\mathbf{A}\Vert_{2}$ itself defines a norm
on $\blam$, but for algorithmic reasons, it may be useful to upper-bound
this norm with a simpler norm.
\end{rem}

\begin{cor}
$g_{\beta}$ is $\beta$-strongly smooth with respect to $\Vert\,\cdot\,\Vert$.
\end{cor}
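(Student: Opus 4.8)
The plan is to compute the gradient of $g_\beta$ explicitly, recognize it as the composition of an affine map with $\nabla S^*$ (up to sign and normalization), and then transfer the strong smoothness of $S^*$ through the affine map using Assumption \ref{assump:norm}. Concretely, recall that $g_\beta(\blam) = \mathbf{b}\cdot\blam - \beta^{-1}\log\Tr[e^{-\beta(C-\blam\cdot\mathbf{A})}]$. A direct differentiation gives $\nabla g_\beta(\blam) = \mathbf{b} - \Tr[\mathbf{A}\,X_{\beta,\blam}]$, where $X_{\beta,\blam}$ is the Gibbs state in \eqref{eq:Xbetalambda}. The key structural observation is that the map $\blam \mapsto X_{\beta,\blam}$ is exactly (a rescaled version of) $\nabla S^*$ evaluated at $\beta(\blam\cdot\mathbf{A} - C)$: one checks from the explicit form of $S^*$ that $\nabla S^*(Y) = e^Y/\Tr[e^Y]$, so that $X_{\beta,\blam} = \nabla S^*\big(\beta(\blam\cdot\mathbf{A}-C)\big)$.

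The core estimate then runs as follows. For dual variables $\blam,\blam'$, we have
\[
\nabla g_\beta(\blam) - \nabla g_\beta(\blam') = -\Tr\!\big[\mathbf{A}\,(X_{\beta,\blam} - X_{\beta,\blam'})\big].
\]
Taking the dual norm and using that $\|\Tr[\mathbf{A}\,Z]\|_*$ is controlled by the spectral norm of $Z$ whenever $\|\blam\cdot\mathbf{A}\|_2 \le \|\blam\|$ — this is the standard operator-norm duality pairing, and is where Assumption \ref{assump:norm} enters — we get
\[
\|\nabla g_\beta(\blam) - \nabla g_\beta(\blam')\|_* \le \|X_{\beta,\blam} - X_{\beta,\blam'}\|_2.
\]
Now apply the preceding Corollary (1-strong smoothness of $S^*$ with respect to $\|\cdot\|_2$) to the matrix arguments $Y = \beta(\blam\cdot\mathbf{A}-C)$ and $Y' = \beta(\blam'\cdot\mathbf{A}-C)$: since $\nabla S^*(Y) = X_{\beta,\blam}$ and $\nabla S^*(Y') = X_{\beta,\blam'}$,
\[
\|X_{\beta,\blam} - X_{\beta,\blam'}\|_2 \le \|Y - Y'\|_2 = \beta\,\|(\blam-\blam')\cdot\mathbf{A}\|_2 \le \beta\,\|\blam-\blam'\|,
\]
again by Assumption \ref{assump:norm}. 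Chaining the two displays yields $\|\nabla g_\beta(\blam) - \nabla g_\beta(\blam')\|_* \le \beta\,\|\blam-\blam'\|$, which is exactly $\beta$-strong smoothness of $g_\beta$ with respect to $\|\cdot\|$.

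A couple of technical points deserve care rather than difficulty. First, one must justify the claim that the dual-norm bound $\|\Tr[\mathbf{A}\,Z]\|_* \le \|Z\|_2$ follows from $\|\blam\cdot\mathbf{A}\|_2 \le \|\blam\|$; this is a clean duality argument, writing $\|\Tr[\mathbf{A}\,Z]\|_* = \sup_{\|\blam\|\le 1}\blam\cdot\Tr[\mathbf{A}\,Z] = \sup_{\|\blam\|\le 1}\langle \blam\cdot\mathbf{A}, Z\rangle \le \sup_{\|\blam\|\le 1}\|\blam\cdot\mathbf{A}\|_2\,\|Z\|_2 \le \|Z\|_2$, using Frobenius Cauchy–Schwarz in the form $|\langle M,Z\rangle| \le \|M\|_2\|Z\|_{\Tr}$ — actually one must pair spectral with nuclear here, so slightly more care is needed on which norm sits where, but the symmetric-matrix pairing $|\langle M, Z\rangle| \le \|M\|_{\Tr}\|Z\|_2$ does the job with $M = \blam\cdot\mathbf{A}$ replaced appropriately; since $X_{\beta,\blam}-X_{\beta,\blam'}$ has trace norm bounded by $2$ this is harmless, but the cleanest route is to keep the spectral norm on the difference of Gibbs states throughout via the $S^*$ smoothness and only invoke the crude bound as above. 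Second, the differentiation of $\log\Tr[e^{-\beta(C-\blam\cdot\mathbf{A})}]$ should be done via the standard fact $\frac{d}{dt}\log\Tr[e^{H+tV}]\big|_{t=0} = \Tr[V e^H]/\Tr[e^H]$, valid for symmetric matrices. I expect no real obstacle here — the corollary is essentially a bookkeeping exercise composing the already-established primal strong convexity with the affine constraint map — so the only thing to get right is the norm chain and the sign conventions.
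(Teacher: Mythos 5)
Your structural plan---transferring the smoothness of $S^*$ through the affine map $\blam \mapsto \beta(\blam\cdot\mathbf{A} - C)$ via the gradient-Lipschitz characterization---is a legitimate route, and it differs from the paper's proof, which instead verifies the descent-lemma form $\Omega(\blam+\blam') \leq \Omega(\blam) + \langle \nabla\Omega(\blam), \blam'\rangle + \tfrac{\beta}{2}\|\blam'\|^2$ directly for the free energy $\Omega$. For a convex function the two characterizations are equivalent, so either is acceptable.

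However, your execution misassigns the norm on the difference of Gibbs states. You assert both $\|\nabla g_\beta(\blam) - \nabla g_\beta(\blam')\|_* \leq \|X_{\beta,\blam} - X_{\beta,\blam'}\|_2$ and $\|X_{\beta,\blam} - X_{\beta,\blam'}\|_2 \leq \|Y-Y'\|_2$; neither is correct, and the middle quantity must be the nuclear norm $\|X_{\beta,\blam}-X_{\beta,\blam'}\|_{\Tr}$. For the first link, duality gives $\|\Tr[\mathbf{A} Z]\|_* = \sup_{\|\blam\|\le 1}\langle \blam\cdot\mathbf{A}, Z\rangle$, and the Schatten--H\"older pairing compatible with Assumption \ref{assump:norm} is $|\langle M, Z\rangle| \le \|M\|_2\,\|Z\|_{\Tr}$ with $M=\blam\cdot\mathbf{A}$ (the assumption controls $\|M\|_2$, so $Z$ must carry the nuclear norm). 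Hence $\|\Tr[\mathbf{A} Z]\|_* \le \|Z\|_{\Tr}$; your displayed $\|\Tr[\mathbf{A} Z]\|_* \le \|Z\|_2$ is false in general---in the Max-Cut setting with $Z=\mathbf{I}_n$ the left side equals $n$ while $\|Z\|_2=1$. For the second link, $1$-smoothness of $S^*$ with respect to $\|\cdot\|_2$ means, by the paper's own definition, $\|\nabla S^*(Y)-\nabla S^*(Y')\|_* \le \|Y-Y'\|_2$ with $\|\cdot\|_*$ the dual of the spectral norm, i.e.\ the nuclear norm; so the gradient difference is controlled in $\|\cdot\|_{\Tr}$, not $\|\cdot\|_2$. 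Once both occurrences of $\|\cdot\|_2$ are replaced by $\|\cdot\|_{\Tr}$ the chain reads
\[
\|\nabla g_\beta(\blam)-\nabla g_\beta(\blam')\|_* \le \|X_{\beta,\blam}-X_{\beta,\blam'}\|_{\Tr} \le \beta\|(\blam-\blam')\cdot\mathbf{A}\|_2 \le \beta\|\blam-\blam'\|,
\]
which is correct and completes the argument. Your later hedge---invoking $|\langle M,Z\rangle|\le \|M\|_{\Tr}\|Z\|_2$ with $M=\blam\cdot\mathbf{A}$, or ``keeping the spectral norm on the Gibbs-state difference throughout''---does not repair this, since the assumption gives no control on $\|\blam\cdot\mathbf{A}\|_{\Tr}$ and the $S^*$ smoothness does not bound the gradient difference in spectral norm.
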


\begin{proof}
It suffices to show that the `free energy' 
\[
\Omega(\blam):=\beta^{-1}\log\Tr\left[e^{-\beta(C-\blam\cdot\mathbf{A})}\right]=\beta^{-1}S^{*}\left(-\beta\left[C-\blam\cdot\mathbf{A}\right]\right)
\]
 is strongly smooth.

Now by the strong smoothness of $S^{*}$ we have
\begin{align*}
\Omega(\blam+\blam') & =\beta^{-1}S^{*}(-\beta\left[C-\blam\cdot\mathbf{A}\right]+\beta\left[\blam'\cdot\mathbf{A}\right])\\
 & \leq\beta^{-1}S^{*}(-\beta\left[C-\blam\cdot\mathbf{A}\right])+\left\langle \nabla S^{*}(-\beta C+\blam\cdot\mathbf{A}),\blam'\cdot\mathbf{A}\right\rangle +\frac{\beta}{2}\Vert\blam'\cdot\mathbf{A}\Vert_{2}^{2}\\
 & =\Omega(\blam)+\left\langle X_{\beta,\blam},\blam'\cdot\mathbf{A}\right\rangle +\frac{\beta}{2}\Vert\blam'\cdot\mathbf{A}\Vert_{2}^{2}.
\end{align*}
 Note that $X_{\beta,\blam}=\nabla\Omega(\blam)$ and $\Vert\blam'\cdot\mathbf{A}\Vert_{2}^{2}\leq\Vert\blam'\Vert^{2}$,
so we have precisely shown $\beta$-strong smoothness with respect
to $\Vert\,\cdot\,\Vert$.
\end{proof}

\subsection{Case studies \label{subsec:normcases}}

Now we discuss how the norm $\Vert\,\cdot\,\Vert$ can be chosen to
guarantee Assumption \ref{assump:norm} in each of our special cases.

\subsubsection{Max-Cut SDP \label{subsec:normMaxCut}}

In this case, we have $\mathbf{A}=[e_{1}e_{1}^{\top},\ldots,e_{n}e_{n}^{\top}]$,
and therefore $\blam\cdot\mathbf{A}=\mathrm{diag}(\blam)$. This means
that we can simply take 
\[
\Vert\blam\Vert=\Vert\blam\Vert_{\infty}
\]
 to guarantee Assumption \ref{assump:norm}. Observe that the dual
norm is the ordinary vector 1-norm.

Note that in this context, concretely we have 
\begin{equation}
X_{\beta,\blam}\propto e^{-\beta(C-\mathrm{diag}(\blam))},\label{eq:Xmaxcut}
\end{equation}
 suitably normalized.

\subsubsection{OT LP\label{subsec:normOT} }

In this case the dual problem takes the form 
\[
g_{\beta}(\phi,\psi)=\mu^{\top}\phi+\nu^{\top}\psi-\beta^{-1}\log\sum_{i,j}e^{-\beta(c_{ij}-\phi_{i}-\psi_{j})}
\]
 where we identify the dual variable as $\blam=(\phi,\nu)$. Translating
Assumption \ref{assump:norm} to this setting, we need our norm to
satisfy 
\[
\Vert(\phi,\nu)\Vert\geq\max_{i,j}\vert\phi_{i}+\psi_{j}\vert.
\]
 Therefore we can take 
\[
\Vert(\phi,\psi)\Vert=\sqrt{2\left(\Vert\phi\Vert_{\infty}^{2}+\Vert\psi\Vert_{\infty}^{2}\right)}.
\]
Observe that the dual norm is the norm is given by 
\[
\Vert(a,b)\Vert_{*}=\sqrt{\frac{1}{2}\left(\Vert a\Vert_{1}^{2}+\Vert b\Vert_{1}^{2}\right)},
\]
 as we verify in Appendix \ref{app:dual-norm:ot}.

\subsubsection{Strong Perm-Synch SDP \label{subsec:normStrongPermSynch} }

For the Perm-Synch SDP (\ref{eq:strongpermsynch_sdp}), it is more
natural to identify the the dual variable with an object $\Blam$
consisting of symmetric blocks $\Lambda^{(1)},\ldots,\Lambda^{(N)}\in\R^{K\times K}$.
Then the matrix $\blam\cdot\mathbf{A}$ is identified with the block
diagonal matrix $\bigoplus_{i=1}^{N}\Lambda^{(i)}$, so we may take
\[
\Vert\Blam\Vert=\max_{i=1,\ldots,N}\Vert\Lambda^{(i)}\Vert_{2}.
\]
 Observe that the dual norm, evaluated on a vector $\mathbf{B}=(B^{(1)},\ldots,B^{(N)})$
of $K\times K$ symmetric matrices is given by 
\[
\Vert\mathbf{B}\Vert_{*}=\sum_{i=1}^{N}\Vert B^{(i)}\Vert_{\Tr},
\]
 where $\Vert\,\cdot\,\Vert_{\Tr}$ denotes the trace or nuclear norm
(i.e., the sum of the singular values). We defer the detailed derivation
of the dual norm to Appendix \ref{app:dual-norm:strong-ps}.

Note that in this context, concretely we have 
\begin{equation}
X_{\beta,\Blam}\propto e^{-\beta(C-\bigoplus_{i=1}^{N}\Lambda^{(i)})},\label{eq:Xstrong}
\end{equation}
 suitably normalized.

\subsubsection{Weak Perm-Synch SDP \label{subsec:normWeakPermSynch} }

For the weak permutation synchronization SDP (\ref{eq:weakpermsynch_sdp})
it is natural to label the dual variables as $(\blam,\bmu)$. Then
the matrix $\blam\cdot\mathbf{A}$ (noting that we have overloaded
the notation for $\blam$ temporarily) corresponds to 
\[
\mathrm{diag}(\blam)+\bigoplus_{i=1}^{N}\mu^{(i)}\frac{\mathbf{1}_{K}\mathbf{1}_{K}^{\top}}{K},
\]
 so it suffices to take 
\[
\Vert(\blam,\bmu)\Vert=\sqrt{2\left(\Vert\blam\Vert_{\infty}^{2}+\Vert\bmu\Vert_{\infty}^{2}\right)}.
\]
 Observe that the dual norm is given by 
\[
\Vert(a,b)\Vert_{*}=\sqrt{\frac{1}{2}\left(\Vert a\Vert_{1}^{2}+\Vert b\Vert_{1}^{2}\right)}.
\]
 The verification of the dual norm is identical to the one offered
for the OT LP norm.

Note that in this context, concretely we have 
\begin{equation}
X_{\beta,(\blam,\bmu)}\propto\exp\left[-\beta\left(C-\mathrm{diag}(\blam)-\bigoplus_{i=1}^{N}\mu^{(i)}\frac{\mathbf{1}_{K}\mathbf{1}_{K}^{\top}}{K}\right)\right],\label{eq:Xweak}
\end{equation}
 suitably normalized.

\section{Algorithm \label{sec:algo}}

\subsection{General form}

Consider gradient descent on $f_{\beta}$ with respect to the norm
$\Vert\,\cdot\,\Vert$ satisfying Assumption \ref{assump:norm} (cf.
Section 10.9 in \cite{beck2017first}), with step size $\eta$. Given
an initial guess $\blam_{0}\in\R^{m}$, this algorithm is defined
by setting 
\[
\blam_{t+1}=\underset{\blam\in\R^{m}}{\text{argmin}}\left\{ f_{\beta}(\blam_{t})+\nabla f_{\beta}(\blam_{t})\cdot(\blam-\blam_{t})+\frac{1}{2\eta}\Vert\blam-\blam_{t}\Vert^{2}\right\} .
\]
 We let $T$ denote the time horizon for the algorithm, which furnishes
iterates $\blam_{1},\ldots,\blam_{T}$.

Observe that the dual gradient can be evaluated as 
\begin{equation}
\nabla f_{\beta}(\blam)=\Tr\left[\mathbf{A}X_{\beta,\blam}\right]-\mathbf{b},\label{eq:dualgrad}
\end{equation}
 which is the primal feasibility gap for $X_{\beta,\blam}$. We may
not be able to compute the trace efficiently, if we want to avoid
forming the full matrix exponential $e^{-\beta(C-\blam\cdot\mathbf{A})}\propto X_{\beta,\blam}$.

We can proceed by randomized trace estimation. If we let $z\in\R^{n}$
have i.i.d. Rademacher entries (so that $\E[z]=0$, $\E[zz^{\top}]=\mathbf{I}_{n}$)
and define 
\[
w=e^{-\frac{1}{2}\beta(C-\blam\cdot\mathbf{A})}z,
\]
 then 
\[
\E\left[w^{\top}\mathbf{A}w\right]=\Tr\left[\mathbf{A}e^{-\beta(C-\blam\cdot\mathbf{A})}\right],\quad\E\left[w^{\top}w\right]=\Tr\left[e^{-\beta(C-\blam\cdot\mathbf{A})}\right],
\]
 and therefore 
\[
\Tr\left[\mathbf{A}X_{\beta,\blam}\right]=\frac{\E\left[w^{\top}\mathbf{A}w\right]}{\E\left[w^{\top}w\right]}.
\]
 Then by drawing many independent Rademacher sample vectors $z_{t,s}$,
$s=1,\ldots,S$, at each optimization step $t=0,\ldots,T-1$ and taking
empirical estimates of the numerator and the denominator, we may form
an empirical estimate of the gradient 
\begin{equation}
g_{t}:=\frac{\sum_{s=1}^{S}w_{t,s}^{\top}\mathbf{A}w_{t,s}}{\sum_{s=1}^{S}w_{t,s}^{\top}w_{t,s}}-b\approx\nabla f_{\beta}(\blam_{t}),\label{eq:gt}
\end{equation}
 where 
\begin{equation}
w_{t,s}:=e^{-\frac{1}{2}\beta(C-\blam_{t}\cdot\mathbf{A})}z_{t,s}.\label{eq:wts}
\end{equation}
 These matrix-vector multiplications can be computed efficiently without
forming the full matrix exponential, using Chebyshev expansion \cite{golub2013matrix}.
For simplicity, we will ignore this source of error, but standard
theory \cite{trefethen2019approximation} guarantees that the error
$\epsilon_{\mathrm{cheb}}$ can be made exponentially small in the
uniform norm by considering a polynomial expansion of order $O\left(\log(n/\epsilon_{\mathrm{cheb}})\sqrt{\Vert\beta(C-\blam_{t}\cdot\mathbf{A})\Vert}\right)$.
If $\Vert\blam_{t}\Vert\leq B$ remains bounded independent of $T$
throughout the optimization procedure, then by Assumption \ref{assump:norm},
the cost of forming each $w_{t,s}$ (with entrywise error bounded
by $\epsilon_{\mathrm{cheb}}$) is bounded by the cost of computing
$O\left(\log(n/\epsilon_{\mathrm{cheb}})\sqrt{\beta(\Vert C\Vert+B)}\right)$
matvecs by $C-\blam_{t}\cdot\mathbf{A}$.

Alternatively, it is useful to view the gradient estimator as follows.
In terms of the stochastic vectors $w_{t,s}$ (\ref{eq:wts}), we
can define 
\begin{equation}
\hat{X}_{t}:=\frac{\sum_{s=1}^{S}w_{t,s}w_{t,s}^{\top}}{\sum_{s=1}^{S}w_{t,s}^{\top}w_{t,s}},\label{eq:Xhatt}
\end{equation}
 which is a stochastic estimate of the matrix $X_{\beta,\blam_{t}}$.
(Note that $\Tr[\hat{X}_{t}]=1$ holds automatically by construction.)
Then our gradient estimate can be viewed as 
\begin{equation}
g_{t}=\Tr[\mathbf{A}\hat{X}_{t}]-\mathbf{b},\label{eq:dualgrad2}
\end{equation}
 by analogy to the formula (\ref{eq:dualgrad}) for the exact gradient
$\nabla f_{\beta}(\blam)$, with the stochastic estimate $\hat{X}_{t}$
in place of $X_{\beta,\blam_{t}}$. For later purposes it is useful
to define as well the unnormalized estimate 
\begin{equation}
\hat{Y}_{t}=\frac{1}{S}\sum_{s=1}^{S}w_{t,s}w_{t,s}^{\top},\label{eq:Yhatt}
\end{equation}
 by analogy to (\ref{eq:Ybetalambda}), so that $\hat{X}_{t}=\hat{Y}_{t}/\Tr[\hat{Y}_{t}].$

We will only make use of a very weak approximation assumption: 

\begin{assump}

\label{assump:gradientestimator}Let $\gamma\geq0$. We assume that
for $t=0,\ldots,T-1$, our gradient estimates $g_{t}$ satisfy
\[
\Vert g_{t}-\nabla f_{\beta}(\blam_{t})\Vert_{*}\leq\gamma.
\]
\end{assump}

We will see how this assumption can be validated in our special SDP
cases of interest.

Observe that in the LP case, we can form the probability vector 
\[
\left[x_{\beta,\blam}\right]_{i}=\frac{e^{-\beta(c_{i}-a_{i}\cdot\blam)}}{\sum_{j=1}^{n}e^{-\beta(c_{j}-a_{j}\cdot\blam)}}
\]
 with linear scaling cost in the size of $x$. Then the gradient 
\[
\nabla f_{\beta}(\blam)=Ax_{\beta,\blam}-\mathbf{b}
\]
 can be computed exactly without any need for stochastic estimation.
Thus in this case we will automatically assume that Assumption \ref{assump:gradientestimator}
holds with $\gamma=0$.

In general, then, we write our optimization step as 
\begin{equation}
\blam_{t+1}=\underset{\blam\in\R^{m}}{\text{argmin}}\left\{ f_{\beta}(\blam_{t})+g_{t}\cdot(\blam-\blam_{t})+\frac{1}{2\eta}\Vert\blam-\blam_{t}\Vert^{2}\right\} ,\quad t=0,\ldots,T-1.\label{eq:optstep}
\end{equation}
 For simplicity, we assume that the initial initial condition is 
\[
\blam_{0}=0.
\]

\subsection{Case studies \label{subsec:updatecases}}

\subsubsection{Max-Cut SDP}

For the Max-Cut SDP, the gradient descent norm is simply $\Vert\,\cdot\,\Vert_{\infty}$.
Therefore, standard treatments (Section 10.9 in \cite{beck2017first})
establish that 
\[
\blam_{t+1}=\blam_{t}-\eta\Vert g_{t}\Vert_{1}\,\mathrm{sign}(g_{t}).
\]
 We offer a derivation in Appendix \ref{app:algo-max-cut}. Moreover,
in this case note that for the implementation of the gradient estimator,
we have $C-\blam_{t}\cdot\mathbf{A}=C-\mathrm{diag}(\blam_{t})$,
and $g_{t}$ is a stochastic estimate of 
\[
\nabla f_{\beta}(\blam_{t})=\mathrm{diag}[X_{\beta,\blam_{t}}]-\mathbf{b}.
\]
 Specifically, we can write 
\[
g_{t}=\mathrm{diag}[\hat{X}_{t}]-\mathbf{b},
\]
 where $\hat{X}_{t}$ is defined as in (\ref{eq:Xhatt}).

We want to guarantee that Assumption \ref{assump:gradientestimator}
holds for all of our gradient estimates $g_{t}$ by taking $S$ sufficiently
large.

To this end, the following lemma, reproduced with suitable notation
from \cite{lindseyshi2025}, is simple consequence of the fact that
the Johnson-Lindenstrauss random projection is a subspace embedding
\cite{Martinsson_Tropp_2020}: 
\begin{lem}
\label{lem:concentration1} Let $\ve,\delta\in(0,1)$ and $t\in\{0,\ldots,T-1\}$.
With notation as in the preceding discussion, there exists a constant
$C>0$ such that if $S\geq\frac{C\,\log(n/\delta)}{\ve^{2}}$, then
with probability at least $1-\delta$, we have that 
\[
(1-\ve)\,\mathrm{diag}[Y_{\beta,\blam_{t}}]\leq\mathrm{diag}[\hat{Y}_{t}]\leq(1+\ve)\,\mathrm{diag}[Y_{\beta,\blam_{t}}].
\]
\end{lem}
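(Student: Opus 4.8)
The plan is to rewrite each diagonal entry of $\hat{Y}_t$ as the squared Euclidean norm of a fixed vector under a random sketch, and then deduce the claim from the norm-preservation (subspace-embedding) property of the Johnson--Lindenstrauss projection together with a union bound over the $n$ coordinates.

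First I would set $M := e^{-\frac{1}{2}\beta(C-\blam_t\cdot\mathbf{A})}$, which is symmetric (in fact positive definite), and $v_i := M e_i$ for $i\in[n]$. From (\ref{eq:Yhatt}) and (\ref{eq:wts}), using symmetry of $M$,
\[
[\hat{Y}_t]_{ii} \;=\; \frac{1}{S}\sum_{s=1}^S\bigl(e_i^\top M z_{t,s}\bigr)^2 \;=\; \frac{1}{S}\sum_{s=1}^S\bigl(v_i^\top z_{t,s}\bigr)^2 \;=\; \|\Pi v_i\|_2^2,
\]
where $\Pi := \tfrac{1}{\sqrt{S}}\,[z_{t,1},\ldots,z_{t,S}]^\top \in \R^{S\times n}$ is the Rademacher sketch. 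Since $\E[z_{t,s}z_{t,s}^\top]=\mathbf{I}_n$, the mean of this quantity is $v_i^\top v_i = e_i^\top M^2 e_i = [Y_{\beta,\blam_t}]_{ii}$, using $M^2 = e^{-\beta(C-\blam_t\cdot\mathbf{A})} = Y_{\beta,\blam_t}$ (cf.\ (\ref{eq:Ybetalambda})). Hence the asserted two-sided inequality is exactly the statement that $(1-\ve)\|v_i\|_2^2 \le \|\Pi v_i\|_2^2 \le (1+\ve)\|v_i\|_2^2$ for every $i\in[n]$.

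Next I would invoke the standard guarantee (as in \cite{Martinsson_Tropp_2020}) that a Rademacher sketch $\Pi$ with $S \gtrsim \log(1/\delta')/\ve^2$ rows is, with probability at least $1-\delta'$, an $\ve$-subspace embedding for any fixed line in $\R^n$; in particular it preserves the norm of a single fixed vector to relative accuracy $\ve$ with that probability. Applying this to each $v_i$ with $\delta' = \delta/n$ and taking a union bound over $i=1,\ldots,n$, all $n$ of the relative-accuracy bounds hold simultaneously with probability at least $1-\delta$, provided $S \ge C\log(n/\delta)/\ve^2$ for a suitable absolute constant $C$ (absorbing the factor from the two-sided bound). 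Read coordinatewise on the diagonal, this is precisely the claimed inequality $(1-\ve)\,\mathrm{diag}[Y_{\beta,\blam_t}] \le \mathrm{diag}[\hat{Y}_t] \le (1+\ve)\,\mathrm{diag}[Y_{\beta,\blam_t}]$.

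The substantive ingredient is just the norm-preservation estimate for a sub-Gaussian (Rademacher) sketch, which is routine; the points needing care are the bookkeeping identifications $[\hat{Y}_t]_{ii} = \|\Pi v_i\|_2^2$ and $\E[\hat{Y}_t]_{ii} = [Y_{\beta,\blam_t}]_{ii}$ (which rely on the symmetry of the matrix square root $M$ and on $M^2 = Y_{\beta,\blam_t}$) and the tracking of constants so that the single-coordinate failure probability $\delta/n$ produces the stated sample bound $S \gtrsim \log(n/\delta)/\ve^2$. No degeneracy arises, since $Y_{\beta,\blam_t}\succ0$ forces $v_i\neq0$. If one prefers a self-contained argument to citing the subspace-embedding machinery, the same per-coordinate estimate follows from a Hanson--Wright (or Bernstein) bound for the quadratic form $\frac{1}{S}\sum_{s=1}^S(v_i^\top z_{t,s})^2$, yielding multiplicative concentration of order $2e^{-cS\ve^2}$, after which the identical union bound applies.
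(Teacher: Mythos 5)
Your proof is correct and fills in exactly the argument that the paper leaves implicit: the paper does not prove Lemma~\ref{lem:concentration1} but simply cites it as reproduced from \cite{lindseyshi2025} and remarks that it is a consequence of the Johnson--Lindenstrauss norm-preservation property from \cite{Martinsson_Tropp_2020}. Your identification $[\hat{Y}_t]_{ii}=\|\Pi v_i\|_2^2$ with $v_i=Me_i$, $M=e^{-\frac{1}{2}\beta(C-\blam_t\cdot\mathbf{A})}$ symmetric and $M^2=Y_{\beta,\blam_t}$, followed by a per-coordinate JL estimate at failure level $\delta/n$ and a union bound over $i\in[n]$, is precisely the intended route and is carried out without error.
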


As a consequence it follows that Assumption \ref{assump:gradientestimator}
can be guaranteed with high probability by taking $S=\tilde{O}(\gamma^{-2})$:
\begin{cor}
\label{cor:maxcutgradest}Let $\gamma\in(0,2]$ and $\delta\in(0,1)$.
Take 
\[
S\geq\frac{16\,C\,\log(Tn/\delta)}{\gamma^{2}},
\]
where $C$ as in Lemma \ref{lem:concentration1}. Then Assumption
\ref{assump:gradientestimator} holds with probability at least $1-\delta$,
where we choose $\Vert\,\cdot\,\Vert=\Vert\,\cdot\,\Vert_{\infty}$,
hence $\Vert\,\cdot\,\Vert_{*}=\Vert\,\cdot\,\Vert_{1}$.
\end{cor}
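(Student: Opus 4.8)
The plan is to translate the multiplicative control on $\mathrm{diag}[\hat Y_t]$ furnished by Lemma \ref{lem:concentration1} into an additive bound on $\Vert g_t - \nabla f_\beta(\blam_t)\Vert_1$, then union-bound over the $T$ iterations. First I would recall that $g_t = \mathrm{diag}[\hat X_t] - \mathbf{b}$ and $\nabla f_\beta(\blam_t) = \mathrm{diag}[X_{\beta,\blam_t}] - \mathbf{b}$, so the $\mathbf{b}$ terms cancel and it suffices to bound $\Vert \mathrm{diag}[\hat X_t] - \mathrm{diag}[X_{\beta,\blam_t}]\Vert_1$. Writing $p := \mathrm{diag}[X_{\beta,\blam_t}] = \mathrm{diag}[Y_{\beta,\blam_t}]/\Tr[Y_{\beta,\blam_t}]$ and $\hat p := \mathrm{diag}[\hat X_t] = \mathrm{diag}[\hat Y_t]/\Tr[\hat Y_t]$, both are probability vectors (the normalization is automatic, as noted after \eqref{eq:Xhatt} and \eqref{eq:Yhatt}).

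Next I would fix $\ve \in (0,1)$ to be chosen and apply Lemma \ref{lem:concentration1} with failure probability $\delta/T$ in place of $\delta$: on the good event (probability at least $1-\delta/T$) we have the entrywise sandwich $(1-\ve)\,\mathrm{diag}[Y_{\beta,\blam_t}] \le \mathrm{diag}[\hat Y_t] \le (1+\ve)\,\mathrm{diag}[Y_{\beta,\blam_t}]$, which upon summing coordinates also gives $(1-\ve)\Tr[Y_{\beta,\blam_t}] \le \Tr[\hat Y_t] \le (1+\ve)\Tr[Y_{\beta,\blam_t}]$. Dividing, each ratio $\hat p_i/p_i$ lies in $[(1-\ve)/(1+\ve),\,(1+\ve)/(1-\ve)]$, so $|\hat p_i - p_i| \le p_i \cdot \big(\tfrac{1+\ve}{1-\ve}-1\big) = p_i \cdot \tfrac{2\ve}{1-\ve}$, whence $\Vert \hat p - p\Vert_1 \le \tfrac{2\ve}{1-\ve}$ since $\sum_i p_i = 1$. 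To make this at most $\gamma$ it suffices to take $\ve \le \gamma/4$ (using $\gamma \le 2$, so $1-\ve \ge 1/2$ and $\tfrac{2\ve}{1-\ve} \le 4\ve$); this is where the stated threshold $S \ge 16\,C\,\log(Tn/\delta)/\gamma^2 = C\log((Tn)/\delta)/\ve^2$ comes from, matching Lemma \ref{lem:concentration1} with $n/\delta$ replaced by $Tn/\delta$. Finally a union bound over $t = 0,\ldots,T-1$ shows that with probability at least $1 - T\cdot(\delta/T) = 1-\delta$ the bound $\Vert g_t - \nabla f_\beta(\blam_t)\Vert_1 \le \gamma$ holds for all $t$ simultaneously, which is exactly Assumption \ref{assump:gradientestimator} with the stated norm pairing $\Vert\cdot\Vert = \Vert\cdot\Vert_\infty$, $\Vert\cdot\Vert_* = \Vert\cdot\Vert_1$.

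The only mild subtlety — not really an obstacle — is the passage from the multiplicative bound on the \emph{unnormalized} diagonal and trace to an additive $\ell_1$ bound on the \emph{normalized} probability vectors; the key observation is simply that summing the entrywise inequalities controls the denominator by the same factor, so the two $\ve$-distortions combine (rather than compound badly) into the clean $\tfrac{2\ve}{1-\ve}$ factor. Everything else is bookkeeping with constants and the union bound.
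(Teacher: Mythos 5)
Your proof is correct and follows essentially the same route as the paper's: set $\ve=\gamma/4$, invoke Lemma \ref{lem:concentration1} with failure probability $\delta/T$ and union-bound over $t$, sum the entrywise sandwich to control $\Tr[\hat Y_t]$, divide to get the ratio bound $\frac{1-\ve}{1+\ve}\le \hat p_i/p_i \le \frac{1+\ve}{1-\ve}$, and sum over coordinates to conclude $\Vert\hat p-p\Vert_1\le \frac{2\ve}{1-\ve}\le\gamma$. There is no substantive difference between your argument and the paper's.
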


The proof is given in Appendix \ref{app:gradest}.

\subsubsection{OT LP}

For the OT LP, our choice of norm $\Vert(\phi,\psi)\Vert=\sqrt{2(\Vert\phi\Vert_{\infty}^{2}+\Vert\psi\Vert_{\infty}^{2})}$
induces the update 
\begin{align*}
\phi_{t+1} & =\phi_{t}-\frac{\eta}{2}\Vert\mu_{t}-\mu\Vert_{1}\,\mathrm{sign}\left(\mu_{t}-\mu\right),\\
\psi_{t+1} & =\psi_{t}-\frac{\eta}{2}\Vert\nu_{t}-\nu\Vert_{1}\,\mathrm{sign}\left(\nu_{t}-\nu\right),
\end{align*}
 where $\mu_{t}=\pi_{t}\mathbf{1}_{n}$ and $\nu_{t}=\pi_{t}^{\top}\mathbf{1}_{m}$
are the current marginal estimates, defined in terms of 
\[
\pi_{t}=\pi_{\beta,\blam_{t}}\propto\left(e^{-\beta(c_{ij}-[\phi_{t}]_{i}-[\psi_{t}]_{j})}\right),
\]
 suitably normalized. The derivation of this exact formula is deferred
to Appendix \ref{app:algo-ot}.

In our implementation, we include an extra step in the algorithm in
which the dual variables $\phi_{t}$ and $\psi_{t}$ are shifted by
a constant for numerical stability. Indeed, note that adding a constant
shift to either dual variable affects neither $\pi_{t}$ nor the dual
objective, so we can always ensure that $\mathbf{1}_{m}^{\top}\phi_{t}=0$
and $\mathbf{1}_{n}^{\top}\psi_{t}=0$. This inclusion leads to the
following algorithm: 
\begin{align}
\tilde{\phi}_{t} & =\phi_{t}-\frac{\eta}{2}\Vert\mu_{t}-\mu\Vert_{1}\,\mathrm{sign}\left(\mu_{t}-\mu\right),\nonumber \\
\tilde{\psi}_{t} & =\psi_{t}-\frac{\eta}{2}\Vert\nu_{t}-\nu\Vert_{1}\,\mathrm{sign}\left(\nu_{t}-\nu\right),\label{eq:otupdate}\\
\phi_{t+1} & =\tilde{\phi}_{t}-\frac{1}{m}\mathbf{1}_{m}^{\top}\tilde{\phi}_{t},\nonumber \\
\psi_{t+1} & =\tilde{\psi}_{t}-\frac{1}{n}\mathbf{1}_{n}^{\top}\tilde{\psi_{t}}.\nonumber 
\end{align}
This convention also helps to control the norm $\|(\phi_{t},\psi_{t})\|$
over the optimization trajectory, as we shall elaborate in Section
\ref{sec:ot-conv}.

\subsubsection{Strong Perm-Synch SDP}

For the strong Perm-Synch SDP, the gradient descent norm is 
\[
\Vert\Blam\Vert=\max_{i=1,\ldots,N}\Vert\Lambda^{(i)}\Vert_{2},
\]
 cf. Section \ref{subsec:normStrongPermSynch}.

One can show that this norm induces the update
\[
\Lambda_{t+1}^{(i)}=\Lambda_{t}^{(i)}-\eta\left(\sum_{j=1}^{N}\Vert G_{t}^{(j)}\Vert_{\Tr}\right)\,\mathrm{sign}[G_{t}^{(i)}],\quad i=1,\ldots,N.
\]
 Here, `$\mathrm{sign}$' denotes the sign function viewed as a matrix
function. Moreover we use the notation $G_{t}^{(i)}$ to denote the
$i$-th block of the gradient estimate 
\[
G_{t}^{(i)}=\hat{X}_{t}^{(i,i)}-\mathbf{I}_{K},
\]
 where $\hat{X}_{t}$ is defined as in (\ref{eq:Xhatt}). A detailed
derivation of the update formula is deferred to Appendix \ref{app:algo-strong-ps}. 

Note that once the vectors $w_{t,s}$ are formed (for $s=1,\ldots,S$)
the cost of forming all of the blocks $G_{t}^{(i)}$ is $O(SK^{2}N)$.
The cost of performing the update is then $O(SK^{2}N+K^{3}N)$ due
to the cubic cost in the block size $K$ of evaluating the matrix
sign function, which requires a diagonalization. Later we will argue
that we can take $S=\tilde{O}(\gamma^{-2}K)$ to satisfy Assumption
\ref{assump:gradientestimator}, so that the total cost is $\tilde{O}(\gamma^{-2}K^{3}N)$,
once the vectors $w_{t,s}$ are formed.

Now we turn to the aforementioned choice of $S$. The following lemma,
reproduced with suitable notation from \cite{lindseyshi2025}, is
(like Lemma \ref{lem:concentration1}) a simple consequence of the
fact that the Johnson-Lindenstrauss random projection is a subspace
embedding \cite{Martinsson_Tropp_2020}: 
\begin{lem}
\label{lem:concentration2} Let $\ve,\delta\in(0,1)$ and $t\in\{0,\ldots,T-1\}$.
With notation as in the preceding discussion, there exists a constant
$C>0$ such that if $S\geq\frac{C\,\log(N/\delta)}{\ve^{2}}K\log K$,
then with probability at least $1-\delta$, we have that 
\[
(1-\ve)\,Y_{\beta,\blam_{t}}^{(i,i)}\preceq\hat{Y}_{t}^{(i,i)}\preceq(1+\ve)\,Y_{\beta,\blam_{t}}^{(i,i)}
\]
 for all $i=1,\ldots,N$.
\end{lem}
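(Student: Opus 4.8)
The plan is to view the claim as a statement about a Johnson--Lindenstrauss-type subspace embedding applied blockwise. First I would recall the setup: we have $\hat Y_t = \frac1S \sum_{s=1}^S w_{t,s} w_{t,s}^\top$ where $w_{t,s} = e^{-\frac12\beta(C-\blam_t\cdot\mathbf A)} z_{t,s}$ with $z_{t,s}$ i.i.d. Rademacher, so that $\hat Y_t = M Z Z^\top M^\top / S$ where $M = e^{-\frac12\beta(C-\blam_t\cdot\mathbf A)}$ and $Z = [z_{t,1},\dots,z_{t,S}] \in \R^{n\times S}$. The matrix $\frac{1}{\sqrt S} Z^\top$ is a standard JL sketching matrix, and $\E[\hat Y_t] = M M^\top = Y_{\beta,\blam_t}$. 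For the $i$-th diagonal block we have $\hat Y_t^{(i,i)} = P_i \hat Y_t P_i^\top$ where $P_i \in \R^{K\times n}$ is the coordinate projection onto the $i$-th block of indices; writing $M_i := P_i M \in \R^{K\times n}$, the block is $\hat Y_t^{(i,i)} = \frac1S M_i Z Z^\top M_i^\top$ with expectation $M_i M_i^\top = Y_{\beta,\blam_t}^{(i,i)}$.

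Next I would invoke the standard subspace embedding guarantee: if $\frac{1}{\sqrt S} Z^\top$ is an $\epsilon$-subspace embedding for the column space of $M_i^\top$ — a subspace of dimension at most $K$ — then for all $v\in\R^K$, $(1-\epsilon)\|M_i^\top v\|_2^2 \le \frac1S\|Z^\top M_i^\top v\|_2^2 \le (1+\epsilon)\|M_i^\top v\|_2^2$, which is precisely the two-sided operator inequality $(1-\epsilon) M_i M_i^\top \preceq \frac1S M_i Z Z^\top M_i^\top \preceq (1+\epsilon) M_i M_i^\top$, i.e. the asserted sandwich for block $i$. The quantitative oblivious-subspace-embedding theorem for Rademacher (or, more standardly, Gaussian) sketches (see \cite{Martinsson_Tropp_2020}) states that $S = O(\epsilon^{-2}(K + \log(1/\delta_0)))$ rows suffice with failure probability $\delta_0$ for a fixed $K$-dimensional subspace; a union bound over the $N$ blocks with $\delta_0 = \delta/N$ then gives the stated requirement $S \ge \frac{C\log(N/\delta)}{\epsilon^2} K\log K$ (the extra $\log K$ factor comfortably absorbs the $K + \log(N/\delta)$ bound, and provides slack that would also cover a net/union-bound argument over the $K\times K$ block in case one prefers an entrywise-to-operator conversion). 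Since the cited reference already packages this as a black box, the bulk of the proof is simply identifying the notation and applying it to each block.

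The main obstacle, such as it is, is bookkeeping rather than mathematics: one must be careful that the subspace being embedded is genuinely at most $K$-dimensional (it is, since $M_i^\top$ has only $K$ columns), that the \emph{same} sketch $Z$ is used simultaneously for all $N$ blocks so that only a union bound — not independent draws — is available, and that the subspace embedding theorem one cites is stated for the sign/Rademacher ensemble (or one appeals to the fact that sub-Gaussian rows suffice). Given that \cite{lindseyshi2025} and \cite{Martinsson_Tropp_2020} already contain the needed statement, I would keep the argument short: reduce to the per-block operator inequality, cite the oblivious subspace embedding bound, and union bound over $i=1,\dots,N$.
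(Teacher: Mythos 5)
Your proof is correct and follows exactly the route the paper intends: the paper states this lemma without proof, attributing it to \cite{lindseyshi2025} as a consequence of the Johnson--Lindenstrauss subspace-embedding property from \cite{Martinsson_Tropp_2020}, and your reconstruction---writing $\hat Y_t^{(i,i)} = \tfrac1S M_i Z Z^\top M_i^\top$ with $M_i = P_i e^{-\frac12\beta(C-\blam_t\cdot\mathbf A)}$, applying the oblivious subspace embedding guarantee to each (at most $K$-dimensional) block, and union-bounding over the $N$ blocks---is precisely the intended argument. Your observation that the quoted $K\log K$ factor is generous relative to the tighter $K + \log(N/\delta)$ the embedding bound actually delivers is also accurate.
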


As a consequence it follows that Assumption \ref{assump:gradientestimator}
can be guaranteed with high probability by taking $S=\tilde{O}(\gamma^{-2}K)$:
\begin{cor}
\label{cor:strongpermsynchgradest}Let $\gamma\in(0,4]$ and $\delta\in(0,1)$.
Take 
\[
S\geq\frac{64\,C\,\log(TN/\delta)}{\gamma^{2}}K\log K,
\]
where $C$ as in Lemma \ref{lem:concentration2}. Then Assumption
\ref{assump:gradientestimator} holds with probability at least $1-\delta$,
where we choose $\Vert\,\cdot\,\Vert$ following Section \ref{subsec:normStrongPermSynch}.
\end{cor}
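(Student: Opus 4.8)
The plan is to reduce the claimed dual-norm bound on the gradient error to the Loewner-order control provided by Lemma~\ref{lem:concentration2}. First I would record that, by the update formulas of Section~\ref{subsec:updatecases} and the general identity $\nabla f_\beta(\blam)=\Tr[\mathbf A X_{\beta,\blam}]-\mathbf b$, the $i$-th block of the gradient estimation error is $\hat X_t^{(i,i)}-X_{\beta,\blam_t}^{(i,i)}$ (whatever constant matrix is subtracted to form the residual cancels). Hence, using the dual norm $\Vert\mathbf B\Vert_*=\sum_{i=1}^N\Vert B^{(i)}\Vert_{\Tr}$ identified in Section~\ref{subsec:normStrongPermSynch},
\[
\Vert g_t-\nabla f_\beta(\blam_t)\Vert_*=\sum_{i=1}^N\bigl\Vert\hat X_t^{(i,i)}-X_{\beta,\blam_t}^{(i,i)}\bigr\Vert_{\Tr},
\]
so it suffices to bound this sum of block nuclear norms, uniformly over $t$, by $\gamma$.

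Next I would invoke Lemma~\ref{lem:concentration2} with $\ve=\gamma/8$ and failure probability $\delta/T$; the stated sample count $S\ge 64\,C\,\gamma^{-2}\log(TN/\delta)\,K\log K$ is precisely $C\,\ve^{-2}\log((TN)/\delta)\,K\log K$, and since this count does not depend on $\blam_t$, I can condition on the randomness generated before step $t$, apply the lemma to the fresh samples $w_{t,s}$ at step $t$, and union bound over $t=0,\dots,T-1$. On the resulting event, which has probability at least $1-\delta$, we have $(1-\ve)\,Y_{\beta,\blam_t}^{(i,i)}\preceq\hat Y_t^{(i,i)}\preceq(1+\ve)\,Y_{\beta,\blam_t}^{(i,i)}$ for all $t$ and all $i$.

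Working on this event, I would first convert the operator sandwich into a nuclear-norm bound on the centered block: for Hermitian $M$ with $-\ve P\preceq M\preceq\ve P$ and $P\succeq 0$ one has $\Vert M\Vert_{\Tr}\le\ve\Tr[P]$ (decompose $M$ into its positive and negative parts and compress $P$ by the corresponding spectral projectors), giving $\Vert\hat Y_t^{(i,i)}-Y_{\beta,\blam_t}^{(i,i)}\Vert_{\Tr}\le\ve\Tr[Y_{\beta,\blam_t}^{(i,i)}]$; summing over $i$ bounds this by $\ve\Tr[Y_{\beta,\blam_t}]$, and the same sandwich yields $|\Tr\hat Y_t-\Tr Y_{\beta,\blam_t}|\le\ve\Tr Y_{\beta,\blam_t}$ and $\Tr\hat Y_t\ge(1-\ve)\Tr Y_{\beta,\blam_t}$. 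Then, using $\hat X_t=\hat Y_t/\Tr\hat Y_t$ and $X_{\beta,\blam_t}=Y_{\beta,\blam_t}/\Tr Y_{\beta,\blam_t}$ together with the decomposition
\[
\hat X_t^{(i,i)}-X_{\beta,\blam_t}^{(i,i)}=\frac{\hat Y_t^{(i,i)}-Y_{\beta,\blam_t}^{(i,i)}}{\Tr\hat Y_t}+Y_{\beta,\blam_t}^{(i,i)}\,\frac{\Tr Y_{\beta,\blam_t}-\Tr\hat Y_t}{\Tr\hat Y_t\,\Tr Y_{\beta,\blam_t}},
\]
the triangle inequality and the bounds above give $\sum_i\Vert\hat X_t^{(i,i)}-X_{\beta,\blam_t}^{(i,i)}\Vert_{\Tr}\le\frac{2\ve}{1-\ve}$. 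Since $\gamma\le 4$ forces $\ve=\gamma/8\le\tfrac12$, this is at most $\frac{\gamma/4}{1/2}=\gamma/2\le\gamma$, completing the argument.

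The only genuinely non-routine ingredient is the passage from the operator inequality to a nuclear-norm estimate on the centered matrix, and then the bookkeeping that keeps the ``numerator'' error $\hat Y_t^{(i,i)}-Y_{\beta,\blam_t}^{(i,i)}$ separate from the scalar-normalization error $\Tr\hat Y_t-\Tr Y_{\beta,\blam_t}$, both of which must be controlled by the single Loewner guarantee while ensuring that the sum of the block traces reconstitutes $\Tr Y_{\beta,\blam_t}$. Everything else is the union bound and elementary algebra.
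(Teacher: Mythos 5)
Your argument is correct and reaches the conclusion; it shares the same backbone as the paper's proof (set $\ve=\gamma/8$, invoke Lemma~\ref{lem:concentration2} with the union bound over $t=0,\dots,T-1$, and convert the per-block Loewner sandwich into the dual-norm bound $\sum_i\Vert\hat X_t^{(i,i)}-X_{\beta,\blam_t}^{(i,i)}\Vert_{\Tr}\le\gamma$). The difference is in the conversion step. The paper transfers the sandwich from $\hat Y_t^{(i,i)}$ to $\hat X_t^{(i,i)}$ directly, getting $\frac{1-\ve}{1+\ve}\,X_{\beta,\blam_t}^{(i,i)}\preceq\hat X_t^{(i,i)}\preceq\frac{1+\ve}{1-\ve}\,X_{\beta,\blam_t}^{(i,i)}$, and then applies the lemma ``$-B\preceq A\preceq B$, $B\succeq0$ $\Rightarrow\Vert A\Vert_{\Tr}\le 2\Tr[B]$'' (proved there via the sign decomposition and H\"older); this lands exactly at $\frac{4\ve}{1-\ve}\le 8\ve=\gamma$. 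You instead keep the unnormalized $\hat Y_t$ and $Y_{\beta,\blam_t}$, split $\hat X_t^{(i,i)}-X_{\beta,\blam_t}^{(i,i)}$ into a ``numerator'' error and a scalar normalization error, and use the sharper version of the lemma with constant $1$ (obtainable by compressing $P$ with the spectral projectors of $M_\pm$ rather than H\"older, since $\Theta(M)+\Theta(-M)\preceq I$). This gives $\frac{2\ve}{1-\ve}\le 4\ve=\gamma/2$, a factor-$2$ improvement; the gain is cosmetic here since both land under $\gamma$, but your linear-algebra fact is genuinely tighter than the one the paper states. The only other thing worth flagging is that you need to justify the union-bound application over $t$ as you did, because the iterates $\blam_t$ are random; conditioning on the past and noting that the sample count does not depend on $\blam_t$, exactly as you say, handles this (the paper leaves this implicit as well).
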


The proof is given in Appendix \ref{app:gradest}.

\subsubsection{Weak Perm-Synch SDP}

For the strong Perm-Synch SDP, the gradient descent norm is 
\[
\Vert(\blam,\bmu)\Vert=\sqrt{2\left(\Vert\blam\Vert_{\infty}^{2}+\Vert\bmu\Vert_{\infty}^{2}\right)}.
\]
 cf. Section \ref{subsec:normWeakPermSynch}. 

One can show that this norm induces the following update. First we
compute the gradient estimate, which we separate into two components
$g_{t}\in\R^{n}$ and $h_{t}\in\R^{N}$ (abusing notation slightly
in this context by overloading the general notation $g_{t}$ for the
gradient estimate, just as we have done for $\blam$). Concretely:
\[
g_{t}=\mathrm{diag}[\hat{X}_{t}]-\frac{\mathbf{1}_{n}}{n};\quad h_{t}^{(i)}=\frac{\mathbf{1}_{K}^{\top}\hat{X}_{t}^{(i,i)}\mathbf{1}_{K}}{K}-\frac{1}{n},\ \ i=1,\ldots,N.
\]
 Then, we update 
\[
\blam_{t+1}=\blam_{t}-\frac{\eta}{2}\Vert g_{t}\Vert_{1}\,\mathrm{sign}(g_{t}),
\]
\[
\bmu_{t+1}=\bmu_{t}-\frac{\eta}{2}\Vert h_{t}\Vert_{1}\,\mathrm{sign}(h_{t}).
\]
 Note that the cost of performing this update, once the vectors $w_{t,s}$
are formed (for $s=1,\ldots,S$), is only $O(SNK)=O(Sn)$. A detailed
derivation of the update formula is deferred to Appendix \ref{app:algo-weak-ps}.

In fact we will see that we can take $S=\tilde{O}(\gamma^{-2})$.
To see this, we reproduce the following lemma (with suitable notation)
from \cite{lindseyshi2025}, which is once again a simple consequence
of the fact that the Johnson-Lindenstrauss random projection is a
subspace embedding \cite{Martinsson_Tropp_2020}: 
\begin{lem}
\label{lem:concentration3} Let $\ve,\delta\in(0,1)$ and $t\in\{0,\ldots,T-1\}$.
With notation as in the preceding discussion, there exists a constant
$C>0$ such that if $S\geq\frac{C\,\log(2n/\delta)}{\ve^{2}}$, then
with probability at least $1-\delta$, we have that both 
\[
(1-\ve)\,\mathrm{diag}[Y_{\beta,(\blam_{t},\bmu_{t})}]\leq\mathrm{diag}[\hat{Y}_{t}]\leq(1+\ve)\,\mathrm{diag}[Y_{\beta,(\blam_{t},\bmu_{t})}]
\]
 and 
\[
(1-\ve)\,\mathbf{1}_{K}^{\top}Y_{\beta,(\blam_{t},\bmu_{t})}^{(i,i)}\mathbf{1}_{K}\leq\mathbf{1}_{K}^{\top}\hat{Y}_{t}^{(i,i)}\mathbf{1}_{K}\leq(1+\ve)\,\mathbf{1}_{K}^{\top}Y_{\beta,(\blam_{t},\bmu_{t})}^{(i,i)}\mathbf{1}_{K}
\]
 for all $i=1,\ldots,N$.
\end{lem}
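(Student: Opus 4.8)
The plan is to follow the same template as the proofs of Lemmas \ref{lem:concentration1} and \ref{lem:concentration2}, reducing everything to the subspace embedding property of a Johnson--Lindenstrauss (JL) random projection \cite{Martinsson_Tropp_2020}. First I would write $Y_{\beta,(\blam_{t},\bmu_{t})} = M M^{\top}$ where $M = e^{-\frac12\beta(C - \blam_t\cdot\mathbf{A})}$, so that $\hat{Y}_t = \frac1S\sum_{s=1}^S w_{t,s}w_{t,s}^{\top} = M(\frac1S Z Z^{\top})M^{\top}$ with $Z = [z_{t,1},\dots,z_{t,S}]$ the matrix of i.i.d.\ Rademacher columns. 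The standard subspace embedding statement says that for $S \gtrsim \varepsilon^{-2}(r + \log(1/\delta))$, where $r$ is the dimension of the relevant subspace (here $r \le n$, but more precisely the rank of $M$), the Gram matrix $\frac1S ZZ^{\top}$ acts as a $(1\pm\varepsilon)$ isometry on the range of $M^{\top}$, i.e. $(1-\varepsilon)\,M M^{\top} \preceq M(\frac1S ZZ^{\top})M^{\top} \preceq (1+\varepsilon)\,MM^{\top}$ as $n\times n$ PSD matrices, with probability at least $1-\delta$. This is exactly the claim $(1-\varepsilon)Y_{\beta,(\blam_t,\bmu_t)} \preceq \hat Y_t \preceq (1+\varepsilon)Y_{\beta,(\blam_t,\bmu_t)}$ as a global matrix inequality, and $S \ge C\log(2n/\delta)/\varepsilon^2$ suffices (the $\log n$ term dominating the rank contribution, matching the form in Lemma \ref{lem:concentration1}).

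Next I would extract the two scalar/diagonal consequences from this single PSD sandwich. Conjugating the matrix inequality $(1-\varepsilon)Y \preceq \hat Y \preceq (1+\varepsilon)Y$ by the standard basis vector $e_k$ gives $(1-\varepsilon)\,[Y]_{kk} \le [\hat Y]_{kk} \le (1+\varepsilon)\,[Y]_{kk}$ for every $k$, which is precisely the $\mathrm{diag}$ statement. Similarly, letting $v^{(i)} \in \R^n$ be the vector that is $\mathbf{1}_K$ on the $i$-th diagonal block and zero elsewhere, conjugating by $v^{(i)}$ gives $(1-\varepsilon)\,\mathbf{1}_K^{\top}Y^{(i,i)}\mathbf{1}_K \le \mathbf{1}_K^{\top}\hat Y^{(i,i)}\mathbf{1}_K \le (1+\varepsilon)\,\mathbf{1}_K^{\top}Y^{(i,i)}\mathbf{1}_K$, which is the second displayed bound. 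Since both follow deterministically from the one high-probability event, a single union bound is unnecessary and the factor $\log(2n/\delta)$ (rather than $\log(n/\delta)$) is just slack absorbed into the constant $C$ — or, if one prefers to prove the diagonal and block-sum bounds from two separate JL arguments, a union bound over the two events accounts for the factor of $2$ inside the logarithm.

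I do not anticipate a genuine obstacle here, since this is a direct transcription from \cite{lindseyshi2025} adapted to the weak Perm-Synch notation $Y_{\beta,(\blam_t,\bmu_t)}$; the only point requiring a little care is making sure the subspace-embedding constant is stated uniformly in $t$ (it is, since the bound depends on $M$ only through its rank, which is at most $n$ regardless of $\blam_t,\bmu_t$) and noting that the PSD sandwich is the strongest form from which both scalar bounds descend for free. I would simply cite \cite{Martinsson_Tropp_2020} for the subspace-embedding fact and \cite{lindseyshi2025} for the specialization, then present the two-line conjugation arguments above.
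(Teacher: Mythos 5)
There is a genuine gap. Your proof hinges on the global PSD sandwich $(1-\ve)\,Y_{\beta,(\blam_t,\bmu_t)}\preceq\hat Y_t\preceq(1+\ve)\,Y_{\beta,(\blam_t,\bmu_t)}$ holding with high probability for $S\gtrsim\ve^{-2}\log(2n/\delta)$, with the remark that ``the $\log n$ term dominat[es] the rank contribution.'' That step is false. The subspace-embedding bound you invoke requires $S\gtrsim\ve^{-2}\bigl(r+\log(1/\delta)\bigr)$ where $r=\mathrm{rank}(M)$, and here $M=e^{-\frac12\beta(C-\blam_t\cdot\mathbf A)}$ is a matrix exponential, hence generically full rank $r=n$. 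So the rank term dominates, not the $\log n$ term, and the global sandwich would cost $S=\Omega(n/\ve^2)$ samples --- exactly the scaling the entire paper is designed to avoid. The sandwich you want is simply too strong to obtain at the claimed sample complexity.

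The correct argument does not go through a global PSD bound at all. Each of the two displayed inequalities is a collection of \emph{scalar} statements: the first is $e_k^\top\hat Y_t e_k=(1\pm\ve)\,e_k^\top Y_{\beta,(\blam_t,\bmu_t)}e_k$ for $k=1,\ldots,n$, and the second is $(v^{(i)})^\top\hat Y_t v^{(i)}=(1\pm\ve)\,(v^{(i)})^\top Y_{\beta,(\blam_t,\bmu_t)}v^{(i)}$ for $i=1,\ldots,N$, where $v^{(i)}$ is your block indicator vector. Each such quantity has the form $\frac{1}{S}\sum_{s=1}^S\bigl\langle M^\top u,\,z_{t,s}\bigr\rangle^2$ for a \emph{fixed} vector $u$ ($u=e_k$ or $u=v^{(i)}$), i.e.\ a rank-one instance of the JL / Hutchinson concentration: for a fixed vector, $S=O(\ve^{-2}\log(1/\delta'))$ Rademacher samples give a $(1\pm\ve)$ relative-error estimate of its squared norm with probability $1-\delta'$. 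Applying this with $\delta'=\delta/(n+N)$ to each of the $n+N\le 2n$ quadratic forms and taking a union bound yields the stated sample complexity $S\ge C\ve^{-2}\log(2n/\delta)$. In particular, the factor $2n$ inside the logarithm is not ``slack absorbed into $C$'' as you suggest: it is the count of quadratic forms being union-bounded over, exactly paralleling the $\log(n/\delta)$ in Lemma \ref{lem:concentration1} (union over $n$ diagonal entries) and the $\log(N/\delta)$ in Lemma \ref{lem:concentration2} (union over $N$ blocks, with the $K\log K$ factor there being the genuine subspace-embedding cost for rank-$K$ blocks). Your closing ``two-line conjugation argument'' is the right way to extract both conclusions once the scalar bounds are in hand; the mistake is claiming the global sandwich as the common ancestor.
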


As a consequence it follows that Assumption \ref{assump:gradientestimator}
can be guaranteed with high probability by taking $S=\tilde{O}(\gamma^{-2})$:
\begin{cor}
\label{cor:weakpermsynchgradest}Let $\gamma\in(0,2]$ and $\delta\in(0,1)$.
Take 
\[
S\geq\frac{16\,C\,\log(2Tn/\delta)}{\gamma^{2}},
\]
where $C$ as in Lemma \ref{lem:concentration3}. Then Assumption
\ref{assump:gradientestimator} holds with probability at least $1-\delta$,
where we choose $\Vert\,\cdot\,\Vert$ following Section \ref{subsec:normWeakPermSynch}.
\end{cor}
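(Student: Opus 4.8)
The plan is to reduce the claim to Lemma~\ref{lem:concentration3} by a union bound over the iterations, and then to propagate the multiplicative entrywise and quadratic-form control on $\hat Y_t$ into additive $\ell_1$ control on the two components of the gradient error $g_t - \nabla f_\beta(\blam_t)$ --- which is precisely what the relevant dual norm $\Vert(a,b)\Vert_* = \sqrt{\tfrac12(\Vert a\Vert_1^2 + \Vert b\Vert_1^2)}$ measures.

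First I would set $\ve = \gamma/4$ and apply Lemma~\ref{lem:concentration3} with failure probability $\delta/T$ at each step $t=0,\ldots,T-1$; since $S \ge \frac{16\,C\,\log(2Tn/\delta)}{\gamma^2} = \frac{C\,\log(2n/(\delta/T))}{\ve^2}$, a union bound over $t$ shows that with probability at least $1-\delta$, for every $t$ we have simultaneously $(1-\ve)\,\mathrm{diag}[Y_{\beta,(\blam_t,\bmu_t)}] \le \mathrm{diag}[\hat Y_t] \le (1+\ve)\,\mathrm{diag}[Y_{\beta,(\blam_t,\bmu_t)}]$ and $(1-\ve)\,\mathbf{1}_K^\top Y_{\beta,(\blam_t,\bmu_t)}^{(i,i)}\mathbf{1}_K \le \mathbf{1}_K^\top \hat Y_t^{(i,i)}\mathbf{1}_K \le (1+\ve)\,\mathbf{1}_K^\top Y_{\beta,(\blam_t,\bmu_t)}^{(i,i)}\mathbf{1}_K$ for all $i$. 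Summing the first chain over the diagonal gives $(1-\ve)\Tr[Y_{\beta,(\blam_t,\bmu_t)}] \le \Tr[\hat Y_t] \le (1+\ve)\Tr[Y_{\beta,(\blam_t,\bmu_t)}]$.

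Next, writing $\hat X_t = \hat Y_t/\Tr[\hat Y_t]$ and $X_{\beta,(\blam_t,\bmu_t)} = Y_{\beta,(\blam_t,\bmu_t)}/\Tr[Y_{\beta,(\blam_t,\bmu_t)}]$, I would divide the entrywise and quadratic-form inequalities by the trace inequality to obtain $\bigl|[\hat X_t]_{jj} - [X_{\beta,(\blam_t,\bmu_t)}]_{jj}\bigr| \le \tfrac{2\ve}{1-\ve}\,[X_{\beta,(\blam_t,\bmu_t)}]_{jj}$ and $\bigl|\mathbf{1}_K^\top \hat X_t^{(i,i)}\mathbf{1}_K - \mathbf{1}_K^\top X_{\beta,(\blam_t,\bmu_t)}^{(i,i)}\mathbf{1}_K\bigr| \le \tfrac{2\ve}{1-\ve}\,\mathbf{1}_K^\top X_{\beta,(\blam_t,\bmu_t)}^{(i,i)}\mathbf{1}_K$. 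Summing the first over $j$ and using $\Tr[X_{\beta,(\blam_t,\bmu_t)}]=1$ bounds the $\ell_1$ norm of the $g$-component of the gradient error, $\mathrm{diag}[\hat X_t] - \mathrm{diag}[X_{\beta,(\blam_t,\bmu_t)}]$, by $\tfrac{2\ve}{1-\ve}$. For the $h$-component, after dividing by $K$ and summing over $i$, I would use the elementary PSD inequality $\mathbf{1}_K^\top M\mathbf{1}_K \le K\,\Tr[M]$ (since $\mathbf{1}_K\mathbf{1}_K^\top \preceq K\,\mathbf{I}_K$), giving $\sum_{i=1}^N \mathbf{1}_K^\top X_{\beta,(\blam_t,\bmu_t)}^{(i,i)}\mathbf{1}_K \le K\sum_i \Tr[X_{\beta,(\blam_t,\bmu_t)}^{(i,i)}] = K$, so the $\ell_1$ norm of the $h$-component is also at most $\tfrac{2\ve}{1-\ve}$.

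Finally, combining these two $\ell_1$ bounds through the dual norm yields $\Vert g_t - \nabla f_\beta(\blam_t)\Vert_* \le \sqrt{\frac12\left(\left(\frac{2\ve}{1-\ve}\right)^2 + \left(\frac{2\ve}{1-\ve}\right)^2\right)} = \frac{2\ve}{1-\ve}$, and since $\gamma \le 2$ forces $\ve = \gamma/4 \le 1/2$, we get $\frac{2\ve}{1-\ve}\le 4\ve = \gamma$, which is Assumption~\ref{assump:gradientestimator}. The only real subtlety is the passage from multiplicative control on the unnormalized $\hat Y_t$ (and its trace) to clean additive $\ell_1$ control after summation; the $h$-component additionally hinges on the PSD inequality $\mathbf{1}_K^\top M\mathbf{1}_K \le K\,\Tr[M]$, which is exactly what makes the factor $1/K$ in the definition of $h_t$ cancel. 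This argument mirrors the proof of Corollary~\ref{cor:maxcutgradest}, with the $h$-component handled by the additional bound just described, and I would present it in Appendix~\ref{app:gradest} alongside the other gradient-estimation corollaries.
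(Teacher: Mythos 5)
Your proof is correct and follows essentially the same route as the paper's: set $\ve=\gamma/4$, invoke Lemma~\ref{lem:concentration3} with failure probability $\delta/T$ and union bound, convert multiplicative control on $\hat Y_t$ into $\ell_1$ control on both components of the gradient error, and bound the $h$-component by observing that $\sum_i \mathbf{1}_K^\top X^{(i,i)}\mathbf{1}_K/K \leq \Tr[X] = 1$. Your justification of that last step via the PSD inequality $\mathbf{1}_K\mathbf{1}_K^\top\preceq K\,\mathbf{I}_K$ is equivalent to the paper's use of H\"older for Schatten norms ($\Tr[X\frac{\mathbf{1}_K\mathbf{1}_K^\top}{K}]\le\Vert X\Vert_{\Tr}\Vert\frac{\mathbf{1}_K\mathbf{1}_K^\top}{K}\Vert_2$); both say the same thing. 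One small point in your favor: you correctly evaluate the dual norm as $\sqrt{\frac12(\Vert\cdot\Vert_1^2+\Vert\cdot\Vert_1^2)}$, whereas the paper's displayed final line asserts the dual norm equals the \emph{max} of the two $\ell_1$ norms, which is not the dual of $\sqrt{2(\Vert\cdot\Vert_\infty^2+\Vert\cdot\Vert_\infty^2)}$; since both yield the bound $\leq\gamma$ the conclusion is unaffected, but your formula is the accurate one.
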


\section{General convergence theory \label{sec:conv}}

We will prove as much convergence theory as can be tractably achieved
in a general setting. There are two approaches to consider.

The first of these (Section \ref{subsec:grad-cov}) uses strong smoothness
to show that the dual gradients converge to zero. The argument is
quite general, but it does not directly yield an error bound on the
approximation of the optimal value. Since the dual gradient at $\blam$
correspond to the primal feasibility gap for $X_{\beta,\blam}$, the
dual gradient convergence may be combined with a specialized \emph{rounding
}argument which explains how $X_{\beta,\blam}$ can be rounded to
a primal-feasible matrix $X$ whose distance from $X_{\beta,\blam}$
is bounded in terms of the feasibility gap. This overall strategy
was previously used in \cite{altschuler2017near} in the context of
entropically regularized OT. In Sections \ref{sec:max-cut-rounding}
and \ref{sec:perm-synch-rounding} below, we will provide a rounding
argument for the Max-Cut SDP and the strong Perm-Synch SDP, respectively.
A rounding argument for the weak Perm-Synch SDP remains out of our
reach in this work. However, for the OT problem, we will not need
a rounding argument to establish convergence of the objective, as
we now elaborate.

Indeed, the second general argument (Section \ref{subsec:obj-conv})
uses strong smoothness to guarantee a convergence rate for the objective
function, but bounding the preconstant requires an \emph{a priori}
bound over the optimization trajectory of $\Vert\blam_{t}\Vert$.
Such a bound appears to be quite hard to guarantee in general. However,
in the special case of the OT LP, we can provide such a bound, as
we shall show below in Section \ref{sec:ot-conv} (Theorem \ref{thm:otconv}).
The argument here is key for avoiding the rounding argument introduced
by \cite{altschuler2017near}, which allows us to prove a sharp convergence
rate. In turn, we will show in Section \ref{sec:ot-conv} how the
objective convergence result for OT can be \emph{combined} with
the gradient convergence strategy, as well as rounding, to obtain
a good convergence rate equipped with a primal-feasible solution (Theorem
\ref{thm:otcomplexity}).

The two general arguments are presented below in Sections \ref{subsec:grad-cov}
and \ref{subsec:obj-conv} respectively. In the ensuing sections we
will offer the specialized arguments building on the general theory
as indicated in the preceding discussion.

\subsection{Gradient convergence \label{subsec:grad-cov}}

Now we state a result guaranteeing gradient convergence. Below we
always use the notation $e_{\max}(A)$ and $e_{\min}(A)$ to denote
the largest and smallest eigenvalues, respectively, of a symmetric
matrix $A$. We also use the notation $\Delta e(A)=e_{\max}(A)-e_{\min}(A)$.
Note that $\Delta e(A)\leq2\Vert A\Vert_{2}$ always.

\begin{restatable}[Gradient convergence]{restatethm}{gradconvthm}

\label{thm:gradconv}Let $\blam_{t}$, $t=0,\ldots,T$, be furnished
by the algorithm (\ref{eq:optstep}) with gradient estimates satisfying
Assumption \ref{assump:gradientestimator} and step size $\eta=1/\beta$,
for some $\gamma>0$. Then there exists $t\in\{0,\ldots,T-1\}$ such
that 
\[
\Vert\nabla f_{\beta}(\blam_{t})\Vert_{*}\leq3\gamma+2\sqrt{\frac{\beta\,\Delta e(C)}{T}}+2\sqrt{\frac{\log n}{T}}.
\]

\end{restatable}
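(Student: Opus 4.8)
The statement is a standard "smoothness $\Rightarrow$ gradient-norm convergence" result, adapted to (i) a non-Euclidean norm, (ii) inexact gradients, and (iii) the specific structure of $f_\beta$. The plan is to run a telescoping / potential-function argument on the function values $f_\beta(\blam_t)$, exploiting the $\beta$-strong smoothness of $g_\beta=-f_\beta$ (hence of $f_\beta$) with respect to $\|\cdot\|$ established earlier. First I would write the descent inequality: by $\beta$-smoothness and the update rule \eqref{eq:optstep} with $\eta=1/\beta$, one has the quadratic upper bound $f_\beta(\blam_{t+1}) \le f_\beta(\blam_t) + \nabla f_\beta(\blam_t)\cdot(\blam_{t+1}-\blam_t) + \frac{\beta}{2}\|\blam_{t+1}-\blam_t\|^2$, and since $\blam_{t+1}$ minimizes the model built from the \emph{estimated} gradient $g_t$, I can substitute and regroup to get a bound of the shape $f_\beta(\blam_{t+1}) \le f_\beta(\blam_t) - \frac{1}{2\beta}\|g_t\|_*^2 + (\text{error terms in }\gamma)$. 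The error terms arise from replacing $g_t$ by $\nabla f_\beta(\blam_t)$ using Assumption~\ref{assump:gradientestimator} together with Young's / Cauchy--Schwarz inequalities for the dual pairing; after collecting them, the net per-step decrease controls something like $\frac{1}{c\beta}\|\nabla f_\beta(\blam_t)\|_*^2$ up to an additive $O(\gamma^2/\beta)$ slack.

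Next I would telescope over $t=0,\dots,T-1$: summing the descent inequality gives $\frac{1}{c\beta}\sum_{t=0}^{T-1}\|\nabla f_\beta(\blam_t)\|_*^2 \le f_\beta(\blam_0) - f_\beta(\blam_T) + T\cdot O(\gamma^2/\beta)$, hence $\min_t \|\nabla f_\beta(\blam_t)\|_*^2 \le \frac{c\beta}{T}\big(f_\beta(\blam_0)-\min f_\beta\big) + O(\gamma^2)$. Taking square roots and using subadditivity of $\sqrt{\cdot}$ yields the form $3\gamma + (\text{const})\sqrt{\beta(f_\beta(\blam_0)-\min f_\beta)/T}$. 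The remaining task is to bound the function-value gap $f_\beta(\blam_0)-\min_{\blam} f_\beta = g_\beta^\star - g_\beta(0)$ \emph{without} assuming a dual optimizer exists — this is where the $\Delta e(C)$ and $\log n$ terms must come from. Recall $g_\beta(\blam) = \mathbf{b}\cdot\blam - \beta^{-1}\log\Tr[e^{-\beta(C-\blam\cdot\mathbf{A})}]$ and $g_\beta(0) = -\beta^{-1}\log\Tr[e^{-\beta C}]$. By weak duality, $\sup_\blam g_\beta(\blam) \le p_{\star,\beta} = \Tr[C X_{\star,\beta}] + \beta^{-1}S(X_{\star,\beta})$. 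Combining, $g_\beta^\star - g_\beta(0) \le \Tr[CX_{\star,\beta}] + \beta^{-1}S(X_{\star,\beta}) + \beta^{-1}\log\Tr[e^{-\beta C}]$. Now I bound each piece crudely in terms of eigenvalues of $C$: $\Tr[CX_{\star,\beta}] \le e_{\max}(C)$ since $X_{\star,\beta}\in\mathcal{P}_1$; $\log\Tr[e^{-\beta C}] \le \log(n\, e^{-\beta e_{\min}(C)}) = \log n - \beta e_{\min}(C)$; and $S(X_{\star,\beta}) \le 0$ on $\mathcal{P}_1$. This gives $g_\beta^\star - g_\beta(0) \le e_{\max}(C) - e_{\min}(C) + \beta^{-1}\log n = \Delta e(C) + \beta^{-1}\log n$. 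Plugging into the telescoped bound, $\min_t\|\nabla f_\beta(\blam_t)\|_*^2 \le \frac{c\beta}{T}(\Delta e(C) + \beta^{-1}\log n) + O(\gamma^2) = \frac{c\beta\,\Delta e(C)}{T} + \frac{c\log n}{T} + O(\gamma^2)$, and after taking the square root and splitting the sum this is exactly the claimed bound (with the constants worked out so that $c=4$ for the first two terms and the $\gamma$-coefficient is $3$).

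The main obstacle I anticipate is getting the constants exactly right in the inexact-gradient descent step — in particular, handling the cross term $\nabla f_\beta(\blam_t)\cdot(\blam_{t+1}-\blam_t)$ when $\blam_{t+1}-\blam_t$ is determined by $g_t$, not $\nabla f_\beta(\blam_t)$. One clean way is to write $\blam_{t+1}-\blam_t$ explicitly (for the norms in the case studies the minimizer has a closed form, but generically one uses that the minimizer satisfies $g_t + \frac{1}{\eta}\partial(\tfrac12\|\cdot\|^2)(\blam_{t+1}-\blam_t) \ni 0$, equivalently $\|\blam_{t+1}-\blam_t\| = \eta\|g_t\|_*$ and the pairing $g_t\cdot(\blam_{t+1}-\blam_t) = -\eta\|g_t\|_*^2$), then split $\nabla f_\beta(\blam_t) = g_t + (\nabla f_\beta(\blam_t)-g_t)$ and bound the latter piece's pairing with $\blam_{t+1}-\blam_t$ by $\gamma\cdot\eta\|g_t\|_*$ via Assumption~\ref{assump:gradientestimator} and the definition of the dual norm. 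This yields a clean quadratic in $\|g_t\|_*$ whose discriminant produces the $\gamma$ and $\gamma^2$ terms; one then relates $\|g_t\|_*$ back to $\|\nabla f_\beta(\blam_t)\|_*$ by the triangle inequality, absorbing another $\gamma$. Careful bookkeeping here is what produces the factor $3$ on $\gamma$ and the factor $2$ on the two square-root terms.
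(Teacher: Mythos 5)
Your proposal follows the paper's own proof essentially step-for-step: the same descent-lemma-with-inexact-gradient argument using the closed-form minimizer property of the update (Lemma \ref{lem:norm} in the paper), the same Young's-inequality handling of the $\gamma$ error terms, the same telescoping sum combined with weak duality, and the same bound $f_\beta(\blam_0)+p_{\star,\beta}\le\Delta e(C)+\beta^{-1}\log n$ (Lemma \ref{lem:guessobj}) to control the initial gap, finishing with subadditivity of the square root and a triangle inequality to pass from $\|g_t\|_*$ to $\|\nabla f_\beta(\blam_t)\|_*$. The bookkeeping you anticipate is exactly how the paper arrives at the constants $3$ and $2$.
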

\begin{rem}
If we want to guarantee that $\Vert\nabla f_{\beta}(\blam_{t})\Vert_{*}=O(\ve)$,
we can take $\gamma=O(\ve)$, $\beta=O(\ve^{-1})$, $T=\tilde{O}(\ve^{-3})$.
Note that for the Max-Cut SDP and this choice of $\gamma$, Lemma
\ref{cor:maxcutgradest} says that Assumption \ref{assump:gradientestimator}
can be guaranteed to hold with high probability by taking $S=\tilde{O}(\ve^{-2})$
samples in our gradient estimator (\ref{eq:gt}).

In the LP setting, $\Delta e(C)$ corresponds to $\max_{i}c_{i}-\min_{i}c_{i}$.
\end{rem}

The proof of Theorem \ref{thm:gradconv} is given in Appendix \ref{app:gradconv}.

\subsection{Objective convergence \label{subsec:obj-conv}}

Below we present two objective-convergence results: one using exact
gradients ($\gamma=0$ in Assumption \ref{assump:gradientestimator})
and allowing for stochastic gradient estimation ($\gamma>0$). The
exact-gradient result (Theorem \ref{thm:objconv}) can be recovered
as a special case of known results---see, for example, Fact B.1 in
\cite{allen2014linear} and Remark 10.72 in \cite{beck2017first}.
In contrast, our stochastic-gradient result (Theorem \ref{thm:objconv-1})
establishes an exponential convergence rate (with rate determined
by $\gamma$) up to a bias (also determined by $\gamma$). Such a
result, to the best of our knowledge, has not appeared previously
in the literature.

\begin{restatable}[GD objective convergence]{restatethm}{objconvthm}

\label{thm:objconv} Let $\blam_{t}$, $t=0,\ldots,T$, be furnished
by the algorithm (\ref{eq:optstep}) with exact gradient estimation
(i.e., satisfying Assumption \ref{assump:gradientestimator} with
$\gamma=0$) and step size $\eta\le1/\beta$. We assume that 
\begin{equation}
D\coloneqq\sup_{t=0,1,2,\ldots}\left\{ \|\blam_{t}-\blam_{\star}\|^{2}\right\} <+\infty\label{eq:D}
\end{equation}
 where $\blam_{\star}$ is a minimizer of $f_{\beta}$. Then for any
$t\ge1,$
\[
f_{\beta}(\blam_{t})-f_{\beta}(\blam_{\star})\le\frac{2D}{t\eta}.
\]

\end{restatable}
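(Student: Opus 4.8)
The plan is to run the textbook analysis of gradient descent on a convex, strongly smooth objective, but carried out with respect to the problem-adapted norm $\Vert\,\cdot\,\Vert$ rather than the Euclidean norm. First I would record the two structural facts we need: $f_{\beta}=-g_{\beta}$ is convex (because $g_{\beta}(\blam)=\min_{X\in\mathcal{P}_{1}}\mathcal{L}_{\beta}(X,\blam)$ is a pointwise minimum of functions affine in $\blam$, hence concave), and $f_{\beta}$ is $\beta$-strongly smooth with respect to $\Vert\,\cdot\,\Vert$ (this is the Corollary established in Section \ref{sec:dual}, which is insensitive to the overall sign). Strong smoothness yields the usual quadratic upper bound $f_{\beta}(\blam+u)\le f_{\beta}(\blam)+\nabla f_{\beta}(\blam)\cdot u+\tfrac{\beta}{2}\Vert u\Vert^{2}$ by integrating $\nabla f_{\beta}$ along the segment $[\blam,\blam+u]$ and applying the defining Lipschitz estimate together with Hölder's inequality. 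Since $\eta\le 1/\beta$, this specializes to
\[
f_{\beta}(\blam_{t+1})\le f_{\beta}(\blam_{t})+\nabla f_{\beta}(\blam_{t})\cdot(\blam_{t+1}-\blam_{t})+\tfrac{1}{2\eta}\Vert\blam_{t+1}-\blam_{t}\Vert^{2}=f_{\beta}(\blam_{t})+m_{t}(\blam_{t+1}),
\]
where $m_{t}(\blam):=\nabla f_{\beta}(\blam_{t})\cdot(\blam-\blam_{t})+\tfrac{1}{2\eta}\Vert\blam-\blam_{t}\Vert^{2}$ is exactly the model minimized in the update (\ref{eq:optstep}) (with $g_{t}=\nabla f_{\beta}(\blam_{t})$ since gradients are exact here).

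Next I would use the fact that $\blam_{t+1}$ is a \emph{global} minimizer of $m_{t}$. In the Euclidean case one completes the square in $m_{t}$ to get Fejér-type contraction of $\Vert\blam_{t}-\blam_{\star}\Vert$; here $\tfrac12\Vert\,\cdot\,\Vert^{2}$ need not be strongly convex with respect to $\Vert\,\cdot\,\Vert$, so that route is unavailable. Instead I would test $m_{t}$ against the points $\blam_{t}+\theta(\blam_{\star}-\blam_{t})$ for $\theta\in[0,1]$. Writing $\delta_{t}:=f_{\beta}(\blam_{t})-f_{\beta}(\blam_{\star})\ge 0$ and invoking convexity of $f_\beta$ in the form $\nabla f_{\beta}(\blam_{t})\cdot(\blam_{\star}-\blam_{t})\le f_{\beta}(\blam_{\star})-f_{\beta}(\blam_{t})=-\delta_{t}$, the display above gives, for every $\theta\in[0,1]$,
\[
\delta_{t+1}\le(1-\theta)\,\delta_{t}+\tfrac{\theta^{2}}{2\eta}\Vert\blam_{t}-\blam_{\star}\Vert^{2}\le(1-\theta)\,\delta_{t}+\tfrac{\theta^{2}D}{2\eta},
\]
where the last inequality uses the \emph{a priori} trajectory bound (\ref{eq:D}).

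The final step is to convert this one-parameter family of inequalities into the stated $O(1/t)$ rate. Minimizing the right-hand side over $\theta$ gives $\theta=\eta\delta_{t}/D$ whenever $\delta_{t}\le D/\eta$, producing the scalar recursion $\delta_{t+1}\le\delta_{t}-\tfrac{\eta}{2D}\delta_{t}^{2}$; if instead $\delta_{t}>D/\eta$, the choice $\theta=1$ already forces $\delta_{t+1}\le D/(2\eta)$. A short case split shows $\delta_{1}\le D/\eta$ regardless of $\delta_{0}$, and since the recursion is monotone decreasing and keeps $\delta_{t}\le D/\eta$, the inequality $\delta_{t+1}\le\delta_{t}-\tfrac{\eta}{2D}\delta_{t}^{2}$ holds for all $t\ge 1$. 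Dividing by $\delta_{t}\delta_{t+1}$ and using $1/(1-x)\ge 1+x$ gives $\delta_{t+1}^{-1}\ge\delta_{t}^{-1}+\eta/(2D)$; summing from $1$ to $t-1$ together with $\delta_{1}^{-1}\ge\eta/D$ yields $\delta_{t}^{-1}\ge\eta(t+1)/(2D)$, hence $f_{\beta}(\blam_{t})-f_{\beta}(\blam_{\star})\le 2D/(\eta(t+1))\le 2D/(\eta t)$ for all $t\ge1$. The only real subtlety — and the reason for the extra factor of $4$ relative to the Euclidean bound $D/(2\eta t)$ — is precisely the general-norm setting: the absence of an inner-product structure rules out the standard telescoping argument, so the rate must be extracted from the scalar recursion above rather than from monotone contraction of the distance to $\blam_{\star}$.
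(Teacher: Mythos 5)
Your proof is correct and follows essentially the same line as the paper's: both compare the update's surrogate against the convex combinations $\blam_{t}+\theta(\blam_{\star}-\blam_{t})$, invoke convexity of $f_{\beta}$ to obtain the scalar recursion $\delta_{t+1}\le\delta_{t}-\frac{\eta}{2D}\delta_{t}^{2}$, and then invert and telescope. The paper packages the comparison step as Lemma \ref{lem:lossdecrease}, where it observes that $\beta$-smoothness together with $\nabla f_{\beta}(\blam_{\star})=0$ already forces $\delta_{t}\le\frac{\beta}{2}\|\blam_{t}-\blam_{\star}\|^{2}\le\frac{D}{2\eta}$ for every $t$, so the minimizing $\theta^{*}=\eta\delta_{t}/D$ automatically lies in $[0,1/2]$; consequently the branch $\delta_{t}>D/\eta$ in your case split never occurs, and that detour can be dropped.
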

\begin{rem}
Here the constant $D$ is difficult to bound in general. It is well-known
(cf. Theorem 3.4 of \cite{garrigos2023handbook}) that if the norm
is Euclidean norm $\|\cdot\|_{2}$, then the descent lemma implies
that $\|\blam_{t+1}-\blam_{\star}\|_{2}\le\|\blam_{t}-\blam_{\star}\|_{2}$,
so that $D=\Vert\blam_{\star}\Vert_{2}^{2}$, since we take $\blam_{0}=0$.
However, the descent lemma does not hold for general norms. Besides,
for the case of $\|\cdot\|_{\infty}$, \cite{sidford2018coordinate}
showed that in a general setting, $D$ can scale with $n$. In many
practical settings, however, $D$ appears to be dimension-independent,
as we confirm in Section \ref{sec:ot-conv} in the case of optimal
transport.
\end{rem}

The proof of Theorem \ref{thm:objconv} is given in Appendix \ref{app:lossconv}.

\begin{restatable}[SGD objective convergence]{restatethm}{noiseobjconvthm}

\label{thm:objconv-1} Let $\blam_{t}$, $t=0,\ldots,T$, be furnished
by the algorithm (\ref{eq:optstep}) with gradient estimates satisfying
Assumption \ref{assump:gradientestimator} and step size $\eta\le1/(2\beta)$.
Further assume that $D$ as defined in (\ref{eq:D}) is bounded and
that the gradient error satisfies $\gamma\le\sqrt{3\beta^{2}D/2}$.
Then for any $t\ge1,$
\[
f_{\beta}(\blam_{t})-f_{\beta}(\blam_{\star})\le\bigg(1-\frac{2\gamma}{\sqrt{2(\eta^{-1}-\beta)(2\eta^{-1}-\beta)D}}\bigg)^{t}\big[f_{\beta}(\blam_{0})-f_{\beta}(\blam_{\star})\big]+\frac{\gamma\sqrt{2(2\eta^{-1}-\beta)D}}{\sqrt{\eta^{-1}-\beta}}.
\]
Specifically, if we set $\eta=1/(2\beta)$, we have
\[
f_{\beta}(\blam_{t})-f_{\beta}(\blam_{\star})\le\bigg(1-\frac{2\gamma}{\sqrt{6\beta^{2}D}}\bigg)^{t}\big[f_{\beta}(\blam_{0})-f_{\beta}(\blam_{\star})\big]+\gamma\sqrt{6D}.
\]

\end{restatable}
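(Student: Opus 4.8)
The plan is to adapt the standard descent-lemma argument for smooth gradient descent to the stochastic setting, tracking carefully how the gradient error $\gamma$ degrades the per-step decrease. First I would invoke $\beta$-strong smoothness of $f_\beta = -g_\beta$ (Corollary in Section \ref{sec:dual}) to write the descent inequality along the actual iterate step. Since the update (\ref{eq:optstep}) minimizes $g_t\cdot(\blam-\blam_t) + \frac{1}{2\eta}\Vert\blam-\blam_t\Vert^2$, the step $\blam_{t+1}-\blam_t$ has $\Vert\blam_{t+1}-\blam_t\Vert = \eta\Vert g_t\Vert_*$ and $g_t\cdot(\blam_{t+1}-\blam_t) = -\eta\Vert g_t\Vert_*^2$. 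Plugging into the smoothness bound $f_\beta(\blam_{t+1}) \le f_\beta(\blam_t) + \nabla f_\beta(\blam_t)\cdot(\blam_{t+1}-\blam_t) + \frac{\beta}{2}\Vert\blam_{t+1}-\blam_t\Vert^2$ and replacing $\nabla f_\beta(\blam_t)$ by $g_t + (\nabla f_\beta(\blam_t)-g_t)$, using Assumption \ref{assump:gradientestimator} and Cauchy--Schwarz for the dual pairing, I expect to obtain
\[
f_\beta(\blam_{t+1}) \le f_\beta(\blam_t) - \eta\Big(1-\tfrac{\beta\eta}{2}\Big)\Vert g_t\Vert_*^2 + \eta\gamma\Vert g_t\Vert_*.
\]
With $\eta \le 1/(2\beta)$ the coefficient $1-\beta\eta/2 \ge 3/4$, but more precisely I would keep it as $(\eta^{-1}-\beta)$-type constants to match the stated bound; completing the square in $\Vert g_t\Vert_*$ shows the right side decreases unless $\Vert g_t\Vert_*$ is of order $\gamma$.

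The second step is to convert the decrease into a contraction on the suboptimality gap. Here I would use that $\Vert g_t\Vert_*$ controls the gap: since $f_\beta$ is convex, $f_\beta(\blam_t) - f_\beta(\blam_\star) \le \nabla f_\beta(\blam_t)\cdot(\blam_t-\blam_\star) \le \Vert\nabla f_\beta(\blam_t)\Vert_*\,\Vert\blam_t-\blam_\star\Vert \le (\Vert g_t\Vert_* + \gamma)\sqrt{D}$. Thus $\Vert g_t\Vert_* \ge (f_\beta(\blam_t)-f_\beta(\blam_\star))/\sqrt{D} - \gamma$. Substituting this lower bound into the descent inequality (in the regime where the gap exceeds the bias floor), the per-step decrease becomes at least a constant multiple of $\gamma$ times $(f_\beta(\blam_t)-f_\beta(\blam_\star))/\sqrt{D}$, which gives a geometric rate $(1 - c\gamma/(\beta\sqrt{D}))$ on the gap, plus a residual of order $\gamma\sqrt{D}$ when the gap has already shrunk to the bias level. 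Choosing the constants to land exactly on $1-\frac{2\gamma}{\sqrt{2(\eta^{-1}-\beta)(2\eta^{-1}-\beta)D}}$ and residual $\frac{\gamma\sqrt{2(2\eta^{-1}-\beta)D}}{\sqrt{\eta^{-1}-\beta}}$ requires being careful about which of the two terms in the completed square one keeps; I would do the case split $\Vert g_t\Vert_* \gtrless 2\gamma$ (or a similarly tuned threshold) so that in the ``large gradient'' case one gets the geometric decrease and in the ``small gradient'' case one has already reached the bias plateau.

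The hypothesis $\gamma \le \sqrt{3\beta^2 D/2}$ is exactly what is needed to ensure the geometric factor $1-\frac{2\gamma}{\sqrt{6\beta^2 D}}$ lies in $[0,1)$, i.e., that the contraction is genuine and the bound is not vacuous; I would note this explicitly. Finally, the specialization $\eta = 1/(2\beta)$ just substitutes $\eta^{-1}-\beta = \beta$ and $2\eta^{-1}-\beta = 3\beta$ into the general formula. The main obstacle I anticipate is bookkeeping: getting the completed-square constants and the case-split threshold to produce precisely the stated coefficients (rather than merely an order-correct bound) will take some care, since a naive argument gives a rate like $1 - \Theta(\gamma/(\beta\sqrt D))$ with unspecified constant and a residual of the same order but possibly a different numerical factor. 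A secondary subtlety is that $D$ is defined as a supremum over all $t$ of $\Vert\blam_t-\blam_\star\Vert^2$, so one must confirm the iterates indeed stay in this ball — but this is assumed as a hypothesis, so I would simply use it, perhaps remarking that the descent inequality itself can sometimes be bootstrapped to show boundedness.
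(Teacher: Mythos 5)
Your outline takes a genuinely different route from the paper's, and the difference is not merely cosmetic. The paper does \emph{not} substitute the explicit step $\blam_{t+1}-\blam_t$ into the descent lemma. Instead, it defines $s:=\eta^{-1}-\beta$, applies Young's inequality \emph{twice}, once to absorb $\langle\delta_t,\blam_{t+1}-\blam_t\rangle$ and once to absorb $\langle-\delta_t,y-\blam_t\rangle$ for an arbitrary comparison point $y$, with the crucial tuning that $\beta+s=\eta^{-1}$ so that the argmin property of (\ref{eq:optstep}) can be used to pass from $\blam_{t+1}$ to $y$. This yields $f_\beta(\blam_{t+1})\le \min_y\{f_\beta(y)+\tfrac{(\beta+2s)\|\blam_t-y\|^2}{2}\}+\gamma^2/s$, to which it applies Lemma \ref{lem:lossdecrease} (a Moreau-envelope / PL-type inequality, also used in Theorem \ref{thm:objconv}) to obtain the clean scalar recursion $\ve_{t+1}\le\ve_t-\ve_t^2/a^2+b^2$ with $a^2=2(\beta+2s)D$, $b^2=\gamma^2/s$. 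The contraction and bias then fall out of the algebraic identity $(\ve_{t+1}-ab)\le(1-2b/a)(\ve_t-ab)-(\ve_t-ab)^2/a^2$, with no case split whatsoever. Your route replaces Lemma \ref{lem:lossdecrease} with the convexity bound $\ve_t\le\|\nabla f_\beta(\blam_t)\|_*\|\blam_t-\blam_\star\|\le(\|g_t\|_*+\gamma)\sqrt D$, which is related in spirit but not interchangeable.

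There are two concrete gaps in your route. First, after obtaining $\ve_{t+1}\le\ve_t-\eta(1-\tfrac{\beta\eta}{2})\|g_t\|_*^2+\eta\gamma\|g_t\|_*$, you want to substitute a \emph{lower} bound on $\|g_t\|_*$. This works for the $-\|g_t\|_*^2$ term but goes the wrong way for the $+\eta\gamma\|g_t\|_*$ term; you must complete the square first and then verify that $\ve_t/\sqrt D-\gamma$ exceeds the vertex $\tfrac{\gamma}{2(1-\beta\eta/2)}$ before the substitution is valid, which is exactly the case split you flagged. Second, and more to the point, even after the case split the resulting recursion is of the form $\ve_{t+1}\le\ve_t-\alpha(\ve_t/\sqrt D-c\gamma)^2+c'\gamma^2$ with $\alpha,c,c'$ that do not coincide with the paper's $a,b$; for instance at $\eta=1/(2\beta)$ your coefficients give $\alpha=3/(8\beta)$, $c=5/3$, $c'=1/(6\beta)$, whereas the paper's $a^2=6\beta D$, $b^2=\gamma^2/\beta$ produce the stated constants $1-2\gamma/\sqrt{6\beta^2 D}$ and $\gamma\sqrt{6D}$. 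So your approach, pushed to completion, would prove a correct qualitative result of the same shape but with different numerical constants; it would not reproduce the theorem as stated. You anticipated this, and the honest fix is to use the Moreau-envelope Lemma \ref{lem:lossdecrease} (which eliminates the case split and gives the constants directly) rather than the convexity-plus-H\"older route.
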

\begin{rem}
Suppose that we want to reach an additive error tolerance of $\epsilon$.
The theorem establishes an exponential rate of convergence up to a
bias determined by $\gamma$. Since the bias term is $\gamma\sqrt{6D}$,
we can take $\gamma=\Omega(\epsilon/\sqrt{D})$ to reach the desired
accuracy. Meanwhile the exponential decay factor is 
\[
\left[1-\Omega\left(\frac{\epsilon}{\beta D}\right)\right]^{t}\leq e^{-\Omega\left(\frac{t\epsilon}{\beta D}\right)},
\]
 so in turn we can take $t=O\left(\beta D\frac{\log(1/\ve)}{\ve}\right)$
to ensure that the decay factor is $O(\ve)$. By Lemma \ref{lem:guessobj},
moreover, $f_{\beta}(\blam_{0})-f_{\beta}(\blam_{\star})=O(1+\beta^{-1}\log n)$,
assuming that the spectral range of the cost matrix is $O(1)$. Therefore,
we require $T=\tilde{O}(\beta D\ve^{-1})$ iterations with $\gamma=O(\epsilon/\sqrt{D})$.
Note that up to logarithmic factors, we recover the complexity of
$T=O(\beta D\ve^{-1})$ iterations implied by Theorem \ref{thm:objconv}
in the exact-gradient case.

Typically, if the gradients are estimated with stochastic trace estimation,
we would require $\tilde{O}(D\ve^{-2})$ shots of the trace estimator
to ensure that Assumption \ref{assump:gradientestimator} holds for
such $\gamma$ with high probability, yielding $\tilde{O}(\beta D^{2}\epsilon^{-3})$
shots in total. Therefore, in the $\beta=\Omega(\ve^{-1})$ regime,
which by Lemma \ref{lem:reg-bound} ensures $\tilde{O}(\ve)$-approximation
of the unregularized problem, the overall complexity is therefore
$\tilde{O}(D^{2}\ve^{-4})$ shots, which scales nearly independently
of dimension assuming that $D$ is $\tilde{O}(1)$.

We refer the reader to Theorem 5.4 of \cite{garrigos2023handbook}
for a typical analysis of SGD with respect to the Euclidean norm.
Although the setting is somewhat different (and in particular assumes
unbiased gradient estimation), it is interesting to compare with our
result in the analogous context in which the Euclidean norm everywhere
takes the place of our general norm $\Vert\,\cdot\,\Vert$, the objective
function is $\beta$-smooth, the gradient estimator has variance $\gamma$,
and the step size is taken at least small enough such that $\eta\leq1/(4\beta)$.
In this setting, Theorem 5.4 of \cite{garrigos2023handbook} implies
that the expected objective error is $O\left(\frac{D^{2}}{\eta t}+\eta\gamma\right)$.
Similar to our result, there is a decaying term and a bias term in
the error estimate. However, the bias term (unlike ours) is modulated
by the step size, while the decaying term decays only algebraically
with $t$.

Taking $\eta=1/(4\beta)$, for simplicity, to ensure convergence,
the expected error of the objective is $O\left(\frac{\beta D^{2}}{t}+\frac{\gamma}{2\beta}\right)$,
we see that we require $T=O\left(\beta D^{2}\ve^{-1}\right)$ iterations
to ensure that the first term is $O(\ve)$. In the $\beta=\Omega(\ve^{-1})$
regime, we see that we only need to ensure $\gamma=O(1)$ to control
the second. This seems only to require a nearly-constant number of
shots in the trace estimator, but due to the use of the Euclidean
norm, we would in fact require a number that grows with the dimension
$n$, in our settings of interest. Note also that in this hypothetical
context, the meaning of $D$ is different since it is defined with
respect to the Euclidean norm. In our problems of interest, such a
`Euclidean $D$' is dimension-dependent.
\end{rem}

The proof of Theorem \ref{thm:objconv-1} is given in Appendix \ref{app:lossconv}.

\section{Convergence for OT \label{sec:ot-conv}}

Now we present the objective convergence result for the OT LP. See
the discussion at the beginning of Section \ref{sec:conv} for further
context.

\begin{restatable}[OT objective convergence]{restatethm}{otconvthm}

\label{thm:otconv} Let $\blam_{t}=(\phi_{t},\psi_{t})$, $t=0,\ldots,T$,
be furnished by the algorithm (\ref{eq:otupdate}) with step size
$\eta\le1/\beta$. Let $M,s>0$ such that $\min_{i\in[m]}\mu_{i}\geq s$,
$\min_{j\in[n]}\nu_{j}\geq s$ and $\max_{i,j}|c_{ij}|\leq M$. Then
for all $t=1,\ldots,T$: 
\[
f_{\beta}(\blam_{t})-f_{\beta}(\blam_{\star})\le\frac{32\left[2M+\frac{\log(1/s)+1}{\beta}\right]^{2}}{t\eta}.
\]

\end{restatable}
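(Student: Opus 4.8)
The plan is to invoke the general objective-convergence result Theorem \ref{thm:objconv} (with $\gamma = 0$, which applies since the OT dual gradient is computed exactly), so that it suffices to produce an \emph{a priori} bound $D$ on $\sup_t \|\blam_t - \blam_\star\|^2$ that holds along the trajectory generated by the modified update \eqref{eq:otupdate}. The key structural observation is that the recentering steps in \eqref{eq:otupdate} force $\mathbf{1}_m^\top \phi_t = 0$ and $\mathbf{1}_n^\top \psi_t = 0$ for every $t \geq 1$ (and this is also arranged for the optimizer $\blam_\star$, since a constant shift changes neither $\pi_{\beta,\blam}$ nor the dual objective). On the subspace of mean-zero potentials, I expect to be able to bound $\|\phi_t\|_\infty$ and $\|\psi_t\|_\infty$ uniformly, and likewise $\|\phi_\star\|_\infty$, $\|\psi_\star\|_\infty$, purely in terms of the problem data $M = \max_{ij}|c_{ij}|$ and $s = \min(\min_i \mu_i, \min_j \nu_j)$, with a $\beta^{-1}\log(1/s)$ correction coming from the entropic term. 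Then $D$ follows from the definition $\|(\phi,\psi)\|^2 = 2(\|\phi\|_\infty^2 + \|\psi\|_\infty^2)$ together with the triangle inequality, and plugging $D$ and $\eta$ into Theorem \ref{thm:objconv} gives the stated bound (the factor $32$ absorbing the $2$ from that theorem, the $2$ from the norm definition, and the $(a+b)^2 \le 2a^2 + 2b^2$ splitting applied a couple of times).

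The first step I would carry out is to derive the \emph{a priori} bound on the dual optimizer $\blam_\star = (\phi_\star, \psi_\star)$, normalized to be mean-zero. The optimality condition is $\pi_{\beta,\blam_\star}\mathbf{1}_n = \mu$ and $\pi_{\beta,\blam_\star}^\top \mathbf{1}_m = \nu$, where $[\pi_{\beta,\blam_\star}]_{ij} \propto e^{-\beta(c_{ij} - [\phi_\star]_i - [\psi_\star]_j)}$. Writing out $\mu_i = \sum_j [\pi]_{ij}$ and using $\mu_i \geq s$, $\mu_i \leq 1$, one extracts two-sided bounds on $e^{\beta[\phi_\star]_i}$ of the form $e^{-\beta M} \le (\text{normalizer}) \cdot e^{\beta[\phi_\star]_i} \cdot (\text{something involving } e^{\beta[\psi_\star]_j}) $; a standard Sinkhorn-type argument (comparing the largest and smallest coordinates of $\phi_\star$, and separately of $\psi_\star$, and using that neither can be too large given the marginal lower bound $s$) yields $\mathrm{osc}(\phi_\star) := \max_i [\phi_\star]_i - \min_i[\phi_\star]_i \le 2M + \beta^{-1}\log(1/s)$ or similar, and since $\phi_\star$ is mean-zero, $\|\phi_\star\|_\infty \le \mathrm{osc}(\phi_\star) \le 2M + \beta^{-1}(\log(1/s)+1)$, and symmetrically for $\psi_\star$. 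The second step is the analogous bound along the trajectory: I would argue that if $\|\mu_t - \mu\|_1 > 0$ then the update moves $\phi_t$ in a direction that decreases the objective and — using the recentering — keeps $\phi_{t+1}$ in the same bounded mean-zero region; more robustly, one can show directly that $\|\phi_t\|_\infty$ and $\|\psi_t\|_\infty$ never exceed (essentially) the same bound as for $\blam_\star$, perhaps by a monotonicity or descent-lemma-free argument exploiting that each coordinate update is a damped step of size $\le \eta \|\mu_t - \mu\|_1 \le 2\eta$ in the sign direction, combined with the a priori knowledge that far from the bounded region the gradient points strictly back toward it.

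The main obstacle I anticipate is precisely the trajectory bound in the second step: unlike the Euclidean case, the descent lemma does not give $\|\blam_{t+1} - \blam_\star\| \le \|\blam_t - \blam_\star\|$ for the $\ell_\infty$-type norm in play (this is flagged in the remark after Theorem \ref{thm:objconv}), so one cannot simply coast on contraction of the iterates toward $\blam_\star$. Instead I would need a genuinely problem-specific argument: leverage that the OT dual objective, restricted to mean-zero potentials, is coercive with sublevel sets contained in an explicit $\|\cdot\|_\infty$-ball of radius $\sim 2M + \beta^{-1}(\log(1/s)+1)$, and that the modified algorithm \eqref{eq:otupdate} is a descent method on this objective (its steps are along $-\mathrm{sign}$ of gradient blocks with step length matched to the strong-smoothness constant $\beta$ via $\eta \le 1/\beta$, so by the standard non-Euclidean descent lemma the objective is nonincreasing), hence all iterates stay in the initial sublevel set $\{g_\beta \ge g_\beta(\blam_0)\}$; bounding that sublevel set in $\|\cdot\|_\infty$ then gives a uniform bound on $\|\blam_t\|_\infty$, and with the bound on $\|\blam_\star\|_\infty$ the triangle inequality produces the explicit $D = 32[2M + \beta^{-1}(\log(1/s)+1)]^2 / 2$ (before the factor from Theorem \ref{thm:objconv}), closing the proof. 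I would expect the bulk of the work, and of the appendix deferred to, to be the coercivity/sublevel-set estimate and the careful tracking of the recentering convention.
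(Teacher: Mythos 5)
You have the right overall architecture: recenter to mean-zero potentials, derive an $\ell_\infty$ bound on $\blam_\star$ from the optimality condition via a Sinkhorn-type comparison of the largest and smallest coordinates, derive a companion bound along the trajectory, and feed $D=\sup_t\|\blam_t-\blam_\star\|^2$ into Theorem~\ref{thm:objconv} with $\gamma=0$. The optimizer bound as you sketch it is exactly the paper's. The gap is in the trajectory bound, and your fallback approach there does not yield the needed estimate.

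The paper's argument (Lemma~\ref{lem:ot-diff-bound}) is a direct induction on the \emph{oscillation} $[\phi_t]_i-[\phi_t]_k$, using the specific $\sign$-update structure: if the oscillation exceeds $2M+\beta^{-1}\log(1/s)$ at some pair $(i,k)$, then the $k$-th marginal of $\pi_{\beta,\blam_t}$ is strictly below $s\leq\mu_k$, so $[\mu_t-\mu]_k<0$ and the $\sign$-update gives coordinate $k$ the \emph{maximal} upward step $\tfrac{\eta}{2}\|\mu_t-\mu\|_1$; since every coordinate moves by at most that same amount, the gap cannot increase. When the oscillation is small, it can grow by at most $2\eta\le2/\beta$ per step. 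Induction then caps the oscillation at $2M+\beta^{-1}(\log(1/s)+2)$ uniformly in $t$, and the mean-zero recentering (which preserves differences) converts this into the $\ell_\infty$ bound. This is a genuinely combinatorial use of the $\ell_\infty$-steepest-descent step, not a descent-lemma or contraction argument, and it is the crux of the theorem.

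Your sublevel-set fallback is structurally coherent but quantitatively fails. It is true that the objective is nonincreasing (this follows from the descent inequality $\|\nabla f_\beta(\blam_t)\|_*^2\le 2\eta^{-1}[f_\beta(\blam_t)-f_\beta(\blam_{t+1})]$ used in the proof of Theorem~\ref{thm:otgrad}), and the mean-zero sublevel set is indeed bounded by coercivity. But the coercivity rate of $g_\beta$ in the direction $\phi=(R,-\tfrac{R}{m-1},\ldots,-\tfrac{R}{m-1})$, $\psi=0$ is only $\Theta(ms)$: one computes $g_\beta(\phi,0)\le(\mu_1-1)\tfrac{m}{m-1}R+M$ with $(\mu_1-1)\tfrac{m}{m-1}\leq -ms$, so the sublevel set extends to $R\sim\frac{M+\beta^{-1}\log(mn)}{ms}$. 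That is a $\Theta(1/s)$ bound, not the $\Theta(M+\beta^{-1}\log(1/s))$ bound needed, and for non-uniform marginals with small $s$ the two differ dramatically. So the sublevel-set route cannot recover the theorem as stated; the direct inductive oscillation bound, which you gesture at but do not carry out, is essential.
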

\begin{rem}
To the best of our knowledge, this result is the first dimension-independent
bound on convergence of the objective for the entropically regularized
OT problem for general $\beta$ (i.e., not in the large-$\beta$ regime).
Previous results \cite{altschuler2017near,dvurechensky2018computational,lin2022efficiency}
focus on obtaining a dual gradient bound, which in turn yields an
objective bound via a rounding argument in the large-$\beta$ regime.
\end{rem}

The proof of Theorem \ref{thm:otconv} is given in Appendix \ref{app:pf-ot}.

Based on the objective bound, we can also establish the following
gradient bound. 

\begin{restatable}[OT gradient convergence]{restatethm}{otgradthm}

\label{thm:otgrad} With notation and assumptions as in Theorem \ref{thm:otconv},
for any integer $T\ge2$ there exists $t\in\{0,\ldots,T\}$ such that
\[
\|\nabla f_{\beta}(\blam_{t})\|_{\star}\le\frac{16\left[2M+\frac{\log(1/s)+1}{\beta}\right]}{(T-1)\eta}.
\]

\end{restatable}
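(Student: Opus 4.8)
The plan is to convert the objective gap bound of Theorem \ref{thm:otconv} into a gradient bound by exploiting the $\beta$-strong smoothness of $f_\beta$ established earlier (Corollary following Assumption \ref{assump:norm}), specialized to the OT norm. The standard mechanism here is that for an $L$-smooth convex function, one has the "descent lemma in reverse," i.e., $\frac{1}{2L}\|\nabla f_\beta(\blam)\|_*^2 \le f_\beta(\blam) - f_\beta(\blam_\star)$, which holds for general norms just as in the Euclidean case. With $L=\beta$ this gives $\|\nabla f_\beta(\blam_t)\|_* \le \sqrt{2\beta\,[f_\beta(\blam_t)-f_\beta(\blam_\star)]}$ for every iterate $t$. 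Plugging in the bound from Theorem \ref{thm:otconv} would yield $\|\nabla f_\beta(\blam_t)\|_* \le \sqrt{2\beta}\cdot\sqrt{32}\,[2M+(\log(1/s)+1)/\beta]/\sqrt{t\eta}$, which has the wrong shape (a $\sqrt{\beta/t}$ factor rather than $1/((T-1)\eta)$).

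So instead I would use the telescoping/averaging argument that is standard for gradient descent on smooth convex functions. From the sufficient-decrease inequality for GD with step $\eta \le 1/\beta$ (which is proved inside the proof of Theorem \ref{thm:objconv}, and which I may invoke), one gets $f_\beta(\blam_{t+1}) \le f_\beta(\blam_t) - \frac{\eta}{2}\|\nabla f_\beta(\blam_t)\|_*^2$ (using that the update in the dual norm geometry contributes $\|\nabla f_\beta(\blam_t)\|_*^2$; here exactness $\gamma=0$ in the LP case is essential). Summing this from $t=t_0$ to $T-1$ and telescoping gives $\frac{\eta}{2}\sum_{t=t_0}^{T-1}\|\nabla f_\beta(\blam_t)\|_*^2 \le f_\beta(\blam_{t_0}) - f_\beta(\blam_T) \le f_\beta(\blam_{t_0}) - f_\beta(\blam_\star)$. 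Taking $t_0 = \lfloor T/2\rfloor$ (or simply $t_0=1$), applying Theorem \ref{thm:otconv} to bound $f_\beta(\blam_{t_0}) - f_\beta(\blam_\star)$ by $\frac{32 K^2}{t_0 \eta}$ where $K = 2M + (\log(1/s)+1)/\beta$, and using that the minimum over $t \in \{t_0,\dots,T-1\}$ is at most the average, I get
\[
\min_{t_0 \le t \le T-1}\|\nabla f_\beta(\blam_t)\|_*^2 \le \frac{2}{\eta(T-t_0)}\cdot\frac{32K^2}{t_0\eta} = \frac{64 K^2}{\eta^2 t_0 (T-t_0)}.
\]
With $t_0 \asymp T/2$ this is $O(K^2/(\eta^2 T^2))$, so $\min_t \|\nabla f_\beta(\blam_t)\|_* = O(K/(\eta T))$, which matches the claimed $16K/((T-1)\eta)$ after tracking constants carefully; alternatively, taking $t_0 = 1$ and $T\ge 2$ gives exactly the form with $(T-1)$ in the denominator and one should check the constant $16$ works out (likely by combining $\sqrt{64}=8$ with the $\sqrt{32}$ from the objective bound one needs to be a little more careful, so I would tune $t_0$).

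The main obstacle is getting the constant to be exactly $16$ and the denominator exactly $(T-1)\eta$ rather than something like $\sqrt{t_0(T-t_0)}\,\eta$: the telescoping argument naturally produces a $\min$ over a range and a product $t_0(T-t_0)$ under a square root, so matching the stated clean form requires a judicious choice of the summation window (I suspect $t_0 = 1$, summing from $1$ to $T-1$, combined with Theorem \ref{thm:otconv} at $t=1$ giving $f_\beta(\blam_1)-f_\beta(\blam_\star) \le 32K^2/\eta$, then $\min_{1\le t\le T-1}\|\nabla f_\beta(\blam_t)\|_*^2 \le \frac{2}{\eta(T-1)}\cdot\frac{32K^2}{\eta} = \frac{64K^2}{\eta^2(T-1)}$, hence $\|\nabla f_\beta(\blam_t)\|_* \le \frac{8K}{\eta\sqrt{T-1}}$ — which is weaker than claimed). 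To actually obtain $1/(T-1)$ rather than $1/\sqrt{T-1}$ one must additionally use that $f_\beta(\blam_t) - f_\beta(\blam_\star)$ itself decays like $1/t$ along the whole trajectory, bounding $\sum \|\nabla f_\beta(\blam_t)\|_*^2$ more tightly, or use that the smallest gradient among the last half is controlled by the objective gap at the midpoint which is already $O(1/T)$. So the real work is to interleave the $1/t$-objective decay (Theorem \ref{thm:otconv}) with the telescoped sum of squared gradients over a shifted window to extract the $1/(T-1)^2$ rate on the squared gradient, and then bookkeep constants so that the final bound reads $16K/((T-1)\eta)$.
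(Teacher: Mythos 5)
Your proposal is correct and follows essentially the same route as the paper's proof: apply Theorem \ref{thm:otconv} at the midpoint $J=\lceil T/2\rceil$ to get $f_\beta(\blam_J)-f_\beta(\blam_\star)=O(K^2/(T\eta))$, telescope the sufficient-decrease inequality $\|\nabla f_\beta(\blam_t)\|_*^2\le 2\eta^{-1}[f_\beta(\blam_t)-f_\beta(\blam_{t+1})]$ over the second half, and average; this is the ``switch argument'' the paper attributes to \cite{dvurechensky2018computational,lin2022efficiency}. The constant does work out as you suspected: $t_0(T-t_0)\geq (T-1)^2/4$ with $t_0=\lceil T/2\rceil$ turns your $64K^2/(\eta^2 t_0(T-t_0))$ into $256K^2/(\eta^2(T-1)^2)$, whose square root is exactly $16K/((T-1)\eta)$.
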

\begin{rem}
The proof is motivated by the switch argument in \cite{dvurechensky2018computational,lin2022efficiency}.
In short, we split the $T$ iterations into two chunks. In the first
chunk we apply the objective bound in Theorem \ref{thm:otconv}. For
the second chunk, we use the same trick as in the proof of Theorem
\ref{thm:gradconv}.
\end{rem}

The proof of Theorem \ref{thm:otgrad} is also given in Appendix \ref{app:pf-ot}.

Based on the gradient bound, we propose the following scheme to get
a primal feasible solution:
\begin{itemize}
\item Run the algorithm (\ref{eq:otupdate}) for $T$ steps to obtain $\tilde{\pi}:=\pi_{\beta,\blam_{\tilde{t}}}$
where $\tilde{t}=\arg\min_{t=1,\ldots,T}\|\nabla f_{\beta}(\blam_{t})\|_{*}$.
\item Round \textbf{$\tilde{\pi}$} to a primal-feasible solution $\hat{\pi}$
of (\ref{eq:ot_lp}) using Algorithm 2 of \cite{altschuler2017near}.
\end{itemize}
Then we output $\hat{\pi}$ as our final result. For this scheme,
we have the following Theorem:

\begin{restatable}[OT complexity bound]{restatethm}{otcomthm}

\label{thm:otcomplexity} Let $p_{\star}$ denote the optimal value
of the unregularized OT LP (\ref{eq:ot_lp}). Adopt the same assumptions
as in Theorem \ref{thm:otconv}, and in turn let $\hat{\pi}$ be furnished
by the preceding discussion. Then 
\[
\langle c,\hat{\pi}\rangle\le p_{\star}+\frac{128M\left[2M+\frac{\log(1/s)+1}{\beta}\right]}{(T-1)\eta}+\beta^{-1}\log(mn).
\]
 In particular, to achieve 
\[
\langle c,\hat{\pi}\rangle\le p_{\star}+\ve
\]
 for a given target $\ve>0$, we can take $\beta=2\ve^{-1}\log(mn)$,
$\eta=1/\beta$, and 
\[
T=O\left((M/\ve)^{2}\log(mn)+(M/\ve)[1+\log(1/s)]\right).
\]
 In the regime $\ve=O\left(\frac{M\log(mn)}{\log(1/s)+1}\right)$,
ruling out large values of $\ve$, the expression simplifies to 
\[
T=O\left((M/\ve)^{2}\log(mn)\right),
\]
 and the total computational complexity is 
\[
O\left((M/\ve)^{2}mn\log(mn)\right).
\]

\end{restatable}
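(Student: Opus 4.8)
\textbf{Proof proposal for Theorem \ref{thm:otcomplexity}.}

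The plan is to combine the OT gradient convergence bound (Theorem \ref{thm:otgrad}) with the rounding guarantee of \cite{altschuler2017near} and the entropic regularization bound (Lemma \ref{lem:reg-bound}), bookkeeping the errors carefully. First I would recall the rounding guarantee: Algorithm 2 of \cite{altschuler2017near} takes an arbitrary nonnegative matrix $\tilde{\pi}$ and produces $\hat{\pi}$ with the correct marginals $\mu,\nu$ (hence primal-feasible for (\ref{eq:ot_lp}), since summing the marginal constraints forces total mass $1$) satisfying $\Vert \hat{\pi} - \tilde{\pi}\Vert_{1} \le 2\left(\Vert \tilde{\pi}\mathbf{1}_{n} - \mu\Vert_{1} + \Vert \tilde{\pi}^{\top}\mathbf{1}_{m} - \nu\Vert_{1}\right)$. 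The right-hand side is exactly (a constant times) the $\ell_1$ marginal infeasibility of $\tilde{\pi}$, which is controlled by the dual gradient: recall $\nabla f_{\beta}(\blam) = Ax_{\beta,\blam} - \mathbf{b}$ has the two marginal gaps as its blocks, and the dual norm is $\Vert(a,b)\Vert_{*} = \sqrt{\tfrac12(\Vert a\Vert_1^2 + \Vert b\Vert_1^2)}$, so the total $\ell_1$ marginal gap is at most $2\Vert\nabla f_{\beta}(\blam_{\tilde t})\Vert_{*}$. Thus $\Vert \hat{\pi} - \tilde{\pi}\Vert_{1} \le 8\Vert\nabla f_{\beta}(\blam_{\tilde t})\Vert_{*}$, and by Theorem \ref{thm:otgrad} this is at most $\tfrac{128[2M + (\log(1/s)+1)/\beta]}{(T-1)\eta}$.

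Next I would chain three estimates on the objective. Write $\langle c, \hat{\pi}\rangle = \langle c, \tilde{\pi}\rangle + \langle c, \hat{\pi} - \tilde{\pi}\rangle$. The second term is bounded by $\Vert c\Vert_{\infty}\Vert \hat{\pi} - \tilde{\pi}\Vert_{1} \le M \cdot 8\Vert\nabla f_{\beta}(\blam_{\tilde t})\Vert_{*}$, giving the $128M[2M+\cdots]/((T-1)\eta)$ term. For the first term, $\tilde{\pi} = \pi_{\beta,\blam_{\tilde t}}$ is the Gibbs state at $\blam_{\tilde t}$, so $\langle c,\tilde\pi\rangle + \beta^{-1}S(\tilde\pi) = -g_{\beta}(\blam_{\tilde t})$ evaluated appropriately; more directly, $\langle c,\tilde\pi\rangle \le \langle c,\tilde\pi\rangle + \beta^{-1}S(\tilde\pi) - \beta^{-1}\min S \le p_{\star,\beta} + \beta^{-1}\log(mn)$ once one observes $\langle c,\tilde\pi\rangle + \beta^{-1}S(\tilde\pi) \le f_{\beta}(\blam_{\tilde t}) \text{-type bound}$ — actually the cleanest route is: since $\tilde\pi$ is feasible for the trace/mass constraint, $\langle c,\tilde\pi\rangle + \beta^{-1}S(\tilde\pi) \geq p_{\star,\beta} - (\text{dual suboptimality})$, but we want an upper bound, so instead use that $\langle c, \tilde\pi\rangle \le \langle c,\tilde\pi\rangle + \beta^{-1}[S(\tilde\pi) + \log(mn)] = $ (regularized objective of a density matrix with entropy shifted to be nonnegative) $\le p_{\star,\beta} + \beta^{-1}\log(mn)$, where the last inequality needs that the regularized objective at $\tilde\pi$ is at most $p_{\star,\beta}$ up to the dual gap — this requires knowing the dual gap is nonpositive in the relevant direction. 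The safe alternative I would actually carry out: bound $\langle c, \tilde\pi\rangle \le \langle c, X_{\star,\beta}\rangle + \beta^{-1}\log(mn) = p_{\star,\beta} + \beta^{-1}\log(mn)$ by noting $\langle c,\tilde\pi\rangle + \beta^{-1}S(\tilde\pi)$ is the \emph{primal} value of $\tilde\pi$ which is $\ge p_{\star,\beta}$ — wait, that is the wrong direction. The correct clean statement is that the dual objective is a lower bound, so $p_{\star,\beta} \ge g_\beta(\blam_{\tilde t})$, and one relates $\langle c,\tilde\pi\rangle$ to $g_\beta(\blam_{\tilde t})$ plus a feasibility-gap correction $\blam_{\tilde t}\cdot(\text{gap})$, bounded by $\Vert\blam_{\tilde t}\Vert\Vert\nabla f_\beta\Vert_*$ — this reintroduces a $\Vert\blam_{\tilde t}\Vert$ factor, which the \emph{a priori} bound from Theorem \ref{thm:otconv}'s proof controls by $O(2M + (\log(1/s)+1)/\beta)$. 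Combining, $\langle c,\tilde\pi\rangle \le p_{\star,\beta} + O\big((2M+\cdots)\Vert\nabla f_\beta(\blam_{\tilde t})\Vert_*\big)$, which folds into the same $1/((T-1)\eta)$ term up to constants. Finally, Lemma \ref{lem:reg-bound} gives $p_{\star,\beta} \le p_{\star} + \beta^{-1}\log(mn)$, yielding the displayed bound.

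For the complexity statement, substitute $\beta = 2\ve^{-1}\log(mn)$ so that $\beta^{-1}\log(mn) = \ve/2$, set $\eta = 1/\beta$ so $(T-1)\eta = (T-1)/\beta$, and demand $\tfrac{128M[2M + (\log(1/s)+1)/\beta]\beta}{T-1} \le \ve/2$; solving for $T$ and expanding $\beta[2M + (\log(1/s)+1)/\beta] = 2M\beta + (\log(1/s)+1)$ gives $T = O\big((M/\ve)^2\log(mn) + (M/\ve)(1+\log(1/s))\big)$. In the stated regime $\ve = O(M\log(mn)/(\log(1/s)+1))$ the first term dominates, and since each iteration costs $O(mn)$ (forming $\pi_t$, computing marginals, and the sign/$\ell_1$ update all cost $O(mn)$; the OT gradient is exact so no trace-estimation shots are needed), the total is $O((M/\ve)^2 mn\log(mn))$. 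The main obstacle I anticipate is the careful handling in the second paragraph: getting from $\langle c,\tilde\pi\rangle$ to $p_{\star,\beta}$ without circularity requires invoking the \emph{a priori} dual-norm bound $\Vert\blam_{\tilde t}\Vert = O(M + \beta^{-1}[\log(1/s)+1])$ established en route to Theorem \ref{thm:otconv}, and making sure the constants there are compatible with the claimed $128$; everything else is routine error-chaining.
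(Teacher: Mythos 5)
Your overall plan---round $\tilde\pi$ to $\hat\pi$, control $\Vert\hat\pi-\tilde\pi\Vert$ via the dual gradient and Theorem~\ref{thm:otgrad}, relate $\langle c,\tilde\pi\rangle$ to $p_\star$ up to entropic terms---is the right strategy, and your complexity bookkeeping at the end is essentially correct. But there are two substantive issues in the body of the argument.

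First, a factor-of-two slip: from $\Vert(a,b)\Vert_* = \sqrt{\tfrac12(\Vert a\Vert_1^2+\Vert b\Vert_1^2)}\ge\tfrac12(\Vert a\Vert_1+\Vert b\Vert_1)$ you correctly get that the total $\ell_1$ marginal gap is at most $2\Vert\nabla f_\beta(\blam_{\tilde t})\Vert_*$, and Lemma~\ref{lem:round-ot} multiplies by $2$, giving $\Vert\hat\pi-\tilde\pi\Vert_{\mathrm{S}}\le 4\Vert\nabla f_\beta(\blam_{\tilde t})\Vert_*$, not $8$. That your final constant nonetheless lands on $128M$ is a coincidence: the paper's $128$ arises as $2\xi$ with $\xi = 64M[\cdots]/((T-1)\eta)$, i.e., there are \emph{two} contributions of size $\xi$, one from $|\langle c,\hat\pi\rangle-\langle c,\tilde\pi\rangle|$ and one from relating $\langle c,\tilde\pi\rangle$ to $p_\star$. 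Your overcounting in the first step silently absorbed the second contribution you had not yet produced.

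Second and more importantly, you never land on the mechanism the paper uses to bound $\langle c,\tilde\pi\rangle-p_\star$, and your exploratory alternatives do not close the gap cleanly. The paper's idea is to observe that $\tilde\pi=\pi_{\beta,\blam_{\tilde t}}$ is the \emph{exact optimizer} of the entropically regularized OT problem with the \emph{perturbed} marginals $(\tilde\mu,\tilde\nu)=(\tilde\pi\mathbf 1_n,\tilde\pi^\top\mathbf 1_m)$, since the dual first-order condition is satisfied exactly at $\blam_{\tilde t}$ for those marginals. One then rounds the unregularized optimizer $\pi_\star$ (not $\tilde\pi$) to a plan $\pi'$ with marginals $(\tilde\mu,\tilde\nu)$, so that $\pi'$ is feasible for the perturbed regularized problem and Lemma~\ref{lem:round-ot} gives $|\langle c,\pi'\rangle-p_\star|\le\xi$. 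Optimality of $\tilde\pi$ for the perturbed problem and $0\ge S\ge-\log(mn)$ then give $\langle c,\tilde\pi\rangle\le\langle c,\pi'\rangle+\beta^{-1}\log(mn)\le p_\star+\xi+\beta^{-1}\log(mn)$, producing exactly one $\beta^{-1}\log(mn)$ term and one factor of $M$ (from the Hölder pairing of $c$ with the rounding perturbation). Your third-attempt route via the Lagrangian gap, $\langle c,\tilde\pi\rangle = g_\beta(\blam_{\tilde t}) - \beta^{-1}S(\tilde\pi) + \blam_{\tilde t}\cdot\nabla f_\beta(\blam_{\tilde t})$, is not incorrect and would yield the same asymptotic complexity, but it is strictly weaker in constants: it requires $p_{\star,\beta}\le p_\star+\beta^{-1}\log(mn)$ \emph{in addition to} the $-S(\tilde\pi)\le\log(mn)$ step, giving $2\beta^{-1}\log(mn)$, and the feasibility-gap term $\Vert\blam_{\tilde t}\Vert\Vert\nabla f_\beta\Vert_*$ scales as $[2M+(\log(1/s)+1)/\beta]^2/((T-1)\eta)$ rather than $M[2M+\cdots]/((T-1)\eta)$, so you would not recover the displayed constant of $128M$. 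Flagging the obstacle was reasonable, but the missing step is exactly the perturbed-problem comparison, which removes the need for the a priori $\Vert\blam_{\tilde t}\Vert$ bound entirely in this particular argument.
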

\begin{rem}
This complexity bound matches the best known bound for the celebrated
Sinkhorn scaling algorithm \cite{lin2022efficiency,dvurechensky2018computational}.
Compared to the Sinkhorn algorithm, our algorithm can straightforwardly
adapt to extra constraints in the primal optimization problem.

The proof of Theorem \ref{thm:otcomplexity} is also given in Appendix
\ref{app:pf-ot}.
\end{rem}

\section{Rounding for Max-Cut SDP \label{sec:max-cut-rounding}}

Now we give the rounding argument for the Max-Cut SDP (\ref{eq:maxcut_sdp}).
Combined with the gradient convergence result (Theorem \ref{thm:gradconv}),
this yields a complete guarantee for approximating the optimal value
of (\ref{eq:maxcut_sdp}). In fact the rounding argument yields a
primal-feasible approximate solution, which is important for producing
an upper bound to the Max-Cut problem, using the celebrated randomized
rounding procedure of \cite{goemans1995improved}. (The latter notion
of rounding is different.)

Throughout we let $\Vert A\Vert_{\infty}=\max_{j}\sum_{i}\vert A_{ij}\vert$
denote the usual induced $\ell^{\infty}$ operator norm. Note that
$\Vert A\Vert_{2}\leq\Vert A\Vert_{\infty}$ always holds, so assuming
that $\Vert C\Vert_{\infty}=O(1)$ is a slightly stronger assumption
than assuming $\Vert C\Vert_{2}=O(1)$. However, when the Max-Cut
SDP is applied to the Max-Cut problem on graphs with bounded (weighted)
degree, we naturally have $\Vert C\Vert_{\infty}=O(1)$. In fact,
the weights can even be signed, as in the case of spin glass optimization.

The key lemma, restated and proved in Appendix \ref{app:maxcutrounding},
is the following:

\begin{restatable}[Max-Cut rounding]{restatelem}{maxcutroundinglem}

\label{lem:maxcutround} Suppose that $X\succeq0$ with $\Vert\mathrm{diag}(X)-\mathbf{1}_{n}\Vert_{1}\leq\delta$.
Then there exists $X'\succeq0$ with $\mathrm{diag}(X')=\mathbf{1}_{n}$
such that for any symmetric $A$, we have 
\[
\Tr[A(X-X')]\leq3\delta\Vert A\Vert_{\infty}.
\]

\end{restatable}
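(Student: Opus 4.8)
The plan is to construct $X'$ from $X$ by a two-step correction: first scale the diagonal entries that are too large downward (via a diagonal congruence) and then add a positive semidefinite diagonal correction to raise the deficient diagonal entries up to $1$, keeping track of the total mass moved in both steps. Concretely, write $d = \mathrm{diag}(X) \in \R^n$, so $\|d - \mathbf{1}_n\|_1 \leq \delta$. Let $D = \diag(\rho)$ where $\rho_i = \min(1, d_i)^{1/2} / d_i^{1/2}$ (interpreted as $1$ if $d_i = 0$), so $\rho_i \leq 1$. Set $\tilde X = D X D \succeq 0$; then $\diag(\tilde X)_i = \rho_i^2 d_i = \min(1, d_i) \leq 1$. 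Finally set $X' = \tilde X + \diag(\mathbf{1}_n - \diag(\tilde X)) \succeq 0$, which has $\diag(X') = \mathbf{1}_n$ by construction.

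The next step is to bound $\Tr[A(X - X')] = \Tr[A(X - \tilde X)] + \Tr[A(\tilde X - X')]$ term by term. For the second term, $\tilde X - X' = -\diag(\mathbf{1}_n - \min(\mathbf{1}_n, d))$ is a diagonal matrix whose $\ell^1$ norm is $\sum_i (1 - \min(1, d_i)) = \sum_{i : d_i < 1}(1 - d_i) \leq \|d - \mathbf{1}_n\|_1 \leq \delta$, so $|\Tr[A(\tilde X - X')]| \leq \|A\|_\infty \cdot \delta$ (using that the diagonal of $A$ is controlled by the induced $\ell^\infty$ operator norm, since $|A_{ii}| \leq \sum_j |A_{ij}| \leq \|A\|_\infty$, actually $|A_{ii}| \le \max_j \sum_i |A_{ij}| = \|A\|_\infty$ — I should double-check which induced norm is meant, but either way $\max_i |A_{ii}| \le \|A\|_\infty$). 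For the first term, $X - \tilde X = X - DXD$, and I would write this as $(I - D)X + DX(I - D) - (I-D)X(I-D)$ or, more cleanly, use the entrywise identity $(X - \tilde X)_{ij} = (1 - \rho_i \rho_j) X_{ij}$ and bound $|1 - \rho_i \rho_j| \le (1-\rho_i) + (1-\rho_j)$. Since $X \succeq 0$ implies $|X_{ij}| \le \frac12(X_{ii} + X_{jj}) = \frac12(d_i + d_j)$, and $1 - \rho_i \le 1 - \min(1,d_i)/d_i$... here I need $(1-\rho_i) d_i$ and $(1-\rho_i) d_j$ type quantities to sum to something $O(\delta)$; note $(1 - \rho_i) d_i \le d_i - \min(1,d_i) = \max(0, d_i - 1) \le |d_i - 1|$ only after checking $\rho_i d_i \ge \min(1,d_i)$, which holds since $\rho_i d_i = \min(1,d_i)^{1/2} d_i^{1/2} \ge \min(1,d_i)$. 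Summing the entrywise bounds against $|A_{ij}|$ and using $\sum_i \sum_j |A_{ij}| (1-\rho_i)(d_i + d_j) \le 2\|A\|_\infty \sum_i (1-\rho_i) d_i \le 2\delta\|A\|_\infty$ — or a variant thereof — gives $|\Tr[A(X - \tilde X)]| \le 2\delta \|A\|_\infty$. Adding the two contributions yields the claimed bound $\Tr[A(X - X')] \le 3\delta\|A\|_\infty$.

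The main obstacle I anticipate is the bookkeeping in the first term: the scaling $D$ is a square-root scaling (needed to preserve positive semidefiniteness via congruence), so the relevant small quantities are $1 - \rho_i$ rather than $1 - \rho_i^2 = 1 - \min(1,d_i)/d_i$, and one must carefully exploit $|X_{ij}| \le \frac12(d_i + d_j)$ together with the double sum to convert everything into a single sum $\sum_i (1-\rho_i^2) d_i = \sum_i (d_i - \min(1,d_i)) = \sum_{i: d_i > 1}(d_i - 1) \le \delta$. The cleanest route is probably to bound $|1 - \rho_i\rho_j| \le 1 - \rho_i^2\rho_j^2 \le (1-\rho_i^2) + (1-\rho_j^2)$ directly (valid since $\rho_i,\rho_j \in [0,1]$), avoiding the square roots entirely in the estimate; then $\sum_{i,j} |A_{ij}|\,(1-\rho_i^2)(d_i+d_j)/2$... wait, this needs $|X_{ij}| \le \frac12(d_i+d_j)$ paired with $(1-\rho_i^2)$, and $(1-\rho_i^2) d_i = d_i - \min(1,d_i) \le |d_i - 1|$ while $(1-\rho_i^2) d_j$ is not obviously controlled — so one does want to symmetrize and use $\sum_{i,j}|A_{ij}| (1-\rho_i^2) d_j \le \|A\|_\infty \sum_j d_j (\text{something})$, which needs more care. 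I would resolve this by instead writing $X - \tilde X = (I - D^2)^{1/2} \cdots$ no — the honest resolution is: $X - DXD = \frac12[(I-D^2)X + X(I-D^2)] + \frac12(I-D)^2 X (\text{...})$, messy; better to just use $\Tr[A(X - DXD)] = \Tr[(A - DAD)X]$ and bound $\|A - DAD\|_\infty \le 2\|A\|_\infty \max_i(1-\rho_i) \le 2\|A\|_\infty$ combined with... no, that loses the $\delta$. The right identity is $\Tr[A(X-DXD)] = \Tr[AX] - \Tr[DADX] = \Tr[(A - DAD)X]$, and then bound using $X \succeq 0$, $\Tr X = \|d\|_1 \le 1 + \delta$: this gives $|\Tr[(A-DAD)X]| \le \|A - DAD\|_2 \Tr X$, but $\|A-DAD\|_2$ is not small. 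So the entrywise double-sum approach is genuinely the one to push through, and getting the constant exactly $3$ (rather than, say, $4$) is where I expect to spend the most effort.
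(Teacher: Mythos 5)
Your construction agrees with the paper's: same scaling matrix $D$ (only indices with $X_{ii}>1$ are actually scaled, since $\rho_i=1$ when $d_i\leq 1$), same form $X'=\tilde X+\mathrm{diag}(\mathbf{1}_n-\mathrm{diag}\tilde X)$, and your bound of the diagonal-correction term by $\delta\|A\|_\infty$ is correct. The gap you already sense is real, and it lives exactly where you flag it: in the deflation term. The problem is that you decouple $|1-\rho_i\rho_j|$ from $|X_{ij}|$ \emph{before} combining them. Bounding $|1-\rho_i\rho_j|\le (1-\rho_i^2)+(1-\rho_j^2)$ and then $|X_{ij}|\le\frac12(d_i+d_j)$ separately produces cross terms of the form $(1-\rho_i^2)d_j$ with $i\neq j$, and these are \emph{not} controlled by $\sum_i|d_i-1|\le\delta$: only the diagonal pairing $(1-\rho_i^2)d_i=\max(0,d_i-1)$ sums to at most $\delta$. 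You can patch this using $d_j\le 1+\delta$, but then the constant degrades to something like $(3+\delta)$ or $1+2\sqrt{1+\delta}$, which is not the stated $3$.

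The paper closes the gap by \emph{not} separating the two factors. It keeps $|X_{ij}|\le\sqrt{d_id_j}$ (Cauchy--Schwarz on the Cholesky vectors), so that
\[
|X_{ij}-\tilde X_{ij}|\le\sqrt{d_id_j}\,\bigl(1-\rho_i\rho_j\bigr)=\sqrt{d_id_j}-\sqrt{\min(1,d_i)\min(1,d_j)},
\]
and then case-splits on whether each index lies in $\mathcal{S}=\{i:d_i\ge1\}$. In the mixed case $i\in\mathcal{S}$, $j\notin\mathcal{S}$, the right-hand side factors as $\sqrt{d_j}\bigl(\sqrt{d_i}-1\bigr)$, and the factor $\sqrt{d_j}\le1$ \emph{kills} the problematic $d_j$-dependence; then $\sqrt{d_i}-1\le d_i-1$. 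In the case $i,j\in\mathcal{S}$, AM--GM gives $\sqrt{d_id_j}-1\le\frac12(d_i-1)+\frac12(d_j-1)$. The resulting double sum then collapses to $2\|A\|_\infty\sum_{i\in\mathcal{S}}(d_i-1)\le2\delta\|A\|_\infty$, and adding the diagonal correction yields exactly $3\delta\|A\|_\infty$. The moral is that the estimate is not a uniform two-factor bound: the $\sqrt{d_j}\le1$ suppression in the mixed case is essential, and it is visible only after combining $|X_{ij}|$ with $1-\rho_i\rho_j$ into the single expression above.
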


This lemma can be used to prove the following main theorem for the
Max-Cut SDP. The proof is also supplied in Appendix \ref{app:maxcutrounding}.

\begin{restatable}[Max-Cut rounding]{restatethm}{maxcutroundingthm}

\label{thm:maxcut} Let $p_{\star}$ denote the optimal value of the
unregularized Max-Cut SDP (\ref{eq:maxcut_sdp}), and let $\kappa=\frac{\max_{i}b_{i}}{\min_{i}b_{i}}$.
Suppose we have $\blam\in\R^{m}$ such that $X_{\beta,\blam}$ as
defined by (\ref{eq:Xmaxcut}) satisfies $\Vert\mathrm{diag}(X_{\beta,\blam})-\mathbf{b}\Vert_{1}\leq\delta$.
Then 
\[
\left|\Tr[CX_{\beta,\blam}]-p_{\star}\right|\leq3\kappa\delta\Vert C\Vert_{\infty}+\beta^{-1}\log n.
\]
 Moreover we can efficiently construct $X'$ which is feasible for
(\ref{eq:maxcut_sdp}) and satisfies 
\[
\Tr[CX']-p_{\star}\leq6\kappa\delta\Vert C\Vert_{\infty}+\beta^{-1}\log n.
\]

\end{restatable}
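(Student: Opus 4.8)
The plan is to deduce both bounds from three ingredients: Lemma~\ref{lem:maxcutround} (to build a feasible $X'$ near $X_{\beta,\blam}$), the regularization bound of Lemma~\ref{lem:reg-bound}, and a sensitivity estimate showing that the optimal value of the unregularized Max-Cut SDP changes by at most $O(\kappa\delta\|C\|_\infty)$ when its diagonal constraint is perturbed by $O(\delta)$ in $\ell^1$.

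\emph{Feasible rounded solution and the lower bound.} Write $\hat{\mathbf{b}}:=\mathrm{diag}(X_{\beta,\blam})$, so that $\|\hat{\mathbf{b}}-\mathbf{b}\|_1\le\delta$ and $\mathbf{1}_n^\top\hat{\mathbf{b}}=\Tr[X_{\beta,\blam}]=1$. Set $D=\mathrm{diag}(\sqrt{b_i})$ and $Z:=D^{-1}X_{\beta,\blam}D^{-1}$; then $\mathrm{diag}(Z)_i=\hat b_i/b_i$, so $\|\mathrm{diag}(Z)-\mathbf{1}_n\|_1\le\delta/\min_i b_i$ and Lemma~\ref{lem:maxcutround} supplies $Z'\succeq0$ with $\mathrm{diag}(Z')=\mathbf{1}_n$ and $\Tr[B(Z-Z')]\le(3\delta/\min_i b_i)\,\|B\|_\infty$ for all symmetric $B$. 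Put $X':=DZ'D$; one checks $X'\succeq0$, $\mathrm{diag}(X')=\mathbf{b}$, and $\Tr[X']=\mathbf{1}_n^\top\mathbf{b}=1$, so $X'$ is feasible for (\ref{eq:maxcut_sdp}), and for symmetric $A$, $\Tr[A(X_{\beta,\blam}-X')]=\Tr[(DAD)(Z-Z')]\le(3\delta/\min_i b_i)\,\|DAD\|_\infty\le3\kappa\delta\|A\|_\infty$, using $\|DAD\|_\infty\le(\max_i b_i)\|A\|_\infty$. Taking $A=\pm C$ gives $|\Tr[C(X_{\beta,\blam}-X')]|\le3\kappa\delta\|C\|_\infty$; since $X'$ is feasible for the unregularized SDP, $\Tr[CX']\ge p_\star$, hence $\Tr[CX_{\beta,\blam}]\ge p_\star-3\kappa\delta\|C\|_\infty$.

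\emph{The upper bound.} The key point is that $X_{\beta,\blam}$, being the Gibbs state of $C-\mathrm{diag}(\blam)$, is exactly the (unique) minimizer of the \emph{modified} regularized Max-Cut SDP obtained from (\ref{eq:reg_sdp}) by replacing the constraint $\mathrm{diag}(X)=\mathbf{b}$ with $\mathrm{diag}(X)=\hat{\mathbf{b}}$---indeed $\mathrm{diag}(\blam)$ then serves as the exact Lagrange multiplier, and this observation requires no Slater condition. By the argument of Lemma~\ref{lem:reg-bound} applied to this modified SDP, $\Tr[CX_{\beta,\blam}]\le\hat p_\star+\beta^{-1}\log n$, where $\hat p_\star$ is the optimal value of the \emph{unregularized} modified SDP. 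It remains to show $\hat p_\star\le p_\star+3\kappa\delta\|C\|_\infty$. For this I would take an optimizer $X_{\star,\infty}$ of (\ref{eq:maxcut_sdp}) and round it to a feasible point of the modified problem by an asymmetric construction: with $d_i:=\sqrt{\min(\hat b_i/b_i,\,1)}\in[0,1]$ and $D'=\mathrm{diag}(d_i)$, set $X^\sharp:=D'X_{\star,\infty}D'+\mathrm{diag}\big(\hat{\mathbf{b}}-\mathrm{diag}(D'X_{\star,\infty}D')\big)$. Since $d_i\le1$, the diagonal of $D'X_{\star,\infty}D'$ is $\min(\hat b_i,b_i)\le\hat b_i$, so the appended diagonal term is nonnegative, and thus $X^\sharp\succeq0$, $\mathrm{diag}(X^\sharp)=\hat{\mathbf{b}}$, $\Tr[X^\sharp]=1$. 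Bounding $\Tr[C(X^\sharp-X_{\star,\infty})]$ using $|[X_{\star,\infty}]_{ij}|\le\sqrt{b_ib_j}$, the elementary inequalities $1-d_id_j\le(1-d_i)+(1-d_j)$ and $1-d_i\le|b_i-\hat b_i|/b_i$, and the fact that the row and column sums of $|C|$ are at most $\|C\|_\infty$, a short computation gives $|\Tr[C(X^\sharp-X_{\star,\infty})]|\le3\kappa\delta\|C\|_\infty$, whence $\hat p_\star\le\Tr[CX^\sharp]\le p_\star+3\kappa\delta\|C\|_\infty$ and therefore $\Tr[CX_{\beta,\blam}]\le p_\star+3\kappa\delta\|C\|_\infty+\beta^{-1}\log n$.

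\emph{Conclusion and main difficulty.} The two bounds combine to $|\Tr[CX_{\beta,\blam}]-p_\star|\le3\kappa\delta\|C\|_\infty+\beta^{-1}\log n$, and for the feasible $X'$ of the first step, $\Tr[CX']\le\Tr[CX_{\beta,\blam}]+3\kappa\delta\|C\|_\infty\le p_\star+6\kappa\delta\|C\|_\infty+\beta^{-1}\log n$, with $X'$ produced by an explicit and efficient rounding procedure. The main obstacle is the diagonal-perturbation sensitivity bound $\hat p_\star\le p_\star+3\kappa\delta\|C\|_\infty$: the constant must involve the conditioning $\kappa=\max_i b_i/\min_i b_i$ of the \emph{original} target vector $\mathbf{b}$, not that of $\hat{\mathbf{b}}$, which can be arbitrarily worse (and $\hat{\mathbf{b}}$ may even have vanishing entries). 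The asymmetric rounding above---first contracting by $d_i=\sqrt{\min(\hat b_i/b_i,1)}$, then restoring the diagonal with a nonnegative diagonal correction, so that one never divides by an entry of $\hat{\mathbf{b}}$---is what delivers a constant in terms of $\kappa$, and it disposes of the degenerate case $\hat b_i=0$ automatically.
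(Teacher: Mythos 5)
Your overall strategy mirrors the paper's: round $X_{\beta,\blam}$ to feasibility for the lower bound, and for the upper bound observe that $X_{\beta,\blam}$ solves the entropy-regularized SDP with perturbed diagonal constraint $\mathrm{diag}(X)=\hat{\mathbf{b}}$, round the unregularized optimizer $X_\star$ into that perturbed feasible set, and pay the entropic diameter $\log n$. Your lower-bound step is literally Corollary \ref{cor:round} unwound, so it matches the paper. Where you genuinely diverge is in the upper-bound rounding, and your instinct there is sound: the paper invokes Corollary \ref{cor:round} with target diagonal $\hat{\mathbf{b}}$, but that corollary produces a constant governed by the condition number $\hat\kappa:=\max_i\hat b_i/\min_i\hat b_i$ of the \emph{target}, not by $\kappa$; and while every $\hat b_i>0$ since $X_{\beta,\blam}\succ0$, $\hat\kappa$ is comparable to $\kappa$ only when $\delta\ll\min_i b_i$. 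Your asymmetric construction $X^\sharp=D'X_\star D'+\mathrm{diag}\big(\hat{\mathbf{b}}-\mathrm{diag}(D'X_\star D')\big)$ with $d_i=\sqrt{\min(\hat b_i/b_i,1)}$ never divides by an entry of $\hat{\mathbf{b}}$, and your estimate via $|[X_\star]_{ij}|\le\sqrt{b_ib_j}$, $1-d_id_j\le(1-d_i)+(1-d_j)$, and $1-d_i\le|b_i-\hat b_i|/b_i$ indeed yields $|\Tr[C(X^\sharp-X_\star)]|\le(2\kappa+1)\delta\Vert C\Vert_\infty\le3\kappa\delta\Vert C\Vert_\infty$ (I checked the bookkeeping: the off-diagonal term contributes at most $2\kappa\delta\Vert C\Vert_\infty$ and the diagonal correction at most $\delta\Vert C\Vert_\infty$). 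So your proof is correct, and it is in fact the more careful route: it delivers the stated constant $3\kappa\delta\Vert C\Vert_\infty$ unconditionally in $\delta$, whereas a literal application of Corollary \ref{cor:round} at the corresponding step of the paper's proof would produce $\hat\kappa$ in place of $\kappa$.
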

\begin{rem}
Let $\ve>0$ be a target for the additive error of our estimate of
the optimal value. Now Theorem \ref{thm:gradconv} implies that $\blam$
as in the statement of Theorem \ref{thm:maxcut} with $\delta=O(\ve)$
can be obtained within the first $T$ iterations of algorithm (\ref{eq:optstep})
with gradient estimates satisfying Assumption \ref{assump:gradientestimator},
as long as we take $\beta=O(\ve^{-1})$, $\gamma=O(\ve)$, and $T=\tilde{O}(\ve^{-3})$.
To guarantee that Assumption \ref{assump:gradientestimator} is satisfied
with high probability for $\gamma=O(\ve)$, Lemma \ref{cor:maxcutgradest}
says that we need only take $S=\tilde{O}(\ve^{-2})$ samples per iteration
within the gradient estimator (\ref{eq:gt}). In summary, assuming
that $\kappa=\tilde{O}(1)$ and $\Vert C\Vert_{\infty}=\tilde{O}(1)$,
we achieve our target error $\ve$ within $\tilde{O}(\ve^{-3})$ iterations,
each of which requires $\tilde{O}(\ve^{-2})$ samples in the gradient
estimator. Notably, the overall dimension-dependence is only logarithmic.
\end{rem}

\section{Rounding for Strong Perm-Synch SDP \label{sec:perm-synch-rounding}}

Next we give the rounding argument for the strong Perm-Synch SDP (\ref{eq:strongpermsynch_sdp}).
The gradient convergence result, Theorem \ref{thm:gradconv}, allows
us to construct a PSD matrix $X$ of size $NK\times NK$ satisfying
\[
\sum_{i=1}^{N}\Vert X^{(i,i)}-\mathbf{I}_{K}/n\Vert_{\Tr}\leq\delta
\]
 for arbitrary $\delta>0$, i.e., satisfying approximate primal feasibility.
We want to construct $X'$ which is close to $X$ and primal-feasible.
The following key lemma is proved in Appendix \ref{app:strongpermsynchrounding}.

\begin{restatable}[Strong Perm-Synch rounding]{restatelem}{strongpermsynchroundinglem}

\label{lem:strongpermsynchround} Suppose that $X\succeq0$ with $\sum_{i=1}^{N}\Vert X^{(i,i)}-\mathbf{I}_{K}\Vert_{\Tr}\leq\delta$.
Then there exists $X'\succeq0$ with $[X']^{(i,i)}=\mathbf{I}_{K}$
for all $i=1,\ldots,N$, such that for any symmetric $A$, we have
\[
\Tr[A(X-X')]\leq(2K+1)\,\delta\,\vert\vert\vert A\vert\vert\vert,
\]
 where 
\[
\vert\vert\vert A\vert\vert\vert:=\max_{i=1,\ldots,N}\sum_{j=1}^{N}\Vert A^{(i,j)}\Vert_{2}.
\]

\end{restatable}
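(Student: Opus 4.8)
\emph{Proof plan.} The plan is to imitate the construction behind the Max-Cut rounding lemma (Lemma~\ref{lem:maxcutround}), which itself adapts the rounding scheme of \cite{altschuler2017near}: I would first rescale the block rows and columns of $X$ so that every diagonal block becomes dominated by $\mathbf{I}_K$, and then restore each diagonal block exactly by adding a block-diagonal positive semidefinite correction. Concretely, write $D_i := X^{(i,i)}\succeq 0$ and set $r_i := \min\!\big(1,\,\Vert D_i\Vert_2^{-1}\big)\in(0,1]$ (with the convention $r_i:=1$ when $D_i=0$). Let $R := \bigoplus_{i=1}^N \sqrt{r_i}\,\mathbf{I}_K$ and $\tilde X := RXR\succeq 0$, which has blocks $\tilde X^{(i,j)}=\sqrt{r_ir_j}\,X^{(i,j)}$ and in particular $\tilde X^{(i,i)} = r_iD_i\preceq\mathbf{I}_K$ by the choice of $r_i$. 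Then set
\[
X' := \tilde X + \bigoplus_{i=1}^N\big(\mathbf{I}_K - r_iD_i\big).
\]
Each summand $\mathbf{I}_K-r_iD_i$ is positive semidefinite, so $X'\succeq 0$, and $[X']^{(i,i)} = r_iD_i + (\mathbf{I}_K-r_iD_i) = \mathbf{I}_K$, as required; moreover forming $X'$ needs only the top eigenvalue of each $K\times K$ block $X^{(i,i)}$.

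Next I would bound $\Tr[A(X-X')]$; since $\vert\vert\vert{-A}\vert\vert\vert = \vert\vert\vert A\vert\vert\vert$, it suffices to bound $|\Tr[A(X-X')]|$. The key structural point is that the diagonal blocks of $X-X'$ collapse to something clean, $(X-X')^{(i,i)} = D_i - r_iD_i - (\mathbf{I}_K-r_iD_i) = D_i - \mathbf{I}_K$, while for $i\ne j$ the off-diagonal blocks are merely shrunk, $(X-X')^{(i,j)} = (1-\sqrt{r_ir_j})\,X^{(i,j)}$. Expanding $\Tr[A(X-X')] = \sum_{i,j}\Tr[A^{(i,j)}(X-X')^{(j,i)}]$ blockwise thus splits it into a diagonal contribution $\sum_i\Tr[A^{(i,i)}(D_i-\mathbf{I}_K)]$ and an off-diagonal contribution $\sum_{i\ne j}(1-\sqrt{r_ir_j})\Tr[A^{(i,j)}X^{(j,i)}]$. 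For the diagonal part, the Hölder inequality $|\Tr[MN]|\le\Vert M\Vert_2\Vert N\Vert_{\Tr}$ together with $\Vert A^{(i,i)}\Vert_2\le\vert\vert\vert A\vert\vert\vert$ and the hypothesis $\sum_i\Vert X^{(i,i)}-\mathbf{I}_K\Vert_{\Tr}\le\delta$ yields a bound of $\delta\,\vert\vert\vert A\vert\vert\vert$, which is the ``$1$'' in the constant $2K+1$. For the off-diagonal part, set $s_i:=1-\sqrt{r_i}\ge 0$, so $0\le 1-\sqrt{r_ir_j}\le s_i+s_j$; note $s_i>0$ only when $\Vert D_i\Vert_2>1$, in which case $s_i\le 1-\Vert D_i\Vert_2^{-1}\le\Vert D_i\Vert_2-1\le\Vert D_i-\mathbf{I}_K\Vert_2\le\Vert D_i-\mathbf{I}_K\Vert_{\Tr}$, whence $\sum_i s_i\le\delta$. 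Positive semidefiniteness of $X$ supplies the block bound $\Vert X^{(j,i)}\Vert_{\Tr}\le\sqrt{\Tr[X^{(i,i)}]\,\Tr[X^{(j,j)}]}\le K+\delta$ (writing $X=Y^\top Y$ in block-column form and using $\Tr[X^{(i,i)}]\le K+\Vert X^{(i,i)}-\mathbf{I}_K\Vert_{\Tr}$). Combining these with $\sum_j\Vert A^{(i,j)}\Vert_2\le\vert\vert\vert A\vert\vert\vert$ and the symmetry $\Vert A^{(i,j)}\Vert_2=\Vert A^{(j,i)}\Vert_2$ bounds the off-diagonal part by $2(K+\delta)\,\delta\,\vert\vert\vert A\vert\vert\vert$, the source of the ``$2K$''. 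Adding the two contributions and absorbing the lower-order $\delta^2$ term (harmless in the regime $\delta\lesssim 1$ relevant to the application; a sharper accounting that retains $\Tr[X^{(j,j)}]\le K+\Vert X^{(j,j)}-\mathbf{I}_K\Vert_{\Tr}$ instead of $K+\delta$ removes it) gives the claimed $(2K+1)\,\delta\,\vert\vert\vert A\vert\vert\vert$.

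The hard part is the off-diagonal term, and specifically making the norm come out right. Two ingredients are essential there: positive semidefiniteness of $X$, which is what converts the diagonal-block information $\sum_i\Vert X^{(i,i)}-\mathbf{I}_K\Vert_{\Tr}\le\delta$ into control on the off-diagonal blocks $\Vert X^{(i,j)}\Vert_{\Tr}$ via the Cauchy--Schwarz-type inequality above; and the precise definition of $\vert\vert\vert\,\cdot\,\vert\vert\vert$, engineered exactly so that the ``block row sums'' $\sum_j\Vert A^{(i,j)}\Vert_2$ are controlled---the block analogue of the way the induced $\ell^\infty$ operator norm appears in the Max-Cut rounding bound. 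A secondary point requiring care is to verify that the cumulative rescaling $\sum_i s_i$ is $O(\delta)$ even though individual blocks may get rescaled by an order-one factor, and to confirm that the $O(\delta^2)$ remainders are indeed absorbable into the stated constant.
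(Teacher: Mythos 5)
Your construction---scalar rescaling of each block row/column by $\sqrt{r_i}$ with $r_i=\min(1,\Vert X^{(i,i)}\Vert_2^{-1})$, then adding back a block-diagonal PSD correction---is genuinely different from the paper's, which instead thresholds the \emph{singular values} of each Gram-factor block $Z^{(i)}$ (a finer, matrix-valued rescaling; your $\sqrt{r_i}\mathbf{I}_K$ coincides with the paper's $V^{(i)}\mathrm{diag}(\min(1,1/\sigma_k^{(i)}))V^{(i)\top}$ only when the diagonal block is isotropic). Your route is more elementary, requiring only the top eigenvalue of each $K\times K$ diagonal block rather than a full SVD; the blockwise identities $(X-X')^{(i,i)}=X^{(i,i)}-\mathbf{I}_K$ and $(X-X')^{(i,j)}=(1-\sqrt{r_ir_j})X^{(i,j)}$ are correct, and the PSD fact $\Vert X^{(j,i)}\Vert_{\Tr}\le\sqrt{\Tr[X^{(i,i)}]\,\Tr[X^{(j,j)}]}$ is a nice substitute for the paper's entrywise bound $\vert B^{(i,j)}_{kl}\vert\le1$.

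However, the proof as written establishes only $\Tr[A(X-X')]\le(2K+1+2\delta)\,\delta\,\vert\vert\vert A\vert\vert\vert$, not the stated $(2K+1)\,\delta\,\vert\vert\vert A\vert\vert\vert$, and your remark that the $O(\delta^2)$ surplus is removed by replacing $K+\delta$ with $K+\Vert X^{(j,j)}-\mathbf{I}_K\Vert_{\Tr}$ is not correct. Writing $\delta_i:=\Vert X^{(i,i)}-\mathbf{I}_K\Vert_{\Tr}$ and using $\Vert X^{(j,i)}\Vert_{\Tr}\le\sqrt{(K+\delta_i)(K+\delta_j)}\le K+\tfrac12(\delta_i+\delta_j)$, the off-diagonal contribution becomes $2K\delta\,\vert\vert\vert A\vert\vert\vert$ plus a cross term $\tfrac12\sum_{i\ne j}(s_i+s_j)(\delta_i+\delta_j)\Vert A^{(i,j)}\Vert_2$, which is still $O(\delta^2\,\vert\vert\vert A\vert\vert\vert)$. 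What genuinely helps is sharpening $s_i\le\delta_i$ to $s_i=1-\Vert X^{(i,i)}\Vert_2^{-1/2}\le\tfrac12(\Vert X^{(i,i)}\Vert_2-1)\le\tfrac12\delta_i$ (since $1-y^{-1}\le\tfrac12(y^2-1)$ for $y\ge1$, applied to $y=\Vert X^{(i,i)}\Vert_2^{1/2}$), which gives $\sum_i s_i\le\delta/2$ and a total bound of $(K+1+\delta)\,\delta\,\vert\vert\vert A\vert\vert\vert$; this is $\le(2K+1)\,\delta\,\vert\vert\vert A\vert\vert\vert$ precisely when $\delta\le K$ but not for larger $\delta$. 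The residual $\delta$ in the prefactor $K+\delta$ is structural to the scalar-rescaling route: your off-diagonal estimate must pass through $\Vert X^{(j,i)}\Vert_{\Tr}$, which can be as large as $K+\delta$, whereas the paper's per-singular-value thresholding controls the off-diagonal blocks via $\vert B^{(i,j)}_{kl}\vert\le1$ and $\sum_{(i,k)\in\mathcal{S}}([\sigma_k^{(i)}]^2-1)\le\delta$, with no $\delta$-dependent prefactor, so its constant $(2K+1)$ holds unconditionally. In the regime $\delta=O(\ve)$ of the downstream application your weaker bound suffices asymptotically, but it does not establish the lemma as stated for all $\delta>0$.
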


This lemma in turn enables us to prove the main theorem for the Strong
Perm-Synch SDP. The proof is also deferred to Appendix \ref{app:strongpermsynchrounding}.

\begin{restatable}[Strong Perm-Synch rounding]{restatethm}{strongpermsynchroundingthm}

\label{thm:strongpermsynch} Let $p_{\star}$ denote the optimal value
of the unregularized Strong Perm-Synch SDP (\ref{eq:strongpermsynch_sdp}).
Suppose we have $\Blam=(\Lambda^{(1)},\ldots,\Lambda^{(N)})$ such
that $X_{\beta,\Blam}$ as defined by (\ref{eq:Xstrong}) satisfies
$\sum_{i=1}^{N}\Vert X_{\beta,\Blam}^{(i,i)}-\mathbf{I}_{K}/n\Vert_{\Tr}\leq\delta$.
Then 
\[
\left|\Tr[CX_{\beta,\Blam}]-p_{\star}\right|\leq(2K+1)\delta\,\vert\vert\vert C\vert\vert\vert+\beta^{-1}\log n.
\]
 Moreover we can efficiently construct $X'$ which is feasible for
(\ref{eq:strongpermsynch_sdp}) and satisfies 
\[
\Tr[CX']-p_{\star}\leq(4K+2)\delta\,\vert\vert\vert C\vert\vert\vert+\beta^{-1}\log n.
\]

\end{restatable}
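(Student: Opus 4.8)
The plan is to combine the rounding Lemma \ref{lem:strongpermsynchround} with the regularization bound Lemma \ref{lem:reg-bound}, mirroring the structure of the Max-Cut argument (Theorem \ref{thm:maxcut}). First I would rescale: since $X_{\beta,\Blam}$ has trace $1$ and the target block-diagonal constraint is $X^{(i,i)} = \mathbf{I}_K/n$, I would work with the rescaled matrix $n X_{\beta,\Blam}$, which has $\sum_{i=1}^N \Vert (nX_{\beta,\Blam})^{(i,i)} - \mathbf{I}_K \Vert_{\Tr} \le n\delta$. Applying Lemma \ref{lem:strongpermsynchround} to $nX_{\beta,\Blam}$ produces $\tilde X' \succeq 0$ with $[\tilde X']^{(i,i)} = \mathbf{I}_K$ and, taking $A = C$, the bound $\Tr[C(nX_{\beta,\Blam} - \tilde X')] \le (2K+1)\, n\delta\, \vert\vert\vert C \vert\vert\vert$. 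Then $X' := \tilde X'/n$ satisfies $[X']^{(i,i)} = \mathbf{I}_K/n$, hence $\Tr[X'] = \sum_i \Tr[\mathbf{I}_K/n] = NK/n = 1$, so $X'$ is feasible for (\ref{eq:strongpermsynch_sdp}), and $\Tr[C(X_{\beta,\Blam} - X')] \le (2K+1)\delta\,\vert\vert\vert C \vert\vert\vert$.

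For the first inequality, I would chain three estimates. By feasibility of $X'$, $p_\star \le \Tr[CX']$, and combining with the displayed rounding bound gives $\Tr[CX_{\beta,\Blam}] \ge p_\star - (2K+1)\delta\,\vert\vert\vert C \vert\vert\vert$. For the reverse direction, I would relate $\Tr[CX_{\beta,\Blam}]$ to the regularized optimal value $p_{\star,\beta}$: since $X_{\beta,\Blam}$ is the Gibbs state for the dual variable $\Blam$ and need not be the exact regularized optimizer, I instead argue via the dual objective. Actually, the cleanest route is to use that $X_{\beta,\Blam}$ is $\delta$-close to primal-feasibility and bound $\Tr[CX_{\beta,\Blam}] \le \Tr[CX'] + 0$ is not quite right — rather I use $\Tr[CX_{\beta,\Blam}] = \Tr[CX'] + \Tr[C(X_{\beta,\Blam}-X')]$; but the sign of the second term is only one-sided from the lemma. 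So instead, for the upper bound on $\Tr[CX_{\beta,\Blam}]$, I would apply Lemma \ref{lem:strongpermsynchround} with $A = -C$ (equally valid since $-C$ is symmetric), obtaining a possibly different feasible $X''$ with $\Tr[-C(nX_{\beta,\Blam} - \tilde X'')] \le (2K+1)n\delta\,\vert\vert\vert C\vert\vert\vert$, i.e. $\Tr[CX_{\beta,\Blam}] \le \Tr[CX''/n] + (2K+1)\delta\,\vert\vert\vert C\vert\vert\vert$. Now $\Tr[CX''/n] \ge p_{\star,\infty} \ge p_\star$ is the wrong direction; what I actually need is $\Tr[CX''/n] \le p_{\star,\beta} + \text{(entropy slack)}$, which follows because among all density operators with the prescribed block-diagonal structure, the one minimizing $\Tr[C\,\cdot\,] + \beta^{-1}S(\cdot)$ attains $p_{\star,\beta}$; combined with the entropic diameter bound from Lemma \ref{lem:reg-bound} ($p_{\star,\beta} \le p_\star + \beta^{-1}\log n$) and $S \le 0$ on $\mathcal{P}_1$, this gives $\Tr[CX_{\beta,\Blam}] \le p_\star + (2K+1)\delta\,\vert\vert\vert C\vert\vert\vert + \beta^{-1}\log n$. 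Together with the lower bound, $\vert \Tr[CX_{\beta,\Blam}] - p_\star\vert \le (2K+1)\delta\,\vert\vert\vert C\vert\vert\vert + \beta^{-1}\log n$.

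For the final assertion, with the constructed feasible $X'$ (from the $A=C$ application) I have $\Tr[CX'] = \Tr[CX_{\beta,\Blam}] - \Tr[C(X_{\beta,\Blam}-X')] \le \Tr[CX_{\beta,\Blam}] + (2K+1)\delta\,\vert\vert\vert C\vert\vert\vert$, and then substituting the just-proved bound $\Tr[CX_{\beta,\Blam}] \le p_\star + (2K+1)\delta\,\vert\vert\vert C\vert\vert\vert + \beta^{-1}\log n$ yields $\Tr[CX'] - p_\star \le (4K+2)\delta\,\vert\vert\vert C\vert\vert\vert + \beta^{-1}\log n$, as claimed. Efficiency of the construction follows from the proof of Lemma \ref{lem:strongpermsynchround}, which presumably produces $X'$ by an explicit low-rank correction applied blockwise. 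I expect the main obstacle to be getting the one-sided direction of the rounding inequality to yield a genuinely two-sided bound on the objective — this is handled by the device of applying the lemma to both $C$ and $-C$ — together with carefully tracking the factor of $n$ introduced by rescaling the trace-normalized constraints, so that the $(2K+1)$ and $(4K+2)$ constants come out exactly as stated rather than with spurious factors of $n$.
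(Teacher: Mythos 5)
Your rescaling by $n$ and the construction of the feasible $X'$ are correct, and your lower bound $\Tr[CX_{\beta,\Blam}]\geq p_\star-(2K+1)\delta\vert\vert\vert C\vert\vert\vert$ (via $p_\star\leq\Tr[CX']$ and the rounding inequality applied with $A=-C$) is also correct. Note, incidentally, that Lemma \ref{lem:strongpermsynchround} produces a single $X'$ valid \emph{for all} symmetric $A$ simultaneously, so there is no separate $X''$ to track; this is a side-confusion but does not break anything.

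The genuine gap is in your upper bound on $\Tr[CX_{\beta,\Blam}]$. You claim $\Tr[CX''/n]\leq p_{\star,\beta}+(\text{entropy slack})$ "because among all density operators with the prescribed block-diagonal structure, the one minimizing $\Tr[C\,\cdot\,]+\beta^{-1}S(\cdot)$ attains $p_{\star,\beta}$." That statement only yields $\Tr[CX''/n]+\beta^{-1}S(X''/n)\geq p_{\star,\beta}$, i.e.\ a \emph{lower} bound $\Tr[CX''/n]\geq p_{\star,\beta}$, not an upper bound; the point $X''/n$ is feasible but has no reason to be close to the minimizer. As written the chain of inequalities does not close. The route the paper takes (mirroring Theorem \ref{thm:maxcut}) is different: observe that $X_{\beta,\Blam}$ is the \emph{exact} minimizer of the entropically regularized problem with the perturbed block constraint $X^{(i,i)}=X_{\beta,\Blam}^{(i,i)}$; then round the \emph{unregularized} optimizer $X_\star$ (which has $X_\star^{(i,i)}=\mathbf{I}_K/n$) to some $X_\star'$ satisfying the perturbed constraints $[X_\star']^{(i,i)}=X_{\beta,\Blam}^{(i,i)}$, so that $\Tr[CX_{\beta,\Blam}]+\beta^{-1}S(X_{\beta,\Blam})\leq\Tr[CX_\star']+\beta^{-1}S(X_\star')$, giving $\Tr[CX_{\beta,\Blam}]\leq\Tr[CX_\star']+\beta^{-1}\log n\leq p_\star+(2K+1)\delta\vert\vert\vert C\vert\vert\vert+\beta^{-1}\log n$. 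This reverses the direction of the rounding: one rounds $X_\star$ toward the perturbed constraints, not only $X_{\beta,\Blam}$ toward the exact constraints. (To carry this out one needs an analog of Corollary \ref{cor:round} for block constraints $X^{(i,i)}=\tilde B^{(i)}\succ0$ that need not be proportional to identity, obtained by conjugating with block-diagonal square roots; the paper leaves this implicit under "completely analogous.")

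Your final assertion $\Tr[CX']-p_\star\leq(4K+2)\delta\vert\vert\vert C\vert\vert\vert+\beta^{-1}\log n$ is then correct once the upper bound is repaired by the argument above, since it only adds one more application of the rounding inequality to the already-established two-sided bound.
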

\begin{rem}
First we discuss the norm $\vert\vert\vert C\vert\vert\vert$. In
the permutation synchronization context outlined in Section \ref{subsec:cases}
(cf. \cite{lindseyshi2025} for further details), the blocks $C^{(i,j)}$
should be viewed as submatrices of permutation matrices, so in particular
$\Vert C^{(i,j)}\Vert_{2}\leq1$ for all $i,j=1,\ldots,N$. Moreover,
some blocks might satisfy $C^{(i,j)}=0$ if we do not have any tentative
correspondences between keypoints in images $i,j$. We can view our
images as defining a graph, in which $(i,j)$ is an edge if and only
if $C^{(i,j)}\neq0$. Then we can roughly estimate $\vert\vert\vert C\vert\vert\vert\leq d$,
where $d$ is the maximum degree of this graph. Provided $d=O(1)$,
then $\vert\vert\vert C\vert\vert\vert=O(1)$ as well, analogously
to the situation for the Max-Cut SDP.

Let $K\ve>0$ be a target for the additive error of our estimate of
the optimal value. Now provided that $\vert\vert\vert C\vert\vert\vert=O(1)$,
Theorem \ref{thm:gradconv} implies that $\Blam$ as in the statement
of Theorem \ref{thm:maxcut} with $\delta=O(\ve)$ can be obtained
within the first $T$ iterations of algorithm (\ref{eq:optstep})
with gradient estimates satisfying Assumption \ref{assump:gradientestimator},
as long as we take $\beta=O(\ve^{-1})$, $\gamma=O(\ve)$, and $T=\tilde{O}(\ve^{-3})$.
To guarantee that Assumption \ref{assump:gradientestimator} is satisfied
with high probability for $\gamma=O(\ve)$, Lemma \ref{cor:strongpermsynchgradest}
says that we need only take $S=\tilde{O}(\ve^{-2})$ samples per iteration
within the gradient estimator (\ref{eq:gt}).

Note the somewhat annoying prefactor of $K$ appearing in the error
tolerance. If $K$ (the number of keypoints per image) is viewed as
constant while $N$ (the number of images) is taken to be large, this
prefactor does not affect the asymptotic scaling of the algorithm.
(Indeed, the Strong Perm-Synch SDP rapidly becomes expensive to implement
for large $K$ anyway due to the cost of diagonalizing $K\times K$
matrices.) Still, the prefactor of $K$ arising from the proof of
Lemma \ref{lem:strongpermsynchround} may be overly pessimistic in
practice. Note that in this proof, if the blocks $A^{(i,j)}$ are
submatrices of permutation matrices and the singular vector blocks
$V^{(j)}$ are permutation matrices (as should be the case in exact
permutation recovery), then the factor of $K$ appearing in (\ref{eq:annoying})
could be omitted.
\end{rem}

\section{Numerical experiments \label{sec:exp}}

We test our algorithms on all of the problems of interest described
above. In all cases we fix the step size in terms of the regularization
parameter as $\eta=1/\beta$. The code is available online at 
\[
\texttt{https://github.com/willcai7/NonEuclidean-GD}
\]

\subsection{Max-Cut SDP}

We validate our theory of Section \ref{sec:conv} concerning gradient
convergence, and we will not concern ourselves with downstream processing
of the SDP solution, which can be used as in \cite{lindsey2023fast}
to produce an upper bound for the Max-Cut problem. We use the algorithm
and code of \cite{expmv} to perform the required exponential matvecs.

We consider the Max-Cut problem on Erd\H{o}s-Renyi random graphs
of size $n$ with probability $p=3/n$ of including an edge. Thus
the expected degree of each vertex is 3. For such a model, the rank
of the optimal solution $X$ of (\ref{eq:maxcut_sdp}) grows with
the problem size, meaning that low-rank approaches to SDP cannot achieve
$\tilde{O}(n)$ scaling.

We consider $n=200,400,600,800,1000$ and plot in Figure \ref{fig:maxcut_feas}
the primal feasibility error 
\begin{equation}
\Vert\mathrm{diag}(\hat{X}_{t})-\mathbf{1}_{n}/n\Vert_{1}\label{eq:maxcut_feas}
\end{equation}
 of the estimate $\hat{X}_{t}$ (\ref{eq:Xhatt}) for $t=1,\ldots,200$,
which can also be viewed as the dual norm of the dual gradient estimate.
We consider two possible choices for the number of stochastic vectors:
$S=\lceil25\times\log n\rceil$ and $S=\lceil100\times\log n\rceil$.
Accordingly we expect the feasibility error to saturate at roughly
half as large of a value for the latter choice. This claim is borne
out empirically, as is the dimension-independent convergence profile
of the feasibility error. We also consider two choices for the inverse
temperature: $\beta=10$ and $\beta=20$. We expect that the latter
choice should require about twice as many iterations to achieve the
same error, and this claim is also confirmed numerically.

\begin{figure}[H]
\begin{centering}
\includegraphics[scale=0.4]{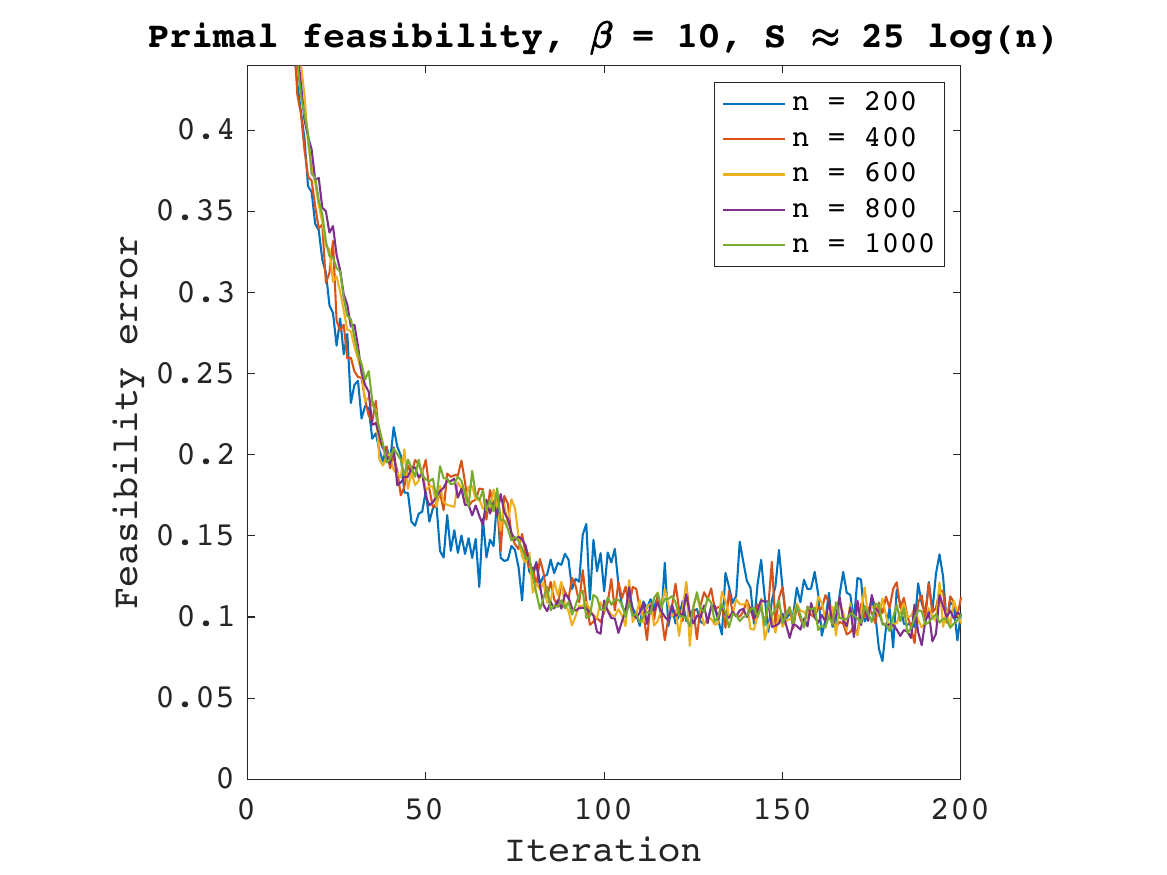}\includegraphics[scale=0.4]{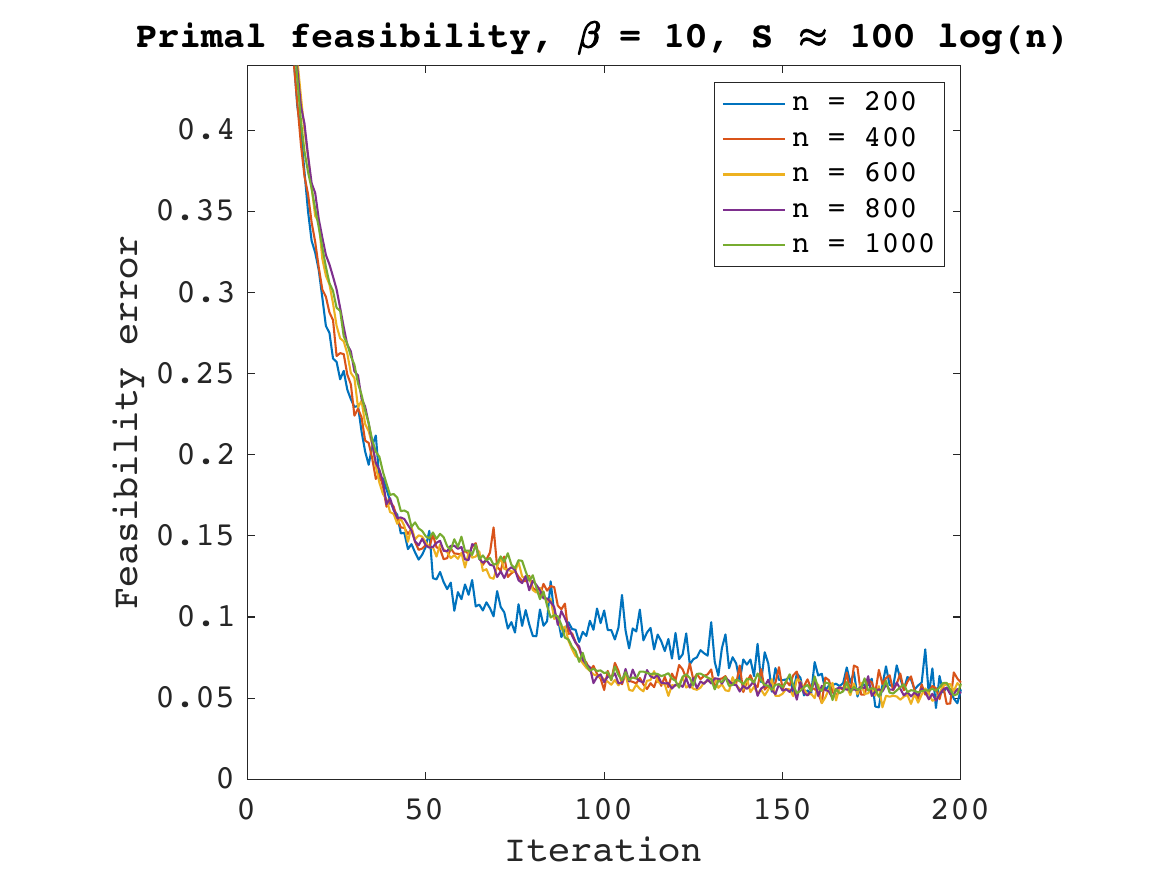}\vspace{3mm}
\par\end{centering}
\begin{centering}
\includegraphics[scale=0.4]{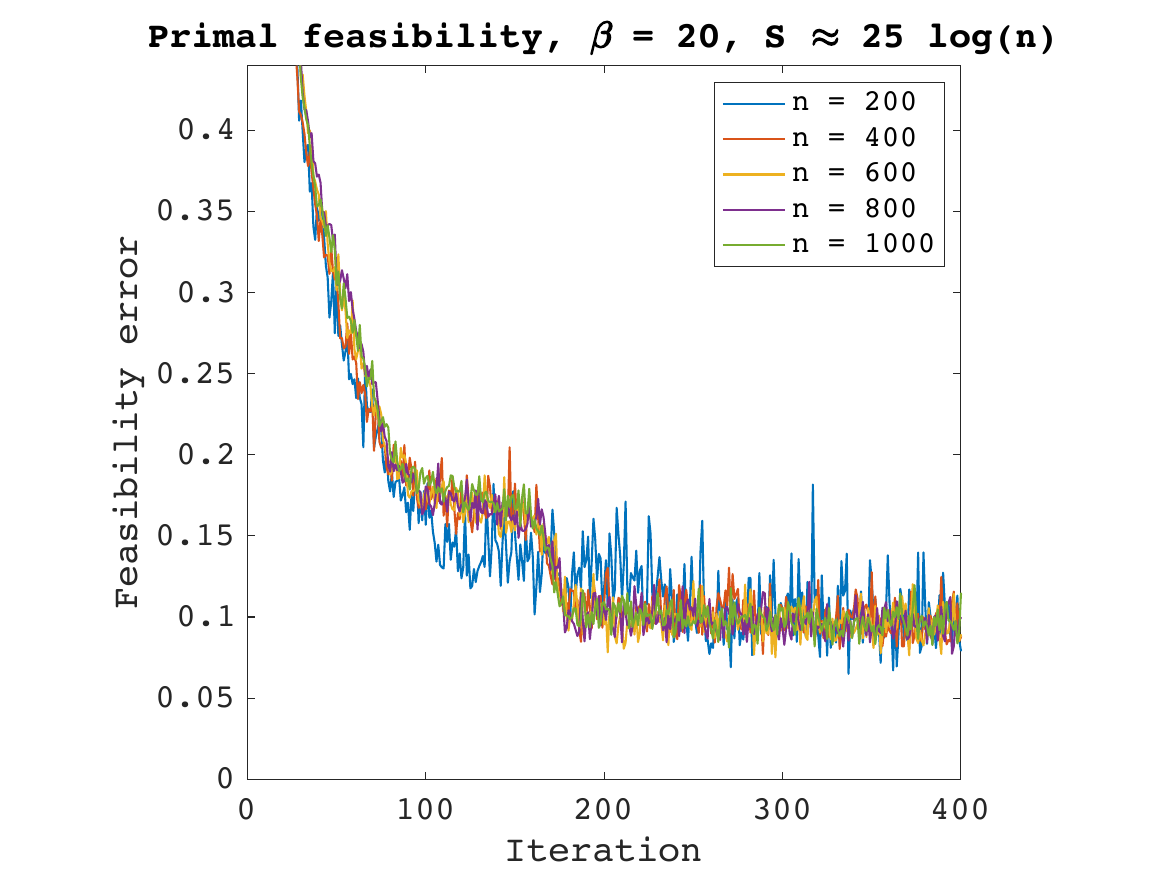}\includegraphics[scale=0.4]{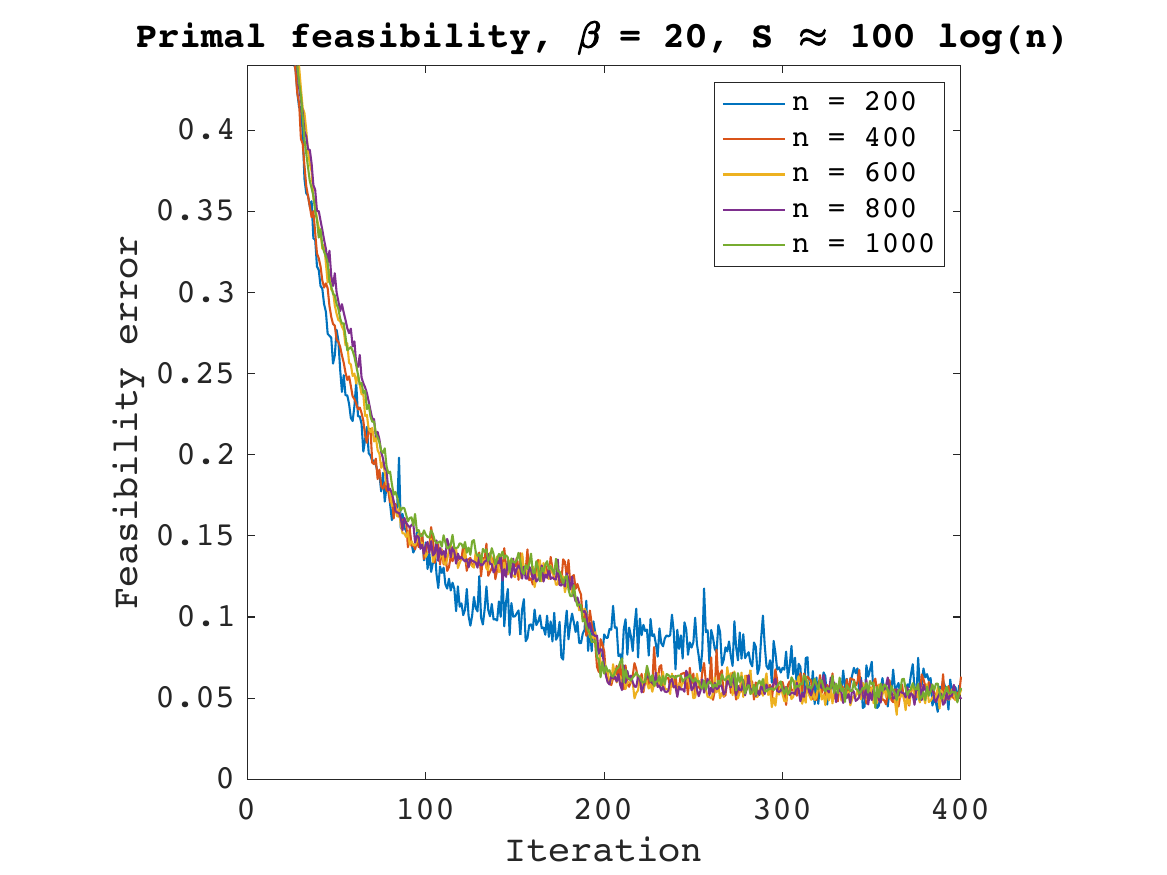}
\par\end{centering}
\caption{Convergence profile of the primal feasibility error (\ref{eq:maxcut_feas})
for the Max-Cut SDP. Experiments are described in the main text. In
the top and bottom rows we consider $\beta=10$ and $\beta=20$, respectively.
(Note the difference in the horizontal axis scale.) In the left and
right columns we consider $S=\lceil25\times\log n\rceil$ and $S=\lceil100\times\log n\rceil$,
respectively. \label{fig:maxcut_feas}}
\end{figure}

\subsection{OT LP}

Following the experimental setting in \cite{li2024fastcomputationoptimaltransport},
we consider two test cases for OT.
\begin{itemize}
\item \textbf{Synthetic} \textbf{dataset.} For an integer $k,$ we produce
two $k\times k$ images as follows. Each image has a randomly placed
square foreground that accounts for 50\% of the pixels; the foreground
and background have pixel values uniformly sampled from $[0,10]$
and $[0,1]$, respectively. We flatten the two images and normalize
them to obtain $\mu,\nu\in\R^{n}$ where $n=k^{2}.$ In addition,
we construct the cost matrix $c\in\R^{n\times n}$ using the $\ell_{2}$
distance between pairs of points in each $k\times k$ image. We normalize
the cost matrix such that the maximum value is $10$.
\item \textbf{MNIST dataset.} First, a pair of images is randomly selected
from the MNIST dataset and downsampled to size $k\times k$. Then,
a value of $0.01$ is added to all pixels. The remaining steps for
obtaining $\mu$, $\nu$, and the cost matrix are the same as in the
previous setting.
\end{itemize}
For each test case, we compare our algorithm \ref{eq:otupdate} with
the Sinkhorn scaling algorithm in two sets of experiments. In the
first set, we fix the dimension $n=784$, which is the original size
of MNIST dataset samples, and consider different inverse temperatures,
$\beta=5,10,20$. For the second set, we fix the inverse temperature
$\beta=10$ and consider different sizes, $n=100,400,900$. We measure
convergence using the dual objective error 
\begin{equation}
f_{\beta}(\phi_{t},\psi_{t})-f_{\beta}(\phi_{\star},\psi_{\star})\label{eq:ot-obj-error}
\end{equation}
 and the primal feasibility error 
\begin{equation}
\|\mu_{t}-\mu\|_{1}+\|\nu_{t}-\nu\|_{1}.\label{eq:ot-feas}
\end{equation}
 For each set of experiments, we consider $20$ pairs of images and
run for $50,000$ iterations. We report averaged convergence metrics
in Fig \ref{fig:ot-beta} and \ref{fig:ot-n} for the respective sets
of experiments. The results confirm our theoretical findings that
the objective error and primal feasibility decay as $\tilde{O}(\beta/t)$,
nearly independently of the dimension.

\begin{figure}
\begin{centering}
\includegraphics[bb=0bp 0bp 440bp 420bp,scale=0.4]{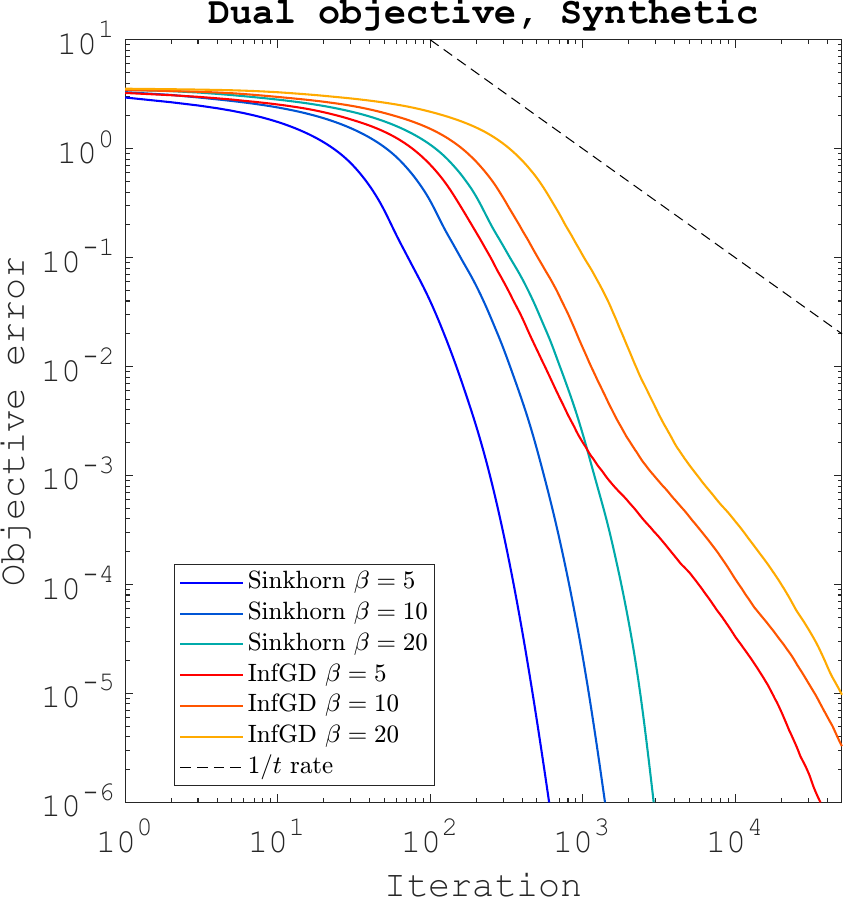}
$\qquad$ \includegraphics[bb=0bp 0bp 440bp 440bp,clip,scale=0.4]{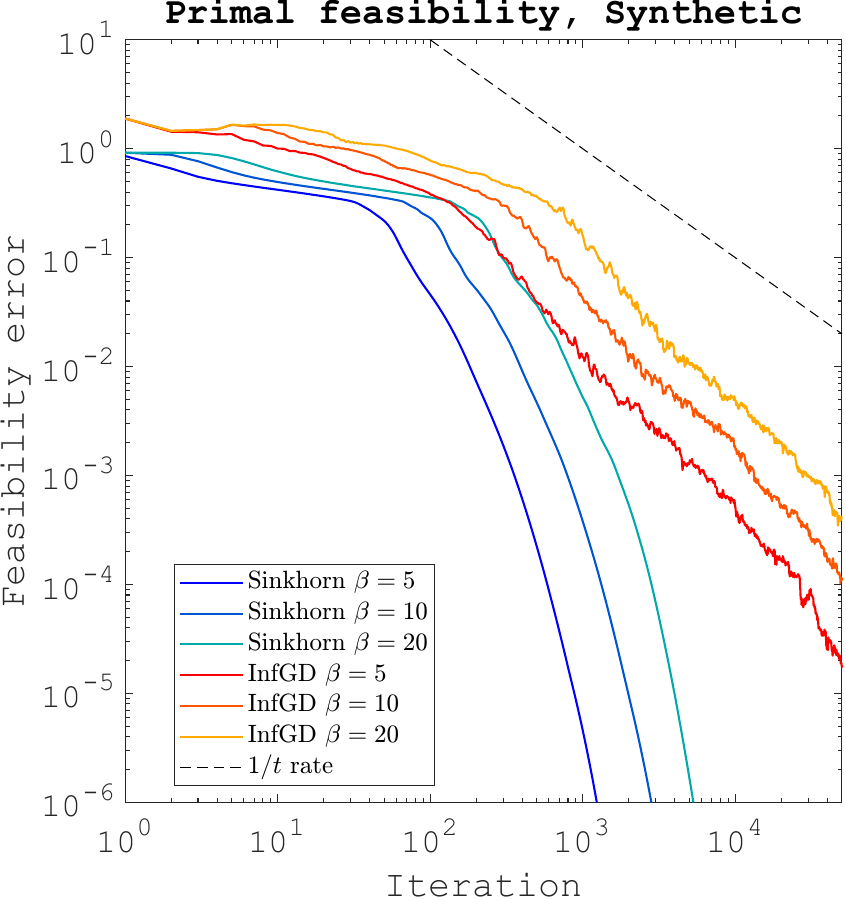}\vspace{5mm}
\par\end{centering}
\begin{centering}
\includegraphics[bb=0bp 0bp 440bp 440bp,scale=0.4]{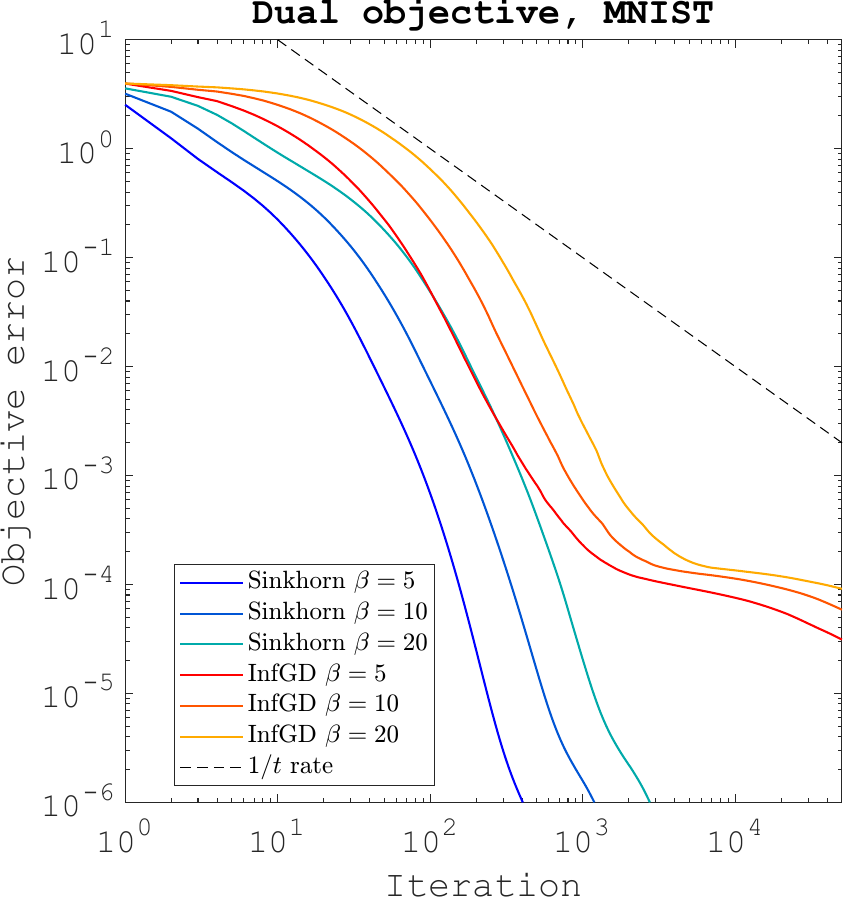}
$\qquad$ \includegraphics[bb=0bp 0bp 440bp 440bp,clip,scale=0.4]{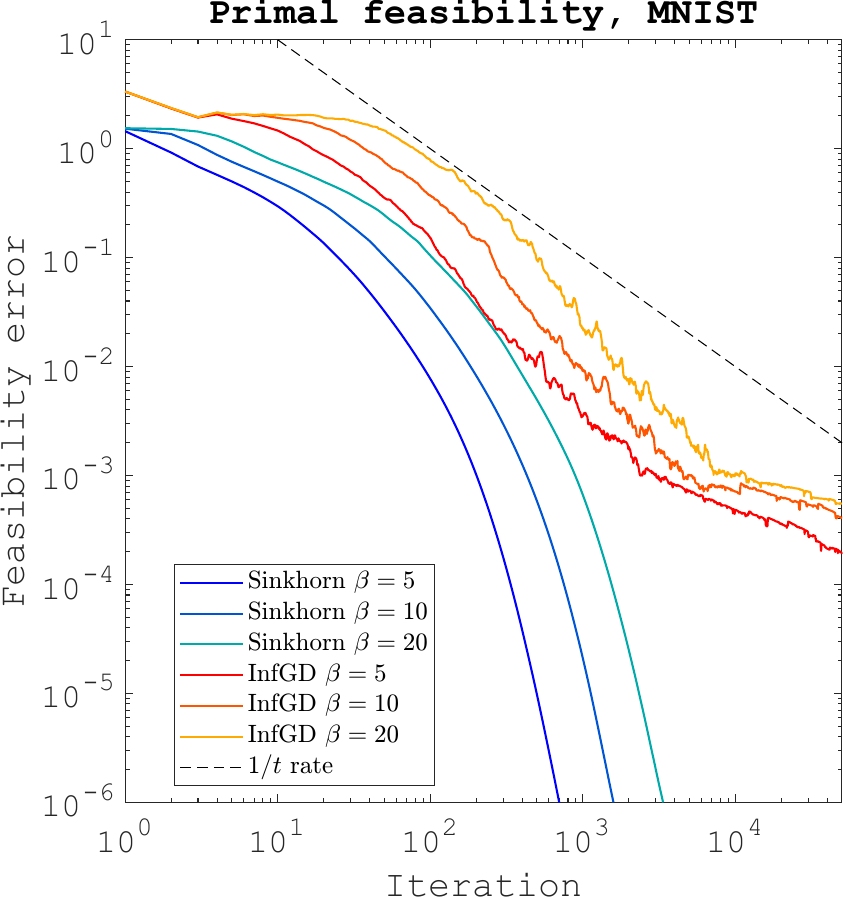}
\par\end{centering}
\begin{centering}
\vspace{3mm}
\par\end{centering}
\centering{}\caption{Convergence profile of the dual objective error (\ref{eq:ot-obj-error})
and the primal feasibility error (\ref{eq:ot-feas}) for various $\beta$
in the OT LP. Experiments are described in the main text. In the top
and bottom rows we consider the MNIST and Synthetic datasets, respectively.
\label{fig:ot-beta}}
\end{figure}

\begin{figure}
\begin{centering}
\includegraphics[bb=0bp 0bp 440bp 440bp,clip,scale=0.4]{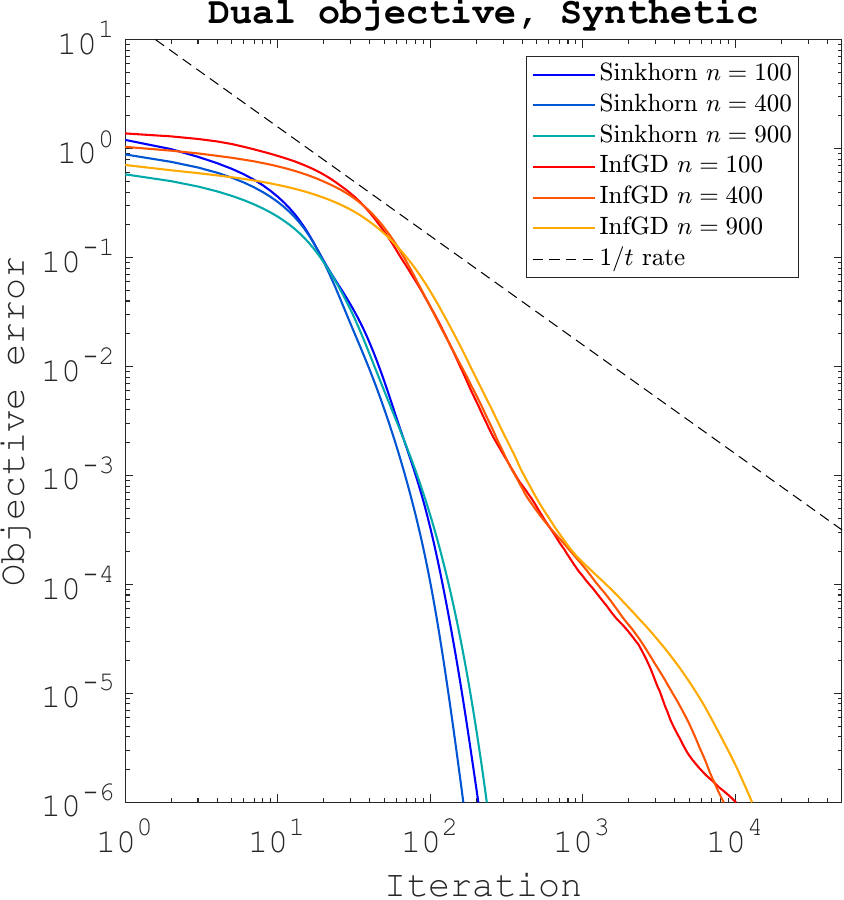}
$\qquad$ \includegraphics[bb=0bp 0bp 440bp 440bp,clip,scale=0.4]{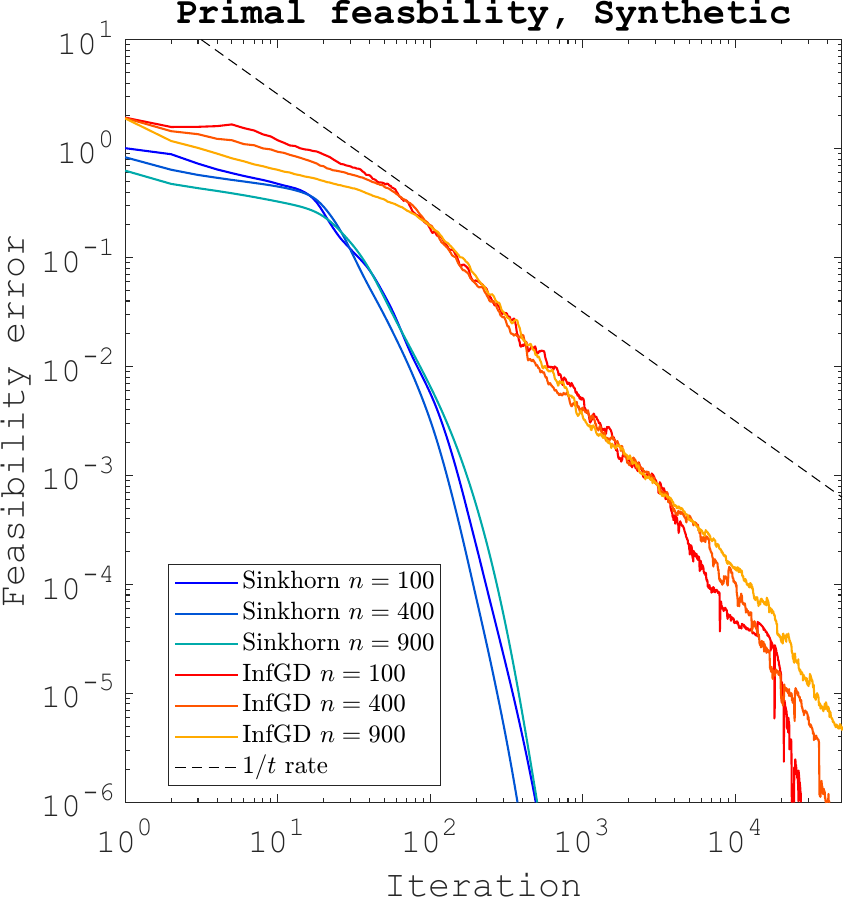}\vspace{5mm}
\par\end{centering}
\begin{centering}
\includegraphics[bb=0bp 0bp 440bp 440bp,clip,scale=0.4]{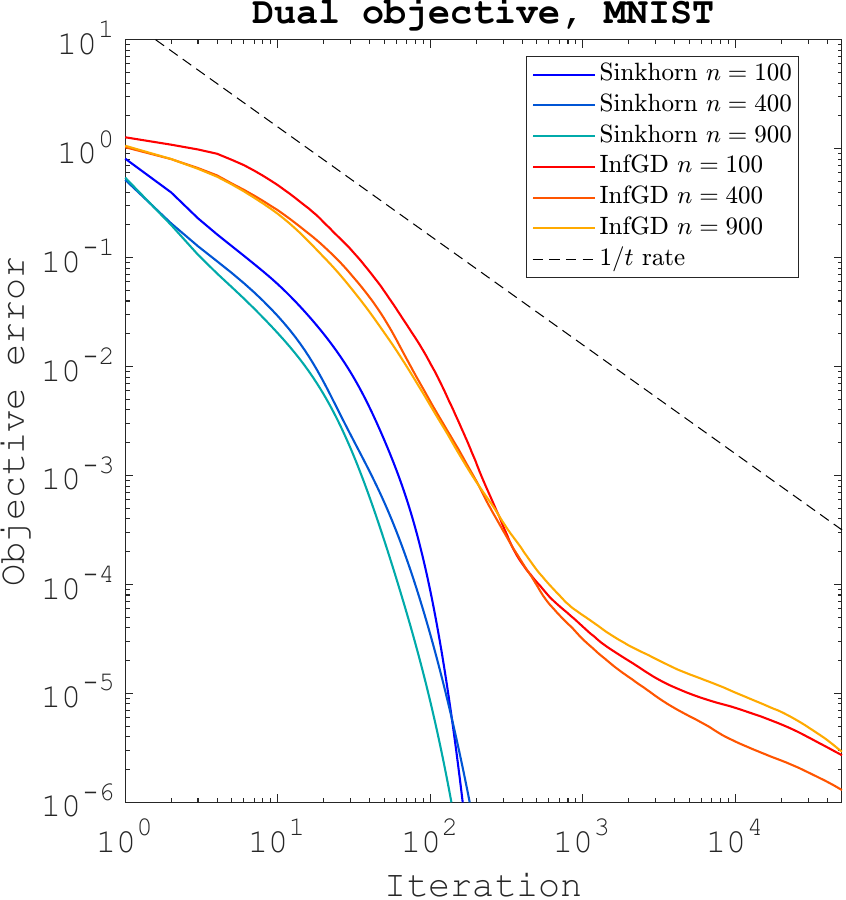}
$\qquad$ \includegraphics[bb=0bp 0bp 440bp 440bp,clip,scale=0.4]{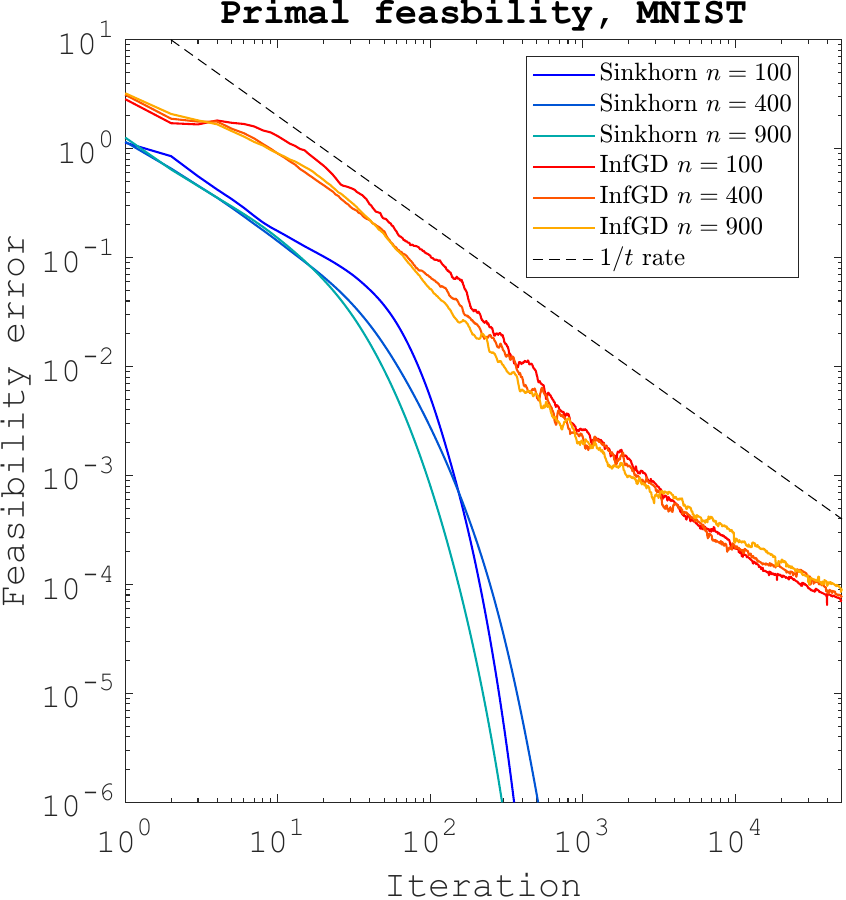}
\par\end{centering}
\begin{centering}
\vspace{3mm}\caption{Convergence profile of the dual objective error (\ref{eq:ot-obj-error})
and the primal feasibility error (\ref{eq:ot-feas}) for various problem
dimensions $n$ in the OT LP. Experiments are described in the main
text. In the top and bottom rows we consider MNIST and Synthetic datasets,
respectively. \label{fig:ot-n}}
\par\end{centering}
\end{figure}

\subsection{Strong Perm-Synch SDP \label{subsec:Numerics-Strong-Perm-Synch-SDP}}

We set up our permutation synchronization problems as follows. Let
$M$ be the size of a full global registry of keypoints. Let $N$
and $K\leq M$ be the number of images and number of keypoints per
image as in the discussion of Section \ref{subsec:cases}. For each
image, we uniformly randomly sample $K$ keypoints from the registry.
For each pair $(i,j)$ of images, we let $-C^{(i,j)}$ encode the
pairwise correspondences between the keypoints in images $i$ and
$j$. However, for each pair $i\neq j$, with probability $p$ we
corrupt the cost matrix by uniformly randomly mislabeling the keypoints
in both images before encoding the correspondences. To implement the
necessary exponential matvecs, we use the Chebfun library \cite{driscoll2014chebfun}
to perform Chebyshev fitting for the exponential on a bounding interval
for the spectrum obtained via power iteration, then implement a Chebyshev
expansion.

For the strong Perm-Synch SDP, we fix the number of keypoints per
image to be $K=10$ and the corruption probability to be $p=0.15$.
We peg $M=\lceil N/2\rceil$ and $\beta=10\times(\log N)/N$, and
we vary $N=20,40,60,80,100$, plotting the primal feasibility error
\begin{equation}
\sum_{i=1}^{N}\left\Vert \hat{X}_{t}^{(i,i)}-\mathbf{I}_{K}/n\right\Vert _{\Tr}\label{eq:strongpermsynch_feas}
\end{equation}
 in Figure \ref{fig:strong_feas}. In the figure, we consider two
choices for the number of stochastic vectors: $S=\lceil8\times K\log N\rceil$
and $S=\lceil32\times K\log N\rceil$. Accordingly we expect the feasibility
error to saturate at roughly half as large of a value for the latter
choice. This claim is borne out empirically, as is the dimension-independent
convergence profile of the feasibility error. For all of these tests,
the recovery procedure of \cite{lindseyshi2025} always achieves exact
recovery of the underlying correspondences.

Note (cf. \cite{lindseyshi2025}) that one expects a spectral method
or low-rank SDP solver to require $\tilde{O}(N^{2}KM+NKM^{2})=\tilde{O}(N^{3})$
operations in terms of the varying quantity $N$, while our solver
for the strong Perm-Synch SDP requires only $\tilde{O}(N^{2}K^{2}+NK^{3})=\tilde{O}(N^{2})$
operations (ignoring error dependence).

\begin{figure}[H]
\begin{centering}
\includegraphics[scale=0.4]{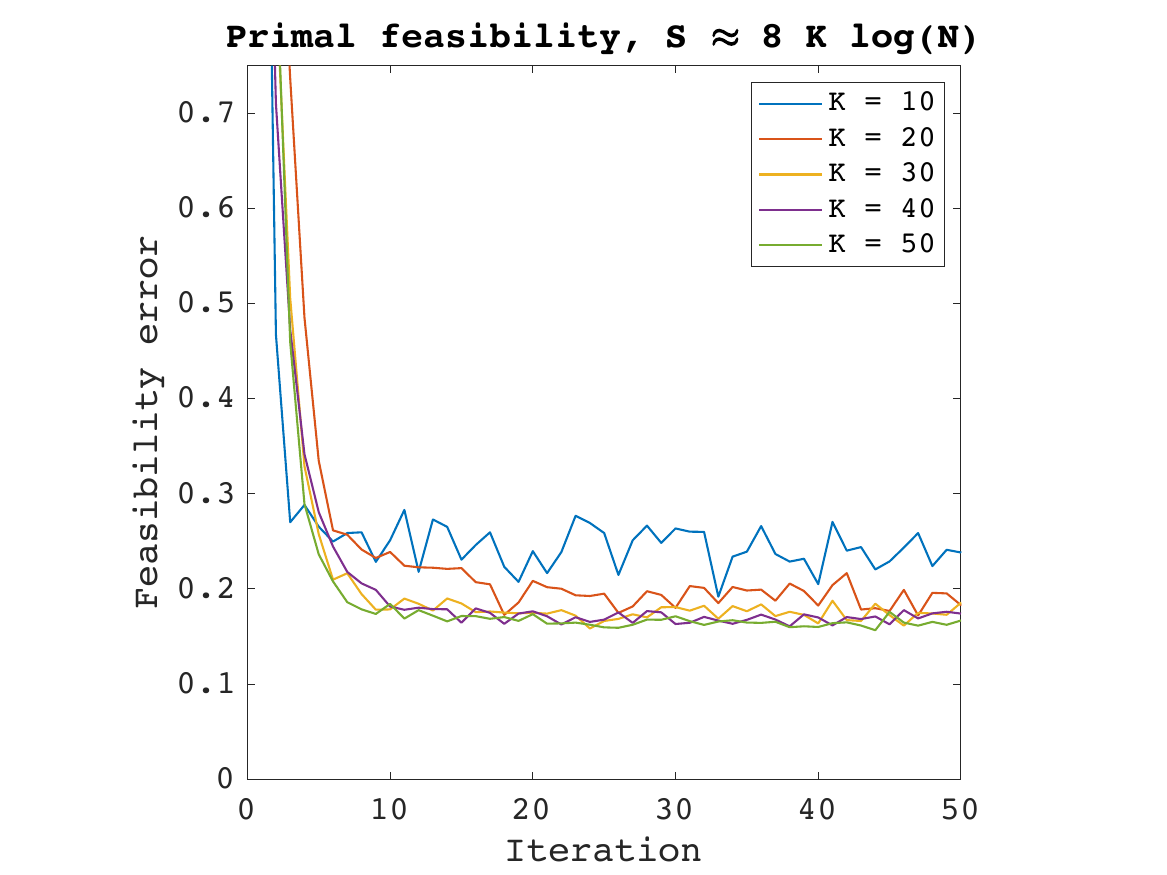}\includegraphics[scale=0.4]{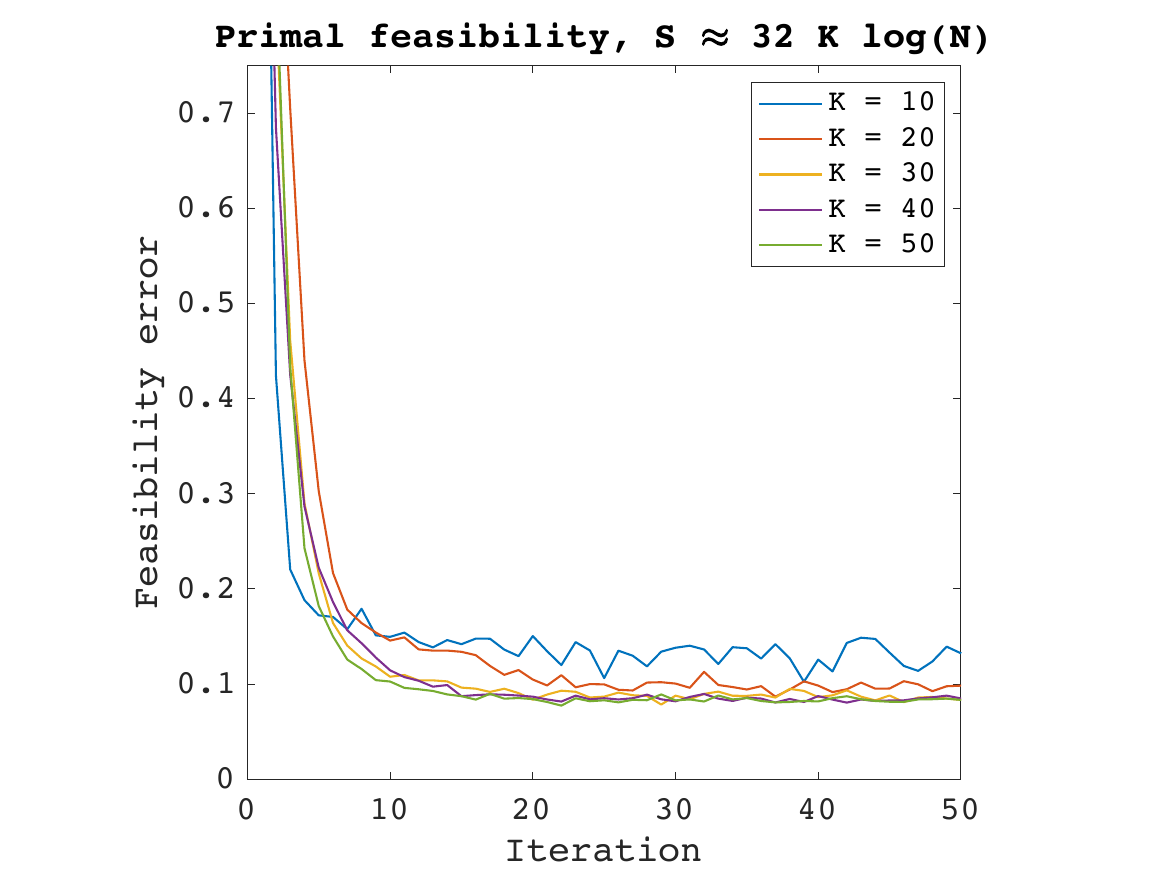}
\par\end{centering}
\caption{Convergence profile of the primal feasibility error (\ref{eq:strongpermsynch_feas})
for the strong Perm-Synch SDP. Experiments are described in the main
text. At left and right we consider $S=\lceil8\times K\log N\rceil$
and $S=\lceil32\times K\log N\rceil$, respectively. \label{fig:strong_feas}}
\end{figure}

\subsection{Weak Perm-Synch SDP}

For the weak Perm-Synch SDP, we maintain the same model as described
at the beginning of Section \ref{subsec:Numerics-Strong-Perm-Synch-SDP}.
However, we now peg $M=K$, $p=0.1$, $N=100$, and $\beta=10\times(\log N)/N$,
and we vary $K=10,20,30,40,50$, plotting the primal feasibility error
\begin{equation}
\sqrt{\Vert\mathrm{diag}(\hat{X}_{t})-\mathbf{1}_{n}/n\Vert_{1}^{2}+\left(\sum_{i=1}^{N}\left|\frac{\mathbf{1}_{K}^{\top}\hat{X}_{t}^{(i,i)}\mathbf{1}_{K}}{K}-\frac{1}{n}\right|\right)^{2}}\label{eq:weakpermsynch_feas}
\end{equation}
in Figure \ref{fig:weak_feas}. Again, we consider two choices for
the number of stochastic vectors: $S=\lceil8\times K\log N\rceil$
and $S=\lceil32\times K\log N\rceil$, and the error saturates at
a value about half as large for the latter choice. The experiments
confirm the dimension-independent convergence profile of the feasibility
error. For all of these tests, the recovery procedure of \cite{lindseyshi2025}
always achieves exact recovery of the underlying correspondences.

Note (cf. \cite{lindseyshi2025}) that one expects a spectral method
or low-rank SDP solver to require $\tilde{O}(N^{2}KM+NKM^{2})=\tilde{O}(K^{3})$
operations in terms of the varying quantity $K$, while our solver
for the strong Perm-Synch SDP requires only $\tilde{O}(N^{2}K)=\tilde{O}(K)$
operations (ignoring error dependence).

\begin{figure}[H]
\begin{centering}
\includegraphics[scale=0.4]{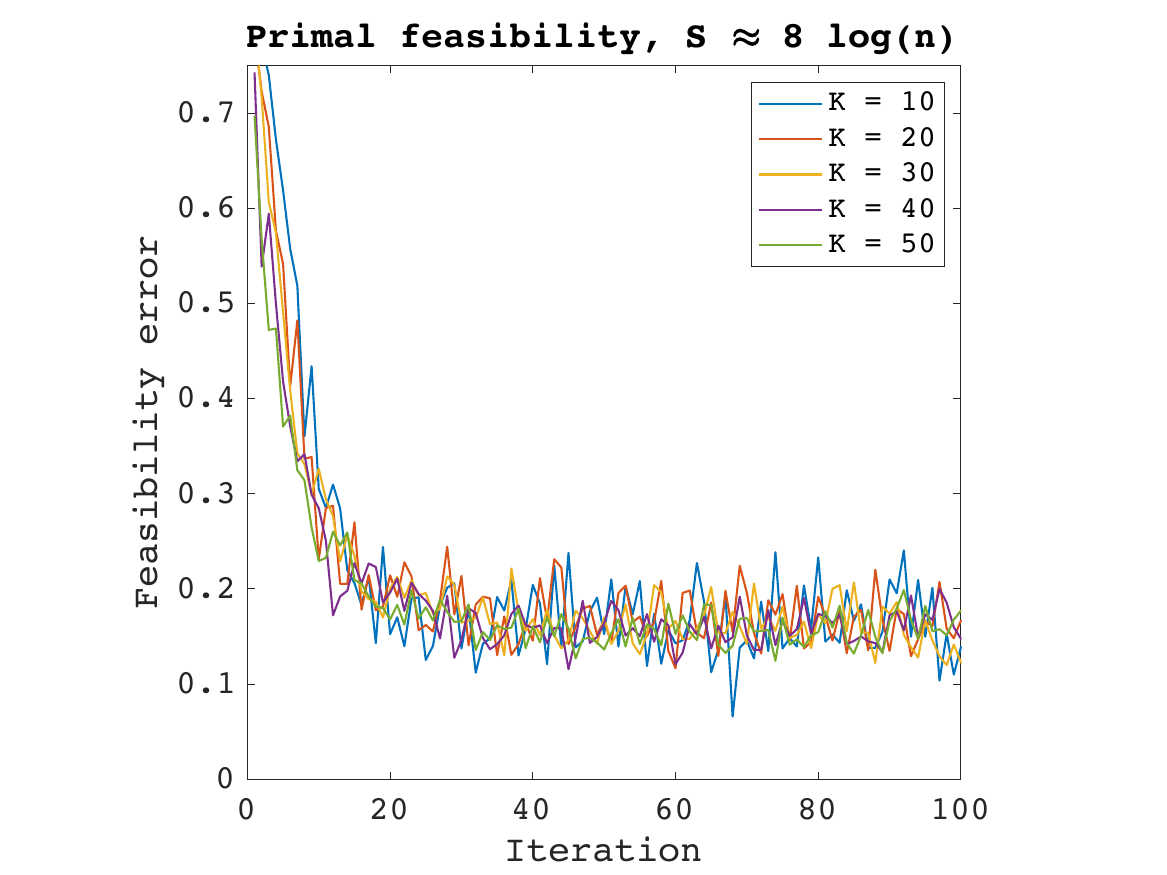}\includegraphics[scale=0.4]{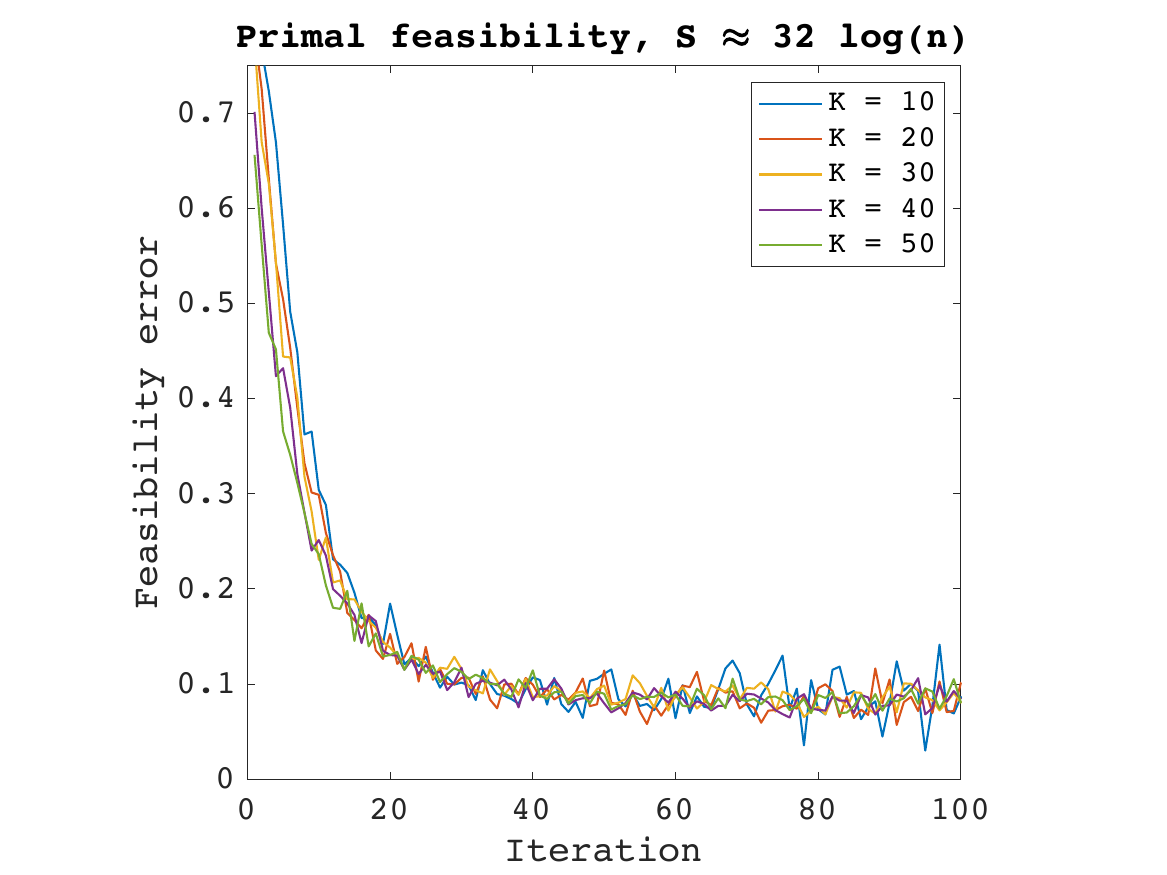}
\par\end{centering}
\caption{Convergence profile of the primal feasibility error (\ref{eq:weakpermsynch_feas})
for the weak Perm-Synch SDP. Experiments are described in the main
text. At left and right we consider $S=\lceil8\times\log n\rceil$
and $S=\lceil32\times\log n\rceil$, respectively. \label{fig:weak_feas} }
\end{figure}
\bibliographystyle{plain}
\bibliography{sdp_gd}

\pagebreak{}

\part*{Appendices}

\appendix

\section{Proofs for norms \label{app:dual-norm}}

In this section, we offer proofs deferred from Section \ref{subsec:normcases}.

\subsection{OT LP \label{app:dual-norm:ot}}
\begin{prop}
For the norm 
\[
\|\blam\|=\|(\phi,\psi)\|=\sqrt{2(\|\phi\|_{\infty}^{2}+\|\psi\|_{\infty}^{2})}
\]
 satisfying Assumption \ref{assump:norm} for the OT LP, the corresponding
dual norm is given by 
\[
\|(a,b)\|_{*}=\sqrt{\frac{1}{2}(\|a\|_{1}^{2}+\|b\|_{1}^{2})}
\]
 for $a\in\R^{m}$, $b\in\R^{n}$.
\end{prop}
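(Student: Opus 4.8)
The plan is to compute $\|(a,b)\|_*$ directly from the definition $\|(a,b)\|_* = \sup\{\,a\cdot\phi + b\cdot\psi \ :\ \|(\phi,\psi)\|\le 1\,\}$ by splitting the maximization into an ``angular'' part (choosing the signs of the entries) and a ``radial'' part (choosing the $\ell^\infty$-magnitudes). First I would note that the constraint $\|(\phi,\psi)\|\le 1$ is exactly $\|\phi\|_\infty^2 + \|\psi\|_\infty^2 \le \tfrac12$. Holding $r := \|\phi\|_\infty$ and $s := \|\psi\|_\infty$ fixed, the inner supremum of $a\cdot\phi$ over $\|\phi\|_\infty\le r$ equals $r\|a\|_1$, attained at $\phi_i = r\,\sign(a_i)$, and similarly $\sup_{\|\psi\|_\infty \le s} b\cdot\psi = s\|b\|_1$. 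Hence the problem reduces to maximizing $r\|a\|_1 + s\|b\|_1$ over $(r,s)\in\R_+^2$ subject to $r^2 + s^2 \le \tfrac12$.

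For this last step I would invoke Cauchy--Schwarz: $r\|a\|_1 + s\|b\|_1 \le \sqrt{r^2+s^2}\,\sqrt{\|a\|_1^2 + \|b\|_1^2} \le \sqrt{\tfrac12\left(\|a\|_1^2 + \|b\|_1^2\right)}$, with equality precisely when $(r,s)$ is a nonnegative multiple of $(\|a\|_1,\|b\|_1)$ normalized so that $r^2+s^2 = \tfrac12$. This simultaneously furnishes the upper bound and exhibits an optimizer, so $\|(a,b)\|_* = \sqrt{\tfrac12\left(\|a\|_1^2 + \|b\|_1^2\right)}$, as claimed.

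I expect no genuine obstacle here: the result is an instance of the general principle that the $\ell^2$-aggregation $\sqrt{\|\phi\|_X^2 + \|\psi\|_Y^2}$ of two norms has dual norm $\sqrt{\|a\|_{X^*}^2 + \|b\|_{Y^*}^2}$ (here $X=Y=\ell^\infty$, with $X^* = Y^* = \ell^1$), and the overall factor $\sqrt2$ in the definition simply rescales the dual norm by $1/\sqrt2$. The only points meriting care are bookkeeping of this factor of $2$ and confirming that both inequalities above are tight --- the $\ell^1$/$\ell^\infty$ duality being exact for every sign pattern, and the radii $(r,s)$ ranging over a full quarter-circle so that the Cauchy--Schwarz step is achievable.
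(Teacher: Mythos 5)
Your proof is correct and follows essentially the same route as the paper's: apply H\"older's inequality ($\ell^1$/$\ell^\infty$ duality) coordinate-wise to reduce to maximizing $\|\phi\|_\infty\|a\|_1 + \|\psi\|_\infty\|b\|_1$, then Cauchy--Schwarz to bound that under the constraint, and verify tightness with an explicit maximizer. The reparametrization through $(r,s)$ is a cosmetic reorganization of the same two inequalities and the same optimizer.
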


\begin{proof}
Compute: 
\begin{align*}
\|(a,b)\|_{*} & \coloneqq\max_{\|(\phi,\psi)\|\le1}\left\{ \langle\phi,a\rangle+\langle\psi,b\rangle\right\} \\
 & \le\max_{\|(\phi,\psi)\|\le1}\left\{ \|\phi\|_{\infty}\|a\|_{1}+\|\psi\|_{\infty}\|b\|_{1}\right\} \\
 & \le\max_{\|(\phi,\psi)\|\le1}\left\{ \sqrt{\|\phi\|_{\infty}^{2}+\|\psi\|_{\infty}^{2}}\right\} \cdot\sqrt{\|a\|_{1}^{2}+\|b\|_{1}^{2}}\\
 & =\sqrt{\frac{1}{2}(\|a\|_{1}^{2}+\|b\|_{1}^{2})}.
\end{align*}
Here the first inequality follows from H\"{o}lder's inequality, and
the second inequality follows from the Cauchy-Schwarz inequality.

To make the equality hold, we can take 
\[
\phi=\frac{\|a\|_{1}\sign(a)}{\sqrt{2(\|a\|_{1}^{2}+\|b\|_{1}^{2})}},\qquad\psi=\frac{\|b\|_{1}\sign(b)}{\sqrt{2(\|a\|_{1}^{2}+\|b\|_{1}^{2})}}.
\]
 This concludes the proof.
\end{proof}

\subsection{Strong Perm-Synch SDP \label{app:dual-norm:strong-ps}}
\begin{prop}
For the norm 
\[
\|\Blam\|=\max_{i=1,...,N}\|\Lambda^{(i)}\|_{2}
\]
 satisfying Assumption \ref{assump:norm} for the strong Perm-Synch
SDP, the corresponding dual norm for a vector $\bB=(B^{(1)},\ldots,B^{(N)}),$where
$B^{(i)}\in\R^{K\times K}$ for $i=1,\ldots,N$, is given by 
\[
\|\bB\|_{*}=\sum_{i=1}^{n}\|B^{(i)}\|_{\Tr}.
\]
\end{prop}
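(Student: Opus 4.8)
The plan is to compute $\|\bB\|_*$ directly from the definition of the dual norm and to exploit the fact that the constraint $\|\Blam\|\le 1$ decouples across the blocks. I would begin by writing
\[
\|\bB\|_* = \max_{\|\Blam\|\le 1}\langle \bB,\Blam\rangle = \max\Big\{\,\textstyle\sum_{i=1}^N \langle B^{(i)},\Lambda^{(i)}\rangle \ :\ \|\Lambda^{(i)}\|_2\le 1 \text{ for all } i\,\Big\},
\]
where the description of the feasible set is just the definition of the norm $\|\cdot\|$ on $\Blam=(\Lambda^{(1)},\ldots,\Lambda^{(N)})$. Since the objective is a sum whose $i$-th term depends only on $\Lambda^{(i)}$, and the feasible set is the Cartesian product of the spectral-norm unit balls, the maximization separates across blocks, giving $\|\bB\|_* = \sum_{i=1}^N \max_{\|\Lambda^{(i)}\|_2\le 1}\langle B^{(i)},\Lambda^{(i)}\rangle$.

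The remaining step reduces everything to the single-block fact that the trace norm is dual to the spectral norm, i.e., $\max_{\|M\|_2\le 1}\langle B,M\rangle = \|B\|_{\Tr}$ for a fixed $K\times K$ matrix $B$. For the upper bound I would invoke the von Neumann trace inequality (equivalently, H\"older's inequality for Schatten norms), which yields $\langle B,M\rangle \le \|B\|_{\Tr}\|M\|_2 \le \|B\|_{\Tr}$. For the matching lower bound, taking the singular value decomposition $B = U\Sigma V^\top$ and choosing $M = UV^\top$ gives $\|M\|_2 = 1$ and $\langle B,M\rangle = \operatorname{tr}(\Sigma) = \|B\|_{\Tr}$; in the symmetric case one may instead write the spectral decomposition $B = \sum_k \lambda_k v_k v_k^\top$ and take $M = \sum_k \sign(\lambda_k)\,v_k v_k^\top$, which is again symmetric and achieves $\langle B,M\rangle = \sum_k|\lambda_k| = \|B\|_{\Tr}$. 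Substituting into the separated expression gives $\|\bB\|_* = \sum_{i=1}^N\|B^{(i)}\|_{\Tr}$, as claimed.

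This argument is entirely routine and I do not anticipate a genuine obstacle. The only point deserving explicit care is the decoupling step: one should note that, because the constraint set is a product set and the objective is additive over blocks, maximizing the sum equals summing the blockwise maxima (with all maxima attained by compactness). A secondary point, if one wishes to keep the dual norm on the symmetric subspace matching the primal variable, is to observe that the extremal matrix $M$ above can be taken symmetric, so restricting to symmetric blocks changes nothing.
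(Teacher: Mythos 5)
Your proposal is correct and follows essentially the same route as the paper: both establish the upper bound via H\"{o}lder's inequality for Schatten norms (i.e., $\langle \Lambda, B\rangle \le \|\Lambda\|_2\|B\|_{\Tr}$, together with $\|\Lambda^{(i)}\|_2\le\max_j\|\Lambda^{(j)}\|_2$) and achieve equality by choosing $\Lambda^{(i)}$ from the SVD of $B^{(i)}$. Your explicit remark about the decoupling of the maximization over the product of unit balls, and about the symmetric choice of extremizer, makes the argument cleaner but does not change its substance.
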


\begin{proof}
In the following we use the classical fact that $\Tr[AB]\leq\Vert A\Vert_{2}\Vert B\Vert_{\Tr}$
for arbitrary matrices $A,B$, which can also be viewed as a special
case of H\"{o}lder's inequality for the Schatten $p$-norms.

Then compute: 
\begin{align*}
\|\bB\|_{*} & :=\max_{\|\Blam\|\le1}\left\{ \sum_{i=1}^{N}\Tr[(\Lambda^{(i)})^{\top}B^{(i)}]\right\} \\
 & \le\max_{\|\Blam\|\le1}\left\{ \sum_{i=1}^{N}\|\Lambda^{(i)}\|_{2}\cdot\|B^{(i)}\|_{\Tr}\right\} \\
 & \le\max_{\|\Blam\|\le1}\left\{ \max_{i=1,\ldots,N}\|\Lambda^{(i)}\|_{2}\right\} \cdot\sum_{i=1}^{N}\|B^{(i)}\|_{\Tr}\\
 & =\sum_{i=1}^{N}\|B^{(i)}\|_{\Tr}.
\end{align*}
Here the first inequality follows from the aforementioned fact.

Note that the inequalities hold with equality holds by taking $\Lambda^{(i)}=U^{(i)}V^{(i)\top}$,
where if $B^{(i)}=U^{(i)}\Sigma^{(i)}V^{(i)\top}$ is an SVD. This
concludes the proof.
\end{proof}

\section{Proofs for algorithms \label{app:algo}}

In this section, we will show how to figure out the exact form of
the update (\ref{eq:optstep}) for each problem of interest. Before
we proceed to specifics, we prove a useful lemma here:
\begin{lem}
\label{lem:infty-minimizer}Given $g\in\R^{n}$, we have 
\[
\underset{\blam\in\R^{n}}{\mathrm{argmin}}\Big\{\langle g,\blam\rangle+\frac{1}{2\eta}\|\blam\|_{\infty}^{2}\Big\}=-\eta\|g\|_{1}\,\sign(g).
\]
\end{lem}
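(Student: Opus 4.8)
The plan is to solve the unconstrained minimization problem
\[
\min_{\blam\in\R^{n}}\Big\{\langle g,\blam\rangle+\tfrac{1}{2\eta}\|\blam\|_{\infty}^{2}\Big\}
\]
by introducing an auxiliary scalar $r=\|\blam\|_{\infty}\ge0$ and optimizing first over the direction of $\blam$ and then over $r$. For fixed $r$, the feasible set is the $\ell^{\infty}$-ball $\{\blam:\|\blam\|_{\infty}\le r\}$, and since the objective restricted to this set is $\langle g,\blam\rangle+\tfrac{r^{2}}{2\eta}$ plus a constraint, we want to make $\langle g,\blam\rangle$ as negative as possible subject to $\|\blam\|_{\infty}\le r$. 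By H\"older's inequality $\langle g,\blam\rangle\ge-\|g\|_{1}\|\blam\|_{\infty}\ge -\|g\|_{1}r$, with equality achieved at $\blam=-r\,\sign(g)$. Hence the inner minimum over $\|\blam\|_{\infty}\le r$ equals $-\|g\|_{1}r+\tfrac{r^{2}}{2\eta}$.

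Next I would minimize the scalar function $h(r):=-\|g\|_{1}r+\tfrac{r^{2}}{2\eta}$ over $r\ge0$. This is a convex quadratic in $r$ with $h'(r)=-\|g\|_{1}+r/\eta$, so the unconstrained minimizer is $r^{*}=\eta\|g\|_{1}\ge0$, which lies in the feasible range $r\ge0$. Substituting back, the optimal $\blam$ is $\blam^{*}=-r^{*}\,\sign(g)=-\eta\|g\|_{1}\,\sign(g)$, which is exactly the claimed formula.

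The only subtlety — and the one point worth stating carefully — is the treatment of coordinates where $g_{i}=0$. In that case $\sign(g_{i})=0$, and indeed any choice of $\blam_{i}$ with $|\blam_{i}|\le r^{*}$ leaves the objective unchanged, so the minimizer is not unique unless $g$ has all nonzero entries; the formula in the statement picks out the canonical minimizer with $\blam_{i}=0$ there. (If the paper's convention takes $\sign(0)=0$, this is automatic; otherwise one remarks that the displayed expression is \emph{a} minimizer.) Since the objective is convex and coercive in $\blam$ (the quadratic term dominates), a minimizer exists, and the argument above shows the displayed point attains the minimum value $-\tfrac{\eta}{2}\|g\|_{1}^{2}$. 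I do not anticipate any genuine obstacle here; the proof is a short two-step reduction (optimize the direction, then the radius), and the write-up amounts to recording the H\"older equality case and the scalar quadratic minimization.
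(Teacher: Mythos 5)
Your proof is correct and follows essentially the same two-step reduction as the paper's: first fix the radius $r=\|\blam\|_{\infty}$ and use H\"older's inequality to optimize the direction (taking $\blam=-r\,\sign(g)$), then minimize the resulting scalar quadratic in $r$. Your observation about the non-uniqueness of the minimizer when some $g_i=0$ is a valid caveat that the paper's proof also glosses over (the paper's "if and only if $\blam$ is entrywise proportional to $\sign(g)$" is not quite accurate for zero coordinates), so strictly speaking the lemma exhibits \emph{a} minimizer rather than \emph{the} minimizer in that degenerate case — but this has no bearing on how the lemma is used downstream.
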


\begin{proof}
For convenience, define 
\[
f(\blam)\coloneqq\langle g,\blam\rangle+\frac{1}{2\eta}\|\blam\|_{\infty}^{2}.
\]
Note that for any $\blam$ if we let $\blam'=-\|\blam\|_{\infty}\,\sign(g)$,
we have 
\[
\langle g,\blam\rangle\ge-\|\blam\|_{\infty}\,\|g\|_{1}=\langle g,\blam'\rangle
\]
 with equality if and only if $\blam$ is entrywise proportional to
$\mathrm{sign}(g)$. This in turn implies that 
\begin{align*}
f(\blam) & =\langle g,\blam\rangle+\frac{1}{2\eta}\|\blam\|_{\infty}^{2}\\
 & \ge-\|\blam\|_{\infty}\,\|g\|_{1}+\frac{1}{2\eta}\|\blam\|_{\infty}^{2}\\
 & =\langle g,\blam'\rangle+\frac{1}{2\eta}\|\blam'\|_{\infty}^{2}\\
 & =f(\blam'),
\end{align*}
 with equality if and only if $\blam$ is entrywise proportional to
$\mathrm{sign}(g)$. Therefore, a minimizer $\blam_{*}$ in the statement
of the lemma must be of the form $-a\cdot\sign(g)$ for some scalar
$a$. Note that
\begin{align*}
f(-a\cdot\sign(g)) & =-\langle g,a\cdot\sign(g)\rangle+\frac{1}{2\eta}\|a\cdot\sign(g)\|_{\infty}^{2}\\
 & =-a\|g\|_{1}+\frac{1}{2\eta}\cdot a^{2}\\
 & =\frac{1}{2\eta}(a-\eta\|g\|_{1})^{2}-\frac{\eta}{2}\|g\|_{1}^{2}.
\end{align*}
Hence, the minimizer is $a=\eta\|g\|_{1}$ and $\blam_{*}=-\eta\|g\|_{1}\,\sign(g)$.
This concludes the proof.
\end{proof}

\subsection{Max-Cut SDP \label{app:algo-max-cut}}

For the Max-Cut SDP, the update is the solution to the following optimization
problem:
\[
\underset{\blam\in\R^{m}}{\text{min}}\Big\{ f_{\beta}(\blam_{t})+\langle g_{t},\blam-\blam_{t}\rangle+\frac{1}{2\eta}\Vert\blam-\blam_{t}\Vert_{\infty}^{2}\Big\}.
\]
 We can let $\bx=\blam-\blam_{t}$ and apply Lemma \ref{lem:infty-minimizer}
directly to obtain: 
\begin{prop}
[Max-Cut SDP update]The exact update of (\ref{eq:optstep}) for the
Max-Cut SDP is
\[
\blam_{t+1}=\blam_{t}-\eta\|g_{t}\|_{1}\,\sign(g_{t}).
\]
\end{prop}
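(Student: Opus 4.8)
The plan is to reduce the update step directly to the elementary optimization problem solved in Lemma~\ref{lem:infty-minimizer}. Recall that the update (\ref{eq:optstep}) for the Max-Cut SDP reads
\[
\blam_{t+1}=\underset{\blam\in\R^{m}}{\text{argmin}}\Big\{ f_{\beta}(\blam_{t})+\langle g_{t},\blam-\blam_{t}\rangle+\frac{1}{2\eta}\Vert\blam-\blam_{t}\Vert_{\infty}^{2}\Big\},
\]
since the gradient-descent norm for this problem is $\Vert\,\cdot\,\Vert=\Vert\,\cdot\,\Vert_{\infty}$ as established in Section~\ref{subsec:normMaxCut}. The constant term $f_{\beta}(\blam_{t})$ does not affect the argmin and may be dropped.

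First I would perform the change of variables $\bx=\blam-\blam_{t}$. Since $\blam$ ranges over all of $\R^{m}$, so does $\bx$, and the objective becomes exactly $\langle g_{t},\bx\rangle+\frac{1}{2\eta}\Vert\bx\Vert_{\infty}^{2}$, which is precisely the function appearing in Lemma~\ref{lem:infty-minimizer} with $g=g_{t}$ and $n=m$. Applying that lemma gives $\bx_{\star}=-\eta\Vert g_{t}\Vert_{1}\,\sign(g_{t})$, and translating back yields $\blam_{t+1}=\blam_{t}+\bx_{\star}=\blam_{t}-\eta\Vert g_{t}\Vert_{1}\,\sign(g_{t})$, as claimed.

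There is essentially no obstacle here; the only thing worth double-checking is that the translation invariance of the argmin under $\blam\mapsto\blam-\blam_{t}$ is applied correctly, i.e., that $\operatorname{argmin}_{\blam}h(\blam-\blam_{t})=\blam_{t}+\operatorname{argmin}_{\bx}h(\bx)$, which is immediate. Thus the proof is a one-line invocation of Lemma~\ref{lem:infty-minimizer} after the shift, and I would write it accordingly.
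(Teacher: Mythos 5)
Your proof is correct and follows exactly the same approach as the paper: substitute $\bx=\blam-\blam_{t}$ to reduce the update to the form in Lemma~\ref{lem:infty-minimizer} and then translate back. Nothing is missing.
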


\subsection{OT LP\label{app:algo-ot}}

For the OT LP, we have two dual variables $\phi,\psi$. The dual gradients
are: 
\begin{align*}
\nabla_{\phi}f_{\beta}(\phi,\psi) & =-\mu+\pi_{\beta,(\phi,\psi)}\mathbf{1}_{n},\\
\nabla_{\psi}f_{\beta}(\phi,\psi) & =-\nu+\pi_{\beta,(\phi,\psi)}^{\top}\mathbf{1}_{m}.
\end{align*}
Here $\pi_{\beta,(\phi,\psi)}\propto\left(e^{-\beta(c_{ij}-\phi_{i}-\psi_{j})}\right)$.
Let $g_{t}^{(1)}=\nabla_{\phi}f_{\beta}(\phi_{t},\psi_{t})$ and $g_{t}^{(2)}=\nabla_{\psi}f_{\beta}(\phi_{t},\psi_{t})$.
Then we want to solve the following optimization problem (omitting
the irrelevant constant term):
\[
\underset{\phi\in\R^{m},\,\psi\in\R^{n}}{\text{min}}\Big\{\langle g_{t}^{(1)},\phi-\phi_{t}\rangle+\langle g_{t}^{(2)},\psi-\psi_{t}\rangle+\frac{1}{\eta}\big[\Vert\phi-\phi_{t}\Vert_{\infty}^{2}+\|\psi-\psi_{t}\|_{\infty}^{2}\big]\Big\}.
\]
 Note that the optimization decouples for $\phi$ and $\psi$, and
we can still use Lemma \ref{lem:infty-minimizer} to deduce: 
\begin{prop}
[OT LP update]The exact update of (\ref{eq:optstep}) for the OT
LP is
\begin{align*}
\phi_{t+1} & =\phi_{t}-\frac{\eta}{2}\|g_{t}^{(1)}\|\,\sign(g_{t}^{(1)}),\\
\psi_{t+1} & =\psi_{t}-\frac{\eta}{2}\|g_{t}^{(2)}\|\,\sign(g_{t}^{(2)}).
\end{align*}
\end{prop}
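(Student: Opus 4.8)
The plan is to specialize the proximal update (\ref{eq:optstep}) to the OT norm $\|(\phi,\psi)\| = \sqrt{2(\|\phi\|_\infty^2 + \|\psi\|_\infty^2)}$ from Section \ref{subsec:normOT}, observe that the resulting minimization decouples into a $\phi$-subproblem and a $\psi$-subproblem, and solve each one with Lemma \ref{lem:infty-minimizer}.

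First I would substitute the explicit norm. Since $\|\blam - \blam_t\|^2 = 2\big(\|\phi - \phi_t\|_\infty^2 + \|\psi - \psi_t\|_\infty^2\big)$, the quadratic penalty $\frac{1}{2\eta}\|\blam - \blam_t\|^2$ equals $\frac{1}{\eta}\big(\|\phi - \phi_t\|_\infty^2 + \|\psi - \psi_t\|_\infty^2\big)$. Using the gradient identities $\nabla_\phi f_\beta(\phi,\psi) = -\mu + \pi_{\beta,(\phi,\psi)}\mathbf{1}_n$ and $\nabla_\psi f_\beta(\phi,\psi) = -\nu + \pi_{\beta,(\phi,\psi)}^\top\mathbf{1}_m$ (obtained by differentiating $g_\beta(\phi,\psi) = \mu^\top\phi + \nu^\top\psi - \beta^{-1}\log\sum_{i,j}e^{-\beta(c_{ij}-\phi_i-\psi_j)}$), and writing $g_t^{(1)} = \nabla_\phi f_\beta(\phi_t,\psi_t)$, $g_t^{(2)} = \nabla_\psi f_\beta(\phi_t,\psi_t)$, the update (\ref{eq:optstep}) becomes, after dropping the constant $f_\beta(\blam_t)$,
\[
\min_{\phi \in \R^m,\ \psi \in \R^n}\Big\{\langle g_t^{(1)}, \phi - \phi_t\rangle + \langle g_t^{(2)}, \psi - \psi_t\rangle + \tfrac{1}{\eta}\big[\|\phi - \phi_t\|_\infty^2 + \|\psi - \psi_t\|_\infty^2\big]\Big\}.
\]

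The key observation is that the objective is a sum of a term depending only on $\phi$ and a term depending only on $\psi$, so it is minimized block-wise. For the $\phi$-block, putting $x = \phi - \phi_t$ gives $\min_{x \in \R^m}\{\langle g_t^{(1)}, x\rangle + \frac{1}{\eta}\|x\|_\infty^2\}$, which is precisely the problem treated in Lemma \ref{lem:infty-minimizer} with step-size parameter $\eta/2$, since $\frac{1}{\eta} = \frac{1}{2(\eta/2)}$. The lemma then yields $x_\star = -\frac{\eta}{2}\|g_t^{(1)}\|_1\,\sign(g_t^{(1)})$, i.e., $\phi_{t+1} = \phi_t - \frac{\eta}{2}\|g_t^{(1)}\|_1\,\sign(g_t^{(1)})$; the $\psi$-block is handled identically and produces $\psi_{t+1} = \psi_t - \frac{\eta}{2}\|g_t^{(2)}\|_1\,\sign(g_t^{(2)})$, as claimed.

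I do not anticipate any real obstacle: the argument is essentially bookkeeping. The two points requiring care are (i) tracking the factor of $2$ built into the OT norm, which is exactly what converts the nominal step size $\eta$ into the effective $\eta/2$ in each block update, and (ii) the harmless sign ambiguity when an entry of $g_t^{(1)}$ or $g_t^{(2)}$ vanishes — this is already absorbed into Lemma \ref{lem:infty-minimizer} via the convention $\sign(0)=0$, which outputs one valid minimizer (uniqueness fails only in that degenerate case, and any minimizer suffices for the algorithm).
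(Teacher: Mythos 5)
Your argument is correct and matches the paper's proof in Appendix B.2 essentially verbatim: substitute the OT norm, observe the $\phi$/$\psi$ decoupling, and apply Lemma \ref{lem:infty-minimizer} with effective step size $\eta/2$. Nothing to add.
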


\subsection{Strong Perm-Synch SDP\label{app:algo-strong-ps}}

The optimization problem for the update in this case is 
\[
\min_{\Blam}\Big\{ f_{\beta}(\Blam_{t})+\sum_{i=1}^{n}\langle G_{t}^{(i)},\Lambda^{(i)}-\Lambda_{t}^{(i)}\rangle+\frac{1}{2\eta}\max_{i=1,...,N}\|\Lambda^{(i)}-\Lambda_{t}^{(i)}\|_{2}^{2}\Big\}.
\]

\begin{prop}
[Strong Perm-Synch SDP update]The exact update of (\ref{eq:optstep})
for the strong Perm-Synch SDP is
\[
\Lambda_{t+1}^{(i)}=\Lambda_{t}^{(i)}-\eta\left(\sum_{j=1}^{N}\Vert G_{t}^{(j)}\Vert_{\Tr}\right)\,\mathrm{sign}[G_{t}^{(i)}].
\]
\end{prop}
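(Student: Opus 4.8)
The plan is to derive the update by solving the minimization over the dual variable $\Blam = (\Lambda^{(1)},\ldots,\Lambda^{(N)})$ directly, exploiting the block structure of the norm $\|\Blam\| = \max_{i}\|\Lambda^{(i)}\|_2$. First I would make the substitution $Z^{(i)} := \Lambda^{(i)} - \Lambda_t^{(i)}$, which reduces the problem (after dropping the constant $f_\beta(\Blam_t)$) to
\[
\min_{Z^{(1)},\ldots,Z^{(N)}}\Big\{\sum_{i=1}^{N}\langle G_t^{(i)}, Z^{(i)}\rangle + \frac{1}{2\eta}\max_{i=1,\ldots,N}\|Z^{(i)}\|_2^2\Big\}.
\]
This is the exact matrix-block analogue of Lemma~\ref{lem:infty-minimizer}, with the vector $\infty$-norm replaced by the ``block-max spectral norm'' and the vector $1$-norm replaced by the ``block-sum trace norm'' (which is precisely the dual norm identified in Section~\ref{subsec:normStrongPermSynch}). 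The key inequality replacing H\"older's inequality for vectors is $\langle G, Z\rangle \geq -\|G\|_{\Tr}\|Z\|_2$ for symmetric matrices, with equality when $Z = -c\,\mathrm{sign}(G)$ for some $c\geq 0$ (here $\mathrm{sign}$ is the matrix sign function, so $\mathrm{sign}(G)$ has the same eigenvectors as $G$ with eigenvalues $\pm 1$; note $\|\mathrm{sign}(G)\|_2 = 1$ and $\langle G, \mathrm{sign}(G)\rangle = \|G\|_{\Tr}$).

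The argument then proceeds in two steps, mirroring the proof of Lemma~\ref{lem:infty-minimizer}. First, for any candidate $(Z^{(i)})$, setting $r := \max_i \|Z^{(i)}\|_2$ and replacing each $Z^{(i)}$ by $-r\,\mathrm{sign}(G_t^{(i)})$ leaves $r$ unchanged but can only decrease each inner-product term, since $\langle G_t^{(i)}, Z^{(i)}\rangle \geq -\|Z^{(i)}\|_2\|G_t^{(i)}\|_{\Tr} \geq -r\|G_t^{(i)}\|_{\Tr} = \langle G_t^{(i)}, -r\,\mathrm{sign}(G_t^{(i)})\rangle$. Hence some minimizer has the form $Z^{(i)} = -a\,\mathrm{sign}(G_t^{(i)})$ for a common scalar $a\geq 0$. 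Second, I would substitute this ansatz to get a scalar problem: the objective becomes
\[
-a\sum_{j=1}^{N}\|G_t^{(j)}\|_{\Tr} + \frac{1}{2\eta}a^2 = \frac{1}{2\eta}\Big(a - \eta\sum_{j=1}^{N}\|G_t^{(j)}\|_{\Tr}\Big)^2 - \frac{\eta}{2}\Big(\sum_{j=1}^{N}\|G_t^{(j)}\|_{\Tr}\Big)^2,
\]
minimized at $a = \eta\sum_{j=1}^{N}\|G_t^{(j)}\|_{\Tr}$. Undoing the substitution $Z^{(i)} = \Lambda^{(i)}_{t+1} - \Lambda^{(i)}_t$ yields exactly the claimed formula.

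I do not anticipate a genuine obstacle here — the structure is a clean block-diagonal generalization of the vector lemma already proved. The only point requiring a little care is the matrix-level analogue of the ``equality iff proportional to $\mathrm{sign}$'' step: $\langle G, Z\rangle = -\|G\|_{\Tr}\|Z\|_2$ forces $Z$ (restricted to the range of $G$) to be a negative multiple of $\mathrm{sign}(G)$, but on $\ker G$ the matrix $Z$ is unconstrained by this inner product and only penalized through its contribution to $\|Z\|_2$; since setting $Z$ to zero there never increases $\|Z\|_2$, the reduction still goes through, and one should note the minimizer need not be literally unique when some $G_t^{(i)}$ is singular — but any minimizer of the stated form is valid, which is all that is needed. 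I would also remark that $\mathrm{sign}(G_t^{(i)})$ is well-defined as a matrix function on symmetric matrices (with the convention $\mathrm{sign}(0)=0$ on the kernel), consistent with the usage in Section~\ref{subsec:updatecases}, and that the diagonalization needed to compute it is the source of the $O(K^3 N)$ term in the per-iteration cost noted earlier.
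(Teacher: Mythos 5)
Your proposal is correct and takes essentially the same route as the paper: the same change of variables, the same Schatten-H\"older inequality $\langle G,Z\rangle\geq-\|G\|_{\Tr}\|Z\|_{2}$ with equality at $Z=-c\,\mathrm{sign}(G)$, and the same reduction to a one-dimensional quadratic in the common scalar $a$ (the paper phrases this last step as an invocation of Lemma~\ref{lem:infty-minimizer} applied to the vector of block spectral norms, whereas you solve the quadratic by hand, but that is a cosmetic difference). Your remark on the degenerate case of singular $G_t^{(i)}$ is a small point of care that the paper glosses over; it does not change the validity of either argument.
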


\begin{proof}
We can let $X^{(i)}=\Lambda^{(i)}-\Lambda_{t}^{(i)}$, $\mathbf{X}=(X^{(1)},\ldots,X^{(N)})$,
and seek to minimize $h(\mathbf{X})$ defined by 
\begin{align*}
h(\bX) & \coloneqq g_{\beta}(\Blam_{t})+\sum_{i=1}^{n}\langle G_{t}^{(i)},X^{(i)}\rangle+\frac{1}{2\eta}\,\max_{i=1,...,N}\left\{ \|X^{(i)}\|_{2}^{2}\right\} \\
 & \overset{(*)}{\geq}g_{\beta}(\Blam_{t})-\sum_{i=1}^{n}\|G_{t}^{(i)}\|_{\Tr}\,\|X^{(i)}\|_{2}+\frac{1}{2\eta}\,\max_{i=1,...,N}\left\{ \|X^{(i)}\|_{2}^{2}\right\} .
\end{align*}
If we just consider optimizing this lower bound over $(\|X^{(i)}\|_{2})_{i=1}^{n}$,
this reduces to the case for the Max-Cut SDP. Then by Lemma \ref{lem:infty-minimizer},
the optimal solution satisfies 
\[
\|X^{(i)}\|_{2}=\eta\bigg(\sum_{i=1}^{n}\|G_{t}^{(i)}\|_{\Tr}\text{\ensuremath{\bigg)}}\cdot\sign\Big(\|G_{t}^{(i)}\|_{\Tr}\Big)=\eta\bigg(\sum_{i=1}^{n}\|G_{t}^{(i)}\|_{\Tr}\text{\ensuremath{\bigg)}}.
\]
 Then if we can find $\mathbf{X}$ satisfying 
\[
\|X^{(i)}\|_{2}=\eta\bigg(\sum_{i=1}^{n}\|G_{t}^{(i)}\|_{\Tr}\text{\ensuremath{\bigg)}}
\]
 as well as 
\[
\sum_{i=1}^{n}\langle G_{t}^{(i)},X^{(i)}\rangle=-\sum_{i=1}^{n}\|G_{t}^{(i)}\|_{\Tr}\cdot\|X^{(i)}\|_{2},
\]
 so that the $(*)$ inequality holds with equality, we know that $\mathbf{X}$
must be an optimizer.

Note that if we take 
\[
X^{(i)}=-\eta\bigg(\sum_{i=1}^{n}\|G_{t}^{(i)}\|_{\Tr}\text{\ensuremath{\bigg)}}\mathrm{sign}(G_{t}^{(i)}),
\]
 for all $i,$ the desired equations are satisfied. This concludes
the proof.
\end{proof}

\subsection{Weak Perm-Synch SDP\label{app:algo-weak-ps}}

For the weak Perm-Synch SDP, we want to optimize (omitting the irrelevant
constant term):
\[
\min_{\blam,\bmu}\Big\{\langle g_{t},\blam-\blam_{t}\rangle+\left\langle h_{t},\bmu-\bmu_{t}\right\rangle +\frac{1}{\eta}\|\blam-\blam_{t}\|_{\infty}^{2}+\frac{1}{\eta}\|\bmu-\bmu_{t}\|_{\infty}^{2}\Big\}.
\]
Here 
\[
g_{t}=\mathrm{diag}[\hat{X}_{t}]-\frac{\mathbf{1}_{n}}{n},\qquad h_{t}^{(i)}=\frac{\mathbf{1}_{K}^{\top}\hat{X}_{t}^{(i,i)}\mathbf{1}_{K}}{K}-\frac{1}{n},\ \ i=1,\ldots,N,
\]
 defining $h_{t}\in\R^{N}$.

By the same argument as for the OT LP, we deduce: 
\begin{prop}
[Weak Perm-Synch SDP update]The exact update of (\ref{eq:optstep})
for the weak Perm-Synch SDP is
\[
\blam_{t+1}=\blam_{t}-\frac{\eta}{2}\Vert g_{t}\Vert_{1}\,\mathrm{sign}(g_{t}),
\]
\[
\bmu_{t+1}=\bmu_{t}-\frac{\eta}{2}\Vert h_{t}\Vert_{1}\,\mathrm{sign}(h_{t}).
\]
\end{prop}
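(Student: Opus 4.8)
The plan is to follow the OT LP derivation of Appendix \ref{app:algo-ot} essentially verbatim, since the gradient-descent norm here has the same product structure. Recall from Section \ref{subsec:normWeakPermSynch} that the norm is $\Vert(\blam,\bmu)\Vert=\sqrt{2(\Vert\blam\Vert_{\infty}^{2}+\Vert\bmu\Vert_{\infty}^{2})}$, so the proximal term in (\ref{eq:optstep}) expands as
\[
\frac{1}{2\eta}\Vert(\blam,\bmu)-(\blam_{t},\bmu_{t})\Vert^{2}=\frac{1}{\eta}\Vert\blam-\blam_{t}\Vert_{\infty}^{2}+\frac{1}{\eta}\Vert\bmu-\bmu_{t}\Vert_{\infty}^{2}.
\]
First I would record that the dual gradient $\nabla f_{\beta}$ at the current iterate splits into the two blocks $(g_{t},h_{t})$ appearing in the statement, $g_{t}$ being the component associated with the $\mathrm{diag}(X)=\mathbf{1}_{n}/n$ constraints and $h_{t}$ the component associated with the $\mathbf{1}_{K}\mathbf{1}_{K}^{\top}$-trace constraints (this is the analogue of formula (\ref{eq:dualgrad}), read off blockwise from (\ref{eq:Xweak})). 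After dropping the constant $f_{\beta}(\blam_{t},\bmu_{t})$, the step (\ref{eq:optstep}) then becomes
\[
\min_{\blam,\bmu}\Big\{\langle g_{t},\blam-\blam_{t}\rangle+\langle h_{t},\bmu-\bmu_{t}\rangle+\tfrac{1}{\eta}\Vert\blam-\blam_{t}\Vert_{\infty}^{2}+\tfrac{1}{\eta}\Vert\bmu-\bmu_{t}\Vert_{\infty}^{2}\Big\}.
\]

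The key point is that this objective separates into a part depending only on $\blam$ and a part depending only on $\bmu$, so the two blocks can be optimized independently. For the $\blam$ block, set $x=\blam-\blam_{t}$; then one must minimize $\langle g_{t},x\rangle+\tfrac{1}{\eta}\Vert x\Vert_{\infty}^{2}$, and writing $\tfrac{1}{\eta}=\tfrac{1}{2(\eta/2)}$ puts this in exactly the form of Lemma \ref{lem:infty-minimizer} with step size $\eta/2$. That lemma gives the unique minimizer $x=-\tfrac{\eta}{2}\Vert g_{t}\Vert_{1}\,\sign(g_{t})$, hence $\blam_{t+1}=\blam_{t}-\tfrac{\eta}{2}\Vert g_{t}\Vert_{1}\,\sign(g_{t})$. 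Repeating the identical argument for the $\bmu$ block with $h_{t}$ in place of $g_{t}$ yields $\bmu_{t+1}=\bmu_{t}-\tfrac{\eta}{2}\Vert h_{t}\Vert_{1}\,\sign(h_{t})$, which is the claimed update.

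There is no genuine obstacle in this argument; it is the same computation as for OT. The only thing to keep track of is the factor of $2$ built into the norm, which effectively replaces $\eta$ by $\eta/2$ when Lemma \ref{lem:infty-minimizer} is applied and so produces the $\tfrac{\eta}{2}$ (rather than $\eta$) in the final update, exactly as in Appendix \ref{app:algo-ot}.
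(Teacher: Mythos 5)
Your argument is correct and is the same one the paper uses: expand the squared norm into the two $\ell^\infty$ terms, note the objective decouples over $\blam$ and $\bmu$, and apply Lemma \ref{lem:infty-minimizer} to each block with effective step size $\eta/2$, exactly as in the OT LP derivation. The paper simply cites "the same argument as for the OT LP," which you have spelled out.
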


\section{Proofs for gradient estimation \label{app:gradest}}
\begin{proof}
[Proof of Corollary \ref{cor:maxcutgradest}.] Let $\ve=\gamma/4\in(0,1/2)$.
Then the conclusion of Lemma (\ref{lem:concentration1}), together
with the union bound, implies in particular that 
\[
(1-\ve)\Tr[Y_{\beta,\blam_{t}}]\leq\Tr[\hat{Y}_{t}]\leq(1+\ve)\,\Tr[Y_{\beta,\blam_{t}}]
\]
 holds for all $t=0,\ldots,T-1$ with probability at least $1-\delta$.
Therefore, since $\hat{X}_{t}=\hat{Y}_{t}/\Tr[\hat{Y}_{t}]$ and $X_{\beta,\blam}=Y_{\beta,\blam}/\Tr[Y_{\beta,\blam}]$,
it follows that 
\[
\frac{1-\ve}{1+\ve}\,\mathrm{diag}[X_{\beta,\blam_{t}}]\leq\mathrm{diag}[\hat{X}_{t}]\leq\frac{1+\ve}{1-\ve}\,\mathrm{diag}[X_{\beta,\blam_{t}}]
\]
 holds for all $t=0,\ldots,T-1$ with probability at least $1-\delta$.

Now $\frac{1+\ve}{1-\ve}-1=\frac{2\ve}{1-\ve}$ and $1-\frac{1-\ve}{1+\ve}=\frac{2\ve}{1+\ve}\leq\frac{2\ve}{1-\ve}$,
so by subtracting $\mathrm{diag}[X_{\beta,\blam_{t}}]$ from all sides
we find that 
\[
\left|\mathrm{diag}[\hat{X}_{t}]-\mathrm{diag}[X_{\beta,\blam_{t}}]\right|\leq\frac{2\ve}{1-\ve}\,\mathrm{diag}[X_{\beta,\blam_{t}}].
\]
 Therefore 
\begin{align*}
\Vert g_{t}-\nabla f_{\beta}(\blam_{t})\Vert_{1} & =\Vert\mathrm{diag}[\hat{X}_{t}]-\mathrm{diag}[X_{\beta,\blam_{t}}]\Vert_{1}\\
 & \leq\frac{2\ve}{1-\ve}\Vert\mathrm{diag}[X_{\beta,\blam_{t}}]\Vert_{1}\\
 & =\frac{2\ve}{1-\ve}\\
 & \leq\gamma,
\end{align*}
 where we have used the fact that $\ve\in(0,1/2]$, so that $1-\ve\geq1/2$,
as well as the fact that $\gamma=4\ve$. This concludes the proof.
\end{proof}
\begin{center}------------------------------------------------------------------------\end{center}
\begin{proof}
[Proof of Corollary \ref{cor:strongpermsynchgradest}.] Let $\ve=\gamma/8\in(0,1/2)$.
Then the conclusion of Lemma (\ref{lem:concentration1}), together
with the union bound, implies in particular that 
\[
(1-\ve)\Tr[Y_{\beta,\blam_{t}}]\leq\Tr[\hat{Y}_{t}]\leq(1+\ve)\,\Tr[Y_{\beta,\blam_{t}}]
\]
holds for all $t=0,\ldots,T-1$ with probability at least $1-\delta$.
Therefore, since $\hat{X}_{t}=\hat{Y}_{t}/\Tr[\hat{Y}_{t}]$ and $X_{\beta,\blam}=Y_{\beta,\blam}/\Tr[Y_{\beta,\blam}]$,
it follows that 
\[
\frac{1-\ve}{1+\ve}\,X_{\beta,\blam_{t}}^{(i,i)}\preceq\hat{X}_{t}^{(i,i)}\preceq\frac{1+\ve}{1-\ve}\,X_{\beta,\blam_{t}}^{(i,i)}
\]
 holds for all $t=0,\ldots,T-1$ with probability at least $1-\delta$.

Then subtracting $X_{\beta,\blam_{t}}^{(i,i)}$ from all sides (with
similar reasoning as in the proof of Corollary \ref{cor:maxcutgradest}),
we deduce that 
\[
-\frac{2\ve}{1-\ve}X_{\beta,\blam_{t}}^{(i,i)}\preceq\hat{X}_{t}^{(i,i)}-X_{\beta,\blam_{t}}^{(i,i)}\preceq\frac{2\ve}{1-\ve}X_{\beta,\blam_{t}}^{(i,i)}.
\]

We claim that it is a general fact that if $-B\preceq A\preceq B$
where $B\succeq0$, then 
\[
\Vert A\Vert_{\Tr}\leq2\,\Tr[B].
\]
 To see this, note that we can write 
\begin{align*}
\Vert A\Vert_{\Tr} & =\left\langle A,\mathrm{sign}(A)\right\rangle \\
 & =\left\langle A,\Theta(A)\right\rangle +\left\langle -A,\Theta(-A)\right\rangle ,
\end{align*}
 where $\Theta$ denotes the Heaviside function and angle brackets
denote the Frobenius inner product. Then since $A\preceq B$ and $\Theta(A)\succeq0$,
it follows that 
\[
\left\langle A,\Theta(A)\right\rangle \leq\left\langle B,\Theta(A)\right\rangle \leq\Vert B\Vert_{\Tr}\,\Vert\Theta(A)\Vert_{2}\leq\Vert B\Vert_{\Tr},
\]
 where we have used the H\"{o}lder inequality for Schatten norms. Similarly,
since $-A\preceq B$ and $\Theta(-A)\succeq0$, we also have that
\[
\left\langle -A,\Theta(-A)\right\rangle \leq\Vert B\Vert_{\Tr},
\]
 and the claim follows, since $\Vert B\Vert_{\Tr}=\Tr[B]$.

Then by applying our general fact, we deduce that 
\[
\Vert\hat{X}_{t}^{(i,i)}-X_{\beta,\blam_{t}}^{(i,i)}\Vert_{\Tr}\leq\frac{4\ve}{1-\ve}\Tr[X_{\beta,\blam_{t}}^{(i,i)}].
\]
 Therefore 
\begin{align*}
\Vert g_{t}-\nabla f_{\beta}(\blam_{t})\Vert_{*} & =\sum_{i=1}^{N}\Vert\hat{X}_{t}^{(i,i)}-X_{\beta,\blam_{t}}^{(i,i)}\Vert_{\Tr}\\
 & \leq\frac{4\ve}{1-\ve}\sum_{i=1}^{N}\Tr[X_{\beta,\blam_{t}}^{(i,i)}]\\
 & =\frac{4\ve}{1-\ve}\Tr[X_{\beta,\blam_{t}}]\\
 & =\frac{4\ve}{1-\ve}\\
 & \leq\gamma,
\end{align*}
 where we have used the fact that $\ve\in(0,1/2]$, so that $1-\ve\geq1/2$,
as well as the fact that $\gamma=8\ve$. This concludes the proof.
\end{proof}
\begin{center}------------------------------------------------------------------------\end{center}
\begin{proof}
[Proof of Corollary \ref{cor:strongpermsynchgradest}.] Let $\ve=\gamma/4\in(0,1/2]$.
Following the proof of Corollary \ref{cor:maxcutgradest} and making
suitable adjustments, we again conclude that 
\[
\Vert g_{t}-\nabla_{\blam}f_{\beta}(\blam_{t},\bmu_{t})\Vert_{1}\leq\gamma,
\]
 and moreover that 
\[
\left|\frac{\mathbf{1}_{K}^{\top}\hat{X}_{t}^{(i,i)}\mathbf{1}_{K}}{K}-\frac{\mathbf{1}_{K}^{\top}X_{\beta,\blam_{t}}^{(i,i)}\mathbf{1}_{K}}{K}\right|\leq\frac{2\ve}{1-\ve}\,\frac{\mathbf{1}_{K}^{\top}X_{\beta,\blam_{t}}^{(i,i)}\mathbf{1}_{K}}{K}
\]
 for all $i=1,\ldots,N$ and all $t=0,\ldots,T-1$, with probability
at least $1-\delta$.

Using the facts that $\gamma=4\ve$ and $1-\ve\geq1/2$, as well as
the fact that 
\begin{align*}
\frac{\mathbf{1}_{K}^{\top}X_{\beta,\blam_{t}}\mathbf{1}_{K}}{K} & =\Tr\left[X_{\beta,\blam_{t}}\frac{\mathbf{1}_{K}\mathbf{1}_{K}^{\top}}{K}\right]\\
 & \leq\Vert X_{\beta,\blam_{t}}\Vert_{\Tr}\left\Vert \frac{\mathbf{1}_{K}\mathbf{1}_{K}^{\top}}{K}\right\Vert _{2}\\
 & =\Tr\left[X_{\beta,\blam_{t}}\right],
\end{align*}
 where we have used the H\"{o}lder inequality for Schatten norms, we conclude
that 
\[
\left|\frac{\mathbf{1}_{K}^{\top}\hat{X}_{t}^{(i,i)}\mathbf{1}_{K}}{K}-\frac{\mathbf{1}_{K}^{\top}X_{\beta,\blam_{t}}^{(i,i)}\mathbf{1}_{K}}{K}\right|\leq\gamma\,\Tr[X_{\beta,\blam_{t}}^{(i,i)}].
\]
 It follows that 
\begin{align*}
\Vert h_{t}-\nabla_{\bmu}f_{\beta}(\blam_{t},\bmu_{t})\Vert_{1} & \leq\gamma\sum_{i=1}^{N}\Tr[X_{\beta,\blam_{t}}^{(i,i)}]\\
 & =\gamma\,\Tr[X_{\beta,\blam_{t}}]\\
 & =\gamma.
\end{align*}
 Therefore 
\[
\Vert(g_{t},h_{t})-\nabla f_{\beta}(\blam_{t},\bmu_{t})\Vert_{*}=\max\left(\Vert g_{t}-\nabla_{\blam}f_{\beta}(\blam_{t},\bmu_{t})\Vert_{1},\Vert h_{t}-\nabla_{\bmu}f_{\beta}(\blam_{t},\bmu_{t})\Vert_{1}\right)\leq\gamma,
\]
 which concludes the proof.
\end{proof}

\section{Proofs for gradient convergence \label{app:gradconv}}

Before giving the proof of Theorem \ref{thm:gradconv}, we state and
prove two useful lemmas.
\begin{lem}
\label{lem:norm} For any $\alpha>0$ and $x\in\R^{n}$, we have 
\[
\min_{y\in\R^{n}}\left\{ x\cdot y+\frac{\alpha}{2}\Vert y\Vert^{2}\right\} =-\frac{1}{2\alpha}\Vert x\Vert_{*}^{2}.
\]
 Moreover, any $y$ attaining the minimum satisfies $\Vert y\Vert=\Vert x\Vert_{*}/\alpha$.
\end{lem}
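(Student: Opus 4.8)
The plan is to reduce the vector minimization to a one-dimensional minimization over the radius $r = \Vert y\Vert$, exploiting the definition of the dual norm, and then to solve the resulting scalar quadratic problem explicitly. First I would record the elementary identity
\[
\min_{\Vert y\Vert = r}\, x\cdot y \;=\; -r\,\Vert x\Vert_*,
\]
valid for every $r\geq 0$: by definition $\Vert x\Vert_* = \sup_{\Vert y\Vert\le 1} x\cdot y$, and by positive homogeneity this gives $\sup_{\Vert y\Vert = r} x\cdot y = r\,\Vert x\Vert_*$, hence $\inf_{\Vert y\Vert = r} x\cdot y = -r\,\Vert x\Vert_*$; since $\R^n$ is finite-dimensional the sphere $\{\Vert y\Vert = r\}$ is compact, so the infimum is attained by some $y$ with $\Vert y\Vert = r$ and $x\cdot y = -r\,\Vert x\Vert_*$.

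Next, since the quadratic penalty $\frac{\alpha}{2}\Vert y\Vert^2$ depends on $y$ only through $r = \Vert y\Vert$, I would split the minimization over $y$ into an outer minimization over $r\geq 0$ and an inner minimization over $\{\Vert y\Vert = r\}$, obtaining
\[
\min_{y\in\R^n}\Big\{ x\cdot y + \tfrac{\alpha}{2}\Vert y\Vert^2\Big\}
= \min_{r\ge 0}\Big\{ -r\,\Vert x\Vert_* + \tfrac{\alpha}{2}\,r^2\Big\}.
\]
The scalar function $\phi(r) := \tfrac{\alpha}{2}r^2 - r\,\Vert x\Vert_*$ is strictly convex on $\R$ (because $\alpha>0$), with $\phi'(r) = \alpha r - \Vert x\Vert_*$, so its unique unconstrained minimizer is $r_\star = \Vert x\Vert_*/\alpha$, which lies in $[0,\infty)$; substituting gives $\phi(r_\star) = -\tfrac{1}{2\alpha}\Vert x\Vert_*^2$, which is the claimed value.

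For the ``moreover'' statement, suppose $y$ attains the minimum. Then $r := \Vert y\Vert$ must attain the minimum of $\phi$ over $r\ge 0$ (otherwise, replacing $y$ by the minimizing-sphere vector at radius $r_\star$ from the first step would strictly decrease the objective), and by strict convexity of $\phi$ this forces $r = r_\star = \Vert x\Vert_*/\alpha$. I expect no real obstacle here: the only points requiring care are the attainment of the infimum over the sphere (finite dimension / compactness) and the uniqueness of the optimal radius (strict convexity of $\phi$ when $\alpha>0$); the degenerate case $x=0$ is handled automatically, as then $\Vert x\Vert_* = 0$, the minimum is $0$, attained only at $y=0$, consistent with $\Vert y\Vert = \Vert x\Vert_*/\alpha = 0$.
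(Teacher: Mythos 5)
Your proof is correct and is essentially the same argument as the paper's: both hinge on reducing the problem to minimizing the scalar quadratic $\tfrac{\alpha}{2}r^{2}-r\Vert x\Vert_{*}$ in the radius $r=\Vert y\Vert$, and both obtain the ``moreover'' part from the uniqueness of the minimizer of that quadratic. The paper organizes this as a two-sided bound (exhibit a witness $y=-\tfrac{\Vert x\Vert_{*}}{\alpha}z$ for the upper bound, then apply H\"older's inequality $x\cdot y\ge-\Vert x\Vert_{*}\Vert y\Vert$ plus scalar minimization for the lower bound), whereas you organize it as a polar decomposition $\min_{y}=\min_{r\ge0}\min_{\Vert y\Vert=r}$; this is purely a presentational difference and neither route is materially shorter or more general.
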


\begin{proof}
This statement and proof are largely reproduced from Section 10.9
in \cite{beck2017first}. We only need a little extra argument to
recover the last part of the statement of the lemma, which does not
appear in \cite{beck2017first}.

Let $z\in\R^{n}$ be a vector such that $\Vert z\Vert\leq1$ and $x\cdot z=\Vert x\Vert_{*}$,
which we know exists by the defining property of the dual norm. Then
letting $y=-\frac{\Vert x\Vert_{*}}{\alpha}z$, it follows that $x\cdot y+\frac{\alpha}{2}\Vert y\Vert^{2}=-\frac{1}{2\alpha}\Vert y\Vert_{*}^{2}$.
This implies that 
\[
\min_{y\in\R^{n}}\left\{ x\cdot y+\frac{\alpha}{2}\Vert y\Vert^{2}\right\} \leq-\frac{1}{2\alpha}\Vert x\Vert_{*}^{2}.
\]
 Meanwhile, we have for any $y\in\R^{n}$: 
\begin{equation}
x\cdot y+\frac{\alpha}{2}\Vert y\Vert^{2}\geq-\Vert x\Vert_{*}\Vert y\Vert+\frac{\alpha}{2}\Vert y\Vert^{2}\geq\min_{t\in\R}\left\{ -t\Vert x\Vert_{*}+\frac{\alpha}{2}t^{2}\right\} =-\frac{1}{2\alpha}\Vert x\Vert_{*}^{2},\label{eq:normineq}
\end{equation}
 so 
\[
\min_{y\in\R^{n}}\left\{ x\cdot y+\frac{\alpha}{2}\Vert y\Vert^{2}\right\} \geq-\frac{1}{2\alpha}\Vert x\Vert_{*}^{2}.
\]

Moreover, for any $y$ that satisfies $x\cdot y+\frac{\alpha}{2}\Vert y\Vert^{2}=-\frac{1}{2\alpha}\Vert x\Vert_{*}^{2}$,
the chain of inequalities (\ref{eq:normineq}) implies that 
\[
\Vert y\Vert=\underset{t\in\R}{\text{argmin}}\left\{ -t\Vert x\Vert_{*}+\frac{\alpha}{2}t^{2}\right\} =\frac{\Vert x\Vert_{*}}{\alpha}.
\]
 This completes the proof.
\end{proof}
The second useful lemma bounds the error in the objective of our initial
guess.
\begin{lem}
\label{lem:guessobj}$f_{\beta}(\blam_{0})+p_{\star,\beta}\leq\Delta e(C)+\beta^{-1}\log n$.
\end{lem}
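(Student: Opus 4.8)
The plan is to use the closed form of the dual objective at $\blam_{0}=0$ together with elementary spectral estimates. Recall that $f_{\beta}=-g_{\beta}$ and, by strong duality, $p_{\star,\beta}=g_{\beta}(\blam_{\star})$, so that $f_{\beta}(\blam_{0})+p_{\star,\beta}=f_{\beta}(\blam_{0})-f_{\beta}(\blam_{\star})$ is precisely the dual optimality gap at the starting point; the claim is that this gap is at most $\Delta e(C)+\beta^{-1}\log n$.

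First I would evaluate $f_{\beta}(\blam_{0})$ directly: since $\blam_{0}\cdot\mathbf{A}=0$, the formula for $g_{\beta}$ gives $f_{\beta}(\blam_{0})=\beta^{-1}\log\Tr[e^{-\beta C}]$. Writing $\Tr[e^{-\beta C}]=\sum_{i}e^{-\beta e_{i}(C)}\leq n\,e^{-\beta e_{\min}(C)}$ in terms of the eigenvalues $e_{i}(C)$ of $C$, this yields $f_{\beta}(\blam_{0})\leq \beta^{-1}\log n-e_{\min}(C)$.

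Next I would bound $p_{\star,\beta}$ from above using the regularized minimizer $X_{\star,\beta}\in\mathcal{P}_{1}$, which exists by the standing feasibility assumption. Since the von Neumann entropy is nonpositive on density operators (it is maximized, at value $0$, by rank-one projectors), $S(X_{\star,\beta})\leq 0$, and hence $p_{\star,\beta}=\Tr[CX_{\star,\beta}]+\beta^{-1}S(X_{\star,\beta})\leq \Tr[CX_{\star,\beta}]\leq e_{\max}(C)$, the last inequality because $X_{\star,\beta}\succeq 0$ with $\Tr[X_{\star,\beta}]=1$. Adding the two estimates gives $f_{\beta}(\blam_{0})+p_{\star,\beta}\leq \beta^{-1}\log n-e_{\min}(C)+e_{\max}(C)=\Delta e(C)+\beta^{-1}\log n$.

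There is no serious obstacle here: the argument is just two one-line spectral bounds plus the fact that $S\le 0$ on $\mathcal{P}_{1}$. The only points requiring care are keeping the sign conventions for $f_{\beta}$, $g_{\beta}$, and $p_{\star,\beta}$ straight, and noting that feasibility is what guarantees that $X_{\star,\beta}$ (equivalently $p_{\star,\beta}$) is well-defined in the first place.
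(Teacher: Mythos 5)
Your proof is correct and follows essentially the same two-step argument as the paper: bound $f_{\beta}(\blam_{0})=\beta^{-1}\log\Tr[e^{-\beta C}]\leq\beta^{-1}\log n-e_{\min}(C)$, and bound $p_{\star,\beta}\leq\Tr[CX_{\star,\beta}]\leq e_{\max}(C)$ using $S\leq 0$ on $\mathcal{P}_{1}$ and a spectral estimate. The opening remark about strong duality is a harmless digression but not actually used, since you bound $p_{\star,\beta}$ directly from the primal side exactly as the paper does.
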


\begin{proof}
Recall that $p_{\star,\beta}$ is the optimal value of (\ref{eq:reg_sdp}).
Therefore 
\[
p_{\star,\beta}=\Tr[CX_{\star,\beta}]+\beta^{-1}S(X_{\star,\beta})\leq\Tr[CX_{\star,\beta}],
\]
 where we have used the fact that $S(X)\leq0$ for $X\in\mathcal{P}_{1}$.
Now by considering a spectral decomposition $X_{\star,\beta}=\sum_{i=1}^{n}x_{i}u_{i}u_{i}^{\top}$
where $\sum_{i=1}^{n}x_{i}=1$ and $x\geq0$, we can see that $\Tr[CX_{\star,\beta}]\leq e_{\max}(C)$.
Therefore 
\[
p_{\star,\beta}\leq e_{\max}(C).
\]

Meanwhile, since $\blam_{0}=0$, we have 
\[
f_{\beta}(\blam_{0})=\beta^{-1}\log\Tr\left[e^{-\beta C}\right].
\]
 Now 
\[
\Tr\left[e^{-\beta C}\right]\leq n e_{\max}\left[e^{-\beta C}\right]=ne^{-\beta e_{\min}(C)}
\]
 so 
\[
f_{\beta}(\blam_{0})\leq-e_{\min}(C)+\beta^{-1}\log n.
\]

The lemma then follows.
\end{proof}
Finally we proceed with the proof of Theorem \ref{thm:gradconv},
which we restate here.

\gradconvthm*
\begin{proof}
From (\ref{eq:optstep}) and Lemma \ref{lem:norm} we know that 
\begin{equation}
\Vert\blam_{t+1}-\blam_{t}\Vert\leq\beta^{-1}\Vert g_{t}\Vert_{*}.\label{eq:steplambda}
\end{equation}
 Next, compute: 
\begin{align*}
f_{\beta}(\blam_{t+1})\overset{\mathrm{(i)}}{\leq}\  & f_{\beta}(\blam_{t})+\nabla f_{\beta}(\blam_{t})\cdot(\blam_{t+1}-\blam_{t})+\frac{\beta}{2}\Vert\blam_{t+1}-\blam_{t}\Vert^{2}\\
=\,\  & f_{\beta}(\blam_{t})+g_{t}\cdot(\blam_{t+1}-\blam_{t})+\frac{\beta}{2}\Vert\blam_{t+1}-\blam_{t}\Vert^{2}+\left(\nabla f_{\beta}(\blam_{t})-g_{t}\right)\cdot(\blam_{t+1}-\blam_{t})\\
\overset{\mathrm{(ii)}}{\leq}\  & \min_{\blam}\left\{ f_{\beta}(\blam_{t})+g_{t}\cdot(\blam-\blam_{t})+\frac{\beta}{2}\Vert\blam-\blam_{t}\Vert^{2}\right\} +\Vert\nabla f_{\beta}(\blam_{t})-g_{t}\Vert_{*}\,\Vert\blam_{t+1}-\blam_{t}\Vert\\
\overset{\mathrm{(iii)}}{\leq}\  & \left[f_{\beta}(\blam_{t})-\frac{1}{2\beta}\Vert g_{t}\Vert_{*}^{2}\right]+\frac{\gamma}{\beta}\,\Vert g_{t}\Vert_{*}\\
\overset{\mathrm{(iv)}}{\leq}\  & \left[f_{\beta}(\blam_{t})-\frac{1}{2\beta}\Vert g_{t}\Vert_{*}^{2}\right]+\frac{\gamma^{2}}{\beta}+\frac{1}{4\beta}\Vert g_{t}\Vert_{*}^{2}\\
=\,\  & f_{\beta}(\blam_{t})-\frac{1}{4\beta}\Vert g_{t}\Vert_{*}^{2}+\frac{\gamma^{2}}{\beta}.
\end{align*}
 In (i), we have used strong smoothness. In (ii), we have used the
defining property of the update (\ref{eq:optstep}) and the Cauchy-Schwarz
inequality for general norms. In (iii), we have used Lemma \ref{lem:norm}
to bound the first term, as well as Assumption \ref{assump:gradientestimator}
and (\ref{eq:steplambda}) to bound the second term. In step (iv),
we have used Young's inequality $ab\leq a^{2}+\frac{1}{4}b^{2}$.

In summary, we have shown (after some rearrangement): 
\[
\Vert g_{t}\Vert_{*}^{2}\leq4\beta\left[f_{\beta}(\blam_{t})-f_{\beta}(\blam_{t+1})\right]+4\gamma^{2}.
\]
 Then sum both sides from $t=0,\ldots,T-1$ to obtain (via telescoping):
\[
\sum_{t=0}^{T-1}\Vert g_{t}\Vert_{*}^{2}\leq4\beta\left[f_{\beta}(\blam_{0})-f_{\beta}(\blam_{T})\right]+4T\gamma^{2}.
\]
 Of course, $-f_{\beta}(\blam_{T})\leq p_{\star,\beta}$ by strong
duality, so in turn: 
\[
\frac{1}{T}\sum_{t=0}^{T-1}\Vert g_{t}\Vert_{*}^{2}\leq\frac{4\beta}{T}\left[f_{\beta}(\blam_{0})+p_{\star,\beta}\right]+4\gamma^{2}.
\]

It follows that there must exist $t\in\{0,\ldots,T-1\}$ such that
\[
\Vert g_{t}\Vert_{*}\leq\sqrt{\frac{4\beta}{T}\left[f_{\beta}(\blam_{0})+p_{\star,\beta}\right]+4\gamma^{2}}.
\]
 But by Assumption \ref{assump:gradientestimator}, $\Vert g_{t}\Vert_{*}\geq\Vert\nabla f_{\beta}(\blam_{t})\Vert_{*}-\gamma$,
so it follows that 
\[
\Vert\nabla f_{\beta}(\blam_{t})\Vert_{*}\leq\gamma+\sqrt{\frac{4\beta}{T}\left[f_{\beta}(\blam_{0})+p_{\star,\beta}\right]+4\gamma^{2}}.
\]
 Then by Lemma \ref{lem:guessobj}, we have 
\[
\Vert\nabla f_{\beta}(\blam_{t})\Vert_{*}\leq\gamma+\sqrt{\frac{4\beta\,\Delta e(C)}{T}+\frac{4\,\log n}{T}+4\gamma^{2}}.
\]
Then using the general inequality $\sqrt{a+b+c}\leq\sqrt{a}+\sqrt{b}+\sqrt{c}$
for $a,b,c\geq0$, the proof of the theorem is completed.
\end{proof}

\section{Proofs for objective convergence \label{app:lossconv}}

Note that our proof of objective convergence with exact gradients
(Theorem \ref{thm:objconv}) is not original. It can be recovered
from Fact B.2 in \cite{allen2014linear} or Section 10 of \cite{beck2017first}.
On the other hand, our proof for inexact gradients (Theorem \ref{thm:objconv-1})
does not appear in any previous literature, to the best of our knowledge.
A key assumption is that the error in the gradient is bounded.

Before we present the proofs of Theorem \ref{thm:objconv} and Theorem
\ref{thm:objconv-1}, we state a key lemma. 
\begin{lem}
\label{lem:lossdecrease}Given a norm $\|\cdot\|$, let $f$ be convex
and $L$-smooth with respect to this norm, and suppose that $x_{\star}$
is a minimizer of $f$. Then 
\begin{equation}
\min_{y}\left\{ f(y)+\frac{L\|x-y\|^{2}}{2}\right\} \le f(x)-\frac{\big[f(x)-f(x_{\star})\big]^{2}}{2L\|x-x_{\star}\|^{2}}.\label{eq:minexp}
\end{equation}
\end{lem}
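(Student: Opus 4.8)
The plan is to upper-bound the left-hand side by plugging a single, carefully chosen competitor $y$ into the minimand, namely a point on the segment from $x$ to $x_\star$. First I would dispose of the trivial case $x = x_\star$: there the right-hand side reads $f(x)$, and taking $y = x$ already attains it, so from now on assume $x \neq x_\star$, hence $\|x - x_\star\| > 0$. The one place smoothness enters is through the descent lemma, which I would establish by writing $f(x) - f(x_\star) - \langle \nabla f(x_\star),\, x - x_\star\rangle = \int_0^1 \langle \nabla f(x_\star + s(x - x_\star)) - \nabla f(x_\star),\, x - x_\star\rangle\, ds$ and bounding the integrand via $\langle u, v\rangle \le \|u\|_* \|v\|$ together with the $L$-smoothness estimate $\|\nabla f(x_\star + s(x-x_\star)) - \nabla f(x_\star)\|_* \le L s \|x - x_\star\|$; since $\nabla f(x_\star) = 0$ (as $x_\star$ minimizes $f$), this yields
\[
f(x) - f(x_\star) \le \tfrac{L}{2}\,\|x - x_\star\|^2 .
\]

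Next I would introduce, for $t \in [0,1]$, the competitor $y_t := (1-t)\,x + t\,x_\star$. Convexity of $f$ gives $f(y_t) \le (1-t) f(x) + t f(x_\star) = f(x) - t\,[f(x) - f(x_\star)]$, and homogeneity of the norm gives $\|x - y_t\| = t\,\|x - x_\star\|$, so that
\[
\min_{y}\Big\{ f(y) + \tfrac{L}{2}\|x - y\|^2 \Big\} \le f(x) - t\,[f(x) - f(x_\star)] + \tfrac{L}{2}\, t^2 \|x - x_\star\|^2
\]
for every $t \in [0,1]$.

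Finally I would optimize the scalar quadratic in $t$ on the right. Its unconstrained minimizer is $t^\star := \dfrac{f(x) - f(x_\star)}{L\,\|x - x_\star\|^2}$, and the displayed descent-lemma bound shows $t^\star \le \tfrac12 \le 1$, so $t^\star$ is admissible; substituting $t = t^\star$ produces the value $f(x) - \dfrac{[f(x) - f(x_\star)]^2}{2L\,\|x - x_\star\|^2}$, which is exactly the claimed inequality. I expect the only delicate point — and it is a mild one — to be confirming that the optimal step $t^\star$ falls in $[0,1]$; this is precisely what the descent lemma buys, and it is the reason the hypothesis of smoothness, rather than mere convexity, is genuinely used. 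Everything else is a one-line convexity estimate and the elementary minimization of a univariate quadratic.
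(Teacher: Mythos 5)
Your proof is correct and follows essentially the same approach as the paper's: plug the line-segment competitor $y_t = (1-t)x + t x_\star$ into the minimand, use convexity, and optimize the resulting quadratic in $t$, with $L$-smoothness used only to certify $t^\star \le \tfrac12$. Your version is slightly more careful in two small respects that the paper glosses over---you handle the degenerate case $x = x_\star$ explicitly (the right-hand side is otherwise $0/0$), and you spell out the descent-lemma derivation rather than merely asserting the bound---but these are refinements, not a different route.
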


\begin{proof}
Let $z_{s}=x+s(x_{\star}-x)$ where $s\in[0,1]$. We will plug $z_{s}$
in the place of $y$ in the minimized expression on the left-hand
side of (\ref{eq:minexp}) to obtain an upper bound.

Compute: 
\begin{align*}
f(z_{s})+\frac{L\|x-z_{s}\|^{2}}{2}\ = & \ f((1-s)x+sx_{\star})+\frac{Ls^{2}\|x-x_{\star}\|^{2}}{2}\\
\leq & \ f(x)+s(f(x_{*})-f(x))+\frac{Ls^{2}\|x-x_{\star}\|^{2}}{2}\\
= & \ f(x)+\frac{L\|x-x_{\star}\|^{2}}{2}\Bigg[s-\frac{f(x)-f(x_{\star})}{L\|x-x_{\star}\|^{2}}\Bigg]^{2}-\frac{\big[f(x)-f(x_{\star})\big]^{2}}{2L\|x-x_{\star}\|^{2}}.
\end{align*}
Note that since $f$ is $L$-smooth and $x_{\star}$ is the minimizer,
we must have 
\[
0\le\frac{f(x)-f(x_{\star})}{L\|x-x_{\star}\|^{2}}\le\frac{1}{2}.
\]
Therefore, we can take $s=\frac{f(x)-f(x_{x})}{L\|x-x_{\star}\|^{2}}$,
yielding 
\[
f(z_{s})+\frac{L\|x-z_{s}\|^{2}}{2}=f(x)-\frac{\big[f(x)-f(x_{\star})\big]^{2}}{2L\|x-x_{\star}\|^{2}}.
\]
 By plugging $z_{s}$ in the place of $y$ in the minimized expression
on the left-hand side of (\ref{eq:minexp}), the proof is completed.
\end{proof}
Now we proceed with the proof of Theorem \ref{thm:objconv}, which
we restate here.

\objconvthm*
\begin{proof}
Recall that 
\begin{equation}
\blam_{t+1}=\underset{\blam}{\text{argmin}}\left\{ \langle\nabla f_{\beta}(\blam_{t}),\blam-\blam_{t}\rangle+\frac{\|\blam-\blam_{t}\|^{2}}{2\eta}\right\} .\label{eq:blamargmin}
\end{equation}
Now for any $y\in\R^{m}$, we can bound: 
\begin{align*}
f(\blam_{t+1})\ \leq & \ f_{\beta}(\blam_{t})+\langle\nabla f_{\beta}(\blam_{t}),\blam_{t+1}-\blam_{t}\rangle+\frac{\beta\|\blam_{t+1}-\blam_{t}\|^{2}}{2}\\
\le & \ f_{\beta}(\blam_{t})+\langle\nabla f_{\beta}(\blam_{t}),y-\blam_{t}\rangle+\frac{\|y-\blam_{t}\|^{2}}{2\eta}\\
\le & \ f_{\beta}(y)+\frac{\|\blam_{t}-y\|^{2}}{2\eta}.
\end{align*}
Here the first inequality is due to the $\beta$-smoothness of $f_{\beta}$,
the second inequality follows from (\ref{eq:blamargmin}), and the
third inequality follows from the convexity of $f_{\beta}$.

Now since $y$ was arbitrary, we have 
\[
f(\blam_{t+1})\leq\min_{y}\left\{ f_{\beta}(y)+\frac{\|\blam_{t}-y\|^{2}}{2\eta}\right\} .
\]
 Moreover, since $f_{\beta}$ is $\beta$-smooth and $\eta\leq1/\beta$,
then $f_{\beta}$ is also $(1/\eta)$-smooth. Therefore Lemma \ref{lem:lossdecrease}
implies that 
\[
f(\blam_{t+1})\leq f_{\beta}(\blam_{t})-\frac{\eta\big[f_{\beta}(\blam_{t})-f_{\beta}(\blam_{\star})\big]^{2}}{2\|\blam_{t}-\blam_{\star}\|^{2}}.
\]

Then if we define the objective error
\[
\epsilon_{t}\coloneqq f(\blam_{t})-f(\blam_{\star})
\]
 for all $t$, it follows that 
\[
\ve_{t+1}\leq\ve_{t}-\frac{\eta\ve_{t}^{2}}{2D}
\]
for all $t\geq0$.

In particular, the sequence $\{\epsilon_{t}\}_{t=0}^{\infty}$ is
monotonically decreasing, so $\ve_{t}^{2}\geq\ve_{t}\ve_{t+1}$, and
in turn 
\[
\epsilon_{t+1}\leq\epsilon_{t}-\frac{\eta\epsilon_{t}\epsilon_{t+1}}{2D}.
\]
Dividing both sides by $\epsilon_{t}\epsilon_{t+1}$, we obtain 
\[
\frac{1}{\epsilon_{t}}\le\frac{1}{\epsilon_{t+1}}-\frac{\eta}{2D}.
\]
 Using telescoping sums, it therefore follows that 
\[
\frac{1}{\epsilon_{t}}\ge\frac{1}{\epsilon_{0}}+\frac{t\eta}{2D}\geq\frac{t\eta}{2D}
\]
 for all $t\geq0$. In turn we deduce that 
\[
\epsilon_{t}\le\frac{2D}{t\eta}.
\]
This concludes the proof.
\end{proof}
Finally, we present the proof of Theorem \ref{thm:objconv-1}.

\noiseobjconvthm*
\begin{proof}
Recall that 
\begin{equation}
\blam_{t+1}=\underset{\blam}{\text{argmin}}\left\{ \langle g_{t},\blam-\blam_{t}\rangle+\frac{\|\blam-\blam_{t}\|^{2}}{2\eta}\right\} .\label{eq:blamargmin2}
\end{equation}
 Recall that by assumption $\eta\leq\frac{1}{2\beta}$. Then define
\[
s\coloneqq\eta^{-1}-\beta>\beta,
\]
 so that $\beta+s=\eta^{-1}$. Also define the gradient error vector
\[
\delta_{t}\coloneqq\nabla f_{\beta}(\blam_{t})-g_{t},
\]
 so $\Vert\delta_{t}\Vert_{*}\leq\gamma$.

Then for any $y$, we can bound: 
\begin{align*}
f_{\beta}(\blam_{t+1})\ \ \overset{\mathrm{(i)}}{\leq} & \ \ f(\blam_{t})+\langle\nabla f_{\beta}(\blam_{t}),\blam_{t+1}-\blam_{t}\rangle+\frac{\beta\|\blam_{t+1}-\blam_{t}\|^{2}}{2}\\
= & \ \ f(\blam_{t})+\langle g_{t},\blam_{t+1}-\blam_{t}\rangle+\frac{\beta\|\blam_{t+1}-\blam_{t}\|^{2}}{2}+\langle\delta_{t},\blam_{t+1}-\blam_{t}\rangle\\
\overset{\mathrm{(ii)}}{\leq} & \ \ f(\blam_{t})+\langle g_{t},\blam_{t+1}-\blam_{t}\rangle+\frac{\text{(}\beta+s)\|\blam_{t+1}-\blam_{t}\|^{2}}{2}+\frac{\gamma^{2}}{2s}\\
\overset{\mathrm{(iii)}}{\leq} & \ \ f(\blam_{t})+\langle g_{t},y-\blam_{t}\rangle+\frac{\text{(}\beta+s)\|y-\blam_{t}\|^{2}}{2}+\frac{\gamma^{2}}{2s}\\
= & \ \ f(\blam_{t})+\langle\nabla f_{\beta}(\blam_{t}),y-\blam_{t}\rangle+\frac{\text{(}\beta+s)\|y-\blam_{t}\|}{2}+\frac{\gamma^{2}}{2s}+\langle-\delta_{t},y-\blam_{t}\rangle\\
\overset{\mathrm{(iv)}}{\leq} & \ \ f(\blam_{t})+\langle\nabla f_{\beta}(\blam_{t}),y-\blam_{t}\rangle+\frac{\text{(}\beta+2s)\|y-\blam_{t}\|^{2}}{2}+\frac{\gamma^{2}}{s}\\
\overset{\mathrm{(v)}}{\leq} & \ \ f(y)+\frac{\text{(}\beta+2s)\|\blam_{t}-y\|^{2}}{2}+\frac{\gamma^{2}}{s}.
\end{align*}
Here (i) is due to the $L$-smoothness of $f$, (ii) follows from
the bound 
\[
\langle\delta_{t},\blam_{t+1}-\blam_{t}\rangle\le\|\delta_{t}\|_{*}\|\blam_{t+1}-\blam_{t}\|\leq\gamma\|\blam_{t+1}-\blam_{t}\|\le\frac{s}{2}\|\blam_{t+1}-\blam_{t}\|^{2}+\frac{\gamma^{2}}{2s},
\]
 (iii) follows from (\ref{eq:blamargmin2}) (noting that $\beta+s=\eta^{-1}$),
(iv) follows---similarly to (ii)---from the bound 
\[
\langle-\delta_{t},y-\blam_{t}\rangle\leq\frac{s}{2}\|y-\blam_{t}\|^{2}+\frac{\gamma^{2}}{2s},
\]
 and (v) is due to the convexity of $f_{\beta}$.

Now since $y$ was arbitrary, we have 
\[
f(\blam_{t+1})\leq\min_{y}\left\{ f(y)+\frac{\text{(}\beta+2s)\|\blam_{t}-y\|^{2}}{2}\right\} +\frac{\gamma^{2}}{s}.
\]
 Moreover, since $f_{\beta}$ is $\beta$-smooth, it is also $(\beta+2s)$-smooth,
and Lemma \ref{lem:lossdecrease} then implies that 
\begin{align*}
f_{\beta}(\blam_{t+1}) & \le f(\blam_{t})-\frac{\big[f(\blam_{t})-f(\blam_{\star})\big]^{2}}{2(\beta+2s)\|\blam_{t}-\blam_{\star}\|^{2}}+\frac{\gamma^{2}}{s}.
\end{align*}

Then if we define the objective error
\[
\epsilon_{t}\coloneqq f(\blam_{t})-f(\blam_{\star})
\]
 for all $t$, it follows that 
\[
\ve_{t+1}\leq\ve_{t}-\frac{\ve_{t}^{2}}{2(\beta+2s)D}+\frac{\gamma^{2}}{s}
\]
for all $t\geq0$.

To simplify our notation, define
\[
a^{2}\coloneqq2(\beta+2s)D,\qquad b^{2}\coloneqq\gamma^{2}/s,
\]
 so that 
\[
\epsilon_{t+1}\le\epsilon_{t}-\frac{\epsilon_{t}^{2}}{a^{2}}+b^{2}.
\]
 In fact, it is equivalent that 
\[
(\epsilon_{t+1}-ab)\le(1-2b/a)(\epsilon_{t}-ab)-\frac{(\epsilon_{t}-ab)^{2}}{a^{2}},
\]
 as can be verified by expanding the right-hand side and performing
elementary simplifications.

By dropping the negative term on the right-hand side, we obtain the
looser bound
\begin{equation}
(\epsilon_{t+1}-ab)\le(1-2b/a)(\epsilon_{t}-ab).\label{eq:decay}
\end{equation}
 Recall that $s>\beta$ (from which it follows that $a^{2}\geq6\beta D$)
and that by assumption $\gamma\leq\sqrt{3\beta^{2}D/2}$, so in turn
\[
2b=\frac{2\gamma}{\sqrt{s}}<\frac{2\gamma}{\sqrt{\beta}}\le\sqrt{6\beta D}\le a.
\]
 Therefore 
\[
0<(1-2b/a)<1,
\]
 i.e., the geometric factor (\ref{eq:decay}) is positive and it follows
from repeated application of (\ref{eq:decay}) that 
\[
(\epsilon_{t}-ab)\le(1-2b/a)^{t}(\epsilon_{0}-ab).
\]
 In turn it follows that 
\[
\epsilon_{t}\le(1-2b/a)^{t}\epsilon_{0}+ab.
\]
Replacing $\epsilon_{t},a$, and $b$ with $f_{\beta}(\blam_{t})-f_{\beta}(\blam_{\star}),\sqrt{2(\beta+2s)D},$
and $\gamma/\sqrt{s}$, respectively, and in turn substituting $s=\eta^{-1}-\beta$,
we get the desired bound, concluding the proof.
\end{proof}

\section{Proofs for OT convergence \label{app:pf-ot}}

Before we present the proof of Theorem \ref{eq:otupdate}, we state
the key lemma. 
\begin{lem}
\label{lem:ot-diff-bound}Let $(\phi_{t},\psi_{t})$, $t=0,\ldots,T$,
be furnished by the algorithm (\ref{eq:otupdate}) with exact gradient
estimation and step size $\eta\leq1/\beta$. Let $M,s>0$ such that
$\min_{i\in[m]}\mu_{i}\geq s$, $\min_{j\in[n]}\nu_{j}\geq s$ and
$\max_{ij}|c_{ij}|\leq M$. Then 
\[
[\phi_{t}]_{i}-[\phi_{t}]_{k}\le2M+\frac{\log(1/s)+2}{\beta}\ \ \ \mathrm{for\,\,all}\,\ i,k\in[m],
\]
and
\[
[\psi_{t}]_{j}-[\psi_{t}]_{l}\le2M+\frac{\log(1/s)+2}{\beta},\ \ \mathrm{for\,\,all}\,\ j,l\in[n].
\]
\end{lem}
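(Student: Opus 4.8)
The plan is to prove the bound by induction on $t$, showing that the spread $\max_i[\phi_t]_i - \min_k[\phi_t]_k$ (and symmetrically for $\psi_t$) never exceeds $R:=2M+\beta^{-1}(\log(1/s)+2)$, which is exactly the asserted inequality for every pair $i,k$. The base case $t=0$ is immediate since $\phi_0=\psi_0=0$. For the inductive step, first observe that the centering in (\ref{eq:otupdate}) cancels out of any difference of coordinates, so writing $\Delta_t:=\tfrac{\eta}{2}\|\mu_t-\mu\|_1\ge 0$ and $\sigma_i:=\sign([\mu_t]_i-\mu_i)\in\{-1,0,1\}$, we have the clean identity
\[
[\phi_{t+1}]_i-[\phi_{t+1}]_j=\big([\phi_t]_i-[\phi_t]_j\big)-\Delta_t(\sigma_i-\sigma_j).
\]
Since $\mu_t=\pi_t\mathbf{1}_n$ and $\mu$ both lie in the simplex $\Delta^m$, we have $\|\mu_t-\mu\|_1\le 2$, hence $\Delta_t\le\eta\le\beta^{-1}$.

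The heart of the argument is a ``self-correction'' claim: if $[\phi_t]_i-[\phi_t]_j>R_0:=2M+\beta^{-1}\log(1/s)$, then $[\mu_t]_j<\mu_j$, i.e. $\sigma_j=-1$. To prove it, write $[\mu_t]_i=e^{\beta[\phi_t]_i}Z_i/\mathcal{Z}$ with $Z_i:=\sum_l e^{-\beta c_{il}}e^{\beta[\psi_t]_l}$ and $\mathcal{Z}$ the global normalizer, so that $[\mu_t]_i/[\mu_t]_j=e^{\beta([\phi_t]_i-[\phi_t]_j)}\,Z_i/Z_j$; the bound $|c_{il}|,|c_{jl}|\le M$ forces $Z_i/Z_j\ge e^{-2\beta M}$ because the $\sum_l e^{\beta[\psi_t]_l}$ factors cancel (note no control on $\psi_t$ enters here). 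Hence $[\mu_t]_i/[\mu_t]_j>e^{\beta R_0-2\beta M}=1/s$. If one had $[\mu_t]_j\ge\mu_j\ge s$, this would give $[\mu_t]_i>1$, contradicting $[\mu_t]_i\le\sum_k[\mu_t]_k=1$; therefore $[\mu_t]_j<\mu_j$.

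Now assume the spread of $\phi_t$ is $\le R$ and let $(i^\ast,j^\ast)$ maximize $[\phi_{t+1}]_i-[\phi_{t+1}]_j$. If $[\phi_t]_{i^\ast}-[\phi_t]_{j^\ast}\le R_0$, then the identity above together with $|\sigma_{i^\ast}-\sigma_{j^\ast}|\le 2$ and $\Delta_t\le\beta^{-1}$ yields $[\phi_{t+1}]_{i^\ast}-[\phi_{t+1}]_{j^\ast}\le R_0+2\beta^{-1}=R$. Otherwise $[\phi_t]_{i^\ast}-[\phi_t]_{j^\ast}>R_0$, and the self-correction claim gives $\sigma_{j^\ast}=-1$, so $\sigma_{i^\ast}-\sigma_{j^\ast}=\sigma_{i^\ast}+1\ge 0$ and $[\phi_{t+1}]_{i^\ast}-[\phi_{t+1}]_{j^\ast}\le[\phi_t]_{i^\ast}-[\phi_t]_{j^\ast}\le R$ by the inductive hypothesis. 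Either way the spread of $\phi_{t+1}$ is $\le R$, which completes the induction for $\phi$; the argument for $\psi_t$ is identical with $\nu$, the columns of $\pi_t$, and $\sum_k e^{-\beta c_{kj}}e^{\beta[\phi_t]_k}$ playing the roles of their row counterparts.

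The main obstacle, and the reason $R$ has exactly this form, is extracting a sufficiently strong self-correction statement: a weaker conclusion such as ``$\sigma_{i^\ast}\ge 0$ or $\sigma_{j^\ast}\le 0$'' would not close the induction, since a pair with $\sigma_{i^\ast}=0,\ \sigma_{j^\ast}=1$ could still push the spread past $R$. The clean dichotomy above—either the pair carries the $2\beta^{-1}$ of slack needed to absorb a single step, or the lagging coordinate $j^\ast$ is provably being increased toward nobody (i.e. $\sigma_{j^\ast}=-1$, so its relative position can only improve)—is precisely what makes it work, and it relies on the exact-gradient hypothesis so that $\sigma_j$ genuinely reports the sign of the $j$-th marginal discrepancy. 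A secondary point to verify carefully is the cancellation of both $\mathcal{Z}$ and the $\psi$-dependent factors in the ratio $[\mu_t]_i/[\mu_t]_j$, which is what decouples the two inductions.
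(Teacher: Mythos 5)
Your proof is correct and follows essentially the same two-step strategy as the paper's: a self-correction claim showing that once a coordinate pair's spread exceeds $2M+\beta^{-1}\log(1/s)$ the sign-gradient update cannot increase it, plus an induction in which within-threshold pairs can grow by at most $2\eta\le 2\beta^{-1}$ per step. The cosmetic differences---tracking the pair maximizing the new spread rather than fixing an arbitrary pair, and deriving self-correction from the ratio bound $[\mu_t]_i/[\mu_t]_j>1/s$ together with $[\mu_t]_i\le 1$ rather than from the paper's direct difference bound via the mediant inequality---do not change the substance.
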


\begin{proof}
Without loss of generality, we will only present the proof for $\phi_{t}$.
Then fix $i,k\in[m]$. Our intuition is based on studying what happens
if $[\phi_{t}]_{i}-[\phi_{t}]_{k}$becomes `large enough.' The proof
is divided into two steps.
\begin{enumerate}
\item In Step 1, we show that if $[\phi_{t}]_{i}-[\phi_{t}]_{k}\ge2M+\frac{\log(1/s)}{\beta}$,
then $[\phi_{t+1}]_{i}-[\phi_{t+1}]_{k}\le[\phi_{t}]_{i}-[\phi_{t}]_{k}$.
\item In Step 2, we show that the desired result follows (using the fact
that $\phi_{0}=0$ and $\psi_{0}=0$).
\end{enumerate}
\paragraph{Step 1.} Consider the marginal $\mu_{t}=\pi_{\beta,\blam_{t}}\mathbf{1}_{n}$
appearing in (\ref{eq:otupdate}) via the difference $\mu_{t}-\mu$
(which is the dual gradient with respect to $\phi$). We can compute
elementwise: 
\begin{align}
\left[\mu_{t}-\mu\right]_{k}\ \ = & \ \ \frac{\sum_{j=1}^{n}\exp\left[\beta\Big([\phi_{t}]_{k}+[\psi_{t}]_{j}-c_{kj}\Big)\right]}{\sum_{i'=1}^{m}\sum_{j=1}^{n}\exp\left[\beta\Big([\phi_{t}]_{i'}+[\psi_{t}]_{j}-c_{i'j}\Big)\right]}-\mu_{k}\nonumber \\
\overset{\mathrm{(i)}}{<} & \ \ \frac{\sum_{j=1}^{n}\exp\left[\beta\Big([\phi_{t}]_{k}+[\psi_{t}]_{j}-c_{kj}\Big)\right]}{\sum_{j=1}^{n}\exp\left[\beta\Big([\phi_{t}]_{i}+[\psi_{t}]_{j}-c_{ij}\Big)\right]}-\mu_{k}\nonumber \\
\overset{\mathrm{(ii)}}{\leq} & \ \ \max_{j=1,\ldots,n}\left\{ \frac{\exp\left[\beta\Big([\phi_{t}]_{k}+[\psi_{t}]_{j}-c_{kj}\Big)\right]}{\exp\left[\beta\Big([\phi_{t}]_{i}+[\psi_{t}]_{j}-c_{ij}\Big)\right]}\right\} -\mu_{k}\nonumber \\
= & \ \ \max_{j=1,\ldots,n}\left\{ \exp\left[\beta\Big([\phi_{t}]_{k}-[\phi_{t}]_{i}-[c_{kj}-c_{ij}]\Big)\right]\right\} -\mu_{k}\nonumber \\
\leq & \ \ \exp\left[\beta\Big([\phi_{t}]_{k}-[\phi_{t}]_{i}+2M\Big)\right]-s.\label{eq:gradineq}
\end{align}
Here, (i) follows from discarding positive terms in the denominator,
and (ii) follows from the general inequality 
\[
\frac{\sum_{j=1}^{n}a_{j}}{\sum_{j=1}^{n}b_{j}}\le\max_{j=1,\ldots,n}\left\{ \frac{a_{j}}{b_{j}}\right\} 
\]
 which holds for all $a_{j},b_{j}>0$, $j=1,\ldots,n$. (To prove
this general inequality, let $R$ denote the right-hand side, so $a_{j}/b_{j}\leq R$
for all $j$. Then in turn $a_{j}\leq Rb_{j}$ for all $j$. Summing
over $j$ proves the desired inequality.)

Therefore, (\ref{eq:gradineq}) implies that if $[\phi_{t}]_{i}-[\phi_{t}]_{k}\ge2M+\frac{\log(1/s)}{\beta}$,
then 
\[
\left[\mu_{t}-\mu\right]_{k}<0.
\]
As a consequence, from the algorithm (\ref{eq:otupdate}), we have
\[
[\tilde{\phi}_{t+1}]_{k}=[\phi_{t}]_{k}+\frac{\eta}{2}\|\mu_{t}-\mu\|_{1}.
\]
 Meanwhile, 
\[
[\tilde{\phi}_{t+1}]_{i}\le[\phi_{t}]_{i}+\frac{\eta}{2}\|\mu_{t}-\mu\|_{1}.
\]
Combining these facts, we deduce that 
\[
[\phi_{t+1}]_{i}-[\phi_{t+1}]_{k}=[\tilde{\phi}_{t+1}]_{i}-[\tilde{\phi}_{t+1}]_{k}\le[\phi_{t}]_{i}-[\phi_{t}]_{k},
\]
 as was to be shown.

\paragraph{Step 2.} We will complete the proof by induction on $t$.
The base case $t=0$ is true by the choice of initial condition. Then
assume the inductive hypothesis that $[\phi_{t}]_{i}-[\phi_{t}]_{k}\le2M+\frac{\log(1/s)+2}{\beta}$.
We want to show that likewise $[\phi_{t+1}]_{i}-[\phi_{t+1}]_{k}\le2M+\frac{\log(1/s)+2}{\beta}$.
There are two cases to consider.

\vspace{4mm}

\uline{Case 1}: $[\phi_{t}]_{i}-[\phi_{t}]_{k}\le2M+\frac{\log(1/s)}{\beta}$.

\vspace{2mm}

Note that 
\[
\Vert\mu-\mu_{t}\Vert_{1}\leq\Vert\mu\Vert_{1}+\Vert\mu_{t}\Vert_{1}=2
\]
 for all $t$. Then we can bound (using the update rule (\ref{eq:otupdate})):
\begin{align*}
[\phi_{t+1}]_{i}-[\phi_{t+1}]_{k} & =\underbrace{[\phi_{t}]_{i}-[\phi_{t}]_{k}}_{\leq\ 2M+\frac{\log(1/s)}{\beta}}\ +\ \frac{\eta}{2}\underbrace{\|\mu_{t}-\mu\|_{1}}_{\leq\ 2}\,\underbrace{\left[\sign\left([\mu-\mu_{t}]_{i}\right)-\sign\left([\mu-\mu_{t}]_{k}\right)\right]}_{\leq\ 2},
\end{align*}
 so 
\[
[\phi_{t+1}]_{i}-[\phi_{t+1}]_{k}\leq2M+\frac{\log(1/s)}{\beta}+2\eta.
\]
 By assumption, $\eta\le1/\beta$, so 
\[
[\phi_{t+1}]_{i}-[\phi_{t+1}]_{k}\leq2M+\frac{\log(1/s)+2}{\beta},
\]
 which completes the induction in this case.

\vspace{4mm}

\uline{Case }2: $[\phi_{t}]_{i}-[\phi_{t}]_{k}>2M+\frac{\log(1/s)}{\beta}$.

\vspace{2mm}

In this case, Step 1 implies that 
\[
[\phi_{t+1}]_{i}-[\phi_{t+1}]_{k}\le[\phi_{t}]_{i}-[\phi_{t}]_{k}\le2M+\frac{\log(1/s)+2}{\beta}.
\]
This concludes the proof.
\end{proof}
Now we proceed with the proof of Theorem \ref{thm:otconv}, which
we restate here.

\otconvthm*
\begin{proof}
Note that by the shifting step in (\ref{eq:otupdate}), we have ensured
that $\mathbf{1}_{m}^{\top}\phi_{t}=0$ for all $t$. Therefore, there
exist $i_{1,}i_{2}\in[m]$, such that $[\phi_{t}]_{i_{1}}\ge0$ and
$[\phi_{t}]_{i_{2}}\le0$.

For any $i,$ we have by Lemma \ref{lem:ot-diff-bound} that 
\[
[\phi_{t}]_{i}-[\phi_{t}]_{i_{2}}\le2M+\frac{\log(1/s)+2}{\beta},
\]
 hence 
\[
[\phi_{t}]_{i}\leq2M+\frac{\log(1/s)+2}{\beta}.
\]
 Similarly, we can prove using the index $i_{1}$ that 
\[
-[\phi_{t}]_{i}\leq2M+\frac{\log(1/s)+2}{\beta}.
\]
 Therefore we conclude that 
\[
\|\phi_{t}\|_{\infty},\,\|\psi_{t}\|_{\infty}\le2M+\frac{\log(1/s)+2}{\beta},
\]
 where similar arguments establish the claim for $\psi_{t}$. 

Let $\blam_{\star}=(\phi_{\star},\psi_{\star})$ be a dual optimizer,
and assume that $\mathbf{1}_{m}^{\top}\phi_{\star}=\mathbf{1}_{n}^{\top}\psi_{\star}=0$
by shifting if necessary, which does not change the dual objective.
We can bound $\|\phi_{\star}\|_{\infty}$ and $\|\psi_{\star}\|_{\infty}$
by similar arguments as follows. The dual optimality condition implies
that $\pi_{\beta,(\phi_{\star},\psi_{\star})}\mathbf{1}_{n}=\mu$.
Hence using the same reasoning as in the justification of (\ref{eq:gradineq}),
we deduce that 
\[
0\leq\exp\left[\beta\Big([\phi_{\star}]_{k}-[\phi_{\star}]_{i}+2M\Big)\right]-s,
\]
 for all $i,k\in[m]$, from which it follows that 
\[
[\phi_{\star}]_{k}-[\phi_{\star}]_{i}\leq2M+\frac{\log(1/s)}{\beta}
\]
 for all $i,k\in[m]$. Since $\mathbf{1}_{m}^{\top}\phi_{\star}=0$,
it follows using the same argument as above that 
\[
\Vert\phi_{\star}\Vert_{\infty},\,\|\psi_{\star}\|_{\infty}\le2M+\frac{\log(1/s)}{\beta},
\]
 where similar arguments establish the claim for $\psi_{\star}$.

Then by the triangle inequality, 
\[
\Vert\phi_{\star}-\phi_{t}\Vert_{\infty},\,\|\psi_{t}-\psi_{\star}\|_{\infty}\leq2\left(2M+\frac{\log(1/s)+1}{\beta}\right).
\]
 Then finally we can bound $D$ as follows: 
\begin{align*}
D\ = & \ \sup_{t=0,1,2,\ldots}\left\{ \|(\phi_{t},\psi_{t})-(\phi_{\star},\psi_{\star})\|^{2}\right\} \\
= & \ \sup_{t=0,1,2,\ldots}\left\{ 2\|\phi_{t}-\phi_{\star}\|_{\infty}^{2}+2\|\psi_{t}-\psi_{\star}\|_{\infty}^{2}\right\} \\
 & \le16\left[2M+\frac{\log(1/s)+1}{\beta}\right]^{2}.
\end{align*}
Then the proof is concluded by invoking Theorem \ref{thm:objconv}.
(Note that although the update (\ref{eq:otupdate}) for the OT problem
differs slightly from the general update (\ref{eq:optstep}), due
to the shifts, the proof of Theorem \ref{thm:objconv} still applies
with minor modifications since these shifts do not change the value
of the objective.)
\end{proof}
Now we proceed with the proof of Theorem \ref{thm:otgrad}, which
we restate here.

\otgradthm*
\begin{proof}
Again let $\blam_{\star}=(\phi_{\star},\psi_{\star})$ be a dual optimizer,
and assume that $\mathbf{1}_{m}^{\top}\phi_{\star}=\mathbf{1}_{n}^{\top}\psi_{\star}=0$
by shifting if necessary, which does not change the dual objective.

Let $J:=\lceil T/2\rceil\geq T/2$. Then by Theorem \ref{thm:otconv},
we have that 
\begin{align}
f_{\beta}(\blam_{J})-f_{\beta}(\blam_{\star})\ \leq & \ \ \frac{32\left[2M+\frac{\log(1/s)+1}{\beta}\right]^{2}}{J\eta}\nonumber \\
\leq & \ \ \frac{64\left[2M+\frac{\log(1/s)+1}{\beta}\right]^{2}}{T\eta}.\label{eq:Jbound}
\end{align}

Then we consider the same strategy as in the proof for Theorem \ref{thm:gradconv}
(noting that here $\gamma=0$), which establishes that 
\[
\|\nabla f_{\beta}(\blam_{t})\|_{*}^{2}\leq2\eta^{-1}\left[f_{\beta}(\blam_{t})-f_{\beta}(\blam_{t+1})\right]
\]
 for all $t=0,1,\ldots$. Then via telescoping we deduce that 
\begin{align*}
\sum_{t=J}^{T}\|\nabla f_{\beta}(\blam_{t})\|_{*}^{2}\ \ \leq & \ \ 2\eta^{-1}\left[f_{\beta}(\blam_{J})-f_{\beta}(\blam_{T+1})\right]\\
\leq & \ \ 2\eta^{-1}\left[f_{\beta}(\blam_{J})-f_{\beta}(\blam_{\star})\right]\\
\leq & \ \ \frac{128\left[2M+\frac{\log(1/s)+1}{\beta}\right]^{2}}{T\eta^{2}},
\end{align*}
 where in the last step we have used (\ref{eq:Jbound}).

Since there are $T-\lceil T/2\rceil\geq\frac{T-1}{2}$ terms in the
summation on the left-hand side, it follows that there exists $t\in\{J,....,T\}$
such that 
\begin{align*}
\|\nabla f_{\beta}(\blam_{t})\|_{*}^{2}\ \ \leq & \ \ \frac{1}{T-\lceil T/2\rceil}\cdot\frac{128\left[2M+\frac{\log(1/s)+1}{\beta}\right]^{2}}{T\eta^{2}}\\
\leq & \ \ \frac{256\left[2M+\frac{\log(1/s)+1}{\beta}\right]^{2}}{(T-1)^{2}\,\eta^{2}}.
\end{align*}
By taking square roots we complete the proof.
\end{proof}
Here we pause to define a useful norm $\|\cdot\|_{\S}$ over matrices:
\[
\|A\|_{\S}:=\sum_{ij}|A_{ij}|.
\]
Given this definition, we have tht for any $A,B\in\R^{m\times n}$,
\[
\langle A,B\rangle\le\max_{i,j}\left\{ |B_{ij}|\right\} \,\|A\|_{\S}.
\]
We also need the following technical lemma (which appears as Lemma
7 in \cite{altschuler2017near}), which controls the procedure of
rounding to a primal-feasible solution in terms of the errors of the
marginals.
\begin{lem}
\label{lem:round-ot} Given any $\mu\in\Delta_{m}$, $\nu\in\Delta_{n}$,
and $\pi\in\R_{+}^{m\times n}$, Algorithm 2 of \cite{altschuler2017near}
takes $O(mn)$ operations to output $\hat{\pi}$ such that:
\[
\hat{\pi}\mathbf{1}_{n}=\mu,\quad\hat{\pi}^{\top}\mathbf{1}_{m}=\nu,
\]
and
\[
\|\hat{\pi}-\pi\|_{\S}\le2\left[\|\pi\mathbf{1}_{n}-\mu\|_{1}+\|\pi^{\top}\mathbf{1}_{m}-\nu\|_{1}\right].
\]
\end{lem}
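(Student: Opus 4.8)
The result is exactly Lemma~7 of \cite{altschuler2017near}, so the plan is to reproduce its short proof. First I would recall the three-step structure of Algorithm~2 of \cite{altschuler2017near}. Given $\pi\in\R_{+}^{m\times n}$: (i) \emph{row-scale}, setting $x_{i}:=\min\{1,\mu_{i}/[\pi\mathbf{1}_{n}]_{i}\}$ and $\pi':=\mathrm{diag}(x)\,\pi$, so that $\pi\ge\pi'\ge0$ entrywise and $[\pi'\mathbf{1}_{n}]_{i}\le\mu_{i}$ for all $i$; (ii) \emph{column-scale}, setting $y_{j}:=\min\{1,\nu_{j}/[\pi'^{\top}\mathbf{1}_{m}]_{j}\}$ and $\pi'':=\pi'\,\mathrm{diag}(y)$, so that $\pi'\ge\pi''\ge0$ and $[\pi''^{\top}\mathbf{1}_{m}]_{j}\le\nu_{j}$; (iii) \emph{rank-one correction}, setting $e_{r}:=\mu-\pi''\mathbf{1}_{n}$, $e_{c}:=\nu-\pi''^{\top}\mathbf{1}_{m}$, and outputting $\hat{\pi}:=\pi''+e_{r}e_{c}^{\top}/\|e_{r}\|_{1}$ (with $\hat{\pi}:=\pi''$ if $\|e_{r}\|_{1}=0$). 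The $O(mn)$ runtime claim is immediate: computing marginals, the two diagonal rescalings, and the rank-one update each cost $O(mn)$.

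The second step is feasibility. Since column-scaling only shrinks row sums, $[\pi''\mathbf{1}_{n}]_{i}\le[\pi'\mathbf{1}_{n}]_{i}\le\mu_{i}$, so $e_{r}\ge0$; and $e_{c}\ge0$ by construction of $y$. Hence $\|e_{r}\|_{1}=\sum_{i}\mu_{i}-\|\pi''\|_{\S}=1-\|\pi''\|_{\S}$ and likewise $\|e_{c}\|_{1}=1-\|\pi''\|_{\S}$, so $\|e_{r}\|_{1}=\|e_{c}\|_{1}=:\rho$. A one-line computation then gives $\hat{\pi}\mathbf{1}_{n}=\pi''\mathbf{1}_{n}+e_{r}\,(\mathbf{1}_{n}^{\top}e_{c})/\rho=\pi''\mathbf{1}_{n}+e_{r}=\mu$, and similarly $\hat{\pi}^{\top}\mathbf{1}_{m}=\nu$, so $\hat{\pi}$ has the prescribed marginals.

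The third step is the norm estimate, which is pure bookkeeping of removed mass. Using $\pi\ge\pi'\ge\pi''\ge0$, set $a:=\|\pi-\pi'\|_{\S}=\sum_{i}([\pi\mathbf{1}_{n}]_{i}-\mu_{i})_{+}$ and $b:=\|\pi'-\pi''\|_{\S}=\sum_{j}([\pi'^{\top}\mathbf{1}_{m}]_{j}-\nu_{j})_{+}$; also $\|\hat{\pi}-\pi''\|_{\S}=\|e_{r}e_{c}^{\top}\|_{\S}/\rho=\|e_{c}\|_{1}=\rho$. Tracking total mass gives $\rho=1-\|\pi''\|_{\S}=(1-\|\pi\|_{\S})+a+b$, so by the triangle inequality
\[
\|\hat{\pi}-\pi\|_{\S}\le\rho+b+a=(1-\|\pi\|_{\S})+2a+2b.
\]
Writing $1-\|\pi\|_{\S}=\sum_{i}(\mu_{i}-[\pi\mathbf{1}_{n}]_{i})=\sum_{i}(\mu_{i}-[\pi\mathbf{1}_{n}]_{i})_{+}-a$, the first two terms collapse to $(1-\|\pi\|_{\S})+2a=\|\pi\mathbf{1}_{n}-\mu\|_{1}$. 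Finally, row-down-scaling does not increase column sums, i.e. $[\pi'^{\top}\mathbf{1}_{m}]_{j}\le[\pi^{\top}\mathbf{1}_{m}]_{j}$, whence $b\le\sum_{j}([\pi^{\top}\mathbf{1}_{m}]_{j}-\nu_{j})_{+}\le\|\pi^{\top}\mathbf{1}_{m}-\nu\|_{1}$, and therefore
\[
\|\hat{\pi}-\pi\|_{\S}\le\|\pi\mathbf{1}_{n}-\mu\|_{1}+2\|\pi^{\top}\mathbf{1}_{m}-\nu\|_{1}\le2\big[\|\pi\mathbf{1}_{n}-\mu\|_{1}+\|\pi^{\top}\mathbf{1}_{m}-\nu\|_{1}\big].
\]
The only mildly delicate point is recognizing that the signed quantity $1-\|\pi\|_{\S}$ combines with one of the positive-part sums to reconstruct a bona fide $\ell^{1}$ marginal error rather than contributing a spurious extra term; once the mass-conservation identities $\|\pi\|_{\S}-\|\pi'\|_{\S}=a$ and $\|\pi'\|_{\S}-\|\pi''\|_{\S}=b$ are in place, everything else is elementary.
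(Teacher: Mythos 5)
Your proof is correct and is essentially a faithful reproduction of the argument the paper relies on: the paper does not prove this lemma itself but cites Lemma~7 / Appendix~A.4 of \cite{altschuler2017near}, and your row-scale/column-scale/rank-one-correction bookkeeping (including the mass-conservation identities that turn $1-\|\pi\|_{\S}+2a$ into $\|\pi\mathbf{1}_{n}-\mu\|_{1}$) is exactly that proof, in fact yielding the slightly sharper constant before relaxing to the stated bound.
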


The proof is given in Appendix A.4 of \cite{altschuler2017near}.

Now we proceed with the proof of Theorem \ref{thm:otcomplexity},
which we restate here.

\otcomthm*
\begin{proof}
In this proof, we use $\pi_{\star}$ to denote an optimal solution
for the unregularized OT problem (\ref{eq:ot_lp}). Recall the construction
of $\tilde{\pi}=\pi_{\beta,\blam_{\tilde{t}}}$ from the discussion
preceding the statement of Theorem \ref{thm:otcomplexity} in the
main text, and recall that $\hat{\pi}$ is furnished by applying the
rounding procedure (Algorithm 2 of \cite{altschuler2017near}) to
$\tilde{\pi}$. Define marginals $\tilde{\mu}:=\tilde{\pi}\mathbf{1}_{n}$
and $\tilde{\nu}:=\tilde{\pi}^{\top}\mathbf{1}_{m}$ for $\tilde{\pi}$.

In order to control the rounding procedure, Lemma \ref{lem:round-ot}
motivates us to bound $\Vert\tilde{\mu}-\mu\Vert_{1}+\Vert\tilde{\nu}-\nu\Vert_{1}$,
which relates to the dual gradient $\nabla f_{\beta}(\blam_{\tilde{t}})$,
which is in turn bounded via Theorem \ref{thm:otgrad}.

Indeed, by construction, 
\begin{align*}
\Vert\nabla f_{\beta}(\blam_{\tilde{t}})\Vert_{*}\ \ = & \ \ \sqrt{\frac{1}{2}\left(\Vert\tilde{\mu}-\mu\Vert_{1}^{2}+\Vert\tilde{\mu}-\nu\Vert_{1}^{2}\right)}\\
\geq & \ \ \frac{1}{2}\left(\Vert\tilde{\mu}-\mu\Vert_{1}+\Vert\tilde{\nu}-\nu\Vert_{1}\right)
\end{align*}
 and by Theorem \ref{thm:otgrad}, 
\[
\Vert\nabla f_{\beta}(\blam_{\tilde{t}})\Vert_{*}\leq\frac{16\left[2M+\frac{\log(1/s)+1}{\beta}\right]}{(T-1)\eta}.
\]
 Therefore 
\begin{equation}
\Vert\tilde{\mu}-\mu\Vert_{1}+\Vert\tilde{\nu}-\nu\Vert_{1}\leq\frac{32\left[2M+\frac{\log(1/s)+1}{\beta}\right]}{(T-1)\eta}.\label{eq:marbound}
\end{equation}
Then by Lemma \ref{lem:round-ot}, together with (\ref{eq:marbound}),
we have that 
\[
\|\hat{\pi}-\tilde{\pi}\|_{\S}\leq\frac{64\left[2M+\frac{\log(1/s)+1}{\beta}\right]}{(T-1)\eta},
\]
 which in turn implies that 
\begin{equation}
\left|\left\langle c,\hat{\pi}\right\rangle -\left\langle c,\tilde{\pi}\right\rangle \right|\leq\xi:=\frac{64M\left[2M+\frac{\log(1/s)+1}{\beta}\right]}{(T-1)\eta}.\label{eq:xi1}
\end{equation}

Meanwhile, observe that $\tilde{\pi}$ is the \emph{exact} solution
of the entropically regularized problem 
\begin{align}
\underset{\pi\in\R^{m\times n}}{\text{minimize}}\ \  & \left\langle c,\pi\right\rangle +\beta^{-1}S(\pi)\label{eq:altered}\\
\text{subject to}\ \  & \pi\mathbf{1}_{n}=\tilde{\mu},\nonumber \\
 & \pi^{\top}\mathbf{1}_{m}=\tilde{\nu},\nonumber \\
 & \left\langle \pi,\mathbf{1}_{m\times n}\right\rangle =1\nonumber 
\end{align}
 with slightly altered marginal constraints. Let $\pi'$ be the result
of applying the rounding procedure (Algorithm 2 of \cite{altschuler2017near})
to $\pi_{\star}$ with marginal constraints specified by $\tilde{\mu}$
and $\tilde{\nu}$. Then again by Lemma \ref{lem:round-ot}, together
with (\ref{eq:marbound}), we deduce that 
\begin{equation}
\left|\left\langle c,\pi'\right\rangle -p_{\star}\right|=\left|\left\langle c,\pi'\right\rangle -\left\langle c,\pi_{\star}\right\rangle \right|\leq\xi.\label{eq:xi2}
\end{equation}
 Now since $\pi'$ is feasible for the altered problem (\ref{eq:altered}),
we know that 
\[
\left\langle c,\pi'\right\rangle +\beta^{-1}S(\pi')\geq\left\langle c,\tilde{\pi}\right\rangle +\beta^{-1}S(\tilde{\pi}).
\]
 Then by applying (\ref{eq:xi1}) and (\ref{eq:xi2}) and using the
fact that the entropic diameter of the set of feasible solutions is
bounded by $\log(mn)$, we deduce that 
\[
\left\langle c,\hat{\pi}\right\rangle \leq p_{\star}+2\xi+\beta^{-1}\log n,
\]
 which completes the proof of the first statement of the theorem.
The remaining statements follow from elementary calculations.
\end{proof}

\section{Proofs for Max-Cut SDP rounding \label{app:maxcutrounding}}

Before giving the proof of Theorem \ref{thm:maxcut}, we state and
prove the key lemma that enables the rounding argument.

\maxcutroundinglem* 
\begin{proof}
First we will define $\tilde{X}\succeq0$ in terms of $X$ by `deflating'
the diagonal values that are too large to ensure that $\mathrm{diag}(\tilde{X})\leq\mathbf{1}_{n}$.
Then we will add a nonnegative diagonal component to obtain $\tilde{X}$,
ensuring that $\mathrm{diag}(X')=\mathbf{1}_{n}$ holds exactly.

Specifically, let 
\[
\mathcal{S}=\{i\in\{1,\ldots,n\}\,:\,X_{ii}\geq1\}
\]
 denote the set of indices that we need to deflate. Note that 
\begin{equation}
\sum_{i\in\mathcal{S}}(X_{ii}-1)\leq\Vert\mathrm{diag}(X)-\mathbf{1}_{n}\Vert_{1}\leq\delta.\label{eq:Xiidelta}
\end{equation}

Then define a diagonal matrix $D=\mathrm{diag}(d)$, where 
\[
d_{i}=\begin{cases}
\frac{1}{\sqrt{X_{ii}}}, & i\in\mathcal{S},\\
1, & \text{otherwise},
\end{cases}
\]
 and define $\tilde{X}=DXD$, so $\tilde{X}\succeq0$ by construction.
Moreover we have that 
\[
\tilde{X}_{ii}=\begin{cases}
1, & i\in\mathcal{S},\\
X_{ii}, & \text{otherwise}.
\end{cases}
\]

Let $X=V^{\top}V$ denote an arbitrary Cholesky factorization of $X$,
and let $v_{1},\ldots,v_{n}$ denote the columns of $V$, so that
$X_{ij}=v_{i}\cdot v_{j}$. In particular $X_{ii}=\Vert v_{i}\Vert^{2}$,
and for $i\in\mathcal{S}$ we have $\Vert v_{i}\Vert^{2}\geq1$. In
terms of these vectors (\ref{eq:Xiidelta}) can be written as 
\begin{equation}
\sum_{i\in\mathcal{S}}(\Vert v_{i}\Vert^{2}-1)\leq\delta.\label{eq:videlta}
\end{equation}

Then we can similarly write $\tilde{X}=\tilde{V}^{\top}\tilde{V}$,
i.e., $\tilde{X}_{ij}=\tilde{v}_{i}\cdot\tilde{v}_{j}$ where $\tilde{V}$
has columns $\tilde{v}_{1},\ldots,\tilde{v}_{n}$ defined by 
\[
\tilde{v}_{i}=\begin{cases}
\frac{v_{i}}{\Vert v_{i}\Vert}, & i\in\mathcal{S},\\
v_{i}, & \text{otherwise}.
\end{cases}
\]

Then for $i,j\in\mathcal{S}$, we have 
\begin{align*}
\vert X_{ij}-\tilde{X}_{ij}\vert & =\left|v_{i}\cdot v_{j}-\tilde{v}_{i}\cdot\tilde{v}_{j}\right|\\
 & =\left|v_{i}\cdot v_{j}-\frac{v_{i}\cdot v_{j}}{\Vert v_{i}\Vert\,\Vert v_{j}\Vert}\right|\\
 & =\vert v_{i}\cdot v_{j}\vert\left|1-\frac{1}{\Vert v_{i}\Vert\,\Vert v_{j}\Vert}\right|\\
 & \overset{\textrm{(C-S)}}{\leq}\Vert v_{i}\Vert\,\Vert v_{j}\Vert\,\left|1-\frac{1}{\Vert v_{i}\Vert\,\Vert v_{j}\Vert}\right|\\
 & =\left|\Vert v_{i}\Vert\,\Vert v_{j}\Vert-1\right|.
\end{align*}
 Here `C-S' indicates an application of the Cauchy-Schwarz inequality.

Now since $i,j\in\mathcal{S}$, we have $\Vert v_{i}\Vert\geq1$ and
$\Vert v_{j}\Vert\geq1$, so we can actually remove the absolute value
bars in the last expression to obtain 
\[
\vert X_{ij}-\tilde{X}_{ij}\vert\leq\Vert v_{i}\Vert\Vert v_{j}\Vert-1.
\]
 Then apply Cauchy's inequality $xy\leq\frac{1}{2}x^{2}+\frac{1}{2}y^{2}$
to obtain 
\[
\vert X_{ij}-\tilde{X}_{ij}\vert\leq\frac{1}{2}(\Vert v_{i}\Vert^{2}-1)+\frac{1}{2}(\Vert v_{j}\Vert^{2}-1),\quad\text{for all }i,j\in\mathcal{S}.
\]
 Moreover, note that 
\[
X_{ij}=\tilde{X}_{ij},\quad\text{for all }i,j\notin\mathcal{S}.
\]

Next consider the situation $i\in\mathcal{S}$, $j\notin\mathcal{S}$,
so $\Vert v_{i}\Vert\geq1$ and $\Vert v_{j}\Vert\leq1$. Then 
\begin{align*}
\vert X_{ij}-\tilde{X}_{ij}\vert & =\left|v_{i}\cdot v_{j}-\tilde{v}_{i}\cdot\tilde{v}_{j}\right|\\
 & =\left|v_{i}\cdot v_{j}-\frac{v_{i}\cdot v_{j}}{\Vert v_{i}\Vert}\right|\\
 & =\vert v_{i}\cdot v_{j}\vert\,\left|1-\frac{1}{\Vert v_{i}\Vert}\right|\\
 & \leq\Vert v_{i}\Vert\,\Vert v_{j}\Vert\,\left|1-\frac{1}{\Vert v_{i}\Vert}\right|\\
 & =\,\Vert v_{j}\Vert\,\left|\Vert v_{i}\Vert-1\right|\\
 & \leq\Vert v_{i}\Vert-1\\
 & \leq\Vert v_{i}\Vert^{2}-1.
\end{align*}
 where we have used the facts that $\Vert v_{i}\Vert\geq1$ and $\Vert v_{j}\Vert\leq1$.

In summary, we have shown that 
\[
\vert X_{ij}-\tilde{X}_{ij}\vert\leq\begin{cases}
\frac{1}{2}(\Vert v_{i}\Vert^{2}-1)+\frac{1}{2}(\Vert v_{j}\Vert^{2}-1), & i,j\in\mathcal{S}\\
(\Vert v_{i}\Vert^{2}-1) & i\in\mathcal{S},\,j\notin\mathcal{S}\\
(\Vert v_{j}\Vert^{2}-1) & i\notin\mathcal{S},\,j\in\mathcal{S}\\
0, & i,j\notin\mathcal{S}.
\end{cases}
\]

Therefore, for arbitrary symmetric $A$, we have
\begin{align*}
\Tr[A(X-\tilde{X})] & \leq\sum_{i,j}\vert A_{ij}\vert\,\vert X_{ij}-\tilde{X}_{ij}\vert\\
 & =\sum_{i,j\in\mathcal{S}}\vert A_{ij}\vert\,\vert X_{ij}-\tilde{X}_{ij}\vert+\sum_{i\in\mathcal{S},j\notin\mathcal{S}}\vert A_{ij}\vert\,\vert X_{ij}-\tilde{X}_{ij}\vert\\
 & \quad\quad+\ \sum_{i\notin\mathcal{S},j\in\mathcal{S}}\vert A_{ij}\vert\,\vert X_{ij}-\tilde{X}_{ij}\vert+\sum_{i,j\notin\mathcal{S}}\vert A_{ij}\vert\,\vert X_{ij}-\tilde{X}_{ij}\vert\\
 & \leq\sum_{i,j\in\mathcal{S}}\vert A_{ij}\vert\,\left[\frac{1}{2}(\Vert v_{i}\Vert^{2}-1)+\frac{1}{2}(\Vert v_{j}\Vert^{2}-1)\right]+2\sum_{i\in\mathcal{S},j\notin\mathcal{S}}\vert A_{ij}\vert\,\left[(\Vert v_{i}\Vert^{2}-1)\right]\\
 & \leq2\sum_{i,j\in\mathcal{S}}\vert A_{ij}\vert\,(\Vert v_{i}\Vert^{2}-1)+2\sum_{i\in\mathcal{S},j\notin\mathcal{S}}\vert A_{ij}\vert\,\left[(\Vert v_{i}\Vert^{2}-1)\right]\\
 & =2\sum_{i\in\mathcal{S}}\left(\sum_{j\in[n]}\vert A_{ij}\vert\right)\,(\Vert v_{i}\Vert^{2}-1)\\
 & \leq2\Vert A\Vert_{\infty}\sum_{i\in\mathcal{S}}(\Vert v_{i}\Vert^{2}-1)\\
 & \leq2\delta\Vert A\Vert_{\infty},
\end{align*}
 where we have used (\ref{eq:videlta}) in the last line.

Finally we define 
\[
X'=\tilde{X}+\underbrace{\mathrm{diag}\left[\mathbf{1}-\mathrm{diag}(\tilde{X})\right]}_{=:\,Y}.
\]
 By construction we have $\mathbf{1}-\mathrm{diag}(\tilde{X})\geq0$
and $\tilde{X}\succeq0$, so $X'\succeq0$ automatically. Moreover,
\begin{align*}
\Tr[A(X-X')] & =\Tr[A(X-\tilde{X})]-\Tr[AY]\\
 & \leq2\delta\Vert A\Vert_{\infty}+\sum_{i}\vert A_{ii}\vert\,\vert Y_{ii}\vert.
\end{align*}
 Now $\vert A_{ii}\vert\le\Vert A\Vert_{\infty}$, and $Y_{ii}=1-\tilde{X}_{ii}\leq\vert1-X_{ii}\vert$,
hence $\sum_{i}\vert Y_{ii}\vert\leq\Vert\mathrm{diag}(X)-\mathbf{1}\Vert_{1}\leq\delta$.
The lemma follows.
\end{proof}
Then we can upgrade the preceding lemma to allow for general $\mathbf{b}$
not proportional to $\mathbf{1}_{n}$. 
\begin{cor}
\label{cor:round}Let\textbf{ $\mathbf{b}=(b_{i})\geq0$} and let
$\kappa=\frac{\max_{i}b_{i}}{\min_{i}b_{i}}$. Suppose that $X\succeq0$
with $\Vert\mathrm{diag}(X)-\mathbf{b}\Vert_{1}\leq\delta$. Then
there exists $X'\succeq0$ with $\mathrm{diag}(X')=\mathbf{b}$ such
that for any symmetric $A$, we have 
\[
\Tr[A(X-X')]\leq3\kappa\delta\Vert A\Vert_{\infty}.
\]
\end{cor}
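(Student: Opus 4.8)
The plan is to reduce to Lemma~\ref{lem:maxcutround} by a diagonal rescaling. I may assume $\min_i b_i > 0$, since otherwise $\kappa = +\infty$ and the claimed bound holds trivially. First I would set $D := \mathrm{diag}(\sqrt{b_1},\ldots,\sqrt{b_n})$, which is invertible, and define $\hat X := D^{-1} X D^{-1} \succeq 0$. Its diagonal entries are $\hat X_{ii} = X_{ii}/b_i$, so
\[
\|\mathrm{diag}(\hat X) - \mathbf{1}_n\|_1 \;=\; \sum_{i=1}^n \frac{|X_{ii} - b_i|}{b_i} \;\leq\; \frac{1}{\min_i b_i}\,\|\mathrm{diag}(X) - \mathbf{b}\|_1 \;\leq\; \frac{\delta}{\min_i b_i} \;=:\; \hat\delta .
\]

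Next I would apply Lemma~\ref{lem:maxcutround} to $\hat X$ with tolerance $\hat\delta$, obtaining $\hat X' \succeq 0$ with $\mathrm{diag}(\hat X') = \mathbf{1}_n$ such that $\Tr[A'(\hat X - \hat X')] \leq 3\hat\delta\,\|A'\|_\infty$ for every symmetric $A'$. Then I would set $X' := D\hat X' D$, which is PSD and, since $X'_{ii} = b_i\hat X'_{ii} = b_i$, satisfies $\mathrm{diag}(X') = \mathbf{b}$ as required. To transfer the trace bound, note that $DAD$ is symmetric whenever $A$ is, so by cyclicity of the trace $\Tr[A(X - X')] = \Tr[AD(\hat X - \hat X')D] = \Tr[(DAD)(\hat X - \hat X')] \leq 3\hat\delta\,\|DAD\|_\infty$. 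The remaining ingredient is the elementary estimate
\[
\|DAD\|_\infty \;=\; \max_j \sqrt{b_j}\sum_i \sqrt{b_i}\,|A_{ij}| \;\leq\; \Big(\max_k b_k\Big)\max_j \sum_i |A_{ij}| \;=\; \Big(\max_k b_k\Big)\|A\|_\infty,
\]
using $\sqrt{b_i},\sqrt{b_j} \leq \sqrt{\max_k b_k}$. Combining the pieces gives $\Tr[A(X-X')] \leq 3\hat\delta\,(\max_k b_k)\,\|A\|_\infty = 3\kappa\delta\,\|A\|_\infty$, which is the claim.

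There is no serious obstacle here: the argument is just a conjugation of Lemma~\ref{lem:maxcutround} by $D$. The only points requiring a little care are checking that the rescaling converts the $\ell^1$ feasibility gap measured against $\mathbf{b}$ into one measured against $\mathbf{1}_n$ at the cost of a factor $1/\min_i b_i$, and the operator-norm bound for $DAD$ — both short computations — together with the initial remark that the degenerate case $\min_i b_i = 0$ needs no proof.
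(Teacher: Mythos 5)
Your proof is correct and matches the paper's own proof essentially step for step: conjugate by $D=\mathrm{diag}(\sqrt{b_i})$, apply Lemma~\ref{lem:maxcutround} to $D^{-1}XD^{-1}$ with rescaled tolerance, undo the conjugation, and bound $\Vert DAD\Vert_\infty$ by $\Vert b\Vert_\infty\Vert A\Vert_\infty$. The only cosmetic difference is that the paper writes the intermediate tolerance as $\kappa\delta/\Vert b\Vert_\infty$ while you write it as $\delta/\min_i b_i$, which are the same quantity, and your brief remark handling the degenerate case $\min_i b_i=0$ is a minor addition the paper leaves implicit.
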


\begin{proof}
Let $D$ be the diagonal matrix with entries $D_{ij}=\sqrt{b_{i}}\delta_{ij}$.
Then define 
\[
Y=D^{-1}XD^{-1},
\]
 so $Y\succeq0$ by construction and $Y_{ii}=\frac{X_{ii}}{b_{i}}$
for all $i$. Then compute: 
\[
\Vert\mathrm{diag}(Y)-\mathbf{1}\Vert_{1}=\sum_{i}\left|\frac{X_{ii}}{b_{i}}-1\right|\leq\sum_{i}\frac{1}{b_{i}}\vert X_{ii}-b_{i}\vert.
\]
 Note that we have $\frac{1}{b_{i}}\leq\frac{\kappa}{\Vert b\Vert_{\infty}}$
for all $i$, so we can continue our calculation: 
\[
\Vert\mathrm{diag}(Y)-\mathbf{1}\Vert_{1}\leq\frac{\kappa}{\Vert b\Vert_{\ell^{\infty}}}\sum_{i}\vert X_{ii}-b_{i}\vert\leq\frac{\kappa\delta}{\Vert b\Vert_{\ell^{\infty}}}=:\tilde{\delta}.
\]

Now apply the preceding lemma with $Y$ in the place of $X$ and $\tilde{\delta}$
in place of $\delta$ and $\tilde{A}=DAD$ in place of $A$ to conclude
that there exists $Y'\succeq0$ with $\mathrm{diag}(Y)=\mathbf{1}$
such that 
\[
\Tr[\tilde{A}(Y-Y')]\leq3\tilde{\delta}\Vert\tilde{A}\Vert_{\infty}.
\]
 Then we define 
\[
X':=DY'D
\]
 so $X'\succeq0$ and $\mathrm{diag}(X)=\mathbf{b}$ as desired, and
moreover 
\begin{align*}
\Tr[\tilde{A}(Y-Y')] & =\Tr[DAD(Y-Y')]\\
 & =\Tr[AD(Y-Y')D]\\
 & =\Tr[A(DYD-DY'D)]\\
 & =\Tr[A(X-X')].
\end{align*}
 Therefore we have shown 
\[
\Tr[A(X-X')]\leq3\tilde{\delta}\Vert\tilde{A}\Vert_{\infty}.
\]
 But 
\begin{align*}
\Vert\tilde{A}\Vert_{\infty} & =\max_{j}\sum_{i}\sqrt{b_{i}b_{j}}\vert A_{ij}\vert\leq\Vert b\Vert_{\infty}\Vert A\Vert_{\infty}.
\end{align*}
 In turn it follows that $\Tr[A(X-X')]\leq3\kappa\delta\Vert A\Vert_{\infty}$,
as was to be shown.
\end{proof}
Now we finally turn to the proof of Theorem \ref{thm:maxcut}, which
we restate here:

\maxcutroundingthm*
\begin{proof}
Let $X=X_{\beta,\blam}$ be as in the statement of the theorem. Note
that $\nabla f_{\beta}(\blam)=\mathrm{diag}(X)-\mathbf{b}$, so we
know that $\Vert\mathrm{diag}(X)-\mathbf{b}\Vert_{1}\leq\delta$.
Define $\tilde{\mathbf{b}}=\mathrm{diag}(X)$, so $\Vert\tilde{\mathbf{b}}-\mathbf{b}\Vert_{1}\leq\delta$.

Observe that $X$ is the \emph{exact}\textbf{ }solution of the entropically
regularized problem 
\begin{align}
\underset{X\in\R^{n\times n}}{\text{minimize}}\quad\quad & \Tr[CX]+\beta^{-1}S(X)\label{eq:perturbedSDP}\\
\text{subject to}\quad\quad & \mathrm{diag}(X)=\tilde{\mathbf{b}}.\nonumber 
\end{align}
 with slightly altered diagonal constraint.

Let $X_{\star}$ denote the optimizer of the unregularized SDP (\ref{eq:maxcut_sdp}).
In particular $\mathrm{diag}(X_{\star})=\mathbf{b}$. Then by applying
Corollary \ref{cor:round}, we can find $X_{\star}'\succeq0$ with
$\mathrm{diag}(X_{\star}')=\tilde{\mathbf{b}}$ (hence feasible for
the altered regularized problem (\ref{eq:perturbedSDP})) satisfying
\[
|\Tr[C(X_{\star}-X_{\star}')]|\leq3\kappa\delta\Vert C\Vert_{\infty},
\]
 hence 
\begin{align}
\Tr[CX_{\star}'] & \geq\Tr[CX_{\star}]-3\kappa\delta\Vert C\Vert_{\infty}\nonumber \\
 & =p_{\star}-3\kappa\delta\Vert C\Vert_{\infty}.\label{eq:sub1}
\end{align}

By the optimality of $X_{\blam}$ for (\ref{eq:perturbedSDP}), we
have 
\[
\Tr[CX_{\blam}]+\beta^{-1}S(X_{\blam})\leq\Tr[CX_{\star}']+\beta^{-1}S(X_{\star}'),
\]
 or 
\[
\Tr[CX_{\blam}]-\Tr[CX_{\star}']\leq\beta^{-1}\left[S(X_{\star}')-S(X_{\blam})\right].
\]
Then it is helpful to recall that $S(X)\in[0,-\log n]$ for all $X\in\mathcal{P}_{1}$.
Therefore 
\[
\Tr[CX_{\blam}]-\Tr[CX_{\star}']\leq\beta^{-1}\log n.
\]

Then combining with (\ref{eq:sub1}), we deduce: 
\begin{equation}
\Tr[CX_{\blam}]-p_{\star}\leq3\kappa\delta\Vert C\Vert_{\infty}+\beta^{-1}\log n.\label{eq:pstarupper}
\end{equation}
 This is our favorable upper bound for $\Tr[CX_{\blam}]$.

To get a similar lower bound, use Corollary \ref{cor:round} again
to construct $X'\succeq0$ with $\mathrm{diag}(X')=\mathbf{b}$ (hence
feasible for the unregularized SDP (\ref{eq:maxcut_sdp})) satisfying
\[
|\Tr[C(X_{\blam}-X')]|\leq3\kappa\delta\Vert C\Vert_{\infty},
\]
 hence 
\begin{equation}
\Tr[CX']\leq\Tr[CX_{\blam}]+3\kappa\delta\Vert C\Vert_{\infty}.\label{eq:sub2}
\end{equation}

Moreover, since $X'$ is feasible for (\ref{eq:maxcut_sdp}), which
has optimal value $p_{\star}$, we have 
\[
p_{\star}\leq\Tr[CX']\leq\Tr[CX_{\blam}]+3\kappa\delta\Vert C\Vert_{\infty}.
\]
 Therefore, combining with (\ref{eq:pstarupper}), we have
\[
\vert\Tr[CX_{\blam}]-p^{*}\vert\leq3\kappa\delta\Vert C\Vert_{\infty}+\beta^{-1}\log n
\]
 as desired.

Moreover, (\ref{eq:sub2}) and (\ref{eq:pstarupper}) together imply
\[
\Tr[CX']-p^{*}\leq6\kappa\delta\Vert C\Vert_{\infty}+\beta^{-1}\log n,
\]
 which completes the proof.
\end{proof}

\section{Proofs for Strong Perm-Synch SDP rounding \label{app:strongpermsynchrounding}}

Before giving the proof of Theorem \ref{thm:strongpermsynch}, we
state and prove the key lemma that enables the rounding argument.

\strongpermsynchroundinglem*
\begin{proof}
First factorize 
\[
X=Z^{\top}Z
\]
 where 
\[
Z=\left(\begin{array}{ccc}
Z^{(1)} & \cdots & Z^{(N)}\end{array}\right),
\]
 where the blocks $Z^{(i)}$ are $NK\times K$, $i=1,\ldots,N$. Therefore
we can write 
\[
X^{(i,j)}=Z^{(i)\top}Z^{(j)}.
\]

Now for each block we can perform a thin SVD $Z^{(i)}=U^{(i)}\Sigma^{(i)}V^{(i)\top}$,
where $U^{(i)}$ is $NK\times K$ and $\Sigma^{(i)}$ has diagonal
entries $\sigma_{k}^{(i)}$. Consider the modification $\tilde{\Sigma}^{(i)}$
of $\Sigma^{(i)}$ which thresholds all singular values at $1$, and
let the diagonal entries be denoted $\tilde{\sigma}_{k}^{(i)}=\min(1,\sigma_{k}^{(i)})$.
Then define $\tilde{Z}^{(i)}=U^{(i)}\tilde{\Sigma}^{(i)}V^{(i)^{\top}}$,
and in turn define 
\[
\tilde{Z}=\left(\begin{array}{ccc}
\tilde{Z}^{(1)} & \cdots & \tilde{Z}^{(N)}\end{array}\right)
\]
 and 
\[
\tilde{X}=\tilde{Z}^{\top}\tilde{Z},
\]
 the `deflated' version of $X$ (which we will shift upward later
to achieve feasibility).

Let $A$ be a symmetric $NK\times NK$ matrix. Then (using angle brackets
always to denote Frobenius innter products): 
\begin{align}
\Tr\left[A(X-\tilde{X})\right] & =\sum_{i,j=1}^{N}\left\langle A^{(i,j)},X^{(i,j)}-\tilde{X}^{(i,j)}\right\rangle ,\nonumber \\
 & =\sum_{i,j=1}^{N}\left\langle \tilde{A}^{(i,j)},\Sigma^{(i)}B^{(i,j)}\Sigma^{(j)}-\tilde{\Sigma}^{(i)}B^{(i,j)}\tilde{\Sigma}^{(j)}\right\rangle \nonumber \\
 & =\sum_{i,j=1}^{N}\left\langle \tilde{A}^{(i,j)},\Delta^{(i,j)}\right\rangle \nonumber \\
 & =\left\langle \tilde{A},\Delta\right\rangle ,\label{eq:AtildeDelta}
\end{align}
 where 
\[
\tilde{A}^{(i,j)}:=V^{(i)\top}A^{(i,j)}V^{(j)},
\]
\[
B^{(i,j)}:=U^{(i)\top}U^{(j)},
\]
\[
\Delta^{(i,j)}:=\Sigma^{(i)}B^{(i,j)}\Sigma^{(j)}-\tilde{\Sigma}^{(i)}B^{(i,j)}\tilde{\Sigma}^{(j)},
\]
 and $\tilde{A}$ and $\Delta$ are defined suitably in terms of the
appropriate blocks.

Now we can write the entries of $\Delta^{(i,j)}$ as 
\[
\Delta_{kl}^{(i,j)}=B_{kl}^{(i,j)}(\sigma_{k}^{(i)}\sigma_{l}^{(j)}-\tilde{\sigma}_{k}^{(i)}\tilde{\sigma}_{l}^{(j)}).
\]

Similar to the proof of Lemma \ref{lem:maxcutround}, let $\mathcal{S}$
denote the set of indices $(i,k)$ such that $\tilde{\sigma}_{k}^{(i)}\neq\sigma_{k}^{(i)}$,
so in particular $\sigma_{k}^{(i)}>1$ and $\tilde{\sigma}_{k}^{(i)}=1$
for $(i,k)\in\mathcal{S}$, while $\sigma_{k}^{(i)}=\tilde{\sigma}_{k}^{(i)}\leq1$
for $(i,k)\notin\mathcal{S}$.

Then if $(i,k),(j,l)\in\mathcal{S}$, we have 
\begin{align*}
\sigma_{k}^{(i)}\sigma_{l}^{(j)}-\tilde{\sigma}_{k}^{(i)}\tilde{\sigma}_{l}^{(j)} & =\sigma_{k}^{(i)}\sigma_{l}^{(j)}-1\\
 & \leq\frac{1}{2}\left([\sigma_{k}^{(i)}]^{2}-1\right)+\frac{1}{2}\left([\sigma_{l}^{(j)}]^{2}-1\right).
\end{align*}
 Meanwhile, if $(i,k)\in\mathcal{S}$ but $(j,l)\notin\mathcal{S}$,
we have 
\begin{align*}
\sigma_{k}^{(i)}\sigma_{l}^{(j)}-\tilde{\sigma}_{k}^{(i)}\tilde{\sigma}_{l}^{(j)} & =(\sigma_{k}^{(i)}-1)\sigma_{l}^{(j)}\\
 & \leq\sigma_{k}^{(i)}-1\\
 & \leq[\sigma_{k}^{(i)}]^{2}-1.
\end{align*}
 Similarly, if $(i,k)\notin\mathcal{S}$ but $(j,l)\in\mathcal{S}$,
we have 
\[
\sigma_{k}^{(i)}\sigma_{l}^{(j)}-\tilde{\sigma}_{k}^{(i)}\tilde{\sigma}_{l}^{(j)}\leq[\sigma_{l}^{(j)}]^{2}-1.
\]
 And finally, if $(i,k),(j,l)\notin\mathcal{S}$, we have 
\[
\sigma_{k}^{(i)}\sigma_{l}^{(j)}-\tilde{\sigma}_{k}^{(i)}\tilde{\sigma}_{l}^{(j)}=0.
\]

Moreover, $\vert B_{kl}^{(i,j)}\vert\leq1$ for all $i,j,k,l$ by
the Cauchy-Schwarz inequality, since $B^{(i,j)}=U^{(i)\top}U^{(j)}$,
hence all entries of $B^{(i,j)}$ can be viewed as inner products
of unit vectors. Therefore it follows that 
\[
\vert\Delta_{kl}^{(i,j)}\vert\leq\begin{cases}
\frac{1}{2}\left([\sigma_{k}^{(i)}]^{2}-1\right)+\frac{1}{2}\left([\sigma_{l}^{(j)}]^{2}-1\right), & (i,k),(j,l)\in\mathcal{S},\\
\frac{1}{2}\left([\sigma_{k}^{(i)}]^{2}-1\right), & (i,k)\in\mathcal{S},\,(j,l)\notin\mathcal{S},\\
\frac{1}{2}\left([\sigma_{l}^{(j)}]^{2}-1\right), & (i,k)\notin\mathcal{S},\,(j,l)\in\mathcal{S},\\
0. & (i,k),(j,l)\notin\mathcal{S}
\end{cases}
\]

Then compute, starting from (\ref{eq:AtildeDelta}): 
\begin{align}
\Tr\left[A(X-\tilde{X})\right] & =\left\langle \tilde{A},\Delta\right\rangle \nonumber \\
 & =\sum_{i,j=1}^{N}\sum_{k,l=1}^{K}\tilde{A}_{kl}^{(i,j)}\Delta_{kl}^{(i,j)}\nonumber \\
 & \leq\sum_{(i,k),(j,l)\in\mathcal{S}}\vert\tilde{A}_{kl}^{(i,j)}\vert\,\vert\Delta_{kl}^{(i,j)}\vert+\sum_{(i,k)\in\mathcal{S},(j,l)\notin\mathcal{S}}\vert\tilde{A}_{kl}^{(i,j)}\vert\,\vert\Delta_{kl}^{(i,j)}\vert+\nonumber \\
 & \quad\quad\ +\ \sum_{(i,k)\notin\mathcal{S},(j,l)\in\mathcal{S}}\vert\tilde{A}_{kl}^{(i,j)}\vert\,\vert\Delta_{kl}^{(i,j)}\vert+\sum_{(i,k),(j,l)\notin\mathcal{S}}\vert\tilde{A}_{kl}^{(i,j)}\vert\,\vert\Delta_{kl}^{(i,j)}\vert\nonumber \\
 & \leq\sum_{(i,k),(j,l)\in\mathcal{S}}\vert\tilde{A}_{kl}^{(i,j)}\vert\,\left[\frac{1}{2}\left([\sigma_{k}^{(i)}]^{2}-1\right)+\frac{1}{2}\left([\sigma_{l}^{(j)}]^{2}-1\right)\right]\nonumber \\
 & \quad\quad\ +2\sum_{(i,k)\in\mathcal{S},(j,l)\notin\mathcal{S}}\vert\tilde{A}_{kl}^{(i,j)}\vert\,\left([\sigma_{k}^{(i)}]^{2}-1\right)\nonumber \\
 & \leq2\sum_{(i,k),(j,l)\in\mathcal{S}}\vert\tilde{A}_{kl}^{(i,j)}\vert\,\left([\sigma_{k}^{(i)}]^{2}-1\right)+2\sum_{(i,k)\in\mathcal{S},(j,l)\notin\mathcal{S}}\vert\tilde{A}_{kl}^{(i,j)}\vert\,\left([\sigma_{k}^{(i)}]^{2}-1\right)\nonumber \\
 & =2\sum_{(i,k)\in\mathcal{S}}\left(\sum_{j=1}^{N}\sum_{l=1}^{K}\vert\tilde{A}_{kl}^{(i,j)}\vert\right)\left([\sigma_{k}^{(i)}]^{2}-1\right)\nonumber \\
 & \leq2\times\max_{i,k\in\mathcal{S}}\left\{ \sum_{j=1}^{N}\sum_{l=1}^{K}\vert\tilde{A}_{kl}^{(i,j)}\vert\right\} \times\sum_{(i,k)\in\mathcal{S}}\left([\sigma_{k}^{(i)}]^{2}-1\right).\label{eq:ADelta}
\end{align}

In the last expression (\ref{eq:ADelta}), consider the last factor.
Note that $[\sigma_{k}^{(i)}]^{2}$, $k=1,\ldots,K$, are the eigenvalues
of the diagonal block $X^{(i,i)}=Z^{(i)\top}Z^{(i)}$. The nuclear
norm error of this block can be rewritten in terms of the eigenvalues
as 
\[
\Vert X^{(i,i)}-\mathbf{I}_{K}\Vert_{\Tr}=\sum_{k=1}^{K}\left|[\sigma_{k}^{(i)}]^{2}-1\right|.
\]
 Therefore 
\[
\sum_{(i,k)\in\mathcal{S}}\left([\sigma_{k}^{(i)}]^{2}-1\right)\leq\sum_{i=1}^{N}\sum_{k=1}^{K}\left|[\sigma_{k}^{(i)}]^{2}-1\right|=\sum_{i=1}^{N}\Vert X^{(i,i)}-\mathbf{I}_{K}\Vert_{\Tr}\leq\delta,
\]
 where the last inequality follows by hypothesis.

Then consider the penultimate factor in the last expression of (\ref{eq:ADelta}).
We have 
\begin{align*}
\vert\tilde{A}_{kl}^{(i,j)}\vert & =\vert[V^{(i)\top}A^{(i,j)}V^{(j)}]_{kl}\vert\\
 & \leq\vert v_{k}^{(i)\top}A^{(i,j)}v_{l}^{(j)}\vert\\
 & \leq\Vert A^{(i,j)}\Vert_{2},
\end{align*}
 where we use $v_{k}^{(i)}$ to denote the $k$-th column of $V^{(i)}$
for all $i,k$, and we have used the fact that these columns are unit
vectors (since the $V^{(i)}$ are all unitary). Then it follows that
\begin{equation}
\max_{i,k\in\mathcal{S}}\left\{ \sum_{j=1}^{N}\sum_{l=1}^{K}\vert\tilde{A}_{kl}^{(i,j)}\vert\right\} \leq K\max_{i=1,\ldots,N}\left\{ \sum_{j=1}^{N}\Vert A^{(i,j)}\Vert_{2}\right\} =K\,\vert\vert\vert A\vert\vert\vert.\label{eq:annoying}
\end{equation}

Combining our bounds with (\ref{eq:ADelta}) we conclude that 
\begin{equation}
\Tr\left[A(X-\tilde{X})\right]\leq2K\delta\,\vert\vert\vert A\vert\vert\vert.\label{eq:AXXtildetriple}
\end{equation}

Finally, we need to shift up the deflated matrix $\tilde{X}$ to ensure
exact feasibility. Now the diagonal blocks 
\[
\tilde{X}^{(i,i)}=\tilde{Z}^{(i)\top}\tilde{Z}^{(i)}=V^{(i)}[\tilde{\Sigma}^{(i)}]^{2}V^{(i)\top}
\]
 satisfy $\tilde{X}^{(i,i)}\preceq\mathbf{I}_{K}$, since $[\tilde{\Sigma}^{(i)}]^{2}\preceq\mathbf{I}_{K}$
by construction. Therefore we can define 
\[
Y^{(i)}:=\mathbf{I}_{K}-\tilde{X}^{(i,i)}\succeq0
\]
 for all $i=1,\ldots,N$, and in turn 
\[
X':=\tilde{X}+\bigoplus_{i=1}^{N}Y^{(i)},
\]
 which is exactly feasible.

To complete the proof, given (\ref{eq:AXXtildetriple}), it suffices
to bound 
\begin{equation}
\left|\left\langle A,\bigoplus_{i=1}^{N}Y^{(i)}\right\rangle \right|\leq\delta\,\vert\vert\vert A\vert\vert\vert.\label{eq:lastclaim}
\end{equation}
 To justify this bound, compute 
\begin{align*}
\left|\left\langle A,\bigoplus_{i=1}^{N}Y^{(i)}\right\rangle \right| & \leq\sum_{i=1}^{N}\left\langle A^{(i,i)},Y^{(i)}\right\rangle \\
 & \leq\sum_{i=1}^{N}\Vert A^{(i,i)}\Vert_{2}\Vert Y^{(i)}\Vert_{\Tr}\\
 & \leq\vert\vert\vert A\vert\vert\vert\,\sum_{i=1}^{N}\Tr[Y^{(i)}]\\
 & =\vert\vert\vert A\vert\vert\vert\,\sum_{i=1}^{N}\sum_{k=1}^{K}(1-[\tilde{\sigma}_{k}^{(i)}]^{2})\\
 & \leq\vert\vert\vert A\vert\vert\vert\,\sum_{i=1}^{N}\sum_{k=1}^{K}\vert1-[\sigma_{k}^{(i)}]^{2}\vert\\
 & =\vert\vert\vert A\vert\vert\vert\,\sum_{i=1}^{N}\Vert X^{(i,i)}-\mathbf{I}_{K}\Vert_{\Tr}\\
 & \leq\delta\,\vert\vert\vert A\vert\vert\vert,
\end{align*}
 where the last inequality again follows by hypothesis. This establishes
(\ref{eq:lastclaim}), which completes the proof of the lemma.
\end{proof}
Now we finally turn to the proof of Theorem \ref{thm:strongpermsynch},
which we restate here:

\strongpermsynchroundingthm*
\begin{proof}
The proof is completely analogous to the proof of Theorem \ref{thm:maxcut},
using Lemma \ref{lem:strongpermsynchround} analogously to Lemma \ref{lem:maxcutround}.
\end{proof}

\end{document}